\documentclass[11pt,reqno]{amsart}
 \usepackage{amsmath,amsfonts,amssymb,amsthm,mathrsfs}
 \usepackage{hyperref}
 \usepackage{bm}
 \usepackage{color}
 \usepackage{enumerate}
 \usepackage{esint}
\usepackage{graphicx}
\usepackage{graphics}
\usepackage{geometry}
\geometry{margin=2cm}
\usepackage[all]{xy}

\usepackage{tikz}

 \newtheorem{theorem}{Theorem}[section]
 \newtheorem{lemma}[theorem]{Lemma}
 \newtheorem{corollary}[theorem]{Corollary}
 \newtheorem{proposition}[theorem]{Proposition}

 \theoremstyle{definition}
 \newtheorem{definition}[theorem]{Definition}

 \newtheorem{example}[theorem]{Example}

 \newtheorem{remark}[theorem]{Remark}

\numberwithin{equation}{section}

\newcommand{\p}{\partial}
\newcommand{\bg}{\bar{g}}
\newcommand{\bx}{\bm{x}}
\newcommand{\bo}{\bm{0}}
\newcommand{\dC}{\mathbb{C}}

\newcommand{\dN}{\mathbb{N}}
\newcommand{\fa}{\mathfrak{a}}

\newcommand{\sA}{\mathscr{A}}
\newcommand{\sB}{\mathscr{B}}
\newcommand{\sU}{\mathscr{U}}

\newcommand{\cA}{\mathcal{A}}
\newcommand{\cB}{\mathcal{B}}
\newcommand{\cC}{\mathcal{C}}

\newcommand{\cE}{\mathcal{E}}
\newcommand{\cH}{\mathcal{H}}
\newcommand{\cI}{\mathcal{I}}
\newcommand{\cJ}{\mathcal{J}}

\newcommand{\cL}{\mathcal{L}}
\newcommand{\cN}{\mathcal{N}}
\newcommand{\cT}{\mathcal{T}}
\newcommand{\cO}{\mathcal{O}}
\newcommand{\cS}{\mathcal{S}}

\newcommand{\cZ}{\mathcal{Z}}
\newcommand{\dH}{\mathbb{H}}
\newcommand{\dQ}{\mathbb{Q}}
\newcommand{\dR}{\mathbb{R}}

\newcommand{\dZ}{\mathbb{Z}}

\newcommand{\fA}{\mathfrak{A}}

\newcommand{\fC}{\mathfrak{C}}
\newcommand{\fG}{\mathfrak{G}}
\newcommand{\fH}{\mathfrak{H}}

\newcommand{\fS}{\mathfrak{S}}
\newcommand{\fX}{\mathfrak{X}}
\newcommand{\fm}{\mathfrak{m}}

\newcommand{\dsg}{d\sigma_{\bg}}
\newcommand{\dsj}{d\sigma_{\bg_j}}

\newcommand{\dst}{d\sigma_t}
\newcommand{\uC}{\underline{\mathcal{C}}}
\newcommand{\uS}{\underline{\mathcal{S}}}

\newcommand{\ud}{\underline{\delta}}

\newcommand{\vr}{\varrho}
\newcommand{\wt}{\widetilde}
\newcommand{\tU}{\widetilde{U}}

\DeclareMathOperator{\Div}{div}

\DeclareMathOperator{\dvol}{dvol}

\DeclareMathOperator{\Inj}{Inj}

 \DeclareMathOperator{\loc}{loc}
  \DeclareMathOperator{\Min}{Min}
 
   \DeclareMathOperator{\PV}{P.V.}
  \DeclareMathOperator{\Rea}{Re}
 \DeclareMathOperator{\Res}{Res}
\DeclareMathOperator{\Ric}{Ric}

\DeclareMathOperator{\SO}{SO}
\DeclareMathOperator{\Span}{Span}

\DeclareMathOperator{\Tr}{Tr}
\DeclareMathOperator{\Vol}{Vol}

 \begin{document}
 \title[Singular sets of degenerate and nonlocal elliptic equations]{The singular sets of degenerate and nonlocal elliptic equations on Poincar\'e-Einstein manifolds}

\author{Xumin Jiang}
\address{Department of Mathematics, Fordham University, Bronx, NY, USA, 10458}
\email{xjiang77@fordham.edu}

 \author{Yannick Sire}
  \address{Department of Mathematics, Johns Hopkins University,  Baltimore, MD, USA, 21218}
 \email{ysire1@jhu.edu}

 \author{Ruobing Zhang}
  \address{Department of Mathematics, Princeton University, Princeton, NJ, USA, 08544}
 \email{ruobingz@princeton.edu}

   \thanks{YS is partially supported by 
NSF Grant DMS-2154219; RZ is supported by NSF Grants DMS-1906265 and DMS-2304818.}

 \maketitle
  \begin{abstract}
   The main objects of this paper include some degenerate and nonlocal elliptic operators which naturally arise in the conformal invariant theory of Poincar\'e-Einstein manifolds. These operators generally reflect the correspondence between the Riemannian geometry of a complete Poincar\'e-Einstein manifold and the conformal geometry of its associated conformal infinity. 
   In this setting, we develop the quantitative differentiation theory that includes quantitative stratification for the singular set and Minkowski type estimates for the (quantitatively) stratified singular sets. All these, together with a new $\epsilon$-regularity result for degenerate/singular elliptic operators on Poincar\'e-Einstein manifolds, lead to uniform Hausdorff measure estimates for the singular sets. Furthermore, the main results in this paper provide a delicate synergy between the geometry of Poincar\'e-Einstein manifolds and the elliptic theory of associated degenerate elliptic operators.

  \end{abstract}

 \tableofcontents
 
 \section{Introduction}
 
 The main purpose of this paper is to establish a series of new dimension bounds and geometric measure estimates for the singular sets of a typical class of degenerate and nonlocal elliptic equations in the context of Poincar\'e-Einstein manifolds. These estimates are new even in the Euclidean model case. 
First, let us recall the backgrounds and major strategies of several aspects employed in our studies.

 \subsection{Background and setup}

 A fundamental theme in the analysis of elliptic PDEs is to understand the structure, particularly the geometric profiles, of their solutions.
 It is a crucial to study the structure of the singular set of these solutions.
 The implicit function theorem and Sard's theorem provide a qualitative characterization for   smooth functions: for a smooth function $f$, its level set $\Sigma$ is a smooth hypersurface except at the critical set of $f$, which has measure zero. In other words, the critical points contribute to the topological and geometric complexity of $\Sigma$, and consequently, of $f$ itself. To obtain more refined geometric information,  it is necessary to investigate the structure of the critical set or singular set.

  In the context of the solutions of elliptic PDEs, the very first step is to consider the model case, namely the Laplace equation and harmonic functions. It is well known that every harmonic on a Euclidean ball must be analytic. Moreover, if a Euclidean harmonic function is homogeneous, then it must be a {\it homogeneous harmonic polynomial}. From the geometric point of view, any homogeneous polynomial is invariant under rescaling at the origin which is also called a {\it Euclidean cone} in the literature.
 Since the critical set of a homogeneous harmonic polynomial is well understood, in the study of general elliptic PDEs, a common strategy  
 is to formulate {\it effective cone structure} of solutions on small scales. That is, one hopes to understand in what sense solutions can be approximated by Euclidean cones, i.e., homogeneous harmonic polynomials. This natural philosophy has endless descendants in different disciplines of  geometry and analysis, such as geometric measure theory, minimal submanifolds, harmonic mappings, mean curvature flow, the metric geometry of Ricci curvature, etc. 
In the specific context of elliptic PDEs, significant progress has been made in recent decades, with many fundamental works focusing on the measure-theoretic properties of the singular set; see  \cite{DF, Lin, Han, HHL, Hardt-N, Han-Lin-harmonic-maps, Cheeger-Naber-Valtorta, naber-valtorta}. 
 It is worth mentioning that groundbreaking results have been obtained recently in the studies of Yau's Conjecture and Nadirashvili's Conjecture, which bounds the Hausdorff measure of the nodal sets of eigenfunctions and harmonic functions on Riemannian manifolds; see \cite{LM, logu1,logu2}.

In this paper, we are mainly concerned with the singular sets of the solutions for a class of degenerate and nonlocal elliptic equations defined in the context of Poincar\'e-Einstein manifolds. Before presenting our main results, it is worth introducing the geometric background and motivations of this geometric setting.
Extensive studies of Poincar\'e-Einstein manifolds originally come from 
 the AdS/CFT correspondence in theoretic physics which relates the (Anti-de-Sitter) Riemannian geometry of a complete Einstein manifold $(X^{n + 1}, g_+)$ to the conformal (field) theory of the conformal infinity $(M^n , [h])$, where $M^n$ is the topological boundary of $X^{n + 1}$.
 This physical philosophy has continuous impacts on conformal geometry so that studying Poincar\'e-Einstein manifolds has become a very active research direction. 
  On the mathematical side, this topic dates back to the   ambient metric construction by Fefferman-Graham in 1985 (\cite{FG-conformal-invariants, Fefferman-Graham-ambient}), which successfully produced conformal invariants and conformally covariant elliptic operators. In this setting, a typical class of conformally covariant elliptic operators $P_{2\gamma}$ constructed on the conformal manifolds $(M^n, [h])$ are nowadays called {\it fractional GJMS operators} with principal symbol equal to the principal symbol of $(-\Delta)^{\gamma}$ on $\dR^n$; see \cite{CG, Case-Chang, Chang-Yang} for a synthetic background. The operators $P_{2\gamma}$ depend not only on the conformal geometry of $(M^n, [h])$ but also on the Riemannian geometry of  $(X^{n+1}, g_+)$: they exhibit nonlocal nature for generic values of $\gamma$.

 Regarding the construction and localization of nonlocal GJMS operators, there are two different approaches. First,
 in Graham-Zworski's work \cite{GZ}, the operators $P_{2\gamma}$ were discovered to have intimate connections with the geometric scattering theory on the asymptotically hyperbolic Poincar\'e-Einstein manifolds. Roughly speaking, the GJMS operators defined on the conformal infinity $(M^n,[h])$
 can be realized as certain ``Dirichlet-to-Neumann mapping", via a scattering operator coming from a non-degenerate Poisson equation defined on the Poincar\'e-Einstein filling $(X^{n + 1}, g_+)$.
  A powerful advantage of this framework is that conformally covariant operators and conformal invariants of different orders can be treated in a uniform way when $\gamma\in(0,\frac{n}{2})$. The second approach to constructing GJMS operators $P_{2\gamma}$ of low orders $\gamma \in (0,1)$ originated in Caffarelli-Silvestre's work \cite{CS}. The main construction of $P_{2\gamma}$ is realized as a ``Dirichlet-to-Neumann mapping" via a degenerate Laplace equation defined on a conformal compactification $(\overline{X^{n + 1}}, \bg)$ which degenerates precisely on the boundary $M^n$. This approach translates the difficulty in studying the behaviors of non-degenerate operators near 
the infinity of a complete manifold $X^{n + 1}$ to investigating degenerate operators defined on a compact manifold $\overline{X^{n + 1}}$ with boundary $M^n$. Recognizing the equivalence between these two constructions and formulating higher order generalizations represents a major progress in the geometric direction; see \cite{CG} and \cite{Case-Chang} for the details.

To introduce the main results of the paper, we now set up the geometric background. 
Given any integer $n\geq 2$, let $(X^{n + 1}, g_+)$ be a complete asymptotically hyperbolic Poincar\'e-Einstein manifold such that $\Ric_{g_+} \equiv - n g_+$. Let $(M^n, [h])$ be the conformal infinity of $(X^{n + 1}, g_+)$: $M^n$ is the topological boundary of $X^{n + 1}$ and $g_+$ admits a compactification  $\bg = \rho^2 g_+$ on the compact $\overline{X^{n + 1}}$ with boundary $M^n$ so that $\bg|_{M^n} = \bg|_{\rho = 0} = h$. We refer the readers to Section \ref{ss:PE-metrics} for detailed definitions of the Poincar\'e-Einstein manifold.
 As in the aforementioned works \cite{GZ, CG}, on the conformal infinity $M^n$, for any $\gamma\in(0,\frac{n}{2})$, there is a family of formally self-adjoint pseudo differential operators, $P_{2\gamma}$, called fractional GJMS operators.
 We restrict ourselves to the case $\gamma \in (0,1)$. The main reason is that we are using monotonicity methods which are less handy when considering $\gamma >1$.

A function $f$ is called $\gamma$-harmonic on $B_1(p)\subset M^n$ if $P_{2\gamma}(f) = 0$ on $B_1(p)$.
 We also denote 
 \begin{align}\label{e:zero-critical-singular}
\cZ(f)   \equiv \{x\in M^n: f(x) = 0\}, \quad 
\cC(f)   \equiv \{x\in M^n: |\nabla f|(x) = 0\}, \quad 
\cS(f)   \equiv \cZ(f) \cap \cC(f).
 \end{align}
Our primary interest is to estimate the size of the zero/critical/singular sets in \eqref{e:zero-critical-singular}.
Similar to the second-order elliptic case in \cite{Lin, HHL, Cheeger-Naber-Valtorta, naber-valtorta}, the size of those sets essentially rely on the ``degree" of the solutions, more precisely, on the frequency of the solutions.

  In our setting, to make sense of the frequency of a $\gamma$-harmonic function, one needs to first localize the operator $P_{2\gamma}$.
  As we mentioned, Chang and Gonz\'alez managed to localize the operators $P_{2\gamma}$ via the Caffarelli-Silvestre type extension; see \cite{CG}. For our purpose, 
we will choose a canonical conformal compactification, and we specify Fefferman-Graham's metric $\bg \equiv  \vr^2 g_+ = e^{2 w}g_+$ in this paper, where $w$ satisfies the equation $-\Delta_{g_+} w = n$ on $X^{n + 1}$. By \cite{CG} (see also Proposition \ref{p:extension-problem-in-FG-metric}), a function $f\in C^{\infty}(M^n)$ that is $\gamma$-harmonic on $B_1(p)\subset M^n$ is always associated to the boundary value problem, called {\it Caffarelli-Silvestre type extension},  
\begin{align}\label{e:harmonic-extension-PE}
 \begin{cases}
		-\Div_{\bg}(\vr^{\fa} \nabla_{\bg} U) + C_{n,\gamma}\vr^{\fa} R_{\bg} U = 0   & \text{in}\ \overline{X^{n+1}},
		\\
	 U = f  & \text{on}\ M^n = \{\vr = 0\},
	 \\
	 P_{2\gamma} f = 0 & \text{on}\ B_1(p) \subset M^n,
\end{cases}
\end{align}  
where $R_{\bg}$ is the scalar curvature of $\bg$, $\fa \equiv 1 - 2\gamma \in (-1,1)$, and $C_{n,\gamma} > 0$ is a constant depending only on $n$ and $\gamma$. We remark that 
the equation is {\it degenerate} when $\fa \in (0,1)$, and {\it singular} when $\fa \in (-1,0)$. 
For the simplicity of the notion, we just call this equation 
   a degenerate equation. 
In this case, we also have 
\begin{align}
 P_{2\gamma} f =  \frac{2^{2\gamma}\cdot\Gamma(\gamma)}{2\gamma\cdot\Gamma(-\gamma)} 
\lim\limits_{\vr \to 0}\vr^{\fa} \frac{\p U}{\p \vr}, \quad \gamma\in(0,1) ;
\end{align}
see Section \ref{ss:GJMS} for details. Moreover, the extension $U$ is unique in the space $H^{1,\fa}(\overline{X^{n + 1}})$. In particular, if the (partial) compactified metric $\bg$ is the Euclidean metric on $\dR_+^{n + 1}$, then \eqref{e:harmonic-extension-PE} is precisely reduced to Caffarelli-Silvestre's original harmonic extension. 
These operators were first studied in the pioneering works \cite{fks,fkj,fjk2}, where the basic theory related to De Giorgi-Nash-Moser estimates, classical functional weighted inequalities, and elliptic measure is considered.

In this paper, when we refer to a $\gamma$-harmonic function $f$, we always associate it with a pair of functions $(U, f)$ for some $U\in H^{1,\fa}(\overline{X^{n + 1}})$ that satisfies \eqref{e:harmonic-extension-PE}. Now that the zero and singular sets of $f$ have been defined in \eqref{e:zero-critical-singular}, for the extension $U$, we define 
\begin{align}\label{e:U-zero-critical-singular}
\cZ(U)   \equiv \{x_+ \in M^n: U(x_+) = 0\}, \quad 
\cC(U)   \equiv \{x_+ \in M^n: |\nabla U|(x_+) = 0\}, \quad 
\cS(U)   \equiv \cZ(U) \cap \cC(U),	
\end{align}
which are the {\bf  restrictions} of the zero/critical/singular sets of $U$ in $\overline{X^{n + 1}}$ on the boundary $M^n$.
To define the frequency, it is convenient to double the compact manifold $(\overline{X^{n + 1}}, \bg)$ along the boundary $(M^n, h)$, which gives a closed manifold $\fX^{n + 1} \equiv \overline{X^{n + 1}}\bigcup\limits_{M^n}\overline{X^{n + 1}}$. The doubled metric is still denoted as $\bg$; see Section \ref{s:preliminaries} for the regularity of the doubled metric. 
Now for any $x \in B_{1/2}(p) \subset M^n$ and for any $r\in (0,1/2)$, the {\it generalized Almgren's frequency} $\cN_f (x, r)$ is defined as: 
\begin{align}
	\cN_f (x, r) 
\equiv \frac{ \displaystyle{r\int_{\cB_r(x)} \vr^{\fa} \left(|\nabla_{\bg} U|^2 + C_{n,\gamma} R_{\bg} U^2\right) \dvol_{\bg} } }{\displaystyle{ \int_{\p\cB_r(x)} \vr^{\fa} U^2 \dsg} },
\end{align}
where $U$ is the extension of $f$ given in \eqref{e:harmonic-extension-PE} and $\cB_r(x)$ is a metric ball in the doubled space $\fX^{n + 1}$. We also define another quantity, called the {\it normalized Dirichlet energy} of $f$:
\begin{align}
\cE_f (x, r) 
\equiv \frac{ \displaystyle{r\int_{\cB_r(x)} \vr^{\fa} |\nabla_{\bg} U|^2 \dvol_{\bg} } }{\displaystyle{ \int_{\p\cB_r(x)} \vr^{\fa} U^2 \dsg} }.	
\end{align}
Similarly, given any $x_+ \in \cB_{1/2}(p_+)\cap M^n$ and $r\in (0,1/2)$, we define 
\begin{align}
\cE_U (x_+, r) 
\equiv \frac{ \displaystyle{r\int_{\cB_r(x_+)} \vr^{\fa} |\nabla_{\bg} U|^2 \dvol_{\bg} } }{\displaystyle{ \int_{\p\cB_r(x_+)} \vr^{\fa} U^2 \dsg} }.		
\end{align}
The relation between the above frequency and normalized Dirichlet energy will be discussed in Sections \ref{ss:almost-monotonicity} and \ref{ss:quantitative-symmetry-PE}. How their large-scale information passes to smaller scales will be also discussed there.

 \subsection{Main results} 
 \label{ss:main-results}
 
Our main results consist of a series of dimension bounds and geometric measure estimates for the nodal and singular sets of both $U$ and $f$.  
In the following, for any $p\in M^n$,  we denote  $p_+ \equiv \iota(p)$, where $\iota:M^n\hookrightarrow \fX^{n + 1}$ is the canonical inclusion.

The first result
 is a Hausdorff measure estimate on the zero set.  
\begin{theorem}[Nodal set estimate]\label{t: nodal_set_theo}
Let $f\in C^{\infty}(M^n)$ be $\gamma$-harmonic on $B_2(p) \subset M^n$ for some $p\in M^n$. 
For any $\Lambda > 0$, there exists a constant $C = C(\Lambda, n, \gamma, \bg) > 0$ such that 
if $\cE_f(p, 2) \leq \Lambda$, then 
\begin{align}
\begin{split}
\mathcal{H}^n (\mathcal{Z}(U) \cap B_1(p)) 
\leq &\ C, \\ \mathcal{H}^{n - 1}(\mathcal{Z}(f) \cap B_1(p)) \leq &\ C,	
\end{split}
 \end{align}
 where $B_1(p)\subset M^n$ is the unit ball.
 \end{theorem}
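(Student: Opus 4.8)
The strategy is the by-now-standard "quantitative differentiation + $\epsilon$-regularity + Minkowski estimate" scheme, adapted to the degenerate/singular weighted operator from \eqref{e:harmonic-extension-PE}, together with a final passage from the interior nodal set of $U$ to the boundary nodal set of $f$. First I would reduce the two displayed bounds to a single Minkowski-type volume estimate: it suffices to show that for the tube $T_s \equiv \{x : d(x,\cZ(U)\cap B_1(p)) < s\}$ one has the uniform bound $\Vol_{\bg}(T_s \cap B_1(p)) \le C s$ (in the doubled space $\fX^{n+1}$, where $\cZ(U)$ is genuinely codimension-one), and likewise $\Vol_{h}(\{x\in M^n : d(x,\cZ(f)\cap B_1(p)) < s\}) \le C s^2$ on the boundary; standard covering arguments then convert these into the claimed $\cH^n$ and $\cH^{n-1}$ bounds by letting $s\to 0$. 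The hypothesis $\cE_f(p,2)\le\Lambda$ is the uniform frequency/energy bound that feeds the whole machine.

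\textbf{Key steps.} (1) Using the almost-monotonicity of the generalized Almgren frequency $\cN_f(x,r)$ and its comparison with $\cE_f$, $\cE_U$ (Sections \ref{ss:almost-monotonicity}, \ref{ss:quantitative-symmetry-PE}), upgrade the single hypothesis at scale $2$ to a uniform frequency bound $\cN_f(x,r)\le \Lambda'=\Lambda'(\Lambda,n,\gamma,\bg)$ for all $x\in B_1(p)$ and $r\le 1$; this is where the doubled closed manifold $\fX^{n+1}$ and the regularity of the doubled metric $\bg$ are essential, so that balls $\cB_r(x)$ behave uniformly. (2) Run the quantitative stratification: define quantitative singular strata $\cS^k_{\eta,r}$ via how close $U$ is, on scales $\ge r$, to being $(k+1)$-symmetric (invariant under a $(k+1)$-dimensional family of translations in the appropriate weighted-cone sense), and invoke the Minkowski-type estimate for these strata, which follows from the quantitative differentiation argument (bounding the number of "bad" scales where $U$ is far from conical, using the frequency drop) combined with a Reifenberg/covering argument à la Naber--Valtorta. (3) Invoke the new $\epsilon$-regularity theorem for the degenerate operator: at points and scales where $U$ is sufficiently close to a $2$-symmetric weighted-harmonic cone (equivalently, where the frequency is close to $1$ and $U$ is close to a linear function), $\cZ(U)$ is a $C^{1,\alpha}$ hypersurface with controlled curvature, so it contributes $\le C s$ to the tube volume on that scale. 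Points that never reach this regime at a given scale are captured, with controlled measure, by the top stratum $\cS^{n-1}$. Summing the two contributions gives $\Vol_{\bg}(T_s)\le Cs$.

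\textbf{From $U$ to $f$.} For the boundary statement about $\cZ(f)$ I would use that $f = U|_{M^n}$ and that, by the extension theory and the interior regularity of $U$ on the doubled manifold, $U$ is (locally, away from a lower-dimensional set) real-analytic transverse to $M^n$ in a suitable sense; more robustly, I would first prove a boundary frequency bound for $f$ itself and then apply the same quantitative-stratification-plus-$\epsilon$-regularity package directly on $(M^n,h)$ for the pseudodifferential operator $P_{2\gamma}$, using that $\cZ(f)$ has codimension one in $M^n$. The loss of one dimension (hence $\cH^{n-1}$ and the $s^2$ in the Minkowski bound) reflects that we are on the $n$-dimensional boundary rather than the $(n+1)$-dimensional total space. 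Alternatively, $\cZ(f) = \cZ(U)\cap M^n$ and one can slice the $\cH^n$-estimate for $\cZ(U)$ by the boundary; either route works once the frequency is controlled.

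\textbf{Main obstacle.} The hard part is the $\epsilon$-regularity step in the presence of the weight $\vr^{\fa}$ with $\fa\in(-1,1)$ and of the zeroth-order term $C_{n,\gamma}\vr^{\fa}R_{\bg}U$: the weight degenerates or blows up exactly on $M^n$, so the usual elliptic compactness and Schauder arguments must be replaced by ones adapted to the Muckenhoupt $A_2$ weight $|\vr|^{\fa}$, and one must show that the lower-order curvature term does not destroy the almost-monotonicity or the classification of blow-up limits as homogeneous weighted-harmonic functions. Ensuring that the doubled metric $\bg$ and the weight $\vr^\fa$ are regular enough (across $M^n$) for the monotonicity formula and the blow-up analysis to close — and that blow-ups are genuinely conical despite the curvature perturbation — is the crux; everything downstream is then a relatively mechanical adaptation of Naber--Valtorta-type covering estimates.
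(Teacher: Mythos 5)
Your proposal takes a genuinely different route from the paper's. For the nodal set, the paper does not run the quantitative-stratification-plus-$\epsilon$-regularity machine at all: Theorem \ref{t: nodal_set_theo} is established via Theorem \ref{t:zero-set-PE}, whose proof the authors deliberately omit because, as they say, the main strategy is the classical Lin and Han--Lin argument for uniformly elliptic equations --- uniform doubling from the almost-monotonicity of Theorem \ref{t:almost-monotonicity}, compactness of normalized solutions, and (degenerate-)harmonic approximation; the quantitative stratification \`a la Cheeger--Naber--Valtorta and the new $\epsilon$-regularity (Theorems \ref{t:eps-reg-Euclidean}, \ref{t:eps-reg-PE}) are reserved for the critical and singular sets in Theorems \ref{t:volume-estimate-extension}--\ref{t:Hausdorff-measure-estimate-(U,f)}. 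Your scheme is a plausible alternative in the style of Naber--Valtorta's Minkowski estimates for nodal sets and, if carried through, would give the sharper conclusion $\Vol_{\bg}(T_s(\cZ(U)))\le Cs$. Two smaller points: a linear function on $\dR^{\fm}$ is $(\fm-1)$-symmetric, not ``$2$-symmetric''; and because the stratified tube estimates come with an $\epsilon$-loss $(\mu^j)^{\fm-k-\epsilon}$, the clean $\cH^{\fm-1}$ bound hinges entirely on the graph/$\epsilon$-regularity contribution at good scales together with the decomposition-by-scales bookkeeping of Section \ref{ss:Hausdorff-measure}; you flag this in step (3) but it is the crux.

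The genuine gap is in the second route you offer for $\cZ(f)$. Writing $\cZ(f)=\cZ(U)\cap M^n$ and ``slicing the $\cH^n$-estimate for $\cZ(U)$ by the boundary'' does not produce an $\cH^{n-1}$ bound: intersecting a set of finite $\cH^n$-measure with a hyperplane can have full $\cH^n$-measure, hence infinite $\cH^{n-1}$-measure (e.g.\ $U(x,y)=y$ gives $\cZ(U)\supset M^n$). The codimension drops only where $\cZ(U)$ is transverse to $M^n$; since $U$ is even in the defining function, $\p_\vr U=0$ on $M^n$, so $\nabla U|_{M^n}$ is tangential and transversality fails exactly on the critical set $\cC(f)$. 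Thus the boundary estimate requires its own argument: either (i) your first alternative, a frequency bound and stratification directly for $f$ (which is essentially what Section \ref{ss:boundary-estimate} sets up), or (ii) a split $\cZ(f)=(\cZ(f)\setminus\cC(f))\cup\cS(f)$, bounding the first piece as a Lipschitz graph away from critical points and absorbing $\cS(f)$ via the codimension-two estimate of Theorem \ref{t:volume-estimate-boundary}. Finally, the appeal to real-analyticity of $U$ is not available in this setting: $\bg$ is only finitely smooth in general, so $U$ is at best $C^{n-1,\alpha}$ across $M^n$, and any transversality argument must be quantitative rather than analytic.
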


The next two theorems provide effective Minkowski-type estimates for the singular sets of a $\gamma$-harmonic function $f$ and their Caffarelli-Silvestre type extension $U$. Note that the two singular sets $\cS(U)$ and $\cS(f)$ exhibit very different behaviors. Such phenomena are particularly intriguing in the non-local setting which distinguishes itself from the second-order elliptic case.  
Therefore, we state these estimates separately. 
 \begin{theorem} 
 [Minkowski-type estimate for extensions] \label{t:volume-estimate-extension} 	
Fix a point $p\in M^n$. Let $f\in C^{\infty}(M^n)$ be $\gamma$-harmonic on $B_2(p) \subset M^n$ and let $U$ be its extension in $\overline{\fX^{n + 1}}$ defined by \eqref{e:harmonic-extension-PE}. 
For any $\Lambda > 0$ and $\tau \in (0, 1)$, there exists $C = C(\tau, \Lambda, n, \gamma, \bg) > 0$ such that if $\cE_f(p, 2) \leq \Lambda$, then
\begin{align}
\Vol_{\bg}(T_r(\mathcal{S}(U)) \cap B_1(p)) \leq C \cdot r^{2 - \tau},	
\end{align}
where $T_r(A)$ is the $r$-tubular neighborhood of $A$ in $\overline{X^{n + 1}}$.
As a consequence, the following Minkowski dimension bound holds,
\begin{align}
\dim_{\Min}(\mathcal{S}(U)\cap B_1(p)) \leq n - 1.	
\end{align}
 \end{theorem}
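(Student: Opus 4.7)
The plan is to implement the quantitative stratification machinery of Cheeger-Naber-Valtorta, adapted to the weighted extension equation \eqref{e:harmonic-extension-PE} on the doubled closed manifold $\fX^{n+1}$. Since $\cS(U)$ is cut out in the $(n+1)$-dimensional ambient space by two independent scalar conditions ($U = 0$ and $\nabla U = 0$), one should expect a codimension-two bound (hence Minkowski dimension at most $n-1$), which matches the target estimate $r^{2-\tau}$. The starting point is the almost-monotonicity of the generalized Almgren frequency $\cN_f(x,r)$ for the $\vr^{\fa}$-weighted operator; the hypothesis $\cE_f(p,2) \le \Lambda$ propagates, via the relation between $\cN_f$ and $\cE_f$ to be established in Sections \ref{ss:almost-monotonicity}--\ref{ss:quantitative-symmetry-PE}, to a uniform bound $\cN_f(x,r) \le \bar\Lambda$ for every $x \in B_1(p)$ and every $r \in (0,1/2)$. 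This is the compactness input that lets one extract tangent homogeneous profiles at every boundary point and every scale.

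Next, I would set up quantitative symmetry in the standard way. A point $x_+$ is declared $(k,\eta,r)$-symmetric when the $\vr^{\fa}$-weighted $L^2$ blow-up of $U$ on $\cB_r(x_+)$ is $\eta$-close to a $k$-symmetric homogeneous extension solution. Two structural lemmas must be proven: a quantitative rigidity statement, saying that a small frequency pinching $\cN_f(x_+, 2r) - \cN_f(x_+, r/2) < \eta$ forces $(0,\eta')$-conical symmetry on $\cB_r(x_+)$; and a cone-splitting lemma, saying that $(k,\eta)$-symmetry at well-separated points upgrades to $(k+1,\eta')$-symmetry. These feed into the usual stratification $\cS^k_{\eta,r}$. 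The essential analytic input, and the main conceptual step, is an $\epsilon$-regularity theorem: there exists $\epsilon = \epsilon(n,\gamma,\bg) > 0$ such that if $U$ is $(n,\epsilon,r)$-symmetric at some $x_+ \in M^n$ at sufficiently small scale, then $x_+ \notin \cS(U)$. Heuristically, $n$-fold translational symmetry along $M^n$ reduces the tangent profile to the one-dimensional ODE $(y^{\fa} U')' = 0$ with $U(0) = 0$, whose only admissible homogeneous solution is $U_0 = c\, y^{1-\fa}$; this profile is incompatible with the boundary critical condition that defines membership in $\cS(U)$ on $M^n$. Thus $\cS(U) \cap B_1(p) \subset \cS^{n-1}_{\eta}$ for a uniform $\eta > 0$.

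Finally, I would apply the Naber-Valtorta discrete Reifenberg covering theorem to the top stratum $\cS^{n-1}_{\eta,r}$, producing $\Vol_{\bg}(T_r(\cS^{n-1}_{\eta,r}) \cap B_1(p)) \le C r^{2-\tau}$ and hence the Minkowski dimension bound. The main obstacle, in my view, is the faithful execution of the Naber-Valtorta iteration in the weighted, curved setting: one must establish a Dini-type summability of frequency drops with the $\vr^{\fa}$ weight, control the $L^2$ best-plane approximation across scales while the weight degenerates on $M^n$, and absorb the error terms coming from the Poincar\'e-Einstein background curvature into the perturbative framework. I expect this covering step, rather than monotonicity or $\epsilon$-regularity, to be the technically heaviest part of the argument.
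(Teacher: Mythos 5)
Your front end (almost monotonicity of the weighted frequency, propagation of $\cE_f(p,2)\le\Lambda$ to a uniform frequency bound at all points and scales, quantitative rigidity from frequency pinching, cone-splitting, and the inclusion of $\cS(U)$ into the top quantitative stratum) is exactly the skeleton the paper uses. But your justification of the crucial codimension-two inclusion is wrong as stated. You argue that full translational symmetry along $M^n$ reduces the tangent profile to $(y^{\fa}U')'=0$ with the profile $c\,y^{1-\fa}$, which you then exclude as ``incompatible with the boundary critical condition.'' For $\fa\in(-1,0)$ (i.e.\ $\gamma\in(1/2,1)$) the function $y^{1-\fa}$ has \emph{vanishing} value and gradient at $y=0$, so it is perfectly compatible with membership in $\cS(U)$; what actually excludes it is the boundary equation $P_{2\gamma}f=0$, i.e.\ $\lim_{y\to0}y^{\fa}\partial_y U=0$, together with evenness of the doubled solution. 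Moreover the symmetry subspace of an $n$-symmetric tangent map need not lie in $M^n$ (the generic profile is a tangential linear function, invariant in the $y$-direction). The correct statement, which is what the paper proves (Lemma \ref{l:almost-(m-1)-symmetric} and its Poincar\'e--Einstein analogue, resting on item (1) of Lemma \ref{l:cone-splitting-principle}), is that any admissible $(\fm-1)$-symmetric homogeneous solution is a nondegenerate linear polynomial, and quantitative $(\fm-1,\eta,r,\fa)$-symmetry forces $C^{1,\alpha}$-closeness of the blow-up to such an $L$, hence noncriticality. Your $\epsilon$-regularity claim is true, but the argument you give for it does not establish it.

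The second divergence is the covering step. The paper does not use the Naber--Valtorta discrete-Reifenberg machinery: Theorem \ref{t:volume-estimate-extension} is obtained from the Cheeger--Naber--Valtorta covering scheme (Theorem \ref{t:volume-estimate-model} / Proposition \ref{p:effective-covering}, transplanted to the Poincar\'e--Einstein setting in Theorem \ref{t:volume-estimate-general}), i.e.\ labeling points by good/bad scales, bounding the number of bad scales via the almost monotonicity and quantitative rigidity, and using inductive cone-splitting to cover the stratum at each scale by $C\mu^{-k}$ balls. This elementary counting argument is exactly calibrated to the stated $r^{2-\tau}$ bound. Your route through discrete Reifenberg would, if carried out, give more (the sharp $r^2$ Minkowski bound and rectifiability), but it requires Dini/$\beta$-number estimates tying frequency drops to $L^2$ best-plane approximation in the $\vr^{\fa}$-weighted, curved setting — precisely the part you flag as the main obstacle and leave entirely unaddressed. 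As a proof of the theorem as stated, that step is both the heaviest and unnecessary; as written, your proposal has its key covering input missing, in addition to the flawed top-stratum argument above.
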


 Different from $\cS(U)$, the singular set $\cS(f)$ of a $\gamma$-harmonic function $f$ on $M^n$ has a more complicated structure. To understand the subtlety of the structure of $\cS(f)$, we define 
 \begin{align}
 	\uS(f) \equiv \left\{x\in \cS(f): \text{the tangent map} \ T_x(f)\ \text{of}\ f\ \text{at}\ x\ \text{is a harmonic homogeneous polynomial}\right\},
 \end{align}
 and we denote $\fS(f) \equiv \cS(f) \setminus \uS(f)$.
  
 \begin{theorem} 
 [Minkowski type estimate on the boundary] \label{t:volume-estimate-boundary} Fix a point $p\in M^n$ Let $f\in C^{\infty}(M^n)$ be $\gamma$-harmonic on $B_2(p) \subset M^n$. For any $\Lambda > 0$ and $\tau \in (0, 1)$, there exists $C = C(\tau, \Lambda, n, \gamma, \bg) > 0$ such that if $\cE_f(p, 2) \leq \Lambda$, then
\begin{align}
\Vol_{\bg}(T_r(\uS(f)) \cap B_1(p)) \leq C \cdot r^{2 - \tau} \quad \text{and} \quad \Vol_{\bg}(T_r(\fS(f)) \cap B_1(p)) \leq C \cdot r^{1 - \tau},
\end{align}
As a consequence, 
\begin{align}
\dim_{\Min}(\uS(f)\cap B_1(p)) \leq n - 2\quad \text{and} \quad \dim_{\Min}(\fS(f)\cap B_1(p)) \leq n - 1.	
\end{align}

  \end{theorem}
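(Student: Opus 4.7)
The plan is to decompose $\cS(f) = \uS(f) \sqcup \fS(f)$ and control each piece by the quantitative stratification/Naber-Valtorta covering scheme adapted to the present degenerate/nonlocal setting. I assume that the almost monotonicity of the Almgren frequency $\cN_f(\cdot, r)$, the compactness of blow-ups of the extension $U$, the classification of tangent objects, and a rectifiable Reifenberg-type covering theorem for quantitative symmetric strata have all been set up in the earlier sections of the paper (together with the $\epsilon$-regularity theorem for the extension problem \eqref{e:harmonic-extension-PE}).

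Step 1. For each $\epsilon > 0$ and $r \in (0,1)$, introduce the quantitative strata
\begin{equation*}
\cS^k_{\epsilon, r}(f) \equiv \bigl\{ x \in B_1(p) \cap M^n : \text{no scale } s \in [r,1] \text{ renders } B_s(x) \ (k+1,\epsilon)\text{-symmetric} \bigr\},
\end{equation*}
where $(k+1,\epsilon)$-symmetry for $U$ at $(x,s)$ is measured by $L^2(\vr^{\fa}\,\dvol_{\bg})$-closeness, after rescaling, to a homogeneous solution of the model extension problem on $\dR^{n+1}_+$ possessing $(k+1)$ independent translational symmetries along $\dR^n = \{\vr=0\}$. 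The almost monotonicity of $\cN_f$ guarantees that tangent objects exist at every $x \in \cS(f)$ and are homogeneous solutions, and that frequency-pinching in a ball forces approximate homogeneity on that ball.

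Step 2. Assign each singular point to the correct stratum according to the tangent classification. For $x \in \uS(f)$, by hypothesis the tangent map $T_x(f)$ is a nonzero harmonic homogeneous polynomial on $\dR^n$, and since $x \in \cC(f)$ this polynomial has degree $d \geq 2$. By the standard cone-splitting rigidity for harmonic polynomials, a degree $d \geq 2$ harmonic polynomial on $\dR^n$ admits no $(n-1)$-dimensional symmetry; hence there is a dimensional threshold $\epsilon_0 = \epsilon_0(n,\gamma,\bg)$ such that $T_x(f)$ fails to be $(n-1,\epsilon_0)$-symmetric. By compactness and the $\epsilon$-regularity theorem, this rigidity passes to the non-tangent scales, giving
\begin{equation*}
\uS(f) \cap B_1(p) \ \subset \ \cS^{n-2}_{\epsilon_0}(f).
\end{equation*}
For $x \in \fS(f)$, the tangent object is a genuinely nonlocal homogeneous solution whose boundary trace is not a polynomial, and only the trivial containment $\fS(f) \cap B_1(p) \subset \cS^{n-1}_{\epsilon}(f)$ holds (for every $\epsilon > 0$).

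Step 3. Apply the Naber-Valtorta rectifiable covering theorem (in its version for the quantitatively stratified singular sets of the extension \eqref{e:harmonic-extension-PE}, already established in the paper) to each quantitative stratum inside $M^n$. This yields, for every $\tau \in (0,1)$,
\begin{equation*}
\Vol_{\bg}\bigl(T_r(\cS^k_{\epsilon,r}(f)) \cap B_1(p)\bigr) \leq C(\tau,\Lambda,n,\gamma,\bg) \cdot r^{n-k-\tau}.
\end{equation*}
Setting $k=n-2$ (via Step 2) produces the bound $r^{2-\tau}$ for $\uS(f)$, and setting $k=n-1$ produces $r^{1-\tau}$ for $\fS(f)$. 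The Minkowski dimension estimates are immediate corollaries.

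The main obstacle is Step 2, namely the containment $\uS(f) \subset \cS^{n-2}_{\epsilon_0}$. The subtlety is that the extension $U$ lives in the weighted space $H^{1,\fa}$, so the relevant homogeneous tangent solutions form a larger family than harmonic polynomials on $\dR^n$: one must isolate precisely those whose $\vr \to 0$ trace is a harmonic polynomial, verify a quantitative rigidity statement of the form \emph{a solution $\epsilon$-close in weighted $L^2$ to a degree $\geq 2$ harmonic polynomial cannot be $(n-1,\epsilon')$-symmetric}, and propagate this rigidity through scales via $\epsilon$-regularity while absorbing the contributions of the conformal factor and the scalar curvature term $C_{n,\gamma} R_{\bg} U^2$ appearing in \eqref{e:harmonic-extension-PE}. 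Once this quantitative cone-splitting at the boundary is in place, the remainder of the argument is a direct application of the covering machinery.
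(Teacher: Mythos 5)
Your overall skeleton (split $\cS(f)=\uS(f)\sqcup\fS(f)$, quantitative stratification, cone-splitting, dyadic covering) is the same as the paper's, but two of your steps have genuine gaps as written.

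First, the bound for $\fS(f)$ cannot be obtained by ``setting $k=n-1$'' in a boundary-adapted stratification whose symmetries are translations along $\dR^n$ only. With your definition, $(n,\epsilon)$-symmetry at a scale means closeness of the normalized rescaling to a constant; since $T_{x,s}f$ always vanishes at the origin and has unit boundary norm, no point of $B_1(p)$ is ever $(n,\epsilon)$-symmetric for small $\epsilon$, so $\cS^{n-1}_{\epsilon,r}(f)$ is essentially all of $B_1(p)$ and the claimed tube bound $r^{1-\tau}$ for it is false. Structurally, the covering machinery cannot reach $k+1=n$ inside $M^n$: cone-splitting to full symmetry is excluded (cf.\ the hypothesis ``$P$ is not $(n-1)$-symmetric'' in Lemma \ref{l:cone-splitting-principle}), and the only homogeneous solutions invariant under all $n$ boundary translations are constants. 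The codimension-one estimate for $\fS(f)$ has to come from the ambient analysis of the extension: $\cS(f)\subset \cS(U)$ (evenness gives $\p_\nu U=0$ on $M^n$), so the $(n+1)$-dimensional tube bound $r^{2-\tau}$ of Theorem \ref{t:volume-estimate-extension} (equivalently the covering for $U$ with $k=\fm-2$, where symmetry directions may include the normal direction) converts, via Fermi coordinates along the totally geodesic boundary, into the $n$-dimensional boundary tube bound $r^{1-\tau}$. This is the route the paper takes implicitly in Section \ref{ss:boundary-estimate} and Theorem \ref{t:volume-estimate-model-boundary}; your Step 3 for $\fS(f)$, as literally described, would fail.

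Second, your Step 2 containment $\uS(f)\subset\cS^{n-2}_{\epsilon_0}(f)$ is exactly the delicate point, and the mechanism you give (rigidity of the tangent map ``passes to the non-tangent scales by compactness and $\epsilon$-regularity'') does not establish it. Membership in the quantitative stratum requires failure of $(n-1,\epsilon_0)$-symmetry at \emph{every} scale $s\in[r,1]$, whereas the harmonicity of $T_x(f)$ only controls the limit $s\to 0$; a point of $\uS(f)$ could a priori be, at an intermediate scale, $L^2$-close to an $(n-1)$-symmetric but non-harmonic trace such as $x_1^2$ (the trace of the model solution in Example \ref{ex-special-soln-U}), which your single-scale rigidity statement does not exclude. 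The paper avoids this by defining the horizontal quantitative strata $\uS^k_{\epsilon,r}(f)$ adapted to the harmonic/non-harmonic dichotomy, so that for the horizontal strata the $(n-1)$-symmetric comparison objects are linear and an analogue of Lemma \ref{l:almost-(m-1)-symmetric} immediately expels critical points at any single symmetric scale; alternatively one must prove a type-stability statement across scales (e.g., via the weighted spherical orthogonality of homogeneous solutions of different degrees together with frequency pinching). You correctly flag this as the main obstacle, but you do not supply the argument, and the route you sketch does not close it. A minor point: the paper establishes only the Cheeger--Naber--Valtorta-type covering with the $\tau$-loss (Proposition \ref{p:effective-covering} and its Poincar\'e--Einstein analogue), not a Naber--Valtorta rectifiable-Reifenberg theorem; the weaker covering suffices here, so this is a misattribution rather than a gap.
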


 We remark that the Minkowski-type estimates in the case of second-order uniformly elliptic have been established by Cheeger-Naber-Valtorta \cite{Cheeger-Naber-Valtorta} within the framework of quantitative stratification of the singular set. We will adopt this new technique in the setting of degenerate and nonlocal elliptic equations.

The next theorem gives the Hausdorff measure estimate for the singular set, which requires higher regularity of the background metric. In the theorem below, we will add an extra geometric assumption.

\begin{theorem}[Hausdorff measure estimates]  \label{t:Hausdorff-measure-estimate-(U,f)}
 Given $n\geq 2$, let $(X^{n + 1}, g_+)$ be a Poincar\'e-Einstein manifold with conformal infinity $(M^n, [h])$. Assume that $[h]$
 is obstruction flat when $n$ is even. Let $f$ be $\gamma$-harmonic on $B_2(p) \subset M^n$ and let $U$ be the Caffarelli-Silvestre type extension in \eqref{e:harmonic-extension-PE}.
 For any $\Lambda > 0$, there exists $C = C(\Lambda, n, \gamma, g_+, h) > 0$ such that if $\cE_f(p, 2) \leq \Lambda$, then 
 \begin{enumerate}
 \item the singular set of $U$ yields $\mathcal{H}^{n - 1}(\mathcal{S}(U) \cap B_1(p)) \leq C$.
 \item the singular set of $f$ yields	$\mathcal{H}^{n - 2}(\uS(f) \cap B_1(p)) \leq C$ and
$\mathcal{H}^{n - 1}(\fS(f) \cap B_1(p)) \leq C$.

 \end{enumerate}

 \end{theorem}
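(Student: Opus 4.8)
The strategy is to upgrade the Minkowski-type estimates of Theorems \ref{t:volume-estimate-extension} and \ref{t:volume-estimate-boundary} to Hausdorff measure estimates, using the $\epsilon$-regularity theorem (for degenerate/singular equations on Poincar\'e-Einstein manifolds) that was announced in the introduction, together with a Naber-Valtorta-type covering/rectifiability argument. The extra hypothesis that $[h]$ is obstruction flat when $n$ is even is exactly what guarantees higher regularity (indeed smoothness) of Fefferman-Graham's compactified metric $\bg = \vr^2 g_+$ up to the boundary, since the obstruction tensor is the first obstruction to a smooth formal expansion; with this regularity the coefficients $\vr^{\fa}$, $\vr^{\fa} R_{\bg}$ of the extension equation \eqref{e:harmonic-extension-PE} are smooth enough for the $\epsilon$-regularity and the quantitative stratification machinery to close at the top stratum. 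I would first record this regularity statement as a lemma (citing Fefferman-Graham / Chang-Gonz\'alez), so that all subsequent arguments take place on a manifold with a uniformly controlled $A_2$-weight structure.

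The core of the proof is the following. By the quantitative stratification already developed in the paper, points of $\cS(U)$ (resp. $\uS(f)$, $\fS(f)$) where the frequency drops are stratified according to the maximal number of independent symmetries of the tangent map; the $\epsilon$-regularity result says precisely that if at a point and some scale the renormalized energy $\cE$ is close to that of a cone with $(n-1)$ (resp. $(n-2)$, $(n-1)$) symmetries and this near-cone behavior persists over a definite range of scales, then the singular set is contained in a small tubular neighborhood of an $(n-1)$-dimensional (resp. lower-dimensional) Lipschitz graph on the corresponding ball. I would then run the standard covering scheme: cover $\cS(U)\cap B_1(p)$ by balls on which either $\epsilon$-regularity applies --- contributing a controlled amount of $\mathcal{H}^{n-1}$ via the Lipschitz graph --- or the frequency pinching fails, in which case one passes to a finer scale and uses the frequency-monotonicity (almost-monotonicity of $\cN_f$, Section \ref{ss:almost-monotonicity}) to bound the number of ``bad'' children, exactly as in \cite{naber-valtorta, Cheeger-Naber-Valtorta}. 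Summing the geometric series over scales and invoking the Minkowski bound $r^{2-\tau}$ (resp. $r^{1-\tau}$ for $\fS(f)$) to control the volume of the bad set at each stage converts the packing estimate into the finite Hausdorff measure bounds $\mathcal{H}^{n-1}(\cS(U)\cap B_1(p))\le C$, $\mathcal{H}^{n-2}(\uS(f)\cap B_1(p))\le C$, and $\mathcal{H}^{n-1}(\fS(f)\cap B_1(p))\le C$. Throughout, one must track that the frequency $\cN_f$ and energy $\cE_f$ on the doubled manifold $\fX^{n+1}$ behave compatibly under restriction to the boundary $M^n$ --- here the distinction between $\uS(f)$ and $\fS(f)$ enters, since only on $\uS(f)$ do tangent maps degenerate to even-symmetric polynomials forcing the drop from codimension $1$ to codimension $2$.

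I expect the main obstacle to be twofold. First, and most seriously, proving (or correctly invoking) the $\epsilon$-regularity theorem in this degenerate/nonlocal setting: one needs a quantitative rigidity statement saying that a solution of \eqref{e:harmonic-extension-PE} which is sufficiently close over enough scales to an $(n-1)$-symmetric homogeneous cone must itself be non-singular off an $(n-1)$-Lipschitz set, and this requires both a good understanding of the classification of the relevant tangent cones (homogeneous solutions of the weighted equation, reducing on the boundary to homogeneous harmonic polynomials) and a Reifenberg-type / best-approximation argument adapted to the weight $\vr^{\fa}$. The weight is only an $A_2$-Muckenhoupt weight, so the usual elliptic estimates must be replaced by their Fabes-Kenig-Serapioni counterparts \cite{fks,fkj,fjk2}, and uniformity of constants as $x$ approaches the degeneracy locus $M^n$ must be monitored carefully. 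Second, the obstruction-flatness hypothesis must be used not merely for smoothness of $\bg$ but to ensure that all constants in the covering argument --- in particular in the Naber-Valtorta $L^2$-subspace approximation theorem --- depend only on $(\Lambda, n, \gamma, g_+, h)$ and not on the solution; verifying this uniformity, including the passage to the doubled metric whose regularity across $M^n$ is limited, is where the bulk of the technical work lies. The rest of the argument --- the geometric-series summation converting packing bounds into Hausdorff bounds --- is by now routine given the tools cited.
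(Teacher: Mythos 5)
Your overall scaffold --- Minkowski estimates from quantitative stratification, an $\epsilon$-regularity step, a dyadic decomposition by the scale at which a point separates from the $(n-2)$-stratum, and a geometric-series summation --- matches the paper's proof of Theorem \ref{t:Hausdorff-measure-estimate-(U,f)} (via Theorems \ref{t:Hausdorff-measure-estimate-PE}, \ref{t:eps-reg-PE} and the model Theorem \ref{t:Hausdorff-measure-estimate-Euclidean}). You also correctly locate the role of obstruction flatness: it forces the global term $\sB$ in \eqref{e:w-expansion} to vanish so that $\bg$ is smooth up to $M^n$ (Proposition \ref{p:regularity-obstruction-free}).

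However, your description of the $\epsilon$-regularity step and the machinery you would use to prove it diverges from the paper's route, and this is where the proposal has a real gap. You invoke a Naber--Valtorta $L^2$-subspace approximation / Reifenberg argument, stating that closeness to a cone over a range of scales traps the singular set in a tubular neighborhood of a Lipschitz graph. The paper's $\epsilon$-regularity (Theorems \ref{t:eps-regularity-smooth-approximation}, \ref{t:eps-reg-Euclidean}, \ref{t:eps-reg-PE}) is a different and more elementary statement: a single $L^2$-closeness of $U_{x_+,r}$ to an $(\fm-2)$-symmetric homogeneous solution $P$ directly yields $\cH^{\fm-2}(\cS(U)\cap \cB_r)\le C r^{\fm-2}$. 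Its proof has nothing to do with Reifenberg; it rests on (i) the two-dimensional classification Lemma \ref{l:hypergeometric-polynomial-critical-point}, showing the tangent maps are homogeneous hypergeometric polynomials whose gradient has an isolated zero at the origin, (ii) a generic-rotation argument factoring $\nabla P$ into linear and positive-definite quadratic forms so that the restriction of $\nabla P$ to every coordinate $2$-plane has an isolated (and isolated complex) zero, (iii) the Han--Hardt--Lin stability theorem (Theorem \ref{t:HHL-stability}) to count critical points of $C^{2d^2}$-perturbations, and (iv) the area formula along coordinate projections. None of these ingredients appear in your proposal.

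The second, more concrete gap is exactly the obstacle you flag but do not resolve: the Han--Hardt--Lin stability theorem requires a $C^{2d^n}$-approximation of $U$ by $P$, while even under obstruction flatness the \emph{doubled} metric on $\fX^{\fm}$ is generically only $C^{\fm-2,1}$ across $M^n$, and $U$ itself is only $C^{\fm-2,\alpha}$ there (Lemma \ref{eq-C1a-U}). The paper's new input is Proposition \ref{p:higher-order-approximation-PE}: because the tangent map $P$ is even in the normal variable $y$, and because the odd-order normal Taylor coefficients of $U$ are controlled by $\|U-P\|_{C^k}$ in the \emph{upper} half-ball (where $\bg$ is smooth), one can replace $U$ on the lower half-ball by the even/odd-modified truncated Taylor polynomial plus remainder. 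This produces a function $\widetilde U\in C^{2\fm^2}$ that agrees with $U$ on $\overline{\cB^+}$ and is $C^{2\fm^2}$-close to $P$, to which Theorem \ref{t:HHL-stability} can then be applied. Without this device (or a genuine substitute), the transition from $L^2$-closeness to the high-order $C^k$-closeness needed for the stability theorem fails, and the proof does not close. If you instead pursue the Naber--Valtorta rectifiability route you allude to, you would still need to classify tangent maps on each stratum and establish the relevant quantitative rigidity for the $A_2$-degenerate operator on $\fX^{\fm}$; the paper notes (final Remark of Section \ref{s:results-on-PE}) that such a rectifiability decomposition is possible via Monneau monotonicity, but that is a supplement, not the proof of the Hausdorff measure bound given here.
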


\begin{remark} The hyperbolic space $(X^{n + 1}, g_+) \equiv (\dH^{n + 1}, y^{-2}(dy^2 + g_{\dR^n}))$ has a natural (partial) conformal compactification $\overline{X^{n + 1}} \equiv \dR_+^{n + 1}$ equipped with the Euclidean metric. This serves as the blow-up model of the general Poincar\'e-Einstein setting. In this special case, the first named author has proven, in \cite{STT}, the Hausdorff measure estimate for the nodal set (special case of Theorem \ref{t: nodal_set_theo}) and a weaker dimension bound on the singular set, i.e., the Hausdorff dimension upper bound. Theorem \ref{t:volume-estimate-extension} and Theorem \ref{t:volume-estimate-boundary} are much stronger than the previous Hausdorff dimension estimates in \cite{STT} even in the Euclidean case. 
We also notice that all
 the estimates in Theorems \ref{t:volume-estimate-extension}-\ref{t:Hausdorff-measure-estimate-(U,f)} can be strengthened to the critical sets $\cC(U)$ and $\cC(f)$.
 \end{remark}

An important purpose of the present work is to understand the influence of the interior of the Poincar\'e-Einstein manifold on the geometry of $\gamma$-harmonic functions on the conformal infinity. As previously mentioned, this heavily relies on a suitable compactification. However, many of the techniques we use in the present work, and in particular the {\sl quantitative stratification} (see the next section for a brief introduction to it),  can also be used to give a fine description of the nodal set and its singular part away from the boundary at infinity (see [Section 4,\cite{STT})] for such a study in the Euclidean case). In this case, the operator is of course uniformly elliptic (with a constant degenerating with the distance to the infinity) and the previous results are already known in this case. It is however important to mention that in the same vein, one could investigate the following problem, which is {\sl not} the one we consider but much related. Consider the doubled manifold $\fX^{n + 1}$ and the following equation posed in $\cB_1(x_+) \subset \fX^{n + 1}$
 \begin{equation}\label{another_model}
 \Div_{\bg}(\vr^{\fa} \nabla_{\bg} U)=0,
\end{equation}
where $\vr$ has the obvious meaning on $\fX^{n + 1}$ (comparable to the signed distance close enough to the set $\vr=0$). Notice that our purpose goes somehow in the opposite direction: we consider first a $\gamma$-harmonic function, extend it inside $X^{n + 1}$ and then double the manifold and reflect accordingly (evenly along the normal direction) the previous equation. In the Euclidean model case,  a thorough analysis of the nodal sets on the characteristic manifold of solutions of \eqref{another_model} has been investigated in \cite{STT}, which relies heavily on the oddness/evenness with respect to the doubling $\fX^{n + 1}$ of a general solution of the PDE \eqref{another_model}. In line with our results, one can also stratify the singular set of the {\it  trace} of $U$ on $\vr = 0$ and refine the decomposition of the singular set by means of frequencies $\geq 2$ and the symmetry of the associated tangent maps. We would like to point out that the quantitative stratification we implement in the present work would also lead to improvements of the relevant results in \cite{STT}, leading to volume estimates. Of further interest is that, as noted in the previous discussion, the global (nonlocal) effects are also present in this latter investigation. They appear indirectly in the investigation of the singular set of traces of solutions on the {\sl characteristic} manifold $\vr=0$.

\subsection{Outline of the proof and organization of the paper}

In this subsection, we will explain the main technical ingredients and new analytic inputs in the proof of the main results. We will also outline the interplay of these techniques from different areas for proceeding the main arguments.

As well understood in the literature, size estimates for the nodal/singular sets of elliptic equations fundamentally rely on the {\it unique continuation property} of the solutions. This property is based on sufficient regularity in the ellipticity coefficients of the equations. Notice that Lipschitz continuity of the coefficients is weakest regularity assumption that one can impose, as shown in \cite{Plis}, where it is demonstrated that the unique continuation property fails under H\"older continuous coefficients. In our specific context of Poincar\'e-Einstein manifold $(X^{n + 1}, g_+)$, it becomes imperative to select an appropriate conformal compactification with sufficient regularity such that the solutions of our interested problem \eqref{e:harmonic-extension-PE} satisfy the unique continuation. Indeed, the adapted compactified metric constructed in \cite{CG, Case-Chang} has only H\"older regularity up to the boundary when the order index of $P_{2\gamma}$ satisfies $\gamma \in (0,1/2)$, which cannot be chosen for our purpose. In the present paper, we will always specify the Fefferman-Graham compacitification the regularity of which will be discussed in detail in Section \ref{ss:PE-metrics}.

The entire strategy of analyzing the singular set is built upon the {\it conic structure analysis} at small scales. The framework has been well understood and widely applied in numerous areas in geometric analysis. In our specific context, to estimate the size of the singular set $\cS(f)$ or $\cS(U)$ for the solutions of \eqref{e:harmonic-extension-PE}, one seeks for appropriate stratification of the singular set. {\it Classical stratification} relies on the symmetry of the tangent map of a solution at a point. For example, the tangent map of a Euclidean harmonic function $f$ can be understood as the leading term, which is a homogeneous polynomial $P$, in its Taylor expansion at a given point, while the {\it degree of symmetry}, denoted as $k$, of this polynomial refers to the maximal dimension of a hyperplane $\cL\subset \dR^n$ for which $P$ is invariant. To obtain the correct dimension bound $\dim(\cS(f)) \leq n - 2$, one needs to establish the following for $\cS^k(f)$:
\begin{enumerate}
\item $\cS (f) \subset \cS^{n - 2}(f)$, namely any $(n - 1)$-symmetric point is not a critical point of $f$;
\item $\dim(\cS^k(f)) \leq k$ for any $k \leq n - 2$.
\end{enumerate}
Item (1) follows from a simple fact that any single-variable harmonic function must be linear, while item (2) is the key ingredient and relies on certain iterative blow-up arguments in classical works; see \cite{Lin, Han, HHL}
for second order elliptic PDEs and see \cite{STT} for the Euclidean model setting in our specific context.
 
This methodology in the metric geometry of the Ricci curvature has led to a series of fundamentally important works in the field. Pioneering works in this area include \cite{ChC, ChC1, ChC2, ChC3, ChCT, Cheeger-elliptic}. All these works, along with the aforementioned works based on classical stratification contribute to estimating the Hausdorff dimension of the singular set. However, Hausdorff dimension bound is somewhat weak for analysis purposes since a set with {\it a low (even zero) Hausdorff dimension} can be arbitrarily dense. More effective results are provided by Minkowski-type estimates, which not only bound the interested set itself but also its tubular neighborhood of that set. For example, the set of rational numbers $\dQ^n$ as a dense subset of $\dR^n$ has a Hausdorff dimension of $0$, but its Minkowski dimension is equal to $n$.
The fundamental tools used to prove Minkowski-type estimates are based on the {\it theory of quantitative differentiation}. This powerful theory was suggested by Cheeger-Kleiner-Naor in \cite{CKN}, and its geometric formulation was further developed by Cheeger and Naber 
in the context of Ricci curvature; see \cite{ChNa-quantitative}. This development has resulted in a rather widely applicable machinery for establishing strong estimates on the singular set and curvatures on non-collapsing Einstein manifolds and their limits. The methodology of quantitative differentiation theory has also been successfully applied in other geometric analytic contexts and in the analysis of nonlinear PDEs. Furthermore, the fundamental tools in this theory have inspired resolutions of numerous long-standing conjectures in the field; see \cite{ChNa-codim-4, JN, CJN}.  Building on this new framework, the authors obtained a series of effective estimates in the case of second-order uniform elliptic PDEs in \cite{Cheeger-Naber-Valtorta, naber-valtorta}.

In this paper, our work aims to establish the quantitative differentiation theory for degenerate and nonlocal elliptic PDEs on Poincar\'e-Einstein manifolds, where the regularity analysis is much more intricate compared to the classical Euclidean case.
We will adopt Cheeger-Naber-Valtorta's general framework \cite{Cheeger-Naber-Valtorta} to formulate quantitative stratification for the singular set of the solutions of \eqref{e:harmonic-extension-PE}.
This fundamental framework will be combined with a careful analysis on the regularity of the underlying Poincar\'e-Einstein manifolds.
 The foundation of such analysis is the {\it almost monotonicity formula} for the generalized Almgren's frequency in the Poincar\'e-Einstein setting, as stated in  Theorem \ref{t:almost-monotonicity}. This (almost) monotonicity enables us to obtain the conic structure (homogeneous polynomials) on all but finitely many scales, which leads us to prove the Minkowski type estimates for the singular set.

In the next stage, in order to obtain a more refined estimate of the Hausdorff measure for the singular set, one approach is to prove a strong $\epsilon$-regularity result. This result was originally established by Han, Hardt, and Lin in \cite[theorem 4.1]{HHL} in the context of second-order uniformly elliptic PDEs. Essentially, this $\epsilon$-regularity theorem states that if a solution of an elliptic PDE is sufficiently close to a homogeneous harmonic polynomial, then their singular sets exhibit similar behaviors.

To prove such an $\epsilon$-regularity in our specific setting, we need to obtain a precise characterization of the tangent maps, which are {\it homogeneous hypergeometric polynomials} (see Section \ref{ss:epsilon-regularity}). Additionally, the $\epsilon$-regularity in \cite{HHL} requires very high regularity of the ellipticity coefficients, which leads us to carry out delicate analysis on the compactification of the Poincar\'e-Einstein metric. Specifically, when the metric under consideration does not have sufficiently high regularity, we are still able to appropriately adjust the original solution to a more regular one. Our new observation for this aspect is included in Proposition \ref{p:higher-order-approximation-PE}. Taking into account these two technical points, our $\epsilon$-regularity result (Theorem \ref{t:eps-reg-PE}) is essentially novel and allows for lower regularity requirements compared to those in \cite{HHL}, making it of independent  interest.

\vspace{0.3cm}

The remaining sections of the paper are organized as follows. Section 2 provides several preliminary materials, including the basics of Poincar\'e-Einstein manifolds and fractional GJMS operators. It also covers some fundamental regularity properties of the PDEs that are being studied.

Section 3 focuses on the quantitative stratification of the singular set and the main estimates in the model case. Specifically, it considers the scenario where the Poincar\'e-Einstein manifold $(X^{n + 1}, g_+)$ is hyperbolic and its (partial) compactification is Euclidean. In Section 4, the main results are proven in their full generality, significantly extending beyond the model case discussed in Section 3.

\vspace{0.3cm}
{\bf Notations and conventions.} For convenience, we list several notations which are frequently used in the later sections.

\begin{itemize}
\item For any $p\in M^n$, let us denote $p_+ \equiv \iota(p)$ for the  inclusion $\iota: M^n \to \overline{X^{n + 1}}$ or   $\iota: M^n \to \fX^{n+1}$. 

\item Given $n\geq 2$, we often denote $\fm \equiv n + 1$.

\item $\cB_r(x)$ denotes a ball in $\overline{X}^{n+1}$ or $\fX^{n + 1}$, while $B_r(x)$ denotes a ball in $M^n$.

\item Given $x\in M^n$, $\cB_r^+(x)\equiv\cB_r(x) \cap \overline{X^{n+1}}$ denotes the intersection between a ball and the compactified manifold $\overline{X^{n+1}}$.  

\item $\rho^2 g_+$ denotes a general conformal compactification, and $\vr^2 g_+$ denotes Fefferman-Graham's compactification.

\end{itemize}

\vspace{0.3cm}

\subsection{Acknowledgements}

The authors would like to express their gratitude to Qing Han and Fang-Hua Lin
for their interest in this work and for their valuable comments. The third named author would also like to thank Fang Wang for the insightful discussions on the regularity of the Poincar\'e-Einstein metrics, and he is indebted to Jeffrey Case and Alice Chang for valuable communications on the fractional GJMS operators.

  \section{Preliminaries}\label{s:preliminaries}

\subsection{Poincar\'e-Einstein metrics and some regularity results} \label{ss:PE-metrics}

\begin{definition}
A complete Riemannian manifold $(X^{n+1},g_+)$ is called asymptotically hyperbolic Poincar\'e-Einstein if 
$g_+$ satisfies $\Ric_{g_+} \equiv - n g_+$ and admits a conformal compactification in the following sense. 
\begin{enumerate}
	\item $X^{n+1}$ is diffeomorphic to the interior of a compact manifold $\overline{X^{n + 1}}$ with boundary $M^n$;
	\item there is a smooth defining function $\rho:\overline{X^{n + 1}} \to  [0,\infty)$ such that 
	\begin{align}
	\begin{cases}
			\rho > 0  & \text{in}\ X^{n+1},
		\\
		    \rho = 0 & \text{on}\ M^n,
		    \\
				|\nabla\rho| = 1 & \text{on}\ M^n.
	\end{cases}
	\end{align}
\end{enumerate}
In this case, the conformal manifold $(M^n, [h])$ is called the conformal infinity, where $h \equiv \rho^2 g_+|_{\rho = 0}$. 
\end{definition}

On a Poincar\'e-Einstein manifold, the existence of the geodesic defining function near the boundary is well known in the literature now; see, e.g., \cite{Lee-spectrum} or \cite{Gra2000}.
\begin{lemma}[Geodesic defining function]\label{l:geodesic-def-funct}
Let $(M^n,[h])$ be the conformal infinity, then for every $h_0\in[h]$ there exists a unique defining function $y$ in a neighborhood of $M^n$ such that 

$(1)$ $y^2g_+|_{y=0}=h_0$,

$(2)$ there exists $\epsilon>0$
such that $|\nabla y|_{y^2g_+}\equiv 1$ on $M^n\times [0,\epsilon)$. 
\end{lemma}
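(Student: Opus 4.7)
The plan is to reduce the construction of the geodesic defining function to a non-characteristic first-order Hamilton--Jacobi equation in a one-sided tubular neighborhood of $M^n$, and solve it by the classical method of characteristics. This will give existence and uniqueness in one stroke.

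First I would pick an arbitrary smooth defining function $\hat y$ of $M^n$ with $\hat y^2 g_+|_{M^n}=h_0$: given any defining function $\rho$ from the definition of the Poincar\'e--Einstein structure and writing $h_0=e^{2\sigma}\rho^2g_+|_{M^n}$, the function $\hat y\equiv e^{\sigma_0}\rho$ works, where $\sigma_0$ is any smooth extension of $\sigma$ to $\overline{X^{n+1}}$. I would then seek the desired defining function in the form $y=e^{w}\hat y$ with $w:\overline{X^{n+1}}\to\dR$ subject to the boundary condition $w|_{M^n}=0$, which automatically forces $(1)$. Writing $\tg\equiv\hat y^2g_+$, which is smooth up to the boundary, a direct calculation using $g_+=\hat y^{-2}\tg$ yields
\begin{equation*}
|\nabla y|^2_{y^2g_+}=|\hat y\,dw+d\hat y|^2_{\tg},
\end{equation*}
so condition $(2)$ is equivalent, near $M^n$, to the PDE
\begin{equation*}
|d\hat y|^2_{\tg}+2\hat y\langle d\hat y,dw\rangle_{\tg}+\hat y^2|dw|^2_{\tg}=1.
\end{equation*}
The asymptotically hyperbolic condition gives $|d\hat y|^2_{\tg}|_{M^n}=1$, so I may write $|d\hat y|^2_{\tg}=1+\hat y A$ for a smooth function $A$. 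Dividing by $\hat y$ reduces the equation to
\begin{equation*}
2\langle d\hat y,dw\rangle_{\tg}+\hat y|dw|^2_{\tg}+A=0,\qquad w|_{M^n}=0.
\end{equation*}

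Next, I would observe that this PDE is non-characteristic along $M^n$: the coefficient of the normal derivative of $w$ at $\hat y=0$ is $2|d\hat y|^2_{\tg}|_{M^n}=2\neq 0$. The classical Hamilton--Jacobi method of characteristics then produces, in a sufficiently small one-sided tubular neighborhood of $M^n$, a smooth solution $w$ of this Cauchy problem, and setting $y\equiv e^{w}\hat y$ gives a smooth defining function satisfying $(1)$ by construction and $(2)$ by the derivation above. Uniqueness of $y$ reduces to uniqueness of $w$, which in turn follows from the uniqueness of solutions to the characteristic ODE system with prescribed initial data on $M^n$.

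The main technical point to watch is verifying the non-characteristic nature of the reduced PDE precisely at the boundary, where the ambient metric $g_+$ is singular. Concretely, this amounts to checking that the factor $\hat y^{-2}$ in $g_+=\hat y^{-2}\tg$ is cleanly absorbed into the asymptotically hyperbolic normalization, so that the residual coefficient $A$ is genuinely smooth up to $M^n$ and the normal derivative of $w$ on $M^n$ is uniquely determined. The size of the neighborhood of existence depends on the usual characteristic-flow caustics, but since we only need existence near $M^n$ this presents no real obstacle; the $\epsilon$ in the statement can be taken small enough to avoid any focusing of characteristics.
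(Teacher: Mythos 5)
Your argument is correct and is precisely the standard proof of this lemma: the paper states it without proof, citing \cite{Lee-spectrum} and \cite{Gra2000}, and the argument in those references is exactly your reduction, via $y=e^{w}\hat y$, to the non-characteristic first-order equation $2\langle d\hat y,dw\rangle_{\tg}+\hat y|dw|^2_{\tg}+A=0$ with $w|_{M^n}=0$, solved uniquely near $M^n$ by the method of characteristics (note the equation is linear in $dw$ on $\{\hat y=0\}$, so the normal derivative there is uniquely determined and uniqueness follows as you say). No gaps.
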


On $M^n\times (0, \epsilon),$ the metric splits as $g_+ = \frac{dy^2+ g_y}{y^2},$ where $g_y$ satisfies $g_y|_{y=0} = h$. Moreover, in \cite{Gra2000}, the compactified metric $g_y$ yields an expansion in terms of the geodesic defining function. 
If $n$ is odd,
\begin{align}\label{eq-gy1}
g_y = g^{(0)} + \frac{g^{(2)}}{2}y^2 +\ldots + (\text{even powers}) + g^{(n-1)}y^{n-1} + g^{(n)}y^{n}+\ldots,
\end{align}
where $g^{(2i)}$ depends only on $h$ for any $2i \leq  n - 1$, and {\it the global term} $g^{(n)}$ depends on both $g_+$ and $h$. If $n$ is even, 
$g_y$ has an expansion of the form
\begin{align}\label{eq-gy2}
g_y = g^{(0)} + \frac{g^{(2)}}{2}y^2 +\ldots + (\text{even powers}) + g^{(n)}y^{n} + \omega \cdot y^n \log y +  \ldots,
\end{align}
where $g^{(n)}$ is also a global term which depends on both $h$ and $g_+$, and $\omega$ depends only on $h$ and satisfies $\Tr_h(\omega)=0$.
For $n=2$, $\omega=0$, and $g_y$ is smooth on $M^n \times [0, \epsilon)$ in this case.
It was proved by Biquard that $h$ and $g^{(n)}$ completely determine the compactified metric and hence $g_+$; see \cite{Biquard}.

For our analysis, we need a compactified metric $\bg$ which has sufficiently high regularity and provides explicit geometric properties when interacting with the scattering operators introduced in Section \ref{ss:GJMS}.
For this purpose, we choose the compactification constructed by Fefferman-Graham in \cite{FG}. 
Let $(X^{n + 1}, g_+)$ be an asymptotically hyperbolic Poincar\'e-Einstein manifold and let $(M^n, h)$ be its conformal infinity. Fefferman-Graham proved 
\begin{lemma}[\cite{FG}] \label{l:Fefferman-Graham} For any representative $h$ on the conformal infinity $(M^n, [h])$, there exists a  conformal compactification 
  $\bg = \vr^2 g_+ \equiv  e^{2w} g_+$, called the {\it Fefferman-Graham compactification}, that satisfies \begin{align}
	-\Delta_{g_+} w = n \quad  \text{on}\ X^{n + 1},  
\end{align}
and $\bg|_{M^n} = h$. Near the boundary, $w = \log y +O(y^\epsilon)$  for some $\epsilon \in (0, 1)$.
\end{lemma}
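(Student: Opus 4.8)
\textbf{Proof proposal for Lemma \ref{l:Fefferman-Graham}.}

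The plan is to produce $w$ by solving the singular Yamabe-type PDE $-\Delta_{g_+} w = n$ with a prescribed boundary behavior, using the geodesic defining function $y$ attached to the chosen representative $h$ as a reference. First I would work in the collar $M^n\times(0,\epsilon)$, where Lemma \ref{l:geodesic-def-funct} gives $g_+ = y^{-2}(dy^2 + g_y)$ with $g_y|_{y=0}=h$. A direct computation of $\Delta_{g_+}(\log y)$ in these coordinates, using the expansions \eqref{eq-gy1}--\eqref{eq-gy2} for $g_y$, shows $-\Delta_{g_+}(\log y) = n + O(y^2)$ (in the even-dimensional case an extra $y^n\log y$-type error appears from $\omega$). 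Thus $w_0 \equiv \log y$ is an approximate solution near the boundary, and it remains to correct it: write $w = \log y + v$ and solve $-\Delta_{g_+} v = n - (-\Delta_{g_+}\log y) =: F$, where $F$ is bounded (indeed $O(y^{2-\delta})$ for small $\delta$, or $O(y^2)$ when $n$ is odd) and decays at infinity. This is a linear Poisson equation on the complete manifold $(X^{n+1},g_+)$.

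The second step is to solve $-\Delta_{g_+} v = F$ globally with $v\to 0$ at conformal infinity. Since $(X^{n+1},g_+)$ is asymptotically hyperbolic with $\Ric_{g_+}\equiv -ng_+$, the Laplacian has an $L^2$-spectrum bounded below by $n^2/4>0$, so $-\Delta_{g_+}$ is invertible on suitable weighted spaces. With $F$ bounded and decaying like a small positive power of $y$, the standard linear theory for the Poisson equation on asymptotically hyperbolic spaces (e.g. via the Graham--Lee/Lee indicial-root analysis, or a barrier argument using $y^{\epsilon}$ as sub/supersolutions for the operator $-\Delta_{g_+}$, whose indicial exponents at the boundary are $0$ and $n$) yields a unique solution $v$ with $v = O(y^{\epsilon})$ for some $\epsilon\in(0,1)$, and $v$ smooth in the interior by elliptic regularity since $g_+$ is smooth on $X^{n+1}$. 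Setting $w \equiv \log y + v$ then gives $-\Delta_{g_+}w = n$ on all of $X^{n+1}$, and near the boundary $w = \log y + O(y^{\epsilon})$, which is exactly the claimed asymptotics.

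The third step is to verify that $\bg \equiv e^{2w}g_+$ is a genuine conformal compactification with $\bg|_{M^n}=h$. Since $w = \log y + O(y^{\epsilon})$, we have $e^{2w} = y^2 e^{2v} = y^2(1+O(y^{\epsilon}))$, so $\bg = e^{2w}g_+ = e^{2v}\,(y^2 g_+) = e^{2v}(dy^2 + g_y)$ in the collar; this extends continuously (in fact with the stated H\"older-type regularity governed by $\epsilon$) to $\overline{X^{n+1}}$ and restricts to $e^{2v(\cdot,0)}g_y|_{y=0}$ on $M^n$. Because $v\to 0$ at the boundary, $\bg|_{M^n} = g_y|_{y=0} = h$, as required. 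One should also record that $\varrho \equiv e^{w}$ is a defining function: $\varrho>0$ in $X^{n+1}$, $\varrho = y\,e^{v}\to 0$ at $M^n$, and $|\nabla\varrho|_{\bg}=|\nabla\varrho|_{e^{2w}g_+}$ can be computed to equal $1$ on $M^n$ from $|\nabla y|_{y^2g_+}=1$ and $v|_{M^n}=0$.

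The main obstacle I anticipate is the boundary-asymptotics bookkeeping in step two: establishing that the correction $v$ decays precisely like $y^{\epsilon}$ (and no worse, so that $\bg$ really does compactify and pick up the right boundary metric) requires pinning down the indicial roots $\{0,n\}$ of $-\Delta_{g_+}$ and excluding the undesired behavior, and in the even-dimensional case one must track the logarithmic term $\omega\cdot y^n\log y$ carefully to see it does not spoil the leading $\log y + O(y^{\epsilon})$ expansion. The global solvability itself is comparatively soft given the spectral gap coming from $\Ric_{g_+}\equiv -ng_+$; the delicate point is the uniform control up to the boundary. Fortunately this is exactly the content of Fefferman--Graham's construction in \cite{FG}, so for the purposes of this paper it suffices to invoke that reference for the existence and the asymptotics, and then carry out the elementary verification in step three that the resulting $\bg$ has the claimed properties.
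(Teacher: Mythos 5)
The paper does not give its own proof here — the statement is attributed directly to \cite{FG}, with the finer expansion \eqref{e:w-expansion} quoted from \cite[Theorem 3.1]{FG} just afterward — and your proposal is a faithful reconstruction of that construction (approximate solution $\log y$ in the collar, global Poisson correction, indicial roots $\{0,n\}$ to control boundary decay, then the verification that $\bg = e^{2w}g_+$ restricts to $h$), so it matches the cited approach rather than replacing it. One minor inaccuracy: the claim that the $L^2$-spectrum of $-\Delta_{g_+}$ is bounded below by $n^2/4$ holds only under a nonnegativity assumption on the Yamabe constant (cf.\ Remark \ref{r:bottom-of-spectrum}) and is in any case not what is used — solvability with bounded, power-decaying data $F$ rests on Fredholm invertibility on weighted spaces in the indicial range $(0,n)$, or equivalently on the barrier argument you also outline, not on an $L^2$ spectral gap.
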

 
Now we discuss the expansion of $w$ in the geodesic defining function $y$ near the conformal infinity $M^n$.
It was proved in \cite[theorem 3.1]{FG} that $w$ has a formal expansion \begin{align} \label{e:w-expansion}
w = \log y + \sA + \sB y^n\log y +O(y^n),
\end{align}
where $\sA, \sB \in C^\infty(\overline{X^{n + 1}})$ and
$\sA=O(y^2)$.
It is similar to the expansion of $g_y$ in \eqref{eq-gy1}-\eqref{eq-gy2}  that $\sA\mod O(y^n)$ is even in $y$. So it follows that $\vr$ yields 
 \begin{align}\label{e:vr-expansion}
\vr = e^w = y\exp\left(\sA + \sB y^n\log y +O(y^n)\right).
\end{align}
Note that $\sB\equiv 0$ when $n$ is odd, which implies that both $w$ and $\bg$ are $C^{\infty}$ up to the boundary.  
In general, $\vr \in C^{n - 1, \alpha}(\overline{X^{n + 1}})$ and hence
$\bar g$ 
is $C^{n-1,\alpha}$ up to boundary.  
 The appearance of $\sB$ obstructs $\bg$ to be $C^{\infty}$
up to the boundary.

In summary, we have the following  helpful result on the regularity of Fefferman-Graham's compactified metric is as follows. Note that a stronger result was proved in \cite{wang-zhou-2}.  

\begin{proposition}\label{p:general-regularity-FG} Given $n \geq 2$, let $(X^{n + 1}, g_+)$ be a Poincar\'e-Einstein manifold with $\Ric_{g_+} \equiv - n g_+$ such that it admits a conformal compactification which is $C^{k,\alpha}$ up to the boundary for some $k \geq 3$ and $\alpha\in(0,1)$.
Denote by $\bar{g}$ the Fefferman-Graham compactification. Then the following holds: 
\begin{enumerate}
	\item  if $n$ is odd, then  $\bar{g}$ is $C^{k , \beta}$ up to the boundary for any $\beta\in(0,\alpha)$;
	 	\item if $n$ is even and $n\geq 4$, then $\bar{g}$ is $C^{n - 1 , \beta}$ up to the boundary for any $\beta\in(0,1)$.
\end{enumerate}
\end{proposition}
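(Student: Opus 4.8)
The plan is to control the regularity of the Fefferman--Graham compactification $\bar g = e^{2w}g_+$ by analyzing the PDE $-\Delta_{g_+}w = n$ together with the expansion \eqref{e:w-expansion}, and then tracking how the regularity of $w$ (equivalently of $\vr = e^w$) transfers to $\bar g = \vr^2 g_+$. The starting point is that, by hypothesis, $g_+$ already admits \emph{some} conformal compactification that is $C^{k,\alpha}$ up to the boundary with $k \geq 3$; equivalently, in a geodesic defining function $y$ associated to a representative $h \in [h]$, the metric splits as $g_+ = y^{-2}(dy^2 + g_y)$ with $g_y$ enjoying the expansion \eqref{eq-gy1} (when $n$ is odd) or \eqref{eq-gy2} (when $n$ is even), and the coefficients $g^{(2i)}$, the global term $g^{(n)}$, and (in the even case) $\omega$ inherit $C^{k,\alpha}$-type regularity on $M^n$ from the assumed compactification. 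First I would set up the conformal change: writing $\vr = e^w$, the equation $-\Delta_{g_+}w = n$ is equivalent to the statement that $\bar g = \vr^2 g_+$ has a prescribed (indeed, the "right") asymptotic behavior of the mean curvature of the level sets of $\vr$, and one has the standard identity relating $\Delta_{g_+}$ to $\Delta_{\bar g}$ under the conformal factor $e^{2w}$. The key structural input, already recorded in the excerpt, is the formal expansion $w = \log y + \sA + \sB\, y^n \log y + O(y^n)$ with $\sA, \sB \in C^\infty(\overline{X^{n+1}})$ and $\sA = O(y^2)$, and with $\sA \bmod O(y^n)$ even in $y$.

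The argument then splits into the two cases. \textbf{Case $n$ odd.} Here $\sB \equiv 0$, so $w = \log y + \sA + O(y^n)$ with $\sA$ smooth and even modulo $O(y^n)$; hence $\vr = y\,e^{\sA}(1 + O(y^n))$ and $\vr/y$ extends across $\{y=0\}$. The point is that the only possible loss of regularity in $\bar g = \vr^2 g_+ = (\vr/y)^2 (dy^2 + g_y)$ comes from (i) the regularity of $g_y$, which is $C^{k,\alpha}$ by the assumed compactification and the structure of \eqref{eq-gy1}, and (ii) the regularity of the conformal factor $(\vr/y)^2$, which is governed by the elliptic equation for $w$. I would feed the assumed $C^{k,\alpha}$ regularity of the background data into the linear elliptic equation $-\Delta_{g_+}w = n$ — more precisely into the equation satisfied by the correction $w - \log y - (\text{explicit even polynomial part})$, which is a genuine (degenerate-at-the-boundary but standard, after multiplying by $y^2$) elliptic equation with $C^{k-2,\alpha}$-type coefficients — and apply boundary Schauder estimates to conclude $C^{k,\beta}$ regularity of the remainder up to the boundary for every $\beta < \alpha$, the small loss $\alpha \to \beta$ being the usual artifact of interpolating/absorbing lower-order terms with only Hölder control. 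Combining with the $C^{k,\alpha}$ regularity of $g_y$ gives $\bar g \in C^{k,\beta}$ up to the boundary. \textbf{Case $n$ even, $n \geq 4$.} Now the log term $\sB\, y^n \log y$ genuinely appears (it is the obstruction), and since $y^n \log y \in C^{n-1,\beta} \setminus C^{n,\cdot}$ for every $\beta < 1$, the best one can hope for is $\bar g \in C^{n-1,\beta}$. I would show this is sharp-from-above by the same scheme: isolate the explicit even part of $w$ up to order $y^{n-1}$ (which is smooth), observe the first genuinely non-smooth contribution is precisely $\sB\, y^n \log y$ with $\sB$ smooth, so $\vr$ — and hence $\vr^2$, and hence (using also that $g_y$ in \eqref{eq-gy2} has the analogous structure with its own $y^n \log y$ term carrying the smooth coefficient $\omega$) the metric $\bar g$ — lies in $C^{n-1,\beta}$ up to the boundary for every $\beta < 1$. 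Here I would invoke that $k \geq 3$ ensures the background regularity is never the bottleneck (since $n \geq 4 > 3$, the $y^n \log y$ obstruction dominates), and the even-in-$y$ structure of $\sA \bmod O(y^n)$ guarantees no half-integer or odd lower-order terms sneak in to spoil the count below order $n$.

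The main obstacle I expect is the bookkeeping in the even case: one must verify carefully that \emph{no} non-smooth term of order lower than $y^n \log y$ is produced when solving $-\Delta_{g_+}w = n$ formally — i.e., that the formal series for $w$ really is even (no odd powers) and log-free below order $n$, so that the $C^{n-1,\beta}$ bound is not degraded to something weaker. This is exactly the content that makes \cite[Theorem 3.1]{FG} (cited in the excerpt for the expansion \eqref{e:w-expansion}) do the heavy lifting, and I would lean on it; the remaining work is to convert that \emph{formal} expansion into a genuine regularity statement up to the boundary via boundary Schauder estimates applied to the equation for the remainder term, and to check that the conformal change $g_+ \rightsquigarrow \vr^2 g_+$ does not lose regularity beyond the stated $\alpha \to \beta$ loss. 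A secondary, more technical point is handling the degeneracy of $\Delta_{g_+}$ at the boundary: one works with the equation multiplied through by $y^2$ (or equivalently in the compactified picture with $\Delta_{\bar g_0}$ for a smooth background compactification $\bar g_0$), after which standard elliptic boundary regularity applies to the successive corrections. Finally, I would remark (as the statement does) that \cite{wang-zhou-2} obtains a sharper conclusion, so full optimality of the Hölder exponent is not needed for the applications in this paper — $C^{k,\beta}$ resp. $C^{n-1,\beta}$ for all $\beta$ in the stated ranges suffices for the unique continuation and $\epsilon$-regularity arguments later.
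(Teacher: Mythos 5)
The paper does not supply a proof of Proposition \ref{p:general-regularity-FG}; it is stated immediately after the discussion surrounding \eqref{e:w-expansion}--\eqref{e:vr-expansion}, which cites \cite[theorem 3.1]{FG} for the structure of the expansion of $w$, observes that $\sB\equiv 0$ when $n$ is odd, and records that $y^n\log y$ caps the regularity at $C^{n-1,\alpha}$ in general, with a pointer to \cite{wang-zhou-2} for a sharper statement. Your plan fleshes out exactly this skeleton --- same key input (the expansion of $w$, hence of $\vr$), same case split on whether $\sB$ vanishes, same derivative count on $y^n\log y$ --- so at the level of structure your approach coincides with what the paper does (or rather, with what it delegates to the references).

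The place where your plan genuinely under-estimates the work is the odd-$n$ Schauder step. Passing from the \emph{formal} expansion \eqref{e:w-expansion} to actual $C^{k,\beta}$ regularity of $w-\log y$ up to $\{y=0\}$ is not achieved by multiplying $-\Delta_{g_+}w=n$ by $y^2$ and quoting classical boundary Schauder estimates: the operator $y^2\Delta_{g_+}$ is uniformly degenerate at $y=0$ (it is not uniformly elliptic up to the boundary even after this rescaling), its indicial roots $\{0,n\}$ are what generate the $\log y$ and $y^n\log y$ terms, and upgrading a polyhomogeneous formal series to a genuine conormal/H\"older regularity statement for the remainder requires the $0$-calculus / parametrix analysis of Mazzeo--Melrose type (or an equivalent barrier/iteration scheme). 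That is precisely the content of \cite{FG} and the sharper \cite{wang-zhou-2}, which the paper cites rather than reproves; your plan should acknowledge that this is the nontrivial analytic step rather than fold it into ``apply boundary Schauder.'' A secondary issue: your remark that ``$k\geq 3$ ensures the background regularity is never the bottleneck'' is not correct as stated --- if $n-1>k$, then $g_y$ is only $C^{k,\alpha}$ and $\bar g$ cannot be $C^{n-1,\beta}$. What actually makes $y^n\log y$ the bottleneck in the paper's applications is Assumption (1) in Section \ref{s:results-on-PE}, which imposes a $C^\infty$ compactification, so that $g_y$, $\sA$, $\sB$ are smooth and the log term alone limits the regularity.
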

 
\begin{remark}
In the simplest case $n = 2$, any Poincar\'e-Einstein manifold $(X^3, g_+)$ is hyperbolic and its conformal infinity $(M^2,[h])$ is locally conformally flat. This implies that $(X^3, g_+)$ admits a conformal compactification which is  $C^{\infty}$ up to the boundary. In this case, the Fefferman-Graham compactified metric is also $C^{\infty}$ up to the boundary.
\end{remark}

One can see from item (2) of Proposition \ref{p:general-regularity-FG} that the regularity of $\bar{g}$  does not exceed $C^n$ in general when $n$ is even. A fundamental reason is that the regularity 
of $\bar{g}$ is determined by some geometric invariant, called the {\it obstruction tensor}, of the conformal infinity; see \cite{Graham-Hirachi} for the definition of the obstruction tensor in this context.
If we assume the conformal infinity is {\it obstruction flat}, 
the following proposition follows from a regularity result due to Graham-Hirachi; see \cite[theorem 2.2]{Graham-Hirachi}.

\begin{proposition}\label{p:regularity-obstruction-free}
	Let $n\geq 4$ be even, and let $(X^{n + 1}, g_+)$ be a Poincar\'e-Einstein manifold with $\Ric_{g_+} \equiv - n g_+$ such that it admits a conformal compactification which is $C^{k,\alpha}$ up to the boundary for some $k \geq 3$ and $\alpha\in(0,1)$. Assume that the conformal infinity $(M^n, [h])$
is obstruction flat. Then 
the Fefferman-Graham compactified metric $\bar{g}$ is $C^{k,\beta}$ for any $\beta \in (0,\alpha)$.
\end{proposition}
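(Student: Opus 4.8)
The statement to prove is Proposition~\ref{p:regularity-obstruction-free}: under the obstruction-flat hypothesis, the Fefferman-Graham compactification $\bar g = \vr^2 g_+$ inherits essentially the full $C^{k,\beta}$ regularity of an existing $C^{k,\alpha}$ conformal compactification, for even $n \ge 4$.

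\medskip

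\textbf{Approach.} The plan is to reduce the regularity of $\bar g$ to the regularity of the solution $w$ of $-\Delta_{g_+} w = n$, and then to control $w$ via the expansion \eqref{e:w-expansion} together with Graham-Hirachi's regularity theorem \cite[theorem 2.2]{Graham-Hirachi}. The obstruction tensor $\mathcal{O}$ of $(M^n,[h])$ is precisely the coefficient (up to a nonzero universal constant) of the $y^n \log y$ term in the Fefferman-Graham expansion \eqref{eq-gy2} of $g_y$; by Graham-Hirachi's result, obstruction flatness forces the log term to vanish and $g_y$ (equivalently, the geodesic compactification $y^2 g_+$) to be $C^{k,\beta}$ up to the boundary for any $\beta \in (0,\alpha)$, not merely $C^{n-1,\beta}$ --- this is what unlocks the improvement over Proposition~\ref{p:general-regularity-FG}(2). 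The remaining work is to transfer this gain from the geodesic compactification to the Fefferman-Graham one.

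\medskip

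\textbf{Key steps, in order.} First I would recall that the two compactifications differ by a conformal factor: $\bar g = \vr^2 g_+ = (\vr/y)^2 \cdot (y^2 g_+)$, so it suffices to show that $\vr / y = \exp(w - \log y)$ is $C^{k,\beta}$ up to the boundary (and bounded away from $0$, which is immediate from \eqref{e:vr-expansion}). Equivalently, by \eqref{e:w-expansion}, one must show that $w - \log y = \sA + \sB y^n \log y + O(y^n)$ is $C^{k,\beta}$; the only potential obstruction to this is the term $\sB y^n \log y$. Second, I would identify $\sB$: tracking through \cite[theorem 3.1]{FG}, the coefficient $\sB$ of the $y^n \log y$ term in the expansion of $w$ is a (nonzero) universal multiple of the trace-free tensor $\omega$ appearing in \eqref{eq-gy2}, hence a universal multiple of the obstruction tensor $\mathcal{O}_h$ of the representative $h$. (Alternatively, one can argue that $w$ solves the PDE $-\Delta_{g_+} w = n$ whose indicial analysis near the boundary produces a log term exactly when $g_y$ has one, i.e. exactly when $\mathcal{O}_h \ne 0$.) Third, I would invoke Graham-Hirachi \cite[theorem 2.2]{Graham-Hirachi}: obstruction flatness of $(M^n,[h])$ implies $\mathcal{O}_h \equiv 0$, hence $\omega \equiv 0$, hence $\sB \equiv 0$, and moreover --- this is the content of their theorem --- the compactified metric $g_y$ is then $C^{k,\beta}$ up to the boundary for every $\beta \in (0,\alpha)$. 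Consequently the expansion of $w$ terminates at $w = \log y + \sA + O(y^n)$ with $\sA$ and the error both $C^{k,\beta}$, and elliptic (Schauder) regularity bootstrapping applied to $-\Delta_{g_+}(w - \log y) = n + \Delta_{g_+}\log y$ on the $C^{k,\beta}$ background $y^2 g_+$ upgrades $w - \log y$ to $C^{k,\beta}$. Fourth, I would conclude: $\vr/y = e^{w - \log y}$ is $C^{k,\beta}$ and positive up to the boundary, so $\bar g = (\vr/y)^2 (y^2 g_+)$ is $C^{k,\beta}$, being a product of $C^{k,\beta}$ tensors.

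\medskip

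\textbf{Main obstacle.} The crux is Step~3, i.e. correctly citing and applying Graham-Hirachi's regularity theorem to conclude that obstruction flatness removes \emph{all} the logarithmic terms in the ambient/geodesic expansion and promotes $g_y$ from $C^{n-1,\beta}$ to the full $C^{k,\beta}$ matching the hypothesized background compactification --- one must verify that the hypotheses of \cite[theorem 2.2]{Graham-Hirachi} (the existence of a $C^{k,\alpha}$ compactification, $k \ge 3$) are exactly what is assumed here, and that their conclusion is stated at the level of regularity claimed. A secondary subtlety is making the identification ``coefficient $\sB$ of $y^n\log y$ in $w$ $=$ universal multiple of the obstruction tensor'' precise; this is essentially bookkeeping within the proof of \cite[theorem 3.1]{FG}, combined with the fact that the linearization of $-\Delta_{g_+}$ at the boundary has indicial root $n$ on the $w$-component, producing a resonance whose obstruction is the obstruction tensor. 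Everything after that is routine Schauder theory and algebra of $C^{k,\beta}$ tensor fields.
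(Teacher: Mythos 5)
Your approach is the same one the paper takes: the proposition is reduced to Graham--Hirachi's regularity theorem \cite[theorem 2.2]{Graham-Hirachi}, which gives $C^{k,\beta}$ regularity of the geodesic compactification $y^2g_+$ under obstruction flatness, plus the observation (stated as a remark in the paper) that the coefficient $\sB$ of the $y^n\log y$ term in \eqref{e:w-expansion} vanishes precisely when the obstruction tensor vanishes. So the skeleton is right. But two of your intermediate steps have genuine problems.

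First, the identification ``$\sB$ is a (nonzero) universal multiple of the trace-free tensor $\omega$, hence of $\mathcal{O}_h$'' is a type mismatch: $\sB$ is a \emph{scalar} function on $M^n$, while $\omega$ and $\mathcal{O}_h$ are rank-two tensors. There is no natural linear map sending $\omega$ to a scalar except via a contraction against $h$, and that is identically zero since $\Tr_h\omega=0$. The correct statement --- which you yourself give as the ``alternative'' --- is that $\sB$ is determined by the jets of $g_y$ through the indicial analysis of $-\Delta_{g_+}w=n$, and the resonance at indicial root $n$ is unobstructed (hence no log term appears in $w$) precisely when $g_y$ itself has no $y^n\log y$ term, i.e.\ when $\mathcal{O}_h\equiv 0$. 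The ``universal multiple'' formulation should be dropped; keep only the indicial-analysis version.

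Second, the final ``routine Schauder theory'' step does not work as written. The operator $\Delta_{g_+}$ is uniformly degenerate: in geodesic coordinates $-\Delta_{g_+}v = F$ reads, to leading order, $-y^2\partial_y^2 v + (n-1)y\,\partial_y v - y^2\Delta_{g_y}v + \cdots = F$, which loses ellipticity at $\{y=0\}$. Dividing by $y^2$ to obtain a uniformly elliptic equation produces a singular right-hand side of order $y^{-1}$ (since $n+\Delta_{g_+}\log y = O(y)$, not $O(y^2)$). So one cannot extract boundary regularity of $w-\log y$ from interior Schauder estimates for $\bar g_y$. What is actually needed is the boundary regularity theory for uniformly degenerate ($0$-type) elliptic operators --- Fefferman--Graham's own polyhomogeneity argument in \cite[theorem 3.1]{FG}, or equivalently Mazzeo's edge calculus --- applied with $C^{k,\beta}$ coefficients. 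This is the mechanism that, once the log terms are removed, promotes $w - \log y$ to the regularity of the background. Replacing your Schauder step by that citation closes the gap and makes the argument match the one the paper intends.
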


\begin{remark}
In the case when $n \geq 4$ is even, the term $\sB$ in the expansion \eqref{e:w-expansion} is vanishing if and only if the obstruction tensor of $h$ is vanishing on $M^n$, and $\vr$ is smooth up to the boundary.
\end{remark}

 We finally recall the following well-known result. 

\begin{lemma}\label{geom-boundary}
Let $(X^{n + 1}, g_+)$ be a Poincar\'e-Einstein manifold with a   conformal infinity $(M^n, [h])$. Let $\bar{g}$ be the Fefferman-Graham compactification of $(X^{n + 1}, g_+)$. Then $(M^n, h)$ is totally geodesic in $(\overline{X^{n + 1}}, \bg)$. 
\end{lemma}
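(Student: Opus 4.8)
The statement to prove is Lemma \ref{geom-boundary}: that $(M^n, h)$ is totally geodesic in $(\overline{X^{n+1}}, \bg)$ where $\bg$ is the Fefferman-Graham compactification.

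\medskip

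\textbf{Proof proposal.} The plan is to compute the second fundamental form of $M^n = \{\vr = 0\}$ inside $(\overline{X^{n+1}}, \bg)$ directly, using the structure of the Fefferman-Graham compactification and the expansion of $w$ (equivalently $\vr$) in a geodesic defining function $y$ associated to $h$. First I would set up a collar neighborhood $M^n \times [0,\epsilon)$ with coordinates $(x, y)$ in which $g_+ = y^{-2}(dy^2 + g_y)$, with $g_y|_{y=0} = h$, as provided by Lemma \ref{l:geodesic-def-funct}. In these coordinates $\bg = \vr^2 g_+ = e^{2w} g_+$. From the expansion \eqref{e:w-expansion}, $w = \log y + \sA + \sB y^n \log y + O(y^n)$ with $\sA = O(y^2)$, so $\vr = e^w = y(1 + O(y^2))$ near $y = 0$; in particular $\vr$ and $y$ agree to first order and $\{\vr = 0\} = \{y = 0\} = M^n$. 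The key point is that $\partial_y$ is (a positive multiple of) the inward $\bg$-unit normal along $M^n$: since $\bg = e^{2w}y^{-2}(dy^2 + g_y)$ and $e^{2w}y^{-2} = e^{2\sA}(1 + \cdots) \to 1$ as $y \to 0$, the normal direction is $\partial_y$ and $|\partial_y|_{\bg} \to 1$ on $M^n$.

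\medskip

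The second fundamental form of the slice $\{y = 0\}$ with respect to the unit normal $\nu = \partial_y / |\partial_y|_{\bg}$ is, up to the positive normalization factor, governed by $\partial_y$ of the tangential metric at $y = 0$. Writing $\bg = A(x,y)\, dy^2 + G_y$ where $G_y$ is the induced metric on the slice at parameter $y$, we have $G_y = e^{2w} y^{-2} g_y$. The second fundamental form is $\mathrm{II} = -\tfrac{1}{2}\, \nu(G_y)\big|_{y=0}$ up to sign and the scalar $A^{-1/2}$. So the claim reduces to showing $\partial_y\big(e^{2w} y^{-2} g_y\big)\big|_{y=0} = 0$. Now $e^{2w}y^{-2} = \exp(2\sA + 2\sB y^n\log y + O(y^n))$, and since $\sA = O(y^2)$ (and $n \geq 2$), this function equals $1 + O(y^2)$, so its $y$-derivative vanishes at $y = 0$. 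For the factor $g_y$: by the Fefferman-Graham expansion \eqref{eq-gy1}--\eqref{eq-gy2}, $g_y = g^{(0)} + \tfrac{1}{2}g^{(2)} y^2 + \cdots$ has no linear term in $y$ (the expansion contains only even powers below the critical order, plus the global/log terms which are $O(y^n)$ with $n \geq 2$), hence $\partial_y g_y|_{y=0} = 0$ as well. Therefore $\partial_y(e^{2w}y^{-2} g_y)|_{y=0} = 0$ by the product rule, so $\mathrm{II} \equiv 0$ and $M^n$ is totally geodesic.

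\medskip

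\textbf{Main obstacle.} The only genuinely delicate point is justifying that the various "error" terms —in particular the $\sB y^n \log y$ term in the expansion of $w$ when $n$ is even, and the corresponding global and logarithmic terms in $g_y$— do not contribute a linear-in-$y$ term to the induced metric. This is where I would be careful: one must check that all such terms are $O(y^n)$ (or $O(y^n \log y)$) with $n \geq 2$, so that after differentiating once in $y$ and evaluating at $y = 0$ they still vanish, and also that they do not affect the normalization $|\partial_y|_{\bg} \to 1$. Given $n \geq 2$ this works uniformly, including the borderline $n = 2$ case where the metric is actually smooth. An alternative, perhaps cleaner, route is to observe that a slice $\{y = \text{const}\}$ being totally geodesic at $y = 0$ for the compactified metric is equivalent to the statement that the defining function $y$ (or $\vr$) satisfies $\nabla^2_{\bg}\vr|_{M^n}$ being a multiple of the metric restricted appropriately — i.e., $M^n$ is umbilic with vanishing mean curvature — and then use that $\vr$ differs from $y$ by $O(y^3)$ after accounting for $\sA = O(y^2)$; but the direct computation above is the most transparent, so that is the one I would write out.
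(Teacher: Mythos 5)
The paper does not supply a proof of Lemma \ref{geom-boundary}; it merely records it as well known, so there is no internal argument to compare against. Your computation is correct. The reduction to a block-diagonal metric $\bg = (e^{2w}y^{-2})\, dy^2 + e^{2w}y^{-2} g_y$ in the collar supplied by Lemma \ref{l:geodesic-def-funct} is legitimate, the identification of $\partial_y$ as the unit normal at $y=0$ is right, and the two facts you invoke (the absence of a linear-in-$y$ term in $g_y$ in \eqref{eq-gy1}--\eqref{eq-gy2}, and $\sA = O(y^2)$ together with $\sB y^n\log y = o(y)$ for $n\geq 2$ in \eqref{e:w-expansion}) do exactly what you need: they force $\partial_y\big(e^{2w}y^{-2} g_y\big)\big|_{y=0}=0$ and hence $\mathrm{II}\equiv 0$.

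One remark worth making: the paper already contains, in the proof of Lemma \ref{l:E}, the identity $\Ric_{\bg} = -(d-2)\,\vr^{-1}\nabla^2_{\bg}\vr$ with $d = n+1$, valid precisely for the Fefferman--Graham compactification because of $-\Delta_{g_+}w = n$. Since $\bg$ is (at least) $C^2$ up to the boundary under the standing regularity hypotheses, $\Ric_{\bg}$ is bounded, so $\nabla^2_{\bg}\vr = O(\vr)$, and as $|\nabla_{\bg}\vr|\to 1$ on $M^n$ the second fundamental form $\mathrm{II}(X,Y) = -|\nabla_{\bg}\vr|^{-1}\,\nabla^2_{\bg}\vr(X,Y)$ of $\{\vr=0\}$ vanishes. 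This one-line route avoids the casework on $\sA$, $\sB$ and the FG expansion of $g_y$, and in particular does not require you to track the $y^n\log y$ terms at all. Your direct computation is nonetheless perfectly sound; the alternative just exploits an identity the paper has already established. Finally, a small caution about your closing remark: ``$\nabla^2_{\bg}\vr|_{M^n}$ a multiple of the metric'' characterizes umbilicity, not total geodesicity; you do correct yourself by adding ``with vanishing mean curvature,'' but the cleanest phrasing is simply that the tangential restriction of $\nabla^2_{\bg}\vr$ vanishes on $M^n$.
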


The previous lemma allows us to double the manifold $M^n$ inside $X^{n + 1}$ and consider a suitable {\it  even} extension of solutions of the PDEs under consideration. This is crucial for regularity purposes as discussed in Section \ref{ss:regularity-of-solutions}. The whole package will be used in Section \ref{s:results-on-PE}.

\subsection{Scattering and GJMS operators on the Poincar\'e-Einstein manifolds}
\label{ss:GJMS}
 
This subsection will summary background materials regarding the fractional GJMS operators defined on the conformal infinity $(M^n, h)$ of a Poincar\'e-Einstein manifold $(X^{n + 1}, g_+)$. Let us consider the Poisson equation \begin{align}
	(\Delta_{g_+} + s(n-s)) u = 0, \quad s\in\dC,\label{e:Poisson}
\end{align}
on $X^{n+1}$. The interaction between conformal geometry of $(M^n, h)$ and   scattering operators defined in this setting is systematically in \cite{GZ}. 
Let us first describe the structure of the solutions of \eqref{e:Poisson}. It is pointed out in \cite{GZ} that for any $s\in\dC$ that satisfies $s(n-s)\not\in\sigma_{pp}(-\Delta_{g_+})$, 
equation \eqref{e:Poisson} admits a generalized eigenfunction
\begin{align}\label{eq-u-asymp}
	u = F y^{n-s} + G y^s, \quad F,G\in C^{\infty}(X^{n+1}).
	\end{align}
Here when $n$ is odd or $n=2$, $F, G\in C^{\infty}(\overline{X^{n + 1}})$; when $n\geq 4$ is even, $F, G \in C^{n-1, \alpha}(\overline{X^{n + 1}})$ for $\alpha \in (0,1)$. In fact, when $n\geq 4$ even and $h$ is {\it not obtruction-free}, the terms involving $y^n\log y$ appear in the expansion of $F$ and $G$ with respect to $y$.

The scattering operator $S(s)$, by definition, is the following Dirichlet-to-Neumann 
operator: if $f\equiv F|_{y=0}$, then 
\begin{align}
S(s) f \equiv G|_{y=0}.	
\end{align}  
It was proved in \cite{GZ} that, as conformally covariant pseudo-differential operators,  $S(s)$
gives a meromorphic family with respect to $\{s\in\dC:\Rea(s)>\frac{n}{2}\}$ with simple poles in $\frac{n}{2}+\dZ_+\subset \dR$.
Now for any $\gamma\in(0,\frac{n}{2})$, the {\it fractional GJMS operator $P_{2\gamma}$} is defined by
\begin{align}
P_{2\gamma}(f) \equiv P_{2\gamma}[h,g_+](f)\equiv 2^{2\gamma}\frac{\Gamma(\gamma)}{\Gamma(-\gamma)}S\left(\frac{n}{2}+\gamma\right)(f).\label{regular-fractional-operator}
\end{align}
In this case, the nonlocal curvature $Q_{2\gamma}$ of order $2\gamma$ is defined as $Q_{2\gamma} \equiv (\frac{n - 2\gamma}{2})^{-1}P_{2\gamma}(1)$.
Observe that the function $\Gamma(\gamma)$
cancels the simple poles of the scattering operator $S(\frac{n}{2}+\gamma)$ so that we define 
\begin{align}
	P_n   \equiv 2^n \frac{\Gamma(\frac{n}{2})}{\Gamma(-\frac{n}{2})}\Res_{s=n}S(s),
\end{align}
and $Q_n 
\equiv \lim\limits_{\gamma\to n} Q_{2\gamma} \equiv (\frac{n - 2\gamma}{2})^{-1}P_{2\gamma}(1)$. Therefore, $P_{2\gamma}$
is continuous in the range $\gamma\in(0,\frac{n}{2}]$.
There are two important special cases: when $\gamma = 1$,
the operator
$P_2$ coincides with the conformal Laplacian \begin{align}\mathscr{L}_h \equiv - \Delta_h + \frac{n-2}{4(n-1)} R_h;\end{align} 
when $\gamma = 2$, the operator $P_4$ coincides with the Paneitz operator of order $4$.

The pseudo-differential operator $P_{2\gamma}$ has an important conformal covariance property in the following sense: if $\gamma\in (0,\frac{n}{2})$ and $\tilde{h} \equiv v^{\frac{4}{n - 2\gamma}}h$, then
\begin{align}
	P_{2\gamma}[\tilde{h}, g_+](u) = v^{-\frac{n + 2\gamma}{n - 2\gamma}} P_{2\gamma}[h,g_+](uv); \end{align}
if $\gamma = \frac{n}{2}$ and $\tilde{h} \equiv e^{2w} h$, then 
\begin{align}
	e^{nw} \widetilde{Q}_n = Q_n + P_n(w).
\end{align}
This gives a uniform way in studying the conformal geometry involving the operator $P_{2\gamma}$ and makes $P_{2\gamma}$ play a crucial role in the conformal invariance theory. How the operators $P_{2\gamma}$ reveal the geometry and topology of the underlying space still needs further exploration.  
The third named author initiated studies of the topological aspect of the operator $P_{2\gamma}$ and the associated nonlocal curvature $Q_{2\gamma}$; see \cite{Zhang-Kleinian} and \cite{Chen-Zhang-rigidity} for topological and isometric rigidity/classification results in this direction.

\begin{remark}\label{r:bottom-of-spectrum}
	We emphasize that, 
throughout this section, we will assume that   $\lambda_1(-\Delta_{g^+}) > \frac{n^2}{4} - \gamma^2$ which guarantees $s = \frac{n}{2} + \gamma$ satisfies $s(n-s)\not\in \sigma_{pp}(-\Delta_{g_+})$ .
A sufficient condition for this is that the non-negativity of the Yamabe constant of  $(M^n,h)$, i.e., $\mathcal{Y}(M^n, [h]) \geq 0$. In fact, Lee proved that in this case
$\lambda_1(-\Delta_{g_+}) = \frac{n^2}{4}$;
 see \cite{Lee-spectrum}.

\end{remark}

 Let us discuss a different formulation of the fractional GJMS operator. First let us recall an example in the model case. 
 \begin{example}
 [Fractional Laplacian in the hyperbolic case]	
Let $(\dH^{n + 1},g_{_1})$ be the hyperbolic space which admits a partial compactification $(\dR^{n + 1}, g_0)$, where $g_{-1} = y^{-2}(dy^2 + g_{\dR^n})$ is the hyperbolic metric and $g_0 = dy^2 + g_{\dR^n}$ is the Euclidean metric on the upper half space $\dR_+^{n + 1}$. One can show that $P_{2\gamma}[g_0, g_{-1}] = (-\Delta_{g_0})^{\gamma}$, where 
 \begin{align}
 	\left((-\Delta_{g_0})^{\gamma}f\right)(x)
 	\equiv C_{n,\gamma} \cdot \PV\int_{\dR^n}\frac{f(x) - f(\xi)}{|x - \xi|^{n + 2\gamma}}d\xi,
 \end{align}
 where $C_{n,\gamma}$ is some constant depending only on $n$ and $\gamma$.
 By solving an extension problem, Caffarelli and Silvestre have introduced in \cite{CS} an equivalent definition of $(-\Delta_{g_0})^{\gamma}$ when $\gamma\in(0,1)$. For a function $f: \dR^n \to \dR$, one 
 can construct the extension $U\in H^{1,\fa}(\dR_+^{n+1})$ as the unique solution of the following boundary value problem
 \begin{align}
 	\begin{cases}
 		\Div_{g_0}(y^{\fa} \nabla_{g_0} U) = 0 & \text{in} \ \dR_+^{n+1},\\
 		U(0,\cdot) = f & \text{on} \ \dR^n \equiv \dR_+^{n+1}\cap \{y = 0\},
 		 	\end{cases}
 \end{align}
 where $\fa \equiv 1 - 2\gamma \in (-1, 1)$ and  \begin{align*}H^{1,\fa}(\dR_+^{n + 1}) \equiv \overline{C^{\infty}(\dR_+^{n + 1})}^{\|\cdot\|_{H^{1,\fa}}},\quad \|U\|_{H^{1,\fa}(\dR_+^{n+1})}\equiv \left(\int_{\dR_+^{n+1}}y^{\fa} U^2 \dvol_{g_0}+\int_{\dR_+^{n+1}}y^{\fa} |\nabla_{g_0} U|^2 \dvol_{g_0} \right)^{\frac{1}{2}}.\end{align*}
 Caffarelli and Silvestre proved that 
 \begin{align}
 	(-\Delta_{g_0})^{\gamma} f =  \frac{2^{2\gamma}\cdot\Gamma(\gamma)}{2\gamma\cdot\Gamma(-\gamma)}\lim\limits_{y\to 0} y^{\fa}\frac{\p U}{\p y}.
 \end{align}  
  \end{example}

\vspace{0.5cm}

In the general case, Chang and Gonz\'alez obtained in \cite{CG} a similar formulation for the fractional GJMS operator $P_{2\gamma}$. 
 We fix a positive number $\gamma \in (0,1)$. 
 Let $(X^{n+1}, g_+)$ be a Poincar\'e-Einstein manifold with conformal infinity $(M^n, h)$. Let us take a conformal compactification $(\overline{X^{n + 1}}, \rho^2 g_+)$ of $(X^{n+1}, g_+)$ for some defining function $\rho$ such that $\rho^2 g_+|_{M^n} = h$.
 Chang-Gonz\'alez proved an equivalence relation: the equation 
 \begin{align}
 -\Delta_{g_+}u - s(n - s)u = 0\quad \text{in} \ (X^{n + 1}, g_+)	
 \end{align}
is equivalent to 
\begin{align}\label{e:equivalent-to-harmonic-extension}
\begin{split}
	     -\Div_{\bar{g}}(\rho^{\fa}\nabla_{\bg} U) + E(\rho) U = 0, \text{ in } X^{n + 1},
    \\
    \bg \equiv \rho^2 g_+, \quad U \equiv \rho^{s - n}u,
    \end{split}
    \end{align}
 where $\fa \equiv 1 - 2\gamma$, and 
 \begin{align}
 	E(\rho) \equiv  - \Delta_{\bar{g}}(\rho^{\frac{\fa}{2}})\rho^{\frac{\fa}{2}} + \left(\gamma^2 - \frac{1}{4}\right)\rho^{-2 + \fa} + \frac{n-1}{4n}R_{\bar{g}}\rho^{\fa}.	 \label{e:equation-of-E}
 \end{align} 
 Furthermore, in \cite{Case-Chang}, there is an improvement of \cite[theorems 4.3 and 4.7]{Case-Chang}. 
 Let $y$ be the geodesic defining function corresponding to $h$ and taking a defining function $\rho$ for $M^n$ that satisfies 
  \begin{align}
    \rho = y + \Phi y^{1+2\gamma} + o(y^{1+2\gamma}) \label{e:assumption-on-vr}
\end{align}
for some smooth function $\Phi \in C^\infty(M^n)$.
Given $f\in C^\infty(M^n)$, let $U$ be the solution of the boundary value problem
\begin{align}\label{e:U-extension-problem-PE}
\begin{cases}
    -\Div_{\bar{g}}(\rho^{\fa}\nabla U) + E(\rho) U = 0, &\text{ in } X^{n+1}, \\
    U = f &\text{ on } M^n.
\end{cases}
\end{align}
Then 
\begin{align}
    P_{2\gamma} f = \frac{d_\gamma}{2\gamma}\lim_{\rho\rightarrow 0} \rho^\fa \frac{\partial U}{\partial \rho} + \frac{n-2\gamma}{2}d_\gamma \Phi f \label{e:expression-of-GJMS-lower-regularity}
\end{align}
for $d_\gamma \equiv 2^{2\gamma} \frac{\Gamma(\gamma)}{\Gamma(-\gamma)}$.

 In the following lemma, 
we will compute the term \eqref{e:equation-of-E} when $\bg = \vr^2 g_+$ is the Fefferman-Graham compactified metric.

\begin{lemma}\label{l:E}
Let $w$ be the solution of $-\Delta_{g_+} w = n$ that satisfies the asymptotics $w = \log y + O(y^{\epsilon})$ for some $\epsilon > 0$. With respect to Fefferman-Graham's 
 compactification,
\begin{align}
\bar{g} \equiv \vr^2 g_+, \quad \vr \equiv e^w,
\end{align}
we have that 
\begin{align}
E(\vr) = C_{n,\gamma} \cdot \vr^{\fa} \cdot R_{\bar{g}}.	
\end{align}
If $n=2$ or $n$ is odd, then $R_{\bar g}\in C^\infty(\overline{X^{n + 1}})$ is smooth. If $n\geq 4$ is even, then $R_{\bar g}\in C^{n-3,\alpha}(\bar X^{n+1})$.
\end{lemma}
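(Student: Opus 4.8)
The plan is to prove Lemma~\ref{l:E} by a direct computation, comparing the general formula \eqref{e:equation-of-E} for $E(\rho)$ with the special algebraic structure of the Fefferman--Graham defining function $\vr = e^w$, where $-\Delta_{g_+} w = n$. The key observation is that the function $\rho^{\fa/2} = e^{\fa w/2}$ satisfies a very clean equation: writing $v \equiv \vr^{\fa/2} = e^{\fa w/2}$ and using $\Delta_{g_+}$ on the one hand, I would compute $\Delta_{g_+} v = \tfrac{\fa}{2} v \Delta_{g_+} w + \tfrac{\fa^2}{4} v |\nabla_{g_+} w|^2$; since $\vr = e^w$ is a compactification, $|\nabla_{g_+} w|_{g_+}^2 = |\nabla_{g_+}\vr|_{g_+}^2 \vr^{-2} = \vr^{-2}|\nabla_{\bg}\vr|^2_{\bg}\cdot(\text{conformal factors})$, and near the boundary $|\nabla_{\bg}\vr|_{\bg} \to 1$; more importantly, $-\Delta_{g_+} w = n$ globally. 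The plan is to convert everything from $\Delta_{g_+}$ to $\Delta_{\bg}$ via the standard conformal transformation law for the Laplacian under $\bg = \vr^2 g_+ = e^{2w}g_+$ in dimension $\fm = n+1$, namely $\Delta_{\bg}\phi = e^{-2w}\left(\Delta_{g_+}\phi + (n-1)\langle\nabla_{g_+} w, \nabla_{g_+}\phi\rangle_{g_+}\right)$, and then substitute into \eqref{e:equation-of-E}.

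First I would rewrite the three terms of $E(\vr)$ in \eqref{e:equation-of-E}. For the middle term, $\left(\gamma^2 - \tfrac14\right)\vr^{-2+\fa}$ is already explicit. For the first term, $-\Delta_{\bg}(\vr^{\fa/2})\cdot \vr^{\fa/2}$, I would express $\Delta_{\bg}(\vr^{\fa/2})$ using the conformal law above with $\phi = e^{\fa w/2}$, reducing it to $\Delta_{g_+} w$ and $|\nabla_{g_+} w|^2_{g_+}$ terms, then use $\Delta_{g_+} w = -n$. For the third term, I would use the conformal transformation formula for scalar curvature: $R_{\bg} = e^{-2w}\left(R_{g_+} - 2n\Delta_{g_+} w - n(n-1)|\nabla_{g_+} w|^2_{g_+}\right)$, combined with $R_{g_+} = -n(n+1)$ and $\Delta_{g_+} w = -n$. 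The crucial cancellation should be that all the $|\nabla_{g_+} w|^2_{g_+}$ terms — which are the only ``non-explicit'' pieces — combine across the three terms into a single multiple of $R_{\bg}\vr^{\fa}$, while the remaining constant-curvature and $\Delta_{g_+} w = -n$ contributions cancel identically. I expect $C_{n,\gamma}$ to come out as $\tfrac{n-1}{4n} + (\text{correction from the }(\gamma^2-\tfrac14)\text{ term})$, i.e.\ consistent with the $\tfrac{n-1}{4n}R_{\bg}\rho^{\fa}$ appearing in \eqref{e:equation-of-E} once the special PDE for $w$ is used; I would double-check the precise constant against the known $\gamma\to 1$ case where $P_2 = \LL_h$ forces $C_{n,\gamma}$ to match the conformal Laplacian normalization.

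For the regularity statement, I would invoke the expansion \eqref{e:w-expansion}--\eqref{e:vr-expansion} together with Proposition~\ref{p:general-regularity-FG}. When $n = 2$ or $n$ is odd, $\sB \equiv 0$, so $w$ and $\bg$ are $C^\infty$ up to the boundary, hence $R_{\bg}$ is smooth. When $n \geq 4$ is even, $\bg$ is $C^{n-1,\beta}$ up to the boundary by Proposition~\ref{p:general-regularity-FG}(2), and since $R_{\bg}$ involves two derivatives of the metric, $R_{\bg} \in C^{n-3,\alpha}(\overline{X^{n+1}})$; I would be slightly careful to track whether the $y^n\log y$ term in the expansion of $\bg$ degrades the H\"older exponent, but differentiating $y^n\log y$ twice gives $y^{n-2}\log y$, which is $C^{n-3,\alpha}$ for any $\alpha\in(0,1)$, so the claimed regularity holds. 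The main obstacle is purely computational bookkeeping: getting the constant $C_{n,\gamma}$ exactly right and making sure the conformal transformation conventions (sign of $\Delta$, the factor $n$ vs.\ $n-1$ vs.\ $n+1$ in dimension $\fm = n+1$) are applied consistently throughout; the conceptual content — that the ``anomalous'' first-order and zeroth-order terms in $E(\rho)$ collapse to a curvature term precisely because $w$ solves $-\Delta_{g_+} w = n$ — is straightforward once the algebra is organized.
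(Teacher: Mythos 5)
Your plan is essentially the paper's proof, run with the roles of $g_+$ and $\bar g$ swapped: the paper converts $\Ric_{g_+}$ and $R_{g_+}$ into $\bar g$-quantities, uses $-\Delta_{g_+}w = n$ to obtain the structural identities $R_{\bar g} = n(n-1)\vr^{-2}(1-|\nabla_{\bar g}\vr|^2_{\bar g}) = -(n-1)\vr^{-1}\Delta_{\bar g}\vr$, and then reads off $C_{n,\gamma}$ by substituting these into \eqref{e:equation-of-E}, whereas you pull each of the three terms of $E(\vr)$ back to $g_+$ via the conformal transformation laws for $\Delta$ and $R$ and use $\Delta_{g_+}w=-n$ there; the algebra is the same, and your regularity discussion is exactly the intended one. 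One caution: the ``constant'' contributions from $R_{g_+}=-n(n+1)$, $\Delta_{g_+}w=-n$, and the $(\gamma^2-\tfrac14)$ term do \emph{not} cancel identically, as you predicted; rather they sum to $\tfrac{(n-2\gamma)^2}{4}$, which pairs with the $|\nabla_{g_+}w|^2_{g_+}$ coefficient $-\tfrac{(n-2\gamma)^2}{4}$ so that $E(\vr) = \tfrac{(n-2\gamma)^2}{4}\,\vr^{\fa-2}\bigl(1-|\nabla_{g_+}w|^2_{g_+}\bigr) = \tfrac{(n-2\gamma)^2}{4n(n-1)}\,\vr^{\fa}R_{\bar g}$; if the constants really cancelled you would be left with a pure multiple of $|\nabla_{g_+}w|^2$, which is \emph{not} proportional to $R_{\bar g}$, so organize the algebra to expect a combination into $(1-|\nabla_{g_+}w|^2)$, not a cancellation.
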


\begin{proof}
It is straightforward to check that 
\begin{align}
\begin{split}
-(d-1) g_+= \Ric_{g_+} = &\ \Ric_{\bar{g}} + (d - 2) \vr^{-1}\nabla_{\bar{g}}^2\vr + (\vr^{-1}\Delta_{\bar{g}}\vr - (d - 1) \vr^{-2}|\nabla_{\bar{g}}\vr|^2) \bar{g},
\\
-d(d-1)= R_{g_+} = &\ \vr^2 \left(R_{\bar{g}} + (2d-2)\vr^{-1}\Delta_{\bar{g}}\vr - d(d-1)\vr^{-2}|\nabla_{\bar{g}}\vr|^2\right),. 	
\end{split}
\end{align}
where $d = n + 1$.
Applying $-\Delta_{g_+} w = n$, we have that 
\begin{align*}
& R_{\bar{g}} =  (d-1)(d-2)\vr^{-2}(1 - |\nabla_{\bar{g}}\vr|_{\bar{\rho}}^2),
\\
& \Ric_{\bar{g}} =  -(d-2)\vr^{-1}\nabla_{\bar{g}}^2\vr.	
\end{align*}
It turns out that 
$R_{\bar{g}} = -(d - 2) \vr^{-1}\Delta_{\bar{g}}\vr$. 	
 
Now 
\begin{align}
\Delta_{\bar{g}}(\vr^{\frac{\fa}{2}})\vr^{\frac{\fa}{2}} = \frac{\fa}{2}\vr^{\fa - 1}\Delta_{\bar{g}} \vr + \frac{\fa}{2}\left(\frac{\fa}{2} -1 \right)\vr^{\fa - 2}|\nabla_{\bar{g}}\vr|^2,
 \end{align}
and hence 
\begin{align*}
E(\vr) = &\ - \frac{\fa}{2}\vr^{\fa - 1}\Delta_{\bar{g}}\vr + \left(\frac{\gamma^2}{4} - 1\right)\cdot\vr^{\fa - 2}(1 - |\nabla_{\bar{g}}\vr|^2) + \frac{n-1}{4n} R_{\bar{g}}\vr^{\fa}
\nonumber\\
= &\ \vr^{\fa}\left(\frac{\fa}{2}\cdot\frac{1}{d-2} + \left(\frac{\gamma^2}{4} - 1\right)\cdot\frac{1}{(d-1)(d-2)} + \frac{n - 1}{4n}\right) R_{\bar{g}}
\nonumber\\
= & \ C_{n,\gamma} \cdot \vr^{\fa} \cdot R_{\bar{g}},
\end{align*}
where
\begin{align}
C_{n,\gamma} = \frac{n^2 -3 - 4n\gamma + \gamma^2}{4n(n-1)}.	
\end{align}
The proof of the lemma is complete.
\end{proof}

We end this subsection by mentioning a lemma regarding the simplified expression of $P_{2\gamma}$ when $\bg = \vr^2 g_+$ is Fefferman-Graham's compactified metric.
\begin{lemma}\label{l:GJMS-operator-regular-metric}
Let $(X^{n + 1}, g_+)$ be a Poincar\'e-Einstein manifold and let $(M^n, h)$ be its conformal infinity. If $\bg = \vr^2 g_+$ is Fefferman-Graham's compactified metric, then   
\begin{align} 
    P_{2\gamma} f = \frac{2^{2\gamma}\cdot\Gamma(\gamma)}{2\gamma\cdot\Gamma(-\gamma)} \lim_{\vr\rightarrow 0} \vr^\fa \frac{\partial U}{\partial \vr}, \quad f\in C^{\infty}(M^n),
\end{align}
where $U$ is the unique solution in \eqref{e:U-extension-problem-PE}.
\end{lemma}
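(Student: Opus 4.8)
\textbf{Proof proposal for Lemma \ref{l:GJMS-operator-regular-metric}.}

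The plan is to deduce this lemma from the general formula \eqref{e:expression-of-GJMS-lower-regularity} of Case--Gonz\'alez together with the explicit computation of the zeroth-order coefficient $E(\vr)$ carried out in Lemma \ref{l:E}. First I would verify that the Fefferman--Graham defining function $\vr = e^w$ satisfies the normalization \eqref{e:assumption-on-vr}, i.e. that $\vr = y + \Phi\, y^{1+2\gamma} + o(y^{1+2\gamma})$ for some $\Phi\in C^\infty(M^n)$. By the expansion \eqref{e:vr-expansion}, $\vr = y\exp(\sA + \sB y^n\log y + O(y^n))$ with $\sA = O(y^2)$ and $\sB$ supported only in the even $n\geq 4$ case. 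Since $\fa = 1-2\gamma \in (-1,1)$ forces $1+2\gamma \in (1,3)$, while the first correction to $\vr/y$ beyond $1$ is of order $y^2$ (from $\sA$) or higher, the coefficient $\Phi$ of $y^{1+2\gamma}$ in the expansion of $\vr$ vanishes identically unless $1+2\gamma = 2$, i.e. $\gamma = 1/2$; and even when $\gamma = 1/2$ one checks $\sA = O(y^2)$ contributes a term depending only on $h$ — the key point for us is simply that the $o(y^{1+2\gamma})$-normalization \emph{holds} (possibly with $\Phi \ne 0$ only in the borderline case $\gamma = 1/2$), so that \eqref{e:expression-of-GJMS-lower-regularity} applies.

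Next, I would invoke Lemma \ref{l:E}, which gives $E(\vr) = C_{n,\gamma}\,\vr^{\fa} R_{\bg}$; hence the extension equation \eqref{e:U-extension-problem-PE} becomes precisely the first line of \eqref{e:harmonic-extension-PE}, namely $-\Div_{\bg}(\vr^{\fa}\nabla_{\bg} U) + C_{n,\gamma}\vr^{\fa}R_{\bg} U = 0$, and existence/uniqueness of $U\in H^{1,\fa}$ follows from the standard theory for this degenerate equation (the $A_2$-weight $\vr^{\fa}$ and the Lax--Milgram argument, using $R_{\bg}\in C^{n-3,\alpha}$ resp. smooth by Lemma \ref{l:E}, with the Yamabe-nonnegativity hypothesis of Remark \ref{r:bottom-of-spectrum} ensuring coercivity). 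Then \eqref{e:expression-of-GJMS-lower-regularity} reads
\begin{align}
P_{2\gamma} f = \frac{d_\gamma}{2\gamma}\lim_{\vr\to 0}\vr^{\fa}\frac{\partial U}{\partial \vr} + \frac{n-2\gamma}{2}d_\gamma\,\Phi f, \qquad d_\gamma = 2^{2\gamma}\frac{\Gamma(\gamma)}{\Gamma(-\gamma)}.
\end{align}
It remains to show the $\Phi f$ term drops out. The cleanest route is to observe that for a \emph{generic} representative $h$ in the conformal class the Fefferman--Graham function has $\Phi = 0$ (from $\sA = O(y^2)$ when $\gamma \ne 1/2$, as above), so the formula holds without the correction; the general case $\gamma = 1/2$ then follows by the conformal covariance of $P_{2\gamma}$ under $\tilde h = v^{4/(n-2\gamma)}h$ together with the corresponding transformation of $\Phi$ (which is exactly designed so that the two $\Phi$-contributions on the two sides cancel), or alternatively by directly noting that $P_1$ for $\gamma = 1/2$ is the conformal Laplacian-type operator whose Dirichlet-to-Neumann realization in the FG gauge is computed independently. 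Either way one lands on $P_{2\gamma} f = \frac{d_\gamma}{2\gamma}\lim_{\vr\to 0}\vr^{\fa}\partial_\vr U$, which is the claim.

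\textbf{Main obstacle.} The routine parts — applying Lemma \ref{l:E} and the solvability of the degenerate Dirichlet problem — are standard; the genuine point requiring care is the vanishing (or harmlessness) of the $\Phi$-term, i.e. showing that the Fefferman--Graham gauge is compatible with the normalization \eqref{e:assumption-on-vr} in a way that makes the zeroth-order boundary contribution disappear. Tracking the precise order of the first nontrivial term in $\vr/y$ against the exponent $1+2\gamma$, and handling the borderline $\gamma = 1/2$ via conformal covariance, is the step I expect to demand the most attention; everything else is bookkeeping with the expansions \eqref{eq-gy1}--\eqref{eq-gy2} and \eqref{e:w-expansion}--\eqref{e:vr-expansion}.
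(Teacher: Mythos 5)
Your overall strategy is the same as the paper's: use the expansion \eqref{e:vr-expansion} to check that the Fefferman--Graham defining function $\vr$ satisfies the normalization \eqref{e:assumption-on-vr} with $\Phi \equiv 0$, and then read off the formula from \eqref{e:expression-of-GJMS-lower-regularity}. The appeal to Lemma \ref{l:E} and the unique solvability of the extension problem are likewise consistent with the paper's setup (the paper invokes these through Proposition \ref{p:extension-problem-in-FG-metric} rather than inside this lemma's proof, but that is a cosmetic difference).

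However, there is a concrete bookkeeping error in your treatment of $\Phi$, and it produces a spurious exceptional case. You correctly note that $\vr/y = \exp(\sA + \cdots) = 1 + O(y^2)$ since $\sA = O(y^2)$; multiplying back by $y$ this gives $\vr = y + O(y^3)$. The coefficient $\Phi$ in \eqref{e:assumption-on-vr} is the coefficient of $y^{1+2\gamma}$ in the expansion of $\vr$, not of $\vr/y$. Hence the relevant comparison is between the exponent $1+2\gamma$ and $3$, not between $1+2\gamma$ and $2$: since $\gamma < 1$ forces $1+2\gamma < 3$, the coefficient $\Phi$ vanishes for \emph{every} $\gamma \in (0,1)$, with no borderline at $\gamma = 1/2$. (Equivalently: $y^{1+2\gamma}$ in $\vr$ corresponds to $y^{2\gamma}$ in $\vr/y$, and $2\gamma < 2$ whenever $\gamma < 1$.) Your claim that the $y^2$-order term of $\sA$ could produce a nonzero $\Phi$ at $\gamma = 1/2$ confuses the expansion of $\vr/y$ with that of $\vr$; in fact there is no $y^2$ term in $\vr$ at all.

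Because the exceptional case does not arise, the subsequent paragraph proposing to handle $\gamma = 1/2$ via conformal covariance or a separate computation of $P_1$ is unnecessary, and as written it is also not a proof — it gestures at a cancellation without exhibiting it. Fortunately, none of this is needed: once the bookkeeping is fixed, the argument collapses to the paper's one-line observation that $\vr = y + O(y^3)$ forces $\Phi = 0$ for all $\gamma \in (0,1)$, and the lemma follows directly from \eqref{e:expression-of-GJMS-lower-regularity}.
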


\begin{proof}
By \eqref{e:vr-expansion}, 
$\vr = y + O(y^3)$ which implies the term $\Phi$ in \eqref{e:assumption-on-vr} is vanishing. Using \eqref{e:expression-of-GJMS-lower-regularity}, 
we have that
\begin{align}\label{eqe-Pf}
P_{2\gamma} f =  \frac{2^{2\gamma}\cdot\Gamma(\gamma)}{2\gamma\cdot\Gamma(-\gamma)} \lim_{\vr\rightarrow 0} \vr^\fa \frac{\partial U}{\partial \vr},\end{align}
 which completes the proof.
\end{proof}

Summarizing the above discussion, we have the following proposition which is a combination of Lemma \ref{l:E} and Lemma \ref{l:GJMS-operator-regular-metric}. 
\begin{proposition}\label{p:extension-problem-in-FG-metric}
Let $(X^{n + 1}, g_+)$ be a Poincar\'e-Einstein manifold and let $(M^n, h)$ be its conformal infinity. Let $\bg = \vr^2 g_+$ be Fefferman-Graham's compactified metric associated to $h$. Given a smooth function $f \in C^{\infty}(M^n)$, let $U\in H^{1,\fa}(X^{n + 1})$ be the unique solution of 
\begin{align}
\begin{cases}
		-\Div_{\bg}(\vr^{\fa} \nabla_{\bg} U) + \vr^{\fa} \cJ_{\bg} U = 0,  & \text{in}\ X^{n+1}
		\\
	 U = f, & \text{on}\ M^n,
\end{cases}
\end{align}  
where $\cJ_{\bg} \equiv C_{n,\gamma} R_{\bg}$, $C_{n,\gamma}$ is the constant in Lemma \ref{l:E}, and
\begin{align*}
H^{1,\fa}(X^{n + 1}) \equiv \overline{C^{\infty}(X^{n + 1})}^{\|\cdot\|_{H^{1,\fa}}},\quad \|U\|_{H^{1,\fa}(X^{n+1})}\equiv \left(\int_{X^{n+1}}\vr^{\fa} U^2 \dvol_{\bg}+\int_{X^{n + 1}}\vr^{\fa} |\nabla_{\bg} U|^2 \dvol_{\bg} \right)^{\frac{1}{2}}.\end{align*}
Then 
\begin{align}
P_{2\gamma} f =  \frac{2^{2\gamma}\cdot\Gamma(\gamma)}{2\gamma\cdot\Gamma(-\gamma)} \lim_{\vr\rightarrow 0} \vr^\fa \frac{\partial U}{\partial \vr}.\end{align}
\end{proposition}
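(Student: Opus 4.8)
The plan is to assemble Proposition \ref{p:extension-problem-in-FG-metric} purely as a bookkeeping consequence of the two lemmas immediately preceding it, so that no genuinely new analysis is required. First I would invoke Chang--Gonz\'alez's equivalence, recorded around \eqref{e:equivalent-to-harmonic-extension}--\eqref{e:equation-of-E}, which says that the generalized eigenfunction problem on $(X^{n+1},g_+)$ is equivalent, after the substitution $\bg=\rho^2 g_+$, $U=\rho^{s-n}u$, to the degenerate equation
\begin{align}
-\Div_{\bg}(\rho^{\fa}\nabla_{\bg}U)+E(\rho)U=0\quad\text{in }X^{n+1},
\end{align}
with $E(\rho)$ given by \eqref{e:equation-of-E}. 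The point is that this holds for \emph{any} admissible defining function $\rho$, in particular for Fefferman--Graham's choice $\rho=\vr=e^w$ with $-\Delta_{g_+}w=n$.

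Next I would substitute the computation of Lemma \ref{l:E}: for the Fefferman--Graham compactification one has exactly $E(\vr)=C_{n,\gamma}\,\vr^{\fa}R_{\bg}$, so the extension equation becomes
\begin{align}
-\Div_{\bg}(\vr^{\fa}\nabla_{\bg}U)+\vr^{\fa}\cJ_{\bg}U=0,\qquad \cJ_{\bg}\equiv C_{n,\gamma}R_{\bg},
\end{align}
which is precisely the PDE in the statement. Well-posedness in $H^{1,\fa}(X^{n+1})$ with boundary datum $U=f$ is the standard variational existence/uniqueness for this weighted (Muckenhoupt $A_2$) operator — the weight $\vr^{\fa}$ behaves like $y^{\fa}$ near $M^n$ with $\fa\in(-1,1)$ — so I would cite the De Giorgi--Nash--Moser / trace theory of \cite{fks,fkj,fjk2} together with the sign/coercivity furnished by Remark \ref{r:bottom-of-spectrum} (the spectral assumption $\lambda_1(-\Delta_{g_+})>\frac{n^2}{4}-\gamma^2$ guaranteeing $s=\frac n2+\gamma$ is not an $L^2$-eigenvalue, hence the bilinear form is coercive on the appropriate space). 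For the Neumann-type identification of $P_{2\gamma}f$ I would simply quote Lemma \ref{l:GJMS-operator-regular-metric}, whose proof already observes via \eqref{e:vr-expansion} that $\vr=y+O(y^3)$, so the correction term $\Phi$ in \eqref{e:assumption-on-vr} vanishes and \eqref{e:expression-of-GJMS-lower-regularity} collapses to
\begin{align}
P_{2\gamma}f=\frac{2^{2\gamma}\cdot\Gamma(\gamma)}{2\gamma\cdot\Gamma(-\gamma)}\lim_{\vr\to 0}\vr^{\fa}\frac{\partial U}{\partial\vr}.
\end{align}

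In short the proof is: quote \eqref{e:equivalent-to-harmonic-extension}, substitute $\rho=\vr$, insert $E(\vr)=C_{n,\gamma}\vr^{\fa}R_{\bg}$ from Lemma \ref{l:E}, invoke the weighted-elliptic well-posedness for existence/uniqueness of $U$, and apply Lemma \ref{l:GJMS-operator-regular-metric} for the boundary-derivative formula. The only mild subtlety — and the step I would be most careful about — is matching conventions: checking that the $\rho^{s-n}u=U$ normalization in Chang--Gonz\'alez, the relation $s=\frac n2+\gamma$, $\fa=1-2\gamma$, and the constant $d_\gamma=2^{2\gamma}\Gamma(\gamma)/\Gamma(-\gamma)$ all line up so that the scattering-operator definition \eqref{regular-fractional-operator} of $P_{2\gamma}$ agrees, with the correct sign and constant, with the weighted Neumann derivative above; and confirming that the hypothesis on $\lambda_1(-\Delta_{g_+})$ needed in Remark \ref{r:bottom-of-spectrum} is in force (as it is throughout, e.g.\ under $\mathcal Y(M^n,[h])\ge 0$). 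Everything else is a direct citation of the material already established in this subsection.
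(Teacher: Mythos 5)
Your proof is correct and follows essentially the same route as the paper, which explicitly presents the proposition as a direct combination of Lemma \ref{l:E} (computing $E(\vr)=C_{n,\gamma}\vr^{\fa}R_{\bg}$ for the Fefferman--Graham compactification) and Lemma \ref{l:GJMS-operator-regular-metric} (the Neumann-type formula for $P_{2\gamma}$). The extra care you take with well-posedness in $H^{1,\fa}$ via \cite{fks,fkj,fjk2} and with the spectral hypothesis in Remark \ref{r:bottom-of-spectrum} is a reasonable elaboration of what the paper leaves implicit, but it does not change the underlying argument.
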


\subsection{Degenerate/singular equations and regularity}
\label{ss:regularity-of-solutions}

In this section, we clarify the regularity properties for the (weak) solutions $f\in C^{\infty}(M^n)$ of $P_{2\gamma} f = 0$ and the solutions  $U\in H^{1,\fa}(X^{n + 1})$ of corresponding 
extension problem \eqref{e:U-extension-problem-PE} in $(X^{n + 1}, \bg)$. Since we will always specify Fefferman-Graham's metric as our conformal compactification throughout this paper, the simplified equation in the extension problem and the expression of the fractional GJMS operator are given by Proposition \ref{p:extension-problem-in-FG-metric}.

 Notice first that we consider the range of parameters $\gamma \in (0,1)$, which amounts $\fa=1-2\gamma \in (-1,1)$. The aim of this section is to recall the regularity results needed for our purposes. In the model case when $(X^{n + 1}, g_+)$ is the hyperbolic space,  a general regularity theory on the partial compactification $(\dR_+^{n + 1}, g_0)$ of $(X^{n + 1}, g_+)$ has been developed in \cite{STV1, STV2} for a larger class of parameters $\fa \in (-1, \infty)$. In our particular case, we are only considering the range $\fa \in (-1,1)$. Several of the arguments can then be simplified because of the  special properties of the weight described below. 
 
 By the very construction of the defining function $\vr$ in Lemma \ref{l:Fefferman-Graham}, the following property holds: there is a sufficiently small constant $r_0 = r_0(n, \bg) >0 $ such that
\begin{equation} \label{mu.def}
\vr (x)\sim
\left\{
\begin{array}{cl}
 \text{dist}_{\bar g}(x, M^n) & \quad \text{when} \quad \text{dist}_{\bar g}(x, M^n) < r_0,\\
 1 & \quad \text{when} \quad \text{dist}_{\bar g}(x, M^n) > 2r_0.
 \end{array}
\right.
\end{equation}
 It is then easy to check that the function $\vr^\fa$ is an $A_2$-Muckenhoupt weight (see e.g. \cite{dyda}). We recall that a function $w \in L^1_{loc} (\mathbb R^{n+1})$ is called an $A_2$ weight  if the following holds: \begin{align}
\sup_{B \subset \mathbb R^{n+1}}  \left( \fint_B w \dvol_{g_0} \right) \left( \fint_B w^{-1} \dvol_{g_0}\right) <\infty,
\end{align}
whenever $B$ is a ball. In a series of important papers \cite{fks,fkj,fjk2}, Fabes, Kenig, Jerison and Serapioni developed the basic theory of equations in divergence form whose main coefficients are $A_2$ functions. Let $\fm \equiv n + 1$ For our purpose, we need to double the compactified manifold $(\overline{X^{\fm}}, \bg)$ along the totally geodesic boundary $(M^n, h)$, which gives a closed manifold $\fX^{\fm} \equiv \overline{X^{\fm}}\bigcup\limits_{M^n}\overline{X^{\fm}}$ equipped with a $C^{n-1, \alpha}$-Riemannian metric (still denoted as $\bg$). Since we are interested in local results, we first choose a domain $\Omega$ in $\fX^{\fm}$ and denote now $\vr$ the previous special defining function where $ \text{dist}_{\bar g}$ denotes the {\it  signed} distance to $M^n$. For any $\fa\in (-1,1)$, we define the weighted Sobolev space $H^{1,\fa}(\Omega)$ as the closure of $C^\infty(\overline\Omega)$ with respect to the norm
\begin{align}\|u\|_{H^{1,\fa}(\Omega)} \equiv \left(\int_{\Omega}\vr^\fa u^2\, \dvol_{\bar g}+\int_{\Omega}\vr^\fa |\nabla_{\bg} u|^2\, \dvol_{\bar g}\right)^{\frac{1}{2}}.\end{align}

By the results in \cite{fks}, it is known the space $\|u\|_{H^{1,\fa}(\Omega)}$ is a Hilbert space (for its usual norm).  We record here the following Sobolev inequality (which follows from the proof of a more general one in \cite{fks}, see also \cite{STV1}). 

\begin{lemma}[Sobolev inequality]
    Let $\fa \in (-1,1)$ and $u\in C^1_c(\Omega)$.  Then there exists a constant $c(\fm, \Omega)$ such that
\begin{equation}\label{sobo>-1}
\left(\int_{\Omega}\vr^\fa |u|^{2^*(\fa)} \dvol_{\bar g}\right)^{2/2^*(a)}\leq c(\fm, \Omega)\int_{\Omega}\vr^\fa |\nabla_{\bg} u|^2\dvol_{\bar g},
\end{equation}
where the optimal embedding exponent is
\begin{equation}\label{2*a}
2^*(\fa)\equiv \frac{2(n+1+\fa^+)}{n+\fa^+-1}.  
\end{equation}
When $n=1$ and $\fa^+>0$ the same inequality holds. When $n=1$ and $\fa^+=0$ then the embedding holds in any weighted $L^p(\Omega,\vr^\fa \mathrm{d}z)$ for $p>1$.
\end{lemma}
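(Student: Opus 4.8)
The statement is the weighted Sobolev inequality \eqref{sobo>-1} for $\fa \in (-1,1)$, and the plan is to reduce it to the classical (Euclidean) Sobolev inequality via the $A_2$-structure of the weight $\vr^\fa$, together with the comparison \eqref{mu.def} between $\vr$ and the signed distance to $M^n$. First I would work in a fixed coordinate chart of $\fX^{\fm}$ adapted to $M^n$: since $(M^n,h)$ is totally geodesic in $(\overline{X^{\fm}},\bg)$ (Lemma \ref{geom-boundary}), Fermi coordinates $(z,\vr)$ make $\vr$ genuinely the signed normal distance near $M^n$, and the metric $\bg$ is $C^{n-1,\alpha}$ (hence at least Lipschitz) and uniformly equivalent to the flat metric $g_0$ on the chart. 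Thus $\dvol_{\bg}$ and $|\nabla_{\bg}u|^2$ are comparable to their flat counterparts up to constants depending only on $\fm$ and $\Omega$, and $\vr^{\fa}$ is comparable to $|z_{\fm}|^{\fa}$ (writing $z_{\fm}$ for the last coordinate) since $\vr\sim\mathrm{dist}_{\bg}(\cdot,M^n)$; away from $M^n$ the weight is comparable to $1$ and the inequality is the ordinary Sobolev inequality, so only a neighborhood of $M^n$ needs attention, and a partition of unity glues the two regions.

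Next, the heart of the matter is the model flat weighted inequality on a half-space or strip with weight $|z_{\fm}|^{\fa}$. For $\fa\in[0,1)$ I would invoke the Fabes--Kenig--Serapioni theory \cite{fks}: $|z_{\fm}|^{\fa}$ is an $A_2$ weight, and their Sobolev embedding gives $\bigl(\int \vr^{\fa}|u|^{2^*}\bigr)^{2/2^*}\le c\int \vr^{\fa}|\nabla u|^2$ with the exponent $2^*(\fa)$ computed from the effective dimension $\fm+\fa = n+1+\fa$; this is exactly \eqref{2*a} with $\fa^+=\fa$. For $\fa\in(-1,0)$ the weight is still $A_2$, but the key observation is that near $\{z_{\fm}=0\}$ a negative power weight only \emph{improves} integrability, so one can bound $\int_\Omega \vr^{\fa}|u|^{2^*(0)}\le C\int_\Omega \vr^{\fa}|\nabla u|^2$ by comparison with the unweighted case: more precisely, since $\fa<0$ one has $\vr^{\fa}\ge c>0$ is false near the boundary, but one can instead run the even-reflection trick — extend $u$ evenly across $M^n$ (legitimate because of the doubled manifold $\fX^{\fm}$ and the oddness/evenness structure of solutions) and use that for $\fa\in(-1,0)$ the measure $\vr^{\fa}\,dz$ is locally finite and $A_2$, so the general FKS Sobolev inequality again applies with effective dimension $n+1$ (hence $\fa^+=0$ in \eqref{2*a}); the point is that the negative-weight case is subsumed by taking $\fa^+=\max(\fa,0)$ in the exponent. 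I would cite \cite{STV1} for the precise form of this model inequality, as the excerpt does.

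Finally I would assemble the pieces: choose a finite cover of $\Supp u\cap B_1$ by adapted charts, apply the model inequality in each, and sum using the comparability of $\bg$ with $g_0$, the comparison \eqref{mu.def}, and Hölder/partition-of-unity bookkeeping to absorb the overlaps; on the region $\mathrm{dist}_{\bg}(\cdot,M^n)>r_0/2$ use the standard Sobolev inequality on the closed manifold $\fX^{\fm}$ (or on $\Omega$) since $\vr^{\fa}\sim 1$ there. The resulting constant depends only on $\fm$ and $\Omega$ (through the chart geometry, the $C^{n-1,\alpha}$-bounds on $\bg$, $r_0$, and the number of charts), as claimed. The low-dimensional cases $n=1$ are handled separately exactly as in \cite{fks,STV1}: for $\fa^+>0$ the same argument gives the stated exponent, while for $n=1$, $\fa^+=0$ the effective dimension is $2$ and the embedding degenerates to $L^p$ for every $p>1$ rather than a single critical exponent.

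\textbf{Main obstacle.} The technical crux is the case $\fa\in(-1,0)$: one must be careful that the negative power weight, while still $A_2$, forces the effective Sobolev dimension to stay at $n+1$ rather than drop, so that the exponent in \eqref{2*a} is governed by $\fa^+$ and not $\fa$ itself; getting the book-keeping of this ``$\fa^+$ versus $\fa$'' dichotomy right — and making sure the even-reflection across the totally geodesic $M^n$ is compatible with the weighted space $H^{1,\fa}$ — is where the real care is needed. Everything else (chart comparison, partition of unity, the $\vr\sim\mathrm{dist}$ estimate) is routine given the regularity of the Fefferman--Graham compactification established in Section \ref{ss:PE-metrics}.
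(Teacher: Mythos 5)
The paper itself offers no proof of this lemma — it simply records it with the remark that it follows from the proof of a more general inequality in \cite{fks}, citing \cite{STV1} as well — so your reconstruction by localization to flat charts, comparison of $\bg$, $\vr$, and $\dvol_{\bg}$ to the Euclidean model via \eqref{mu.def} and Lemma \ref{geom-boundary}, invocation of the model power-weight inequality, and gluing with a partition of unity is essentially the intended route, and the heuristic for why $\fa^+$ rather than $\fa$ governs the exponent (interior versus boundary scaling) is the right one.

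Two sentences in your write-up are nonetheless wrong and should be fixed. First, ``since $\fa<0$ one has $\vr^{\fa}\ge c>0$ is false near the boundary'' is backwards: for $\fa\in(-1,0)$ the weight $\vr^{\fa}=\vr^{-|\fa|}$ \emph{blows up} as $\vr\to 0$, so the lower bound $\vr^{\fa}\ge c>0$ in fact holds everywhere on a bounded $\Omega$ (it is the upper bound $\vr^{\fa}\le C$ that fails near $M^n$); and in any case a pointwise lower bound on the weight is of no use because $\vr^{\fa}$ appears on both sides of \eqref{sobo>-1}, so no comparison to the unweighted Sobolev inequality is available. The correct remark for $\fa<0$ is simply that the model inequality of \cite{STV1} for the weight $|z_{\fm}|^{\fa}$ is invoked with exponent $2^*(0)=\frac{2(n+1)}{n-1}$; the interior scaling test (where $\vr^{\fa}$ is locally comparable to a constant) shows the exponent cannot be taken larger, while the boundary scaling gives no stronger restriction because $n+1+\fa<n+1$ there — this is precisely the content of the ``$\fa^+$'' in \eqref{2*a}. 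Second, the appeal to the even-reflection trick and to the ``oddness/evenness structure of solutions'' of \eqref{eq-U-e} is out of place here: the lemma is asserted for arbitrary $u\in C^1_c(\Omega)$, not for solutions of the extension problem, and $\Omega$ is already a domain in the doubled manifold $\fX^{\fm}$, so $u$ is defined on both sides of $M^n$ to begin with; no reflection is needed and the parity of solutions plays no role in a purely functional-analytic embedding. With these two points repaired the argument is complete.
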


We now describe the regularity properties of the solutions under consideration. The next lemmas are straightforward adaptations of the results in \cite{fks,STV1}. We will always consider an energy solution of 
   \begin{align}\label{eq-U-e}
        -\Div_{\bg}(\vr^{\fa} \nabla_{\bg} U) + \vr^{\fa} \cJ_{\bg} U = 0 \quad \text{ in some geodesic ball}\ \cB_r(x_+)  \subset \fX^{\fm},
    \end{align}
    where $\cJ_{\bg} \equiv C_{n,\gamma} R_{\bg}$.
The following lemma follows from classical integration by parts. 

\begin{lemma}[Caccioppoli type inequality]
     Assume that $U$ solves \eqref{eq-U-e}. 
         Let $r \in (0, 1]$, $\beta>1$ and $\eta \in C_0^\infty(\cB_r(x_+))$. Then there exists a constant $C = C(r, \fa, \bg) > 0$  such that
    \begin{align}
\int_{\cB_r(x_+)}\vr^{\fa} |\nabla_{\bg} (\eta U^\frac{\beta}{2})|^2 \dvol_{\bg} \leq C\int_{\cB_r(x_+)} \vr^{\fa}  (|\nabla_{\bg} \eta|^2 +|\cJ_{\bg}| \eta^2)U^2 \dvol_{\bg}.
    \end{align}
\end{lemma}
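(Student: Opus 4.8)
The plan is to test the weak formulation of \eqref{eq-U-e} against a suitable cutoff of a power of $U$ and then absorb the bad terms. First I would fix $r\in(0,1]$, $\beta>1$, and $\eta\in C_0^\infty(\cB_r(x_+))$, and work with the test function $\varphi \equiv \eta^2 U^{\beta-1}$. Strictly speaking, one should first truncate $U$ at height $\pm L$ and replace $U^{\beta-1}$ by a Lipschitz power $U_L^{\beta-1}$ (using $|U_L|^{\beta-2}U_L$ to allow sign changes, or work with $|U|$ since the equation is invariant under $U\mapsto |U|$ near a zero only in a De Giorgi sense; more cleanly, one runs the argument for $U^{\pm}$ and for the truncation $U_L \equiv \min(\max(U,-L),L)$), so that $\varphi\in H^{1,\fa}_0(\cB_r(x_+))$ is an admissible test function and all integrations by parts are justified; then one lets $L\to\infty$ at the end via monotone convergence, which is where the hypothesis $\beta>1$ (ensuring the relevant weighted integrals control each other) is used. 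I will suppress these routine truncation details in the writeup.

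The key computation: plugging $\varphi = \eta^2 U^{\beta-1}$ into $\int \vr^\fa\langle\nabla_{\bg}U,\nabla_{\bg}\varphi\rangle\,\dvol_{\bg} + \int \vr^\fa \cJ_{\bg} U\varphi\,\dvol_{\bg} = 0$ gives
\begin{align}
(\beta-1)\int_{\cB_r(x_+)}\vr^\fa \eta^2 U^{\beta-2}|\nabla_{\bg}U|^2\,\dvol_{\bg} + 2\int_{\cB_r(x_+)}\vr^\fa \eta U^{\beta-1}\langle\nabla_{\bg}U,\nabla_{\bg}\eta\rangle\,\dvol_{\bg} = -\int_{\cB_r(x_+)}\vr^\fa \cJ_{\bg}\eta^2 U^{\beta}\,\dvol_{\bg}.
\end{align}
On the other hand $\nabla_{\bg}(\eta U^{\beta/2}) = U^{\beta/2}\nabla_{\bg}\eta + \tfrac{\beta}{2}\eta U^{\beta/2-1}\nabla_{\bg}U$, so
\begin{align}
|\nabla_{\bg}(\eta U^{\beta/2})|^2 \leq 2 U^{\beta}|\nabla_{\bg}\eta|^2 + \tfrac{\beta^2}{2}\eta^2 U^{\beta-2}|\nabla_{\bg}U|^2.
\end{align}
Thus it suffices to bound $\int\vr^\fa\eta^2 U^{\beta-2}|\nabla_{\bg}U|^2$ and $\int\vr^\fa U^\beta|\nabla_{\bg}\eta|^2$ by the right-hand side. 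The second is already of the desired form. For the first, use the identity above, Cauchy–Schwarz on the cross term $2\int\vr^\fa\eta U^{\beta-1}\langle\nabla_{\bg}U,\nabla_{\bg}\eta\rangle$ with a parameter $\varepsilon$ (writing it as $2\int (\vr^{\fa/2}\eta U^{\beta/2-1}\nabla_{\bg}U)\cdot(\vr^{\fa/2}U^{\beta/2}\nabla_{\bg}\eta)$ and applying Young's inequality with weight $\varepsilon$), and choose $\varepsilon$ small relative to $\beta-1$ so that the $\int\vr^\fa\eta^2 U^{\beta-2}|\nabla_{\bg}U|^2$ term on the right is absorbed into the left of the tested identity. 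This yields
\begin{align}
\int_{\cB_r(x_+)}\vr^\fa \eta^2 U^{\beta-2}|\nabla_{\bg}U|^2\,\dvol_{\bg} \leq C(\beta)\int_{\cB_r(x_+)}\vr^\fa\big(|\nabla_{\bg}\eta|^2 + |\cJ_{\bg}|\eta^2\big)U^2\,\dvol_{\bg},
\end{align}
and combining with the pointwise bound on $|\nabla_{\bg}(\eta U^{\beta/2})|^2$ finishes the proof; the constant $C$ depends on $r$ through nothing essential but on $\fa$ and $\bg$ only via the bound $|\cJ_{\bg}|\le C(n,\gamma,\bg)$ guaranteed by Lemma \ref{l:E} (recall $\cJ_{\bg} = C_{n,\gamma}R_{\bg}$ and $R_{\bg}$ is bounded since $\bg$ is at least $C^{n-1,\alpha}$, hence continuous, on the compact $\fX^{\fm}$), together with the $A_2$ property of $\vr^\fa$ which makes all the weighted integrals finite for $U\in H^{1,\fa}$.

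The main obstacle is not the algebra — that is the standard Caccioppoli manipulation — but the \emph{justification of admissibility} of the test function: $U$ is only in $H^{1,\fa}$, so $U^{\beta-1}$ need not lie in any reasonable space without truncation, and one must verify that the truncated test functions $\eta^2 U_L^{\beta-1}$ genuinely belong to $H^{1,\fa}_0(\cB_r(x_+))$ (using the chain rule for Sobolev functions composed with Lipschitz maps, valid in the $A_2$-weighted setting by \cite{fks}) and that the passage $L\to\infty$ is legitimate. This is where the weight being $A_2$ (established just above via \eqref{mu.def}) and the De Giorgi–Nash–Moser–type density and truncation lemmas from \cite{fks} are invoked; once those are in hand the estimate follows as sketched. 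I would state explicitly that these truncation and chain-rule facts are the ones recorded in \cite{fks,STV1} and not reprove them.
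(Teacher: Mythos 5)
Your proof is correct and is exactly what the paper intends: the paper offers no written argument, stating only that the lemma ``follows from classical integration by parts,'' and your test-function-plus-absorption argument (with the $\varphi=\eta^2 U^{\beta-1}$ choice, Young's inequality on the cross term, and the truncation/admissibility discussion invoking \cite{fks,STV1}) is precisely the classical Caccioppoli route. One small remark: the right-hand side as printed in the paper (and carried into your final display) reads $U^2$, but the algebra you run produces $U^\beta$ there, so you should not match the literal statement without remark — this is almost certainly a typo in the paper's lemma; similarly, the constant genuinely depends on $\beta$ (as you correctly write $C(\beta)$) rather than only on $(r,\fa,\bg)$ as the lemma claims.
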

Iterating the previous lemma, one gets classically the following upper bound by Moser's argument using the previous Sobolev inequality .  
\begin{lemma}[$L^\infty$ bounds]
 Assume that $U$ solves \eqref{eq-U-e}. Then for $r\in (0,1)$, there exists a constant $C = C(r, \fa, n, \bg) > 0$ such that
    \begin{align*}
\|U\|_{L^\infty(\cB_r(x_+))} \leq C\|U\|_{L^{2,\fa}(\cB_1(x_+))}.
    \end{align*}
\end{lemma}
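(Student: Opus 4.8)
The statement is the standard local boundedness estimate for energy solutions of the degenerate equation \eqref{eq-U-e}, and the plan is to run De Giorgi--Nash--Moser iteration adapted to the $A_2$-weight $\vr^\fa$, exactly in the spirit of \cite{fks} and \cite{STV1}. The key structural inputs are already recorded in the excerpt: the Sobolev inequality \eqref{sobo>-1} with gain exponent $2^*(\fa) > 2$, and the Caccioppoli-type inequality for powers $U^{\beta/2}$. Note that because $\fa \in (-1,1)$ we have $2^*(\fa) = 2^*(\fa^+)$, so the Sobolev gain is the one associated with the \emph{nonnegative} part of $\fa$; this is the only place the special range of $\fa$ enters, and it suffices to fix a single dimensional exponent $\chi \equiv 2^*(\fa)/2 > 1$ for the iteration.

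First I would set up the iteration on a nested family of geodesic balls $\cB_{r_k}(x_+)$ with $r_k \downarrow r$, say $r_k = r + 2^{-k}(1-r)$ (after a harmless rescaling reducing to $r < 1$ and $\cB_1(x_+)$), choosing cutoffs $\eta_k \in C_0^\infty(\cB_{r_k}(x_+))$ equal to $1$ on $\cB_{r_{k+1}}(x_+)$ with $|\nabla_{\bg}\eta_k| \lesssim 2^k/(1-r)$. For exponents $\beta_k = \chi^k \cdot 2 \to \infty$, the Caccioppoli inequality applied to $U^{\beta_k/2}$ together with \eqref{sobo>-1} applied to $\eta_k U^{\beta_k/2}$ yields a recursive estimate of the form
\begin{align*}
\left( \int_{\cB_{r_{k+1}}} \vr^\fa |U|^{\chi \beta_k} \dvol_{\bg} \right)^{1/\chi} \leq C\, \beta_k^2\, \frac{4^k}{(1-r)^2} \int_{\cB_{r_k}} \vr^\fa |U|^{\beta_k} \dvol_{\bg},
\end{align*}
where the factor $|\cJ_{\bg}| = C_{n,\gamma}|R_{\bg}|$ is bounded on the fixed ball (here one uses that $R_{\bg}$ is at least continuous, by Lemma \ref{l:E}), and where I have absorbed the extra $\beta^2$-type factor coming from differentiating the power $U^{\beta/2}$. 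Writing $\Phi_k \equiv \left( \int_{\cB_{r_k}} \vr^\fa |U|^{\beta_k} \dvol_{\bg} \right)^{1/\beta_k}$, taking $\beta_k$-th roots converts this into $\Phi_{k+1} \leq (C\chi^{2k})^{1/\beta_k} \Phi_k$, and since $\sum_k k/\chi^k < \infty$ the infinite product converges, giving $\|U\|_{L^\infty(\cB_r(x_+))} = \lim_k \Phi_k \leq C(r,\fa,n,\bg)\, \Phi_0 = C \|U\|_{L^{2,\fa}(\cB_1(x_+))}$.

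The main technical point — more a bookkeeping obstacle than a conceptual one — is the legitimacy of inserting the power $U^{\beta/2}$ into the weak formulation when $U$ is merely an energy solution: one truncates $U$ at level $L$, runs the iteration with $U_L \equiv \min(|U|, L)$ or $U \cdot \mathbf{1}_{\{|U| \le L\}}$ so that all integrals are finite, derives the estimate with constants independent of $L$, and then lets $L \to \infty$ via monotone convergence. This is routine given the Caccioppoli inequality as stated (which already handles the truncated powers), so I would only indicate it and refer to \cite{fks, STV1} for the details. A secondary point is that the geodesic balls $\cB_r(x_+)$ in the doubled manifold $\fX^{\fm}$ equipped with the merely $C^{n-1,\alpha}$ metric $\bg$ still support \eqref{sobo>-1} with a uniform constant on a fixed coordinate chart — this is exactly why the Sobolev lemma above is stated on domains $\Omega$ rather than on Euclidean balls, so no extra work is needed beyond covering $\cB_r(x_+)$ by finitely many such charts.
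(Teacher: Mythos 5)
Your proof is correct and takes essentially the same route as the paper: the paper dispatches this lemma in one sentence ("iterating the previous lemma, one gets classically the following upper bound by Moser's argument using the previous Sobolev inequality"), and your argument is precisely that Moser iteration — combining the Caccioppoli inequality for powers with the weighted Sobolev inequality \eqref{sobo>-1}, together with the standard truncation device and a reference to \cite{fks,STV1} — spelled out in detail.
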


The previous lemma allows to consider the term $ \vr^{\fa} \cJ_{\bg} U$ as the r.h.s. of the equation \eqref{eq-U-e} and apply directly the arguments in \cite{STV1} to deduce that 

\begin{lemma}\label{lem-eps-C1a} Let $x_+\in M^n$.
 Assume that $U$ is an even solution  to \eqref{eq-U-e} under the doubling $\fX^{\fm}$ such that $P_{2\gamma} f =0$ where $f$ is the trace of $U$ on $M^n$. Then for $r\in (0,1)$, there exists a constant $C = C(r, \fa, n, \bg) > 0$ such that
    \begin{align*}
\|U\|_{C^{1,\alpha}(\cB_r(x_+))} \leq C\|U\|_{L^{2,\fa}(\cB_1(x_+))}.
    \end{align*}
\end{lemma}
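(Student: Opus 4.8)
The target is the $C^{1,\alpha}$ estimate for an even energy solution $U$ of \eqref{eq-U-e} whose trace $f$ satisfies $P_{2\gamma}f=0$, with the bound controlled by $\|U\|_{L^{2,\fa}(\cB_1(x_+))}$. The plan is to bootstrap from the preceding two lemmas (the Caccioppoli inequality and the $L^\infty$ bound) and then invoke the De Giorgi--Nash--Moser--Schauder theory for degenerate equations with $A_2$ weights, in the form developed in \cite{fks} and refined in \cite{STV1}. First I would fix nested balls $\cB_r(x_+) \Subset \cB_\sigma(x_+) \Subset \cB_1(x_+)$ and, using the $L^\infty$ bound, regard the zeroth-order term $\vr^{\fa}\cJ_{\bg}U$ as a fixed right-hand side: write the equation as $-\Div_{\bg}(\vr^{\fa}\nabla_{\bg}U) = \vr^{\fa} F$ with $F \equiv -\cJ_{\bg}U = -C_{n,\gamma}R_{\bg}U$, so that $\|F\|_{L^\infty(\cB_\sigma(x_+))} \le C\|R_{\bg}\|_{L^\infty}\,\|U\|_{L^{2,\fa}(\cB_1(x_+))}$, where $R_{\bg}$ is bounded since $\bg$ is $C^{n-1,\alpha}$ up to the (now interior, after doubling) hypersurface $M^n$ and $n\ge 2$. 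The key structural point enabling everything is that $\vr^{\fa}$ is an $A_2$-Muckenhoupt weight, as recorded just above, together with the fact that $\bg$ is a $C^{n-1,\alpha}$ metric on the doubled manifold $\fX^{\fm}$, so the principal part $\vr^{\fa}\bg^{ij}$ has the degeneracy precisely of the model Caffarelli--Silvestre weight modulated by Hölder-continuous coefficients.

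The main steps would then be: (i) apply the interior $C^{1,\alpha}$ regularity of \cite{STV1} for solutions of $\Div(\vr^{\fa}A(z)\nabla U)=\vr^{\fa}F$ with $A$ Hölder-continuous and symmetric-uniformly-elliptic after factoring out $\vr^{\fa}$; this yields $\|U\|_{C^{1,\alpha'}(\cB_r(x_+))} \le C\big(\|U\|_{L^{2,\fa}(\cB_\sigma(x_+))} + \|F\|_{L^{q,\fa}(\cB_\sigma(x_+))}\big)$ for suitable $\alpha'$ and $q$ depending on $n,\fa$; (ii) absorb the $F$-term via step (i)'s $L^\infty$ bound; (iii) iterate/chain the balls to reach $\cB_1(x_+)$ on the right. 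Crucially, the evenness of $U$ across $M^n$ is what makes the doubled equation \eqref{eq-U-e} hold as a genuine equation in a full neighborhood $\cB_1(x_+)\subset\fX^{\fm}$ rather than merely a one-sided problem with a Neumann-type condition, and the hypothesis $P_{2\gamma}f=0$ is exactly the compatibility condition guaranteeing that the even reflection of the one-sided extension from Proposition \ref{p:extension-problem-in-FG-metric} is a weak solution on the doubled space (no singular measure supported on $M^n$). I would make this reflection/no-jump argument explicit: test the doubled equation against $\eta\in C_c^\infty(\cB_1(x_+))$, split the integral into the two copies of $\overline{X^{\fm}}$, integrate by parts on each, and observe the boundary terms on $M^n$ cancel precisely because $\lim_{\vr\to 0}\vr^{\fa}\p_{\vr}U = \frac{2\gamma\Gamma(-\gamma)}{2^{2\gamma}\Gamma(\gamma)}P_{2\gamma}f = 0$.

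\textbf{Main obstacle.} The principal difficulty is not the interior estimate away from $M^n$ (there the equation is uniformly elliptic and classical Schauder applies, giving $C^{1,\alpha}$ locally with constants blowing up like $\vr^{-|\fa|}$ near $M^n$), but obtaining the estimate \emph{uniformly up to and across} $M^n$ where the weight degenerates. This is where one genuinely needs the theory of \cite{STV1}: the relevant point is that for $\fa\in(-1,1)$ the weight $\vr^{\fa}$ lies in $A_2$ with the special feature that the natural scaling matches that of the model extension operator, so that the Liouville-type classification of global solutions on $\dR^{\fm}_+$ (even solutions homogeneous of low degree are polynomials) transfers, via a blow-up/compactness argument, to the Hölder-coefficient setting. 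A secondary technical care is verifying that the doubled metric's $C^{n-1,\alpha}$ regularity — which for $n\ge 2$ gives at least $C^{1,\alpha}$ coefficients — suffices to run the Schauder-type iteration yielding $C^{1,\alpha}$ for $U$; since Lemma \ref{geom-boundary} ensures $(M^n,h)$ is totally geodesic, the reflected metric is at worst $C^{n-1,\alpha}$ (no loss of a derivative from a corner), which is comfortably enough. I would therefore present the proof as: reduce to the model-coefficient regularity theorem of \cite{STV1} by (a) the $L^\infty$ bound to move $\cJ_{\bg}U$ to the right-hand side, (b) the doubling/evenness argument using $P_{2\gamma}f=0$ to get a genuine equation on $\cB_1(x_+)$, and (c) a finite chain of ball inclusions to convert the local bound into the stated one with $L^{2,\fa}(\cB_1(x_+))$ on the right.
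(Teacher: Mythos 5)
Your proposal follows essentially the same route as the paper: invoke the $L^\infty$ bound to treat $\vr^{\fa}\cJ_{\bg}U$ as a bounded right-hand side, observe that the even reflection across the totally geodesic $M^n$ turns the problem into a genuine interior equation on the doubled manifold with the $A_2$ weight $\vr^{\fa}$, and then apply the interior $C^{1,\alpha}$ regularity of \cite{STV1} (their Theorem~1.2) for $\Div(\vr^{\fa}A\nabla U)=\vr^{\fa}F$ with H\"older coefficients. The paper's proof is terser but identical in substance; your additional remark that $P_{2\gamma}f=0$ is precisely the condition forcing the distributional flux $\lim_{\vr\to 0}\vr^{\fa}\p_\vr U$ to vanish (so no singular measure is created on $M^n$ by the reflection) is a correct and useful elaboration of a step the paper leaves implicit.
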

\begin{proof}
We follow the argument in [\cite{STV1}, Theorem 1.2] for instance. By the previous lemma, we know that $U$ solves locally 
\begin{align}\label{eq-U-temp}
        -\Div_{\bar g} (\vr^\fa  \nabla_{\bar g} U) = \vr^\fa  \zeta \text{ in } \cB_1(x_+),
    \end{align}
    where $\zeta =- \cJ_{\bg} U \in L^p(\cB_1(x_+))$ for any $p>1$.  Now since $U$ is assumed to be even w.r.t. the doubling $\fX^{\fm}$, we are in the {\it  local } situation of \cite{STV1} and the result follows from applying the techniques to prove Theorem 1.2 there. Notice that our metric enjoys some generic $C^{\fm-2}$ regularity, which is of course enough to derive the desired $C^{1,\alpha}$ bound.  
\end{proof}

\begin{remark}
In the previous lemma, the symmetry assumption is necessary as pointed out in \cite{STV1}. In our special case since the right hand side involves $U$ and the {\it  doubled metric} $\bar g$, which is also symmetric.   
\end{remark}

The following lemma is a summary of the previous discussion and provides additional regularity depending on the smoothness of the compactification. 

\begin{lemma}\label{eq-C1a-U}
    Let $U$ be an even solution to \eqref{eq-U-e} under the doubling $\fX^{\fm}$ such that $P_{2\gamma} f =0$ where $f$ is the trace of $U$ on $M^n$. Then,
\begin{align}\label{reg1}
    \|U\|_{C^{n - 1,\alpha}(\cB_{2/3}(x_+))}\leq C(n, \fa, \bg)\|U\|_{L^{2,\fa}(\cB_1(x_+))}.
\end{align}
Furthermore, 
\begin{enumerate}
\item 	
if $n = 2$, or $n \geq 4$ is even and the conformal infinity $(M^n,[h])$ is obstruction flat, then for any $k\in \dN_0$ and $\alpha \in (0,1)$,  
\begin{align}\label{reg2}
    \|U\|_{C^{k,\alpha}(\cB_{1/2}(x_+))}\leq C(k, \alpha, n, \fa, \bg) \|U\|_{L^{2,\fa}(\cB_1(x_+))};
\end{align}
\item 
if $n$ is odd, then for any $k\in\dN_0$ and $\alpha \in (0,1)$,
\begin{align}\label{reg3}
    \|U\|_{C^{k,\alpha}(\cB_{1/2}(x_+))}\leq C(k, \alpha, n, \fa, \bg)\|U\|_{L^{2,\fa}(\cB_1(x_+))}.
\end{align}
\end{enumerate}
 \end{lemma}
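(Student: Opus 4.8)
\textbf{Proof plan for Lemma \ref{eq-C1a-U}.}

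The plan is to bootstrap from the baseline $C^{1,\alpha}$ estimate of Lemma \ref{lem-eps-C1a} using the regularity of the Fefferman--Graham compactification recorded in Proposition \ref{p:general-regularity-FG} and Proposition \ref{p:regularity-obstruction-free}. First I would recall the structure of the equation: by Lemma \ref{lem-eps-C1a}, the even solution $U$ solves, in $\cB_1(x_+) \subset \fX^{\fm}$,
\begin{align}\label{e:U-rhs-proof}
-\Div_{\bg}(\vr^{\fa}\nabla_{\bg} U) = \vr^{\fa}\zeta, \quad \zeta \equiv -\cJ_{\bg} U = -C_{n,\gamma}R_{\bg}U,
\end{align}
and $\|U\|_{C^{1,\alpha}(\cB_{2/3}(x_+))} \leq C\|U\|_{L^{2,\fa}(\cB_1(x_+))}$ already holds for some $\alpha\in(0,1)$. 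For the general bound \eqref{reg1}, the idea is to iterate the local Schauder-type theory for the weighted operator $\Div_{\bg}(\vr^{\fa}\nabla_{\bg}\,\cdot\,)$ from \cite{STV1, STV2}: each gain of a derivative on $U$ requires one more derivative of the coefficients (the components of $\bg$, the weight $\vr^{\fa}$, and $R_{\bg}$) and of the right-hand side $\vr^{\fa}\zeta$. Since the doubled metric $\bg$ is $C^{n-1,\alpha}$ up to (now across) $M^n$ by Proposition \ref{p:general-regularity-FG}, and $\vr$ is $C^{n-1,\alpha}$ with $\vr^{\fa}$ an $A_2$-weight behaving like the signed distance near $M^n$ by \eqref{mu.def}, and $R_{\bg} \in C^{n-3,\alpha}$ by Lemma \ref{l:E}, the iteration can be run $n-1$ times: at each stage I differentiate the equation, treat the lower-order terms as a controlled right-hand side in the weighted $L^p$/$C^\beta$ scale, and invoke the evenness of both $U$ and $\bg$ across $M^n$ so that the ``local'' (two-sided) estimates of \cite{STV1} apply without a symmetry obstruction — exactly as in Lemma \ref{lem-eps-C1a}. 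Tracking the worst coefficient regularity ($C^{n-3,\alpha}$ from $R_{\bg}$, but note $R_{\bg}$ enters only as a zeroth-order coefficient, so it costs one fewer derivative than a first-order coefficient would) yields $U \in C^{n-1,\beta}(\cB_{2/3}(x_+))$ for $\beta\in(0,\alpha)$, and absorbing $\beta$ into $\alpha$ gives \eqref{reg1}.

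For items (1) and (2), the point is that under the stated hypotheses the obstruction to smoothness disappears: when $n$ is odd, $\sB \equiv 0$ in the expansion \eqref{e:w-expansion}, so $\vr$ and hence $\bg$ are $C^\infty$ up to $M^n$; when $n\geq 4$ is even and $(M^n,[h])$ is obstruction flat, Proposition \ref{p:regularity-obstruction-free} combined with the Remark after it gives $\sB \equiv 0$ again, so $\vr$ is smooth up to the boundary and $\bg$ is $C^{k,\alpha}$ for every $k$; the case $n=2$ is covered by the Remark after Proposition \ref{p:general-regularity-FG}. In all these cases $R_{\bg} \in C^\infty$ as well by Lemma \ref{l:E}. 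Hence the same differentiate-and-iterate bootstrap now runs indefinitely: for each $k\in\dN_0$ and $\alpha\in(0,1)$ one obtains \eqref{reg2} (equivalently \eqref{reg3}) on a slightly smaller ball $\cB_{1/2}(x_+)$, the shrinkage of the ball at each iteration step being absorbed into a finite chain from radius $2/3$ down to $1/2$.

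The main obstacle, and the step deserving the most care, is the two-sided (local across $M^n$) higher-order weighted Schauder estimate for $\Div_{\bg}(\vr^{\fa}\nabla_{\bg}\,\cdot\,) = \vr^{\fa}\zeta$ with the degenerate/singular weight $\vr^{\fa}$, $\fa\in(-1,1)$: near $\{\vr=0\}$ the natural estimates are in the $\vr^{\fa}$-weighted Hölder scale and the derivatives transverse to $M^n$ behave differently from the tangential ones, so one must be careful that the even reflection of $U$ together with the even reflection of $\bg$ across the totally geodesic boundary (Lemma \ref{geom-boundary}) produces a genuine solution of the reflected equation with coefficients of the claimed regularity — otherwise spurious singularities in odd-order normal derivatives could appear. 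This is precisely where the evenness hypothesis on $U$ (and the fact that $\zeta$ inherits evenness from $U$ and the doubled metric, as noted in the Remark after Lemma \ref{lem-eps-C1a}) is used, and where I would lean directly on the local regularity theory developed in \cite{STV1, STV2}, checking only that the coefficient regularity we have suffices at each order. Once that input is granted, the rest is a routine, if somewhat bookkeeping-heavy, induction on the number of derivatives.
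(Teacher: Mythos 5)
Your plan is correct and follows essentially the same route as the paper: bootstrap from the $C^{1,\alpha}$ bound of Lemma \ref{lem-eps-C1a} by repeatedly differentiating \eqref{eq-U-temp} and invoking the weighted Schauder theory of \cite{STV1} (Section 6 and Theorem 7.9 there), with the coefficient regularity from Propositions \ref{p:general-regularity-FG}--\ref{p:regularity-obstruction-free} and Lemma \ref{l:E} setting the ceiling at $C^{n-1,\beta}$ in general and removing it in the odd/obstruction-flat cases. The only technical point the paper makes explicit that you leave implicit is that, when differentiating in the normal direction, the relevant ``dual'' operator carries the weight exponent $-\fa$, and the bootstrap works precisely because $-\fa \in (-1,1)$ remains in the admissible range for the \cite{STV1} theory; this is worth stating when you fill in the details.
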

 \begin{proof}
Notice first that since the equation under consideration is posed in a ball of the doubled manifold $\fX^{\fm}$, all the estimates are {\sl interior } estimates and in particular hold across the characteristic manifold. By Lemma \ref{lem-eps-C1a}, we already know that $U$ is $C^{1,\alpha}$ for some $\alpha \in (0,1)$. More concretely, invoking \eqref{eq-gy1} and Proposition \ref{p:general-regularity-FG}, it can be observed that the even extension of the metric $\bar g$ onto $\cB_1(x_+)$ is locally $C^{n - 1,\beta}$ but is not $C^n$ in general and the scalar curvature term $\cJ_{\bg}$ is then locally $C^{n-3,\beta}$. To get the desired higher estimates under our standing geometric assumptions, we use the bootstrap argument in [\cite{STV1}, Section 6] noticing that in our case of $\fa \in (-1,1)$, the "dual" operator, which is associated to the parameter  $-\fa$ is also of the form $L_{-\fa}$ with $-\fa \in (-1,1)$. Therefore since $U$ satisfies  locally
\begin{align}-\Div_{\bg}(\vr^{\fa} \nabla_{\bg} U) =- \vr^{\fa} \cJ_{\bg} U, 
\end{align}
taking derivatives in the tangential and normal variables respectively, and using the Schauder estimates in Theorem 7.9 in \cite{STV1} (the lemmas before this theorem allow to deal with the normal derivatives solely) leads to the desired estimate \eqref{reg1}. The other two estimates \eqref{reg2}-\eqref{reg3} are obtained in exactly the same way taking higher derivatives.  
 \end{proof}
 
 \begin{remark}
 It is important to notice for later purposes that Lemma \ref{eq-C1a-U} provides generically {\sl finite} smoothness of the solution $U$. In view of the $\epsilon-$regularity we will need later, this technical aspect requires an adaptation of stability results. 
 \end{remark}

\section{Singular set in the model case}\label{model_section}

In this section, we focus on the model case. Let $(X^{n+1}, g_+)\equiv (\dH^{n+1}, g_{\dH^{n+1}})$ be the hyperbolic space, where 
\begin{align}g_+ \equiv \frac{dx^2 + dy^2}{y^2},\quad y>0, \quad x\equiv (x_1,\ldots, x_n)\in \dR^n\end{align}  is the hyperbolic metric on $\dH^{n+1}$ with $\sec_{g_+} \equiv -1$. Now $(X^{n+1},g_+)$ can be {\it partially compactified} to the upper half space $\dR_+^{n+1}\equiv \{(x,y)| y \geq 0,\ x\in\dR^n \}$  
equipped with the Euclidean metric $g_{\dR_+^{n+1}} \equiv dx^2 + dy^2$.

Throughout this section, we will consider the fractional Laplacian $(-\Delta)^{\gamma}$ defined on the conformal infinity $\dR^n$, where
 $\gamma\in(0,1)$. 
 Now let $f:\dR^n \to \dR$ be a smooth function in the Schwarz space $\mathcal{S}(\dR^n)$. Then there exists a unique Caffarelli-Silvestre type extension $U\in H^{1,\fa}(\dR_+^{n+1})$ of $f$ that satisfies
 \begin{align}\label{e:U-extension}
 \begin{cases}
 	\Div(y^{\fa}\nabla U) = 0 & \text{in}\ \dR_+^{n+1},
 	\\
 	U(x,0) = f(x) & \text{on} \ \dR^n,
 \end{cases}	
 \end{align}
where $\fa\equiv 1 - 2\gamma\in(-1,1)$. Then 
\begin{align}\label{eq-Delta-f-Uy}
(-\Delta)^{\gamma} f \equiv \frac{2^{2\gamma}}{2\gamma}\cdot \frac{\Gamma(\gamma)}{\Gamma(-\gamma)}\cdot \lim\limits_{y\to 0}y^{\fa}\frac{\p U}{\p y}.	
\end{align}
We are interested in a $\gamma$-harmonic function $f\in \mathcal{S}(\dR^n)$ that satisfies $(-\Delta)^{\gamma} f = 0$ on $B_1(\bo)\subset \dR^n$. 
\subsection{The quantitative symmetry and quantitative stratification}

This subsection summarizes some definitions and examples. To begin with, let us recall some basic definitions given in \cite{Cheeger-Naber-Valtorta}.

\begin{definition}Let $B_1(\bo)\subset \dR$. 
A smooth function $u: B_1(\bo)\to \dR$ is called {\it nondegenerate} if for every $x$ some derivative of some order is nonzero.
\end{definition}

\begin{example}
	It is immediate to see that any non-constant analytic function is nondegenerate. 	
\end{example}
 
For nondegenerate functions, it is essential to define the tangent map to study delicate properties on small scales.  

\begin{definition}
[Tangent map on $\dR^n$] Let $B_1(\bo)\subset\dR^n$. Let $u:B_1(\bo)\to \dR$ be a smooth nondegenerate function and $r>0$. Then for every $x\in B_{1-r}(\bo)$, we define 
\begin{align}
T_{x,r} u(\xi) \equiv \frac{u(x + r \xi) - u(x)}{ {\displaystyle \left(\fint_{\p B_1(\bo)}(u(x + r \zeta) - u(x))^2d\sigma(\zeta)\right)^{\frac{1}{2}} } }, \quad \xi \in B_1(\bo)\subset \dR^n.	
\end{align}
	If the denominator vanishes, we set $T_{x,r}u=\infty$. We also define 
	\begin{align}
	T_{x,0} u(\xi) = T_x u(\xi) = \lim\limits_{r\to 0} T_{x,r} u(\xi).	
	\end{align}
 
\end{definition}

We remark that if $u$ is smooth and nondegenerate at $x$, then a unique 
tangent map $T_x u$ exists. Moreover, up to rescaling, $T_x u $ is the leading order polynomial of the Taylor expansion of $u-u(x)$ at $x$. To define the symmetry of a function, we need the following notion. 
\begin{definition}
A polynomial $P$ is said to be {\it  homogeneous at $x\in \dR^n$} if for some $d\in\dN_0$,
\begin{align}
P(y) = \sum\limits_{|\beta| = d}A_{\beta}(y - x)^{\beta},	
\end{align}
where $\beta$ is a multi-index and $|\beta|= \sum\limits_{i}\beta_i=d$.	
\end{definition}

\begin{definition}
[$k$-symmetry] Let $u:\dR^n \to \dR$ be a continuous function. 
\begin{itemize}
\item $u$ is called $0$-symmetric at the origin $\bo\in\dR^n$ if $u$ is a homogeneous polynomial at the origin $\bo \in \dR^n$. 
\item $u$ is called $k$-symmetric at the origin $\bo\in\dR^n$  if  $u$ is called $0$-symmetric at the origin and there exists a $k$-dimensional subspace $V \subset \dR^n$ such that 
\begin{align}
u(x + y) = u(x), \quad \forall x\in \dR^n,\  y\in V.	
\end{align}
 	
\end{itemize}

\end{definition}

\begin{definition}[First-order stratification]
Given a smooth nondegenerate function $u:B_1(\bo)\to \dR$ we define the $k^{th}$-singular stratum of $u$ by 
\begin{align}
\mathcal{S}^k(u) \equiv \{x\in B_1(\bo): T_xu\ \text{is not}\ (k+1)\text{-symmetric}\}.	
\end{align}

\end{definition}

\begin{example}Here are some basic examples. 
\begin{itemize}
	\item 	Any homogeneous polynomial of degree $1$, i.e., a linear function, on $\dR^n$ is $(n-1)$-symmetric.
\item Let $u$ be a nondegenerate function on $\dR^n$. Then for almost every point $x$, the tangent map $T_x u$ is linear, and hence $\dim_{\mathcal{H}}(\mathcal{S}^{n-1}) = n$ and $\mathcal{S} = \mathcal{S}^{n-2}$.  
\end{itemize}
\end{example}

The {\it quantitative differentiation theory} in our context 
relies on the quantitative stratification of the singular set which provides a precise characterization of the function on definite scales. 

\begin{definition}
[Quantitative symmetry on $\dR^n$] Let $B_1(\bo)$ be the unit ball in $\dR^n$, and let $u: B_1(\bo)\to\dR$ be a continuous function. We say $u$ is $(k,\eta,s)$-symmetric at $x$ if there exists a $k$-symmetric polynomial $P$ such that $\fint_{\p B_1(\bo)}|P|^2 d\sigma_{\p B_1(\bo)}= 1$ and 
\begin{align}
\fint_{B_1(\bo)}|T_{x,s} u - P|^2 \dvol_{\dR^n} < \eta.	
\end{align}
\end{definition}

In our interested context of $\dR^{n + 1}\equiv\{(x,y): x\in\dR^n, \ y\in \dR\}$, we will also consider how a function $U: \cB_1(\bo_+)\to \dR$ behaves near the hypersurface $\dR^n \equiv\{y = 0\} \subset \dR^{n+1}$, where $\cB_1(\bo_+)\subset \dR^{n+1}$ is the unit ball centered at the origin $\bo_+ \in\dR^{n+1}$. For our purpose,  the weighted measure $|y|^{\fa}\dvol_{\dR^{n + 1}}$ on $\dR^{n+1}$ will be frequently used. Now we can define the tangent map and the quantitative symmetry similarly.

\begin{definition}
[Tangent map on $\dR^{n+1}$]  Let $\cB_1(\bo_+)$ be the unit ball in $\dR^{n+1}$. Let $U:\cB_1(\bo_+)\to \dR$ be a smooth nondegenerate function and $r>0$. Then for every $x_+ \equiv (x, 0) \in \cB_{1-r}(\bo_+)$, we define 
\begin{align*}
\cT_{x_+,r} U(\xi) \equiv \frac{U(x_+ + r \xi) - U(x_+)}{ {\displaystyle \left(\frac{1}{r^{\fa}}\fint_{\p \cB_1(\bo_+)}   |y(x_+ + r\zeta)|^{\fa} \cdot \left(U(x_+ + r \zeta) - U(x_+)\right)^2 d\sigma_{\p \cB_1(\bo_+)}(\zeta)\right)^{\frac{1}{2}} } }, \quad \xi \in \cB_1(\bo_+).	
\end{align*}
	If the denominator vanishes, we set $\cT_{x_+,r}U =\infty$. We also define 
	\begin{align}
	\cT_{x_+,0} U(\xi) = \cT_{x_+} U(\xi) = \lim\limits_{r\to 0} \cT_{x_+,r} U(\xi).
	\end{align}
 
\end{definition}

\begin{definition}
[Quantitative symmetry on $\dR^{n+1}$] A continuous function $U: \cB_1(\bo_+)\to\dR$  is said to be $(k,\eta,s ,\fa)$-symmetric at $x_+\equiv(x,0)\in\dR^{n+1}$ if there exists a $k$-symmetric polynomial $P$ such that $\frac{1}{s^{\fa}}\fint_{\p \cB_1(\bo_+)}|y|^{\fa} \cdot |P|^2 d\sigma_{\p \cB_1(\bo_+)} = 1$ and 
\begin{align}
\frac{1}{s^{\fa}}\fint_{\cB_1(\bo_+)}|y|^{\fa} \cdot |\cT_{x_+,s} U - P|^2 \dvol_{\dR^n} < \eta.	
\end{align}

\end{definition}

Now let us formulate the quantitative stratification of a continuous function $U:\cB_1(\bo_+)\to \dR$. 
\begin{definition}
[Quantitative singular strata]	Let us fix $\fa \in (-1,1)$, and let $U:\cB_1(\bo_+)\to \dR$ and $f: B_1(\bo)\to \dR$ be continuous functions on $\dR^{n + 1}$ and $\dR^n$,  respectively. Then we define 
the $(k,\eta,r)$-quantitative singular stratum of $U$ by 
\begin{align}
\mathcal{S}_{\eta,r}^k(U) \equiv &\ \left\{x\in  B_1(\bo) \subset \dR^n: U\ \text{is not} \ (k+1,\eta,s, \fa)\text{-symmetric at}\ x\ \text{for all}\ s \geq r \right\},
\\
\mathcal{S}_{\eta,r}^k(f) \equiv &\  \left\{x\in B_1(\bo) \subset \dR^n: f\ \text{is not} \ (k+1,\eta,s)\text{-symmetric at}\ x\ \text{for all}\ s \geq r \right\}.
\end{align}

\end{definition}

\begin{remark}\label{r:inclusion}
	Immediately, we have the following monotonicity,
	\begin{align}
	\mathcal{S}_{\eta, r}^k(U)	\subset 	\mathcal{S}_{\eta', r'}^{k'}(U) \quad \text{if}\ k\leq k',\ \eta'\leq \eta, \ r\leq r'. 
	\end{align}
Moreover, we can recover the standard stratification by 
\begin{align}
\mathcal{S}^k(U) = \bigcup\limits_{\eta > 0}\bigcap\limits_{r > 0} 	\mathcal{S}_{\eta, r}^k(U).	
\end{align}
Here, similar to \eqref{e:U-zero-critical-singular}, each classical stratum $\cS^k(U)$ is defined as the restriction 
\begin{align}
	\mathcal{S}^k(U) \equiv \{x\in B_1(\bo) \cap \dR^n : T_x U\ \text{is not}\ (k+1)\text{-symmetric}\}.	
\end{align}
Similar relations hold for $\cS_{\eta, r}^k(f)$ as well. 
\end{remark}

\subsection{Monotonicity and quantitative rigidity}
\label{ss:quantitative-rigidity-Euclidean}

For our purpose, we will consider the extension of a $\gamma$-harmonic function in the {\it entire} Euclidean space $\dR^{n+1}$.
For any smooth function $f\in \mathcal{S}(\dR^n)$ that is harmonic on $B_1(\bo)$, there exists a unique symmetric extension of $f$ (denoted as $U$) in $\dR^{n+1}$ satisfying
\begin{align}\label{e:symmetric-extension}
\begin{cases}
 	\Div(y^{\fa}\nabla U) = 0 & \text{in}\ \dR^{n+1},
  	\\ U(x, -y) = U(x,y) & \text{in}  \ \dR^{n+1},
\\
 	U(x,0) = f(x) & \text{on} \ \dR^n,
 	\\
 	(-\Delta)^{\gamma} f = 0 & \text{on} \ B_1(\bo).
 \end{cases}	
	 \end{align}

\begin{example}\label{ex-special-soln-U}
Let $x_1$ be the first component of $x$. A special solution to \eqref{e:symmetric-extension} when $f(x)=x_1^2$ is
\begin{align}\label{eq-U-special}
U(x, y) =  x_1^2 -\frac{1}{1+\fa}y^2.
\end{align}
While $y^{1-\fa}$	is a homogeneous solution to $\Div(y^{\fa}\nabla U) = 0$, by \eqref{eq-Delta-f-Uy}, $(-\Delta)^{\gamma} f =0$ implies that the term $y^{1-\fa}$ does not appear in \eqref{eq-U-special}. Notice that $U$ and $f$ here do not satisfy the decaying conditions: $f\in \cS(\dR^n)$ and $U\in H^{1,\fa}(\dR^{n + 1})$. But they can be served as local blow-up models.
\end{example}

One can define Almgren's frequency function for the symmetric extension $U\in H^{1,\fa}(\dR^{n+1})$. 
 Let us fix some notations.
 For any $x\in\dR^n$, let us denote $x_+\equiv (x, 0)\in\dR^{n+1}$ and 
\begin{align}\cB_r(x_+)\equiv \{w_+\in \dR^{n+1}: |w_+ - x_+|\leq r\},\quad \p\cB_r(x_+)\equiv \{w_+\in \dR^{n+1}: |w_+ - x_+| = r\}. \end{align}
Immediately, $\cB_r(x_+)\cap \dR^n = B_r(x)$ and $\p\cB_r(x_+)\cap \dR^n = \p B_r(x)$.
Then we define 
\begin{align}
\begin{split}
	E_U(x_+, r) 
	& \equiv \frac{1}{r^{n+\fa - 1}}\int_{\cB_r(x_+)}|y|^{\fa}\cdot |\nabla U|^2 \dvol_{\dR^{n+1}},
	\\
		H_U(x_+, r) 
	& \equiv \frac{1}{r^{n+\fa}}\int_{\p\cB_r(x_+)}|y|^{\fa}\cdot U^2 d\sigma.
\end{split}	
\end{align}
Then the Almgren's frequency function $N_U(x_+, r)$ is defined to be 
\begin{align}
N_U(x_+, r) \equiv \frac{E_U(x_+, r)}{H_U(x_+, r)} = \frac{\displaystyle {r \int_{\cB_r(x_+)}|y|^{\fa}\cdot |\nabla U|^2 \dvol_{\dR^{n+1}}}}{{\displaystyle \int_{\p\cB_r(x_+)}|y|^{\fa}\cdot U^2 d\sigma}}.
\end{align}
The following monotonicity result is due to Caffarelli-Silvestre \cite[theorem 6.1]{CS}.	

\begin{lemma}\label{l:Euclidean-monotonicity}
Let $U$ be the symmetric extension of $f$ that solves \eqref{e:symmetric-extension} in $\cB_1(\bo_+)\subset \dR^{n+1}$ such that $(-\Delta)^{\gamma} f = 0$ on $B_1(\bo)\subset \dR^n$. Then for any $x_+\in B_1(\bo)$, the frequency function $N_U(x_+, r)$ is monotone nondecreasing in $r\in (0,1 - |x_+|)$. Moreover, if 
\begin{align}N_U(x_+,r_1) = N_U(x_+, r_2)\quad \text{for some}\ 0\leq r_1 < r_2 < 1-|x_+|, \end{align} then $U$ is a homogeneous polynomial of degree $d = N_U(x,r)$.  
\end{lemma}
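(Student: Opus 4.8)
The plan is to reproduce Almgren's argument in the degenerate weighted setting, as in Caffarelli--Silvestre, but now for the symmetric extension on all of $\dR^{n+1}$. Translating so that $x_+ = \bo_+$, I would set
\begin{align}
D(r) \equiv \int_{\cB_r(\bo_+)}|y|^{\fa}\,|\nabla U|^2\,\dvol_{\dR^{n+1}}, \qquad H(r) \equiv \int_{\p\cB_r(\bo_+)}|y|^{\fa}\,U^2\,d\sigma,
\end{align}
so that $N_U(\bo_+,r) = rD(r)/H(r)$. The first step is to establish the two identities
\begin{align}
\int_{\p\cB_r(\bo_+)}|y|^{\fa}\,U\,\p_\nu U\,d\sigma = D(r), \qquad H'(r) = \frac{n+\fa}{r}H(r) + 2\int_{\p\cB_r(\bo_+)}|y|^{\fa}\,U\,\p_\nu U\,d\sigma,
\end{align}
the first obtained by testing the weak formulation of $\Div(|y|^{\fa}\nabla U)=0$ against $U$ over $\cB_r(\bo_+)$, the second by differentiating $H$ in polar coordinates. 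The genuinely delicate point is the integration by parts across the characteristic hyperplane $\{y=0\}$: the boundary contribution there is $\int_{\{y=0\}}(|y|^{\fa}\p_y U)\,U$, which vanishes because $U$ even together with $(-\Delta)^{\gamma}f=0$ forces $\lim_{y\to 0}|y|^{\fa}\p_y U = 0$ (the homogeneous solution $y^{1-\fa}$ being excluded, cf. Example \ref{ex-special-soln-U}); the $C^{1,\alpha}$ regularity needed to make this rigorous is furnished by Lemma \ref{lem-eps-C1a}.

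Second, I would prove the Rellich--Ne\v{c}as (Pohozaev-type) identity obtained by testing the equation against the radial vector field $x_+\cdot\nabla U$, namely
\begin{align}
D'(r) = \frac{n+\fa-1}{r}\,D(r) + 2\int_{\p\cB_r(\bo_+)}|y|^{\fa}\,(\p_\nu U)^2\,d\sigma,
\end{align}
with the interior boundary term on $\{y=0\}$ again dropping out by the same vanishing. Combining the three displays, the $1/r$-terms cancel and
\begin{align}
\frac{d}{dr}\log N_U(\bo_+,r) = 2\left(\frac{\int_{\p\cB_r(\bo_+)}|y|^{\fa}(\p_\nu U)^2\,d\sigma}{\int_{\p\cB_r(\bo_+)}|y|^{\fa}U\,\p_\nu U\,d\sigma} - \frac{\int_{\p\cB_r(\bo_+)}|y|^{\fa}U\,\p_\nu U\,d\sigma}{\int_{\p\cB_r(\bo_+)}|y|^{\fa}U^2\,d\sigma}\right) \geq 0,
\end{align}
the inequality being Cauchy--Schwarz for the inner product $\langle v,w\rangle \equiv \int_{\p\cB_r(\bo_+)}|y|^{\fa}vw\,d\sigma$ applied to $U$ and $\p_\nu U$. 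This proves monotonicity.

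For the rigidity clause, $N_U(\bo_+,r_1)=N_U(\bo_+,r_2)$ forces $\frac{d}{dr}\log N_U\equiv 0$ on $(r_1,r_2)$, hence equality in Cauchy--Schwarz at every such $r$, i.e. $\p_\nu U = \lambda(r)\,U$ on $\p\cB_r(\bo_+)$ for a scalar $\lambda(r)$; feeding this back into the formula for $H'$ identifies $\lambda(r) = d/r$ with $d = N_U(\bo_+,r)$ constant on $(r_1,r_2)$. Integrating this radial ODE gives $U(\rho\omega) = \rho^{d}U(\omega)$, so $U$ is positively homogeneous of degree $d$ on $\cB_{r_2}(\bo_+)$; since $U$ is smooth away from $\{y=0\}$ (uniform ellipticity there) and $C^{n-1,\alpha}$ up to $\{y=0\}$ by Lemma \ref{eq-C1a-U}, a homogeneous solution of this smoothness must be a polynomial, and in particular $d\in\dN_0$. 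The main obstacle throughout is not any of these computations but the justification of the two integrations by parts across the degenerate set $\{y=0\}$: one must know a priori that $U$ is sufficiently regular and, crucially, that the weighted conormal derivative $|y|^{\fa}\p_y U$ vanishes on $\{y=0\}$ --- which is precisely where the hypothesis $(-\Delta)^{\gamma}f=0$ (rather than merely $\Div(|y|^{\fa}\nabla U)=0$) enters, and also the step that would fail without passing to the symmetric extension.
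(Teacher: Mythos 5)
The paper offers no proof of this lemma; it cites \cite[Theorem 6.1]{CS}, and your argument is the Almgren-type computation adapted to the $A_2$-weight $|y|^\fa$, which is exactly how Caffarelli and Silvestre prove it. The monotonicity half is correct: the two first-variation identities, the Rellich--Pohozaev identity $D'(r) = \frac{n+\fa-1}{r}D(r) + 2\int_{\p\cB_r}|y|^\fa(\p_\nu U)^2$, the cancellation of the $1/r$-terms in $\frac{d}{dr}\log N$, and the Cauchy--Schwarz step (using $D(r)=\int_{\p\cB_r}|y|^\fa U\p_\nu U>0$ for non-constant $U$) all check out, and you have rightly identified where the hypothesis $(-\Delta)^\gamma f=0$ enters: with evenness it forces $\lim_{y\to0}|y|^\fa\p_y U=0$, which kills the boundary contributions from the cut-off $\{|y|=\epsilon\}$ as $\epsilon\to 0$.

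The rigidity half has two genuine gaps. First, equality in Cauchy--Schwarz gives the radial ODE $\p_\rho U=(d/\rho)U$ only for $\rho\in(r_1,r_2)$, hence homogeneity of $U$ \emph{only on that annulus}; you then assert homogeneity on all of $\cB_{r_2}(\bo_+)$ without justification. Since the lemma allows $r_1>0$ this is a real step: one must take the degree-$d$ homogeneous extension $\tilde U$ of $U|_{\p\cB_{r_2}}$ inward, observe that $\tilde U$ solves the same degenerate equation (homogeneity commutes with $\Div(|y|^\fa\nabla\cdot)$), lies in $H^{1,\fa}(\cB_{r_2})$ because $d\geq 0$ and $\fa>-1$, agrees with $U$ on the annulus, and hence agrees with $U$ on the whole ball by uniqueness of the weighted Dirichlet problem (or unique continuation). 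Second, ``a homogeneous solution of this smoothness must be a polynomial'' is not a complete argument: a homogeneous function of non-integer degree $d$ is $C^{n-1,\alpha}$ at the origin whenever $d>n-1+\alpha$, so the $C^{n-1,\alpha}$ bound you invoke cannot by itself rule out such $d$. The clean route is spectral: writing $U=\rho^d\Phi(\Theta)$, the function $\Phi$ is an eigenfunction of the weighted spherical Laplacian on $S^n$ with eigenvalue $d(d+n+\fa-1)$, whose spectrum is exactly $\{k(k+n+\fa-1):k\in\dN_0\}$; this forces $d\in\dN_0$ and identifies $U$ with a homogeneous polynomial. (Alternatively, note that in the Euclidean model Lemma~\ref{eq-C1a-U} actually gives $C^{k,\alpha}$ for \emph{all} $k$, which would exclude non-integer $d$ directly --- but then you should cite that stronger conclusion, not the generic $C^{n-1,\alpha}$ one.)
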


Since we are interested in the structure of the critical set of a function (not only the singular set), we will normalize Almgren's frequency as follows. 
If $U$ is not constant, let us define 
\begin{align}
\cN_U(x_+,r) \equiv   \frac{\displaystyle {r\cdot\int_{\cB_r(x_+)}|y|^{\fa}\cdot |\nabla U|^2 \dvol_{\dR^{n+1}}}}{{\displaystyle \int_{\p\cB_r(x_+)}|y|^{\fa}\cdot (U-U(x_+))^2 d\sigma}}.
\end{align}
When there is no ambiguity, for a $\gamma$-harmonic function $f$, we will denote by 
\begin{align}\cN(x, r) = \cN_f(x, r)  = \cN_U(x_+,r)\end{align} the Almgren's frequency of $f$ at $x\in \dR^n$.

Similarly, we have the following monotonicity. 
\begin{lemma}[Monotonicity and rigidity] \label{l:monotonicity}
Let $U$ be unique symmetric extension of a non-constant $\gamma$-harmonic function $f$ that solves \eqref{e:symmetric-extension} in $\cB_1(\bo_+)\subset \dR^{n+1}$ such that $(-\Delta)^{\gamma} f = 0$ on $B_1(\bo)\subset \dR^n$. Then the following properties hold. 
\begin{enumerate}
\item For any $x\in B_1(\bo)$, the frequency function $\mathcal{N}_f(x, r)$ of $f$ at $x_+$ is monotone nondecreasing with respect to $r\in (0,1 - |x|)$.
\item If for some $0\leq r_1 < r_2$, $\cN_f (x, r_1) = \cN_f (x, r_2)$, then $U - U(x_+)$ is a homogeneous polynomial of degree $d = \cN_f (x,r)$ centered at $x_+ = (x, 0)\in \dR^{n + 1}$.  

\end{enumerate}
\end{lemma}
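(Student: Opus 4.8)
The plan is to adapt the classical Almgren monotonicity argument (as in Caffarelli–Silvestre \cite{CS}, already invoked in Lemma \ref{l:Euclidean-monotonicity}) to the normalized frequency $\cN_f(x,r) = \cN_U(x_+,r)$, which amounts to Lemma \ref{l:Euclidean-monotonicity} applied to the translated function $V \equiv U - U(x_+)$. The first observation is that $V$ is again an even solution of $\Div(|y|^{\fa}\nabla V) = 0$ in $\dR^{n+1}$: the constant $U(x_+)$ is harmonic for the operator, and since $x_+ = (x,0)$ lies on the characteristic hyperplane, subtracting this constant does not destroy the evenness $V(x,-y)=V(x,y)$. Moreover, the trace $f - f(x)$ is still $\gamma$-harmonic on $B_1(\bo)$ because $(-\Delta)^\gamma$ annihilates constants; equivalently by \eqref{eq-Delta-f-Uy} the Neumann-type condition $\lim_{y\to 0} y^{\fa}\partial_y V = \lim_{y\to 0} y^{\fa}\partial_y U = 0$ persists on $B_1(\bo)$. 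Hence $V$ satisfies exactly the system \eqref{e:symmetric-extension} with boundary data $f - f(x)$.

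Once this reduction is in place, both parts follow directly. For part (1), $\cN_f(x,r) = \cN_V(x_+, r) = N_V(x_+,r)$ in the notation of Lemma \ref{l:Euclidean-monotonicity} (the normalization in $\cN$ is built into the choice of $V$, whose mean over $\partial\cB_r$ need not vanish but whose value at the center $x_+$ is $0$), so $r \mapsto \cN_f(x,r)$ is monotone nondecreasing on $(0, 1-|x|)$ by the monotonicity assertion of Lemma \ref{l:Euclidean-monotonicity}. For part (2), if $\cN_f(x,r_1) = \cN_f(x,r_2)$ for some $0 \le r_1 < r_2$, the rigidity clause of Lemma \ref{l:Euclidean-monotonicity} applied to $V$ gives that $V = U - U(x_+)$ is a homogeneous polynomial of degree $d = N_V(x_+, r) = \cN_f(x,r)$, homogeneous with respect to the center $x_+ = (x,0) \in \dR^{n+1}$, which is precisely the claimed conclusion.

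I would spell out only the two small verifications that make the reduction legitimate: (i) $V$ still solves the divergence-form equation with the zero Neumann condition on $B_1(\bo)$ — immediate from linearity and the fact that $(-\Delta)^\gamma 1 = 0$; and (ii) the normalized frequency $\cN_U(x_+,r)$ coincides with the unnormalized Almgren frequency $N_V(x_+,r)$ of the shifted function, which is a matter of unwinding the two definitions, noting $\nabla V = \nabla U$ and $(V)|_{\partial\cB_r(x_+)} = (U - U(x_+))|_{\partial\cB_r(x_+)}$. Everything else is quoted from Lemma \ref{l:Euclidean-monotonicity}.

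The main (and essentially only) obstacle is a bookkeeping subtlety rather than a genuine difficulty: one must be careful that $x_+$ lies on the characteristic set $\{y=0\}$, so that the subtracted constant is compatible with the symmetry requirement and the extension $V$ is genuinely the extension of its own boundary trace; away from $\{y=0\}$ this normalization trick would not interact correctly with the weight $|y|^{\fa}$. Since all the statements here are for $x \in B_1(\bo) \subset \dR^n$, i.e. $x_+ \in \{y=0\}$, this is automatically satisfied, and the proof is short.
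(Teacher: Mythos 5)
Your reduction to Lemma \ref{l:Euclidean-monotonicity} via $V \equiv U - U(x_+)$, together with the two verifications (that $V$ is still an even solution of the extension problem with trace $f-f(x)$, and that $\cN_U(x_+,r) = N_V(x_+,r)$), is exactly what the paper leaves implicit when it says ``Similarly, we have the following monotonicity,'' so the proposal is correct and matches the paper's approach. One minor aside worth correcting: the weighted mean $\int_{\p\cB_r(x_+)} |y|^{\fa} V\, d\sigma$ does in fact vanish, by the mean-value identity $U(x_+)\int_{\p\cB_r(x_+)}|y|^{\fa}d\sigma = \int_{\p\cB_r(x_+)}|y|^{\fa}U\,d\sigma$ that the paper uses in the proof of Lemma \ref{l:frequency-control-large-to-small}, though this is immaterial to your argument.
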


Next, we will prove a uniform boundedness result that controls the frequency from a large scale to any smaller scales. 

\begin{lemma}[Uniform bound on frequency] \label{l:frequency-control-large-to-small}
Let $f \in \cS(\dR^n)$ be a non-constant function that is $\gamma$-harmonic on $B_1(\bo)\subset \dR^n$. Let $U$ be its unique symmetric extension such that
 $\cN_f(\bo,1)\leq \Lambda$. Then for each $\tau\in(0,1)$, there exists $C=C(n,\Lambda, \tau, \gamma)>0$ such that for each $x\in B_{\tau}(\bo)$ and $r\leq \frac{2}{3}(1-\tau)$, we have   
\begin{align}
\cN_f(x,r) \leq C.	
\end{align}

\end{lemma}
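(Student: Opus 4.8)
The plan is to derive the uniform bound from the monotonicity of $\cN_f(\cdot,\cdot)$ together with a bound on the frequency at a definite scale around each interior point $x$, obtained by a compactness (or direct covering) argument. First I would observe that by the monotonicity in Lemma \ref{l:monotonicity}(1), for a fixed $x\in B_\tau(\bo)$ the function $r\mapsto \cN_f(x,r)$ is nondecreasing on $(0,1-|x|)$; hence for all $r\le \tfrac23(1-\tau)$ we have $\cN_f(x,r)\le \cN_f(x,\rho_0)$ for any fixed $\rho_0$ with $\tfrac23(1-\tau)\le\rho_0<1-\tau$, say $\rho_0=\tfrac34(1-\tau)$. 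So it suffices to bound $\cN_f(x,\rho_0)$ uniformly over $x\in B_\tau(\bo)$, i.e. to control the frequency at one definite scale.

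The key step is then a \emph{doubling / comparison estimate} propagating the hypothesis $\cN_f(\bo,1)\le\Lambda$ outward to nearby basepoints at a slightly smaller scale. The standard mechanism: the bound $\cN_f(\bo,1)\le\Lambda$ yields, via Lemma \ref{l:monotonicity}(1), a doubling inequality for $H_U(\bo_+,r)=r^{-(n+\fa)}\int_{\p\cB_r(\bo_+)}|y|^\fa (U-U(\bo_+))^2\,d\sigma$ on $(0,1)$, namely $H_U(\bo_+,2r)\le C(\Lambda)H_U(\bo_+,r)$, and hence a growth bound $\int_{\cB_R(\bo_+)}|y|^\fa(U-U(\bo_+))^2 \le C\,R^{n+\fa+2\Lambda}\int_{\cB_{1}(\bo_+)}|y|^\fa(U-U(\bo_+))^2$ for $R\le 1$. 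Combined with the Caccioppoli-type inequality (Lemma, Caccioppoli type) and the $A_2$-weighted energy estimates, one controls $\int_{\cB_{\rho_0}(x_+)}|y|^\fa|\nabla U|^2$ from above in terms of the same $L^{2,\fa}$ mass. For the \emph{lower} bound on the denominator $H_U(x_+,\rho_0)$ one uses that $U-U(x_+)$ cannot be too small relative to its gradient energy on a slightly larger concentric ball — this is exactly where a \emph{unique continuation} (or a quantitative Almgren-type lower bound $H_U(x_+,r)\ge c\, r^{\text{const}}$ obtained again from monotonicity applied at $x_+$) is needed, using that $f$ is non-constant, hence $U$ is non-constant on every ball meeting $\dR^n$. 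Putting the upper bound on $E_U(x_+,\rho_0)$ and lower bound on $H_U(x_+,\rho_0)$ together gives $\cN_f(x,\rho_0)\le C(n,\Lambda,\tau,\gamma)$, and then monotonicity finishes the proof.

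An equivalent and perhaps cleaner route, which I would actually pursue, is a \emph{contradiction-compactness argument}: if no such $C$ existed, there would be $f_i\in\cS(\dR^n)$, $\gamma$-harmonic on $B_1(\bo)$ with $\cN_{f_i}(\bo,1)\le\Lambda$, points $x_i\in B_\tau(\bo)$, and radii $r_i\le\tfrac23(1-\tau)$ with $\cN_{f_i}(x_i,r_i)\to\infty$. Normalizing $U_i$ (subtract $U_i(\bo_+)$, rescale so that $\fint_{\p\cB_1(\bo_+)}|y|^\fa U_i^2=1$), the uniform frequency bound $\cN_{U_i}(\bo_+,1)\le\Lambda$ gives, via the doubling inequality and the interior Schauder/De Giorgi--Nash--Moser estimates of Section \ref{ss:regularity-of-solutions} (Lemma \ref{eq-C1a-U} applied on balls of the doubled space $\fX^{\mathfrak m}$, here with $\bg$ Euclidean so full $C^\infty$ estimates hold), uniform $C^{k}_{\loc}$ bounds on $U_i$ in $\cB_{1-\tau/2}(\bo_+)$. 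Passing to a subsequential limit $U_\infty$, which solves $\Div(|y|^\fa\nabla U_\infty)=0$ with $(-\Delta)^\gamma(U_\infty|_{\dR^n})=0$ and is non-constant (its spherical $L^{2,\fa}$ norm is $1$), and $x_i\to x_\infty\in\overline{B_\tau(\bo)}$, $r_i\to r_\infty\in[0,\tfrac23(1-\tau)]$: by Lemma \ref{l:monotonicity} the limit frequency $\cN_{U_\infty}(x_\infty,\rho_0)<\infty$ is finite (it is the degree of the leading Taylor polynomial, or simply finite since $U_\infty$ is real-analytic away from $\{y=0\}$ and satisfies the monotonicity), and by continuity of $E_U,H_U$ in $(x_+,r)$ and lower semicontinuity one gets $\cN_{f_i}(x_i,\rho_0)\to\cN_{U_\infty}(x_\infty,\rho_0)<\infty$, contradicting $\cN_{f_i}(x_i,r_i)\le\cN_{f_i}(x_i,\rho_0)\to\infty$.

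The main obstacle is the limiting/degeneracy issue: ensuring the denominator $H_{U_i}(x_i,r)$ does not collapse, i.e., that the limit $U_\infty$ is genuinely non-constant \emph{near $x_\infty$}, so that $\cN_{U_\infty}(x_\infty,\rho_0)$ is well-defined and finite. This is precisely where the non-constancy of $f$ plus \emph{strong unique continuation} for the degenerate operator (which holds because of the Lipschitz — indeed smooth, in the model case — regularity of the coefficients, via the Almgren monotonicity itself) must be invoked: $U_\infty$ constant on an open set forces $U_\infty\equiv\text{const}$, contradicting the normalization $\fint_{\p\cB_1(\bo_+)}|y|^\fa U_\infty^2=1$. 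A secondary technical point is uniform control of the boundary-trace term and of $\nabla U$ up to $\{y=0\}$ across the characteristic hyperplane; in the model Euclidean case this is handled by the symmetric (even) extension and the interior estimates of Lemma \ref{eq-C1a-U} on the doubled space, so it is routine here, though it is the feature that will require more care in the general Poincaré--Einstein setting of Section \ref{s:results-on-PE}.
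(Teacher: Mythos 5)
Your preferred contradiction-compactness route (the second one) is correct, but it is genuinely different from the paper's proof. The paper argues directly and almost entirely by elementary computation: after normalizing $U(\bo_+)=0$ (so $N(\bo_+,r)=\cN_U(\bo_+,r)$) it reduces to the scale $r=1/2$, then rewrites the normalized denominator $\int_{\p\cB_{1/2}(x_+)}|y|^\fa(U-U(x_+))^2\,d\sigma$ in terms of the \emph{un}-normalized spherical averages via the weighted mean-value identity $U(x_+)\int_{\p\cB_r(x_+)}|y|^\fa\,d\sigma=\int_{\p\cB_r(x_+)}|y|^\fa U\,d\sigma$, quantifies the loss coming from subtracting $U(x_+)$ by integrating the ODE $\tfrac{d}{dr}\log H(x_+,r)=2N(x_+,r)/r$ between $r=1/3$ and $r=1/2$, and arrives at $\cN_f(x,1/2)\le N(x_+,1/2)\bigl(1-(2/3)^{2N(x_+,1/3)}\bigr)^{-1}$. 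This is elementary and yields an explicit constant, but it implicitly relies on Almgren-type propagation facts for the \emph{un}-normalized frequency at shifted base points, namely an upper bound for $N(x_+,1/2)$ and a positive lower bound for $N(x_+,1/3)$ — which is precisely the degeneracy of the denominator that you correctly identify as the main obstacle. Your compactness argument handles that degeneracy cleanly by invoking unique continuation for the limit $U_\infty$ (so the shifted denominator cannot collapse), at the cost of producing a non-explicit constant. Your first route, by contrast, is incomplete as written: the quantitative Almgren lower bound $H_U(x_+,r)\ge c\,r^{\text{const}}$ that you invoke for the denominator is itself obtained from a frequency bound at the shifted base point $x_+$, which is the very quantity you are trying to control, so that version of the argument is circular. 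The compactness route is the one that resolves the degeneracy without circularity, and it does so by unique continuation rather than by the paper's ODE comparison of $H$ at two scales.
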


\begin{proof}
There is no harm to only prove the lemma for $\tau = \frac{1}{4}$ and $s = \frac{1}{2}$.
We also assume that $u(\bo) = 0$ such that $N(0,r) = \cN(0, r)$ for all $r\leq 1$. Let $x_+ \equiv (x, 0)\in \dR^{n + 1}$. By the definition of the normalized frequency,
\begin{align}
\cN_f(x, 1/2) = \cN_U(x_+,1/2) \equiv   \frac{\displaystyle {\int_{\cB_{1/2}(x_+)}|y|^{\fa}\cdot |\nabla U|^2 \dvol_{\dR^{n+1}}}}{{\displaystyle 2\int_{\p \cB_{1/2}(x_+)}|y|^{\fa}\cdot (U-U(x_+))^2 d\sigma}} \label{e:1/2-frequency}.
\end{align}
It follows from standard computations that 
\begin{align}
U(x_+)\cdot \int_{\p \cB_{1/2}(x_+)}|y|^{\fa} d\sigma = 	\int_{\p \cB_{1/2}(x_+)}|y|^{\fa} \cdot U  d \sigma.
\end{align}
Then for any $0<r\leq \frac{1}{2}$,
\begin{align}
\begin{split}
	& \int_{\p \cB_r(x_+)}|y|^{\fa}\cdot (U-U(x_+))^2 d\sigma
	\nonumber\\
	 = \ & \int_{\p \cB_r(x_+)}|y|^{\fa} U^2 d\sigma
 - \left(\int_{\p \cB_r(x_+)}|y|^{\fa} d\sigma\right)|U(x_+)|^2 
 \nonumber\\
 = \ & r^{n+\fa}\cdot \left(H(x_+ , r) - \left(\int_{\p \cB_1(\bo)}|\Theta|^{\fa}d\sigma \right)	|U(x_+)|^2  \right)\geq 0,
	\end{split}
\end{align}
where  $y \equiv r \cdot \Theta$
 and $\Theta$ is a spherical harmonic on the unit sphere.
 Immediately we have that  
\begin{align}
\left(\int_{\p \cB_1(\bo)}|\Theta|^{\fa}d\sigma \right)	|U(x_+)|^2 \leq H(x_+, r),\label{e:H-bound-scale-r}
\end{align}
and the integral in the denominator of \eqref{e:1/2-frequency} becomes 
\begin{align}
\int_{\p \cB_{1/2}(x_+)}|y|^{\fa}\cdot (U-U(x_+))^2 d\sigma = &\ \left(\frac{1}{2}\right)^{n+\fa}\left(H(x_+, 1/2) - \left(\int_{\p \cB_1(\bo)}|\Theta|^{\fa}d\sigma \right)	|U(x_+)|^2  \right).
\end{align}
Now we choose $r=1/3$ in \eqref{e:H-bound-scale-r} so that 
\begin{align} 
\int_{\p \cB_{1/2}(x_+)}|y|^{\fa}\cdot (U-U(x_+))^2 d\sigma \geq 	
    \left(\frac{1}{2}\right)^{n+\fa}\Big((H(x_+, 1/2) - H(x_+, 1/3)\Big).\label{e:denominator}	
\end{align}
To estimate the right-hand side of \eqref{e:denominator}, let us apply the formula
\begin{align}
\frac{d}{dr}\log H(x_+, r) = \frac{2N(x_+ , r)}{r}, 
\end{align}
which implies that 
\begin{align*}
 \frac{H(x_+, 1/3)}{H(x_+, 1/2)}  =  \exp\left(-2\int_{1/3}^{1/2}\frac{N(x_+, r)}{r}dr\right)  \leq    \exp\left(2N(x_+,1/3)\log(2/3)\right) = \left(2/3\right)^{2N(x_+, 1/3)}. 
\end{align*}
Plugging the above into \eqref{e:denominator}, 
\begin{align}
\int_{\p \cB_{1/2}(x_+)}|y|^{\fa}\cdot (U-U(x_+))^2 d\sigma \geq 	
    \left(\frac{1}{2}\right)^{n+\fa}\left(1-\left(2/3\right)^{2N(x_+, 1/3)}\right) H(x_+, 1/2).	
\end{align}
Therefore,
\begin{align}
\cN_f (x, 1/2) = \cN_U (x_+, 1/2) 
\leq N(x_+, 1/2) \cdot \left(1-\left(2/3\right)^{2N(x_+, 1/3)}\right)^{-1}\leq C(n,\Lambda),
\end{align}
 where we used the fact $N(x_+, 1/3) \geq 1$. The proof of the lemma is done.
\end{proof}

The rigidity part of Lemma \ref{l:monotonicity} has a quantitative version.  
\begin{proposition}
[Quantitative symmetry] \label{p:quantitative-symmetry}
	For any $\epsilon>0$, $n\geq 2$, $\gamma\in(0,1)$, $\Lambda>0$, there exists a uniform constant $\delta=\delta(\epsilon, n, \gamma, \Lambda)>0$ such that the following holds. Let $f$ be $\gamma$-harmonic on $B_s(\bo)\subset \dR^n$ with $\cN_f(0,s) = \cN_U(\bo_+,s)\leq \Lambda$, where $U$ is the unique symmetric extension of $f$ that solves \eqref{e:symmetric-extension} in $\dR^{n+1}$.
	If 
	\begin{align}
		\cN_U(\bo_+, s) - \cN_U(\bo_+, s\gamma) < \delta, 
	\end{align}
 then $U$ is $(0,\epsilon, s, \fa)$-symmetric at the origin $\bo_+\subset \dR^{n+1}$.
\end{proposition}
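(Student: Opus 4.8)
The plan is to argue by contradiction and compactness, exactly in the spirit of the classical ``almost rigidity implies almost symmetry'' arguments. Suppose the statement fails for some $\epsilon > 0$, $n$, $\gamma$, $\Lambda$. Then there is a sequence of $\gamma$-harmonic functions $f_j$ on balls $B_{s_j}(\bo)$ with symmetric extensions $U_j$, satisfying $\cN_{U_j}(\bo_+, s_j) \le \Lambda$ and
\begin{align}
\cN_{U_j}(\bo_+, s_j) - \cN_{U_j}(\bo_+, s_j \gamma) < \tfrac1j,
\end{align}
yet no $U_j$ is $(0, \epsilon, s_j, \fa)$-symmetric at $\bo_+$. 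After rescaling we may assume $s_j = 1$, and after normalizing by the boundary $L^{2,\fa}$-average we may assume each $U_j$ is already in the normalized form that appears in the definition of the tangent map, i.e. $\frac{1}{1}\fint_{\p\cB_1(\bo_+)}|y|^{\fa} U_j^2\, d\sigma = 1$ and $U_j(\bo_+) = 0$ (using that translating/subtracting the value at the center does not change the frequency, by the normalized definition $\cN_U$). The key point is that $H_{U_j}(\bo_+, r)$ is bounded above and below on a fixed interval $r \in [\gamma, 1]$: the upper bound for $H_{U_j}(\bo_+, r)$ with $r \le 1$ follows from the frequency bound $\cN_{U_j} \le \Lambda$ via the logarithmic derivative formula $\frac{d}{dr}\log H_{U_j}(\bo_+, r) = \frac{2 N_{U_j}(\bo_+, r)}{r}$ and the normalization $H_{U_j}(\bo_+, 1) = 1$, while the lower bound on a slightly smaller interval follows similarly (this is essentially the computation already carried out in the proof of Lemma~\ref{l:frequency-control-large-to-small}).

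Next I would extract a limit. By the Caccioppoli/$L^\infty$/$C^{1,\alpha}$ estimates of Lemma~\ref{lem-eps-C1a} (applied in the flat model, where the metric is literally Euclidean and $\cJ_{\bg} \equiv 0$, so these reduce to the classical Fabes--Kenig--Serapioni and Caffarelli--Silvestre estimates), the sequence $U_j$ is bounded in $C^{1,\alpha}_{\loc}(\cB_1(\bo_+))$ and also bounded in $H^{1,\fa}(\cB_{1-\delta_0}(\bo_+))$ for any $\delta_0 > 0$; moreover each $U_j$ is even in $y$ and $\gamma$-harmonic, i.e. a weak solution of $\Div(|y|^{\fa}\nabla U_j) = 0$ in $\cB_1(\bo_+)$ with $(-\Delta)^\gamma f_j = 0$ on $B_1(\bo)$. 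Passing to a subsequence, $U_j \to U_\infty$ in $C^1_{\loc}$ and weakly in $H^{1,\fa}_{\loc}$, where $U_\infty$ is again even in $y$, solves the same degenerate equation, satisfies $(-\Delta)^\gamma f_\infty = 0$ on $B_1(\bo)$ with $f_\infty = U_\infty|_{y=0}$, has $U_\infty(\bo_+) = 0$, and has $\fint_{\p\cB_1(\bo_+)}|y|^{\fa} U_\infty^2 = 1$ (so $U_\infty \not\equiv 0$; here one uses the uniform lower bound on $H$ to prevent the limit from degenerating, and boundary trace/compactness for the weighted surface integral). The $C^1_{\loc}$ convergence also gives convergence of the energies $E_{U_j}(\bo_+, r) \to E_{U_\infty}(\bo_+, r)$ and of $H_{U_j}(\bo_+, r) \to H_{U_\infty}(\bo_+, r)$ for every fixed $r \in (0,1)$ — for the $H$-quantities one should use the $C^0$ convergence on spheres plus the uniform integrability of the weight $|y|^{\fa}$ on $\p\cB_r$ — hence $\cN_{U_j}(\bo_+, r) \to \cN_{U_\infty}(\bo_+, r)$ for each such $r$.

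Now the contradiction: passing to the limit in the almost-constancy hypothesis gives $\cN_{U_\infty}(\bo_+, 1) = \cN_{U_\infty}(\bo_+, \gamma)$. By the rigidity part of Lemma~\ref{l:monotonicity} (its Euclidean/model version), $U_\infty$ is a homogeneous polynomial of degree $d = \cN_{U_\infty}(\bo_+, \gamma)$ centered at $\bo_+$, i.e. $U_\infty$ is $0$-symmetric at $\bo_+$; after normalizing by its boundary $L^{2,\fa}$-average (which is $1$) it is an admissible competitor $P$ in the definition of $(0, \epsilon, 1, \fa)$-symmetry. But $\cT_{\bo_+, 1} U_j \to U_\infty$ in, say, $L^{2,\fa}(\cB_1(\bo_+))$ — this follows from the $C^1_{\loc}$ convergence together with a uniform tail/equi-integrability estimate near $\p\cB_1$ coming from the $H^{1,\fa}$ bound and the weighted Sobolev inequality \eqref{sobo>-1} — and the denominators in the definition of $\cT_{\bo_+,1}$ converge to the (nonzero) limit, so for $j$ large
\begin{align}
\fint_{\cB_1(\bo_+)}|y|^{\fa}\,|\cT_{\bo_+,1} U_j - U_\infty|^2\, \dvol < \epsilon,
\end{align}
which means $U_j$ \emph{is} $(0, \epsilon, 1, \fa)$-symmetric at $\bo_+$ for large $j$, contradicting the choice of the sequence. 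The main obstacle, and the step requiring the most care, is the convergence of the weighted boundary and interior integrals through the degenerate weight $|y|^{\fa}$ — in particular establishing that no $L^{2,\fa}$-mass of $U_j$ escapes to $\p\cB_1(\bo_+)$ (so that $H_{U_\infty} > 0$ and $\cT_{\bo_+,1}U_j \to U_\infty$ strongly in $L^{2,\fa}$). This is handled by combining the uniform frequency bound (which controls $\log H$ on the whole interval $[\gamma,1]$ from above and below) with the weighted Sobolev/trace inequalities of Fabes--Kenig--Serapioni for $A_2$-weights, exactly as recorded in the Sobolev and Caccioppoli lemmas above; everything else is the standard blow-up/compactness machinery.
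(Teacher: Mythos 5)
Your proof is correct and follows essentially the same route as the paper's: contradiction and compactness, normalize so that $\fint_{\p\cB_1(\bo_+)}|y|^{\fa}U_j^2\,d\sigma=1$ and $U_j(\bo_+)=0$ so that $\cT_{\bo_+,1}U_j=U_j$, extract a limit $U_\infty$ in $H^{1,\fa}\cap C^1_{\loc}$, pass to the limit in the pinching hypothesis, and invoke the rigidity of Lemma~\ref{l:monotonicity} to conclude $U_\infty$ is a homogeneous polynomial, contradicting the assumed non-symmetry. The only difference is that you spell out more carefully the uniform two-sided bound on $H_{U_j}(\bo_+,\cdot)$ coming from the frequency bound and the logarithmic derivative identity, and the trace/equi-integrability argument needed to upgrade local convergence to strong $L^{2,\fa}$ and $H^{1,\fa}$ convergence on the full ball (so no mass escapes to $\p\cB_1$ and the boundary normalization passes to the limit) — useful bookkeeping that the paper states without proof.
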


\begin{proof}
We will prove it by contradiction. Suppose there exist positive constants $\epsilon_0>0$, $\gamma > 0$, and a sequence of $U_j$ such that 
$\cN_{U_j}(\bo_+,1) \leq \Lambda$,  $\cN_{U_j}(\bo_+, 1) - \cN_{U_j}(\bo_+, \gamma) < \delta_j \to 0$, but $U_j$ is not $(0,\epsilon_0,1, \fa)$-symmetric at $\bo_+\in\dR^{n+1}$.
Without loss of generality, one can assume \begin{align}U_j(\bo_+) = \bo_+\quad  \text{and} \quad \fint_{\p \cB_1(\bo_+)}|y|^{\fa} \cdot  U_j^2 d\sigma = 1. \end{align}
	That is, $\cT_{\bo_+,1} U_j = U_j$. 
Then $U_j$ converges in $H^{1,\fa}(\cB_1(\bo_+))\cap C_{\loc}^1(\cB_1(\bo_+))$ to some function $U_{\infty}$. Moreover, by 
the weighted Sobolev trace inequality, we have that 
\begin{align}
\fint_{\p\cB_1(\bo_+)} |y|^{\fa} \cdot U_{\infty}^2 d\sigma	= 1. 
\end{align}
 Since  $\cN_{U_{\infty}}(\bo_+, 1) - \cN_{U_{\infty}}(\bo_+, \gamma) = 0$, 
by Lemma \ref{l:monotonicity}, $U_{\infty}$ is a homogeneous polynomial. The convergence 
$\| U_j - U_{\infty}\|_{H^{1,\fa}(\cB_1(\bo_+))} \to 0$ particularly contradicts the assumption that $U_j$ is not $(0,\epsilon_0,1,\fa)$-symmetric at the origin $\bo_+ \in \dR^{n + 1}$. 
\end{proof}

\begin{definition}[Good and bad scales]
Let us fix $\epsilon>0$, $n\geq 2$, $\gamma\in(0,1)$ and let us take $r_j \equiv \gamma^j$ for any $j\in\dZ_+$. Given a smooth function $U$ defined on $\cB_1(\bo_+)$ with $x_+\equiv(x,0)\in \cB_1(\bo_+)$, a scale $r_j$ is called a {\it good scale} at $x$ if $U$ is $(0,\epsilon, r_j, \fa)$-symmetric at $x$, and a scale $r_j$ is said to be a {\it bad scale} at $x_+$ if $U$ is not $(0,\epsilon, r_j, \fa)$-symmetric at $x_+$,
\end{definition}

\begin{corollary}
[Uniform control on the number of bad scales] \label{c:controlling-bad-scales}
For any $\epsilon>0$, $n\geq 2$, $\gamma\in(0,1)$, $\Lambda>0$, there exists a uniform constant $Q_0=Q_0(\epsilon, n, \gamma, \Lambda, \fa)>0$ such that the following holds. Let $U$ be a solution of \eqref{e:symmetric-extension} with $\cN_U(\bo_+, 1)\leq \Lambda$. Then for any $\epsilon > 0$ and  $x_+\equiv(x,0)\in \cB_{1/2}(\bo_+)$, there are at most $Q_0$ bad scales at $x_+$.

\end{corollary}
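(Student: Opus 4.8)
\textbf{Proof plan for Corollary \ref{c:controlling-bad-scales}.} The plan is to combine the uniform frequency bound of Lemma \ref{l:frequency-control-large-to-small} with the quantitative symmetry result of Proposition \ref{p:quantitative-symmetry} via a pigeonhole / dyadic-summation argument. First I would fix $x_+ = (x,0) \in \cB_{1/2}(\bo_+)$ and restrict attention to the scales $r_j = \gamma^j$ with, say, $j \geq 1$ so that $\cB_{r_j}(x_+) \subset \cB_{1}(\bo_+)$. By Lemma \ref{l:frequency-control-large-to-small} (applied with $\tau = \tfrac12$), there is a constant $\overline\Lambda = \overline\Lambda(n,\Lambda,\gamma) > 0$ such that $\cN_U(x_+, r) \leq \overline\Lambda$ for all $r \leq \tfrac13$; in particular $\cN_U(x_+, r_j) \leq \overline\Lambda$ for all $j \geq 1$. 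Also, by monotonicity (Lemma \ref{l:monotonicity}, together with the relation between $\cN_U$ and $N_U$), the sequence $j \mapsto \cN_U(x_+, r_j)$ is nondecreasing and bounded below by a fixed constant (e.g. $1$, using that single-variable nondegenerate harmonic behavior forces degree $\geq 1$, or simply below by $\cN_U(x_+, r_1) \geq 0$). Hence the total variation $\sum_{j \geq 1}\bigl(\cN_U(x_+, r_j) - \cN_U(x_+, r_{j+1})\bigr)$ telescopes and is bounded by $\overline\Lambda$.

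\medskip

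Next I would apply Proposition \ref{p:quantitative-symmetry}: for the given $\epsilon > 0$ there is $\delta = \delta(\epsilon, n, \gamma, \overline\Lambda) > 0$ such that whenever $\cN_U(x_+, r_j) - \cN_U(x_+, r_{j+1}) < \delta$ (note $r_{j+1} = \gamma \cdot r_j$), the function $U$ is $(0,\epsilon, r_j, \fa)$-symmetric at $x_+$ — that is, $r_j$ is a good scale. (Strictly, Proposition \ref{p:quantitative-symmetry} is stated at the origin and at scale $s$; one applies it after translating to $x_+$ and rescaling by $r_j$, which is legitimate since $\cN$ and the notion of symmetry are defined in a translation/dilation-covariant way and the frequency bound $\overline\Lambda$ holds uniformly over the relevant window.) Contrapositively, $r_j$ can be a bad scale only if $\cN_U(x_+, r_j) - \cN_U(x_+, r_{j+1}) \geq \delta$. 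Since the frequency drops sum to at most $\overline\Lambda$, the number of indices $j$ with a drop of size $\geq \delta$ is at most $\overline\Lambda/\delta$. Setting $Q_0 \equiv \lfloor \overline\Lambda/\delta \rfloor + 1$, which depends only on $\epsilon, n, \gamma, \Lambda, \fa$, gives the claimed bound on the number of bad scales at $x_+$, uniformly in $x_+ \in \cB_{1/2}(\bo_+)$.

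\medskip

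The main obstacle I anticipate is bookkeeping rather than anything deep: one must ensure the hypotheses of Proposition \ref{p:quantitative-symmetry} are genuinely available at every scale $r_j$ and every center $x_+$ — in particular that the uniform frequency bound $\overline\Lambda$ (not the original $\Lambda$) is the correct input to feed into the choice of $\delta$, and that the $(0,\epsilon, r_j, \fa)$-symmetry extracted from Proposition \ref{p:quantitative-symmetry} is the same notion appearing in the definition of good/bad scales (a matter of matching the rescaled tangent map $\cT_{x_+, r_j}U$ with the normalization used there). A secondary point is the edge effect near $j$ small: one simply discards the finitely many scales $r_j$ with $r_j > \tfrac13$, absorbing them into $Q_0$ at no cost. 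With these normalizations in place the argument is the standard quantitative-differentiation pigeonhole and no further estimates are needed.
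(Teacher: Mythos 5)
Your proposal is correct and follows essentially the same route as the paper: combine the uniform frequency bound of Lemma \ref{l:frequency-control-large-to-small} with the quantitative rigidity of Proposition \ref{p:quantitative-symmetry} and a telescoping/pigeonhole count over the dyadic scales $r_j=\gamma^j$. One small slip: since $\cN_U(x_+,r)$ is nondecreasing in $r$ and $r_j=\gamma^j$ decreases as $j$ grows, the sequence $j\mapsto\cN_U(x_+,r_j)$ is \emph{nonincreasing} (not nondecreasing as you wrote); this is immaterial to the argument because the only thing used is that the successive differences $\cN_U(x_+,r_j)-\cN_U(x_+,r_{j+1})$ are nonnegative and telescope to a quantity bounded by $\overline\Lambda$.
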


\begin{proof}
By Lemma \ref{l:frequency-control-large-to-small}, for any $x_+ = (x,0)\in B_{1/2}(\bo_+)$, we have 
 $\cN_U (x_+,1/2) \leq C(n,\Lambda, \tau)$.	
 Then one can obtain  
\begin{align}
C(n,\Lambda, \tau) \geq 
\cN_U (x_+, 1/2) = \sum\limits_{j = 1}^{\infty} (\cN_U (x_+, r_j) - \cN_U (x_+, r_{j + 1})).	
\end{align}
For any $\epsilon > 0$, let $\delta = \delta (\epsilon, n , \gamma, \Lambda) > 0$ be the constant in Proposition \ref{p:quantitative-symmetry}. Applying Lemma \ref{l:monotonicity}, there are finitely many scales $r_j$ of number bounded by $Q_0\equiv C(n,\Lambda, \tau) / \delta$, for which 
\begin{align}
	\mathcal{N}_U (x_+, r_j) - \mathcal{N}_U (x_+, r_{j + 1}) > \delta. 
\end{align}
Applying Proposition \ref{p:quantitative-symmetry}, one can see that all other scales are good scales.
\end{proof}

\subsection{Quantitative cone-splitting}
\label{ss:quantitative-cone-splitting}

We start with a general {\it cone splitting principle}. The computations are standard now; see also \cite[proposition 2.11]{Cheeger-Naber-Valtorta}.
\begin{lemma}
[Cone-splitting principle] \label{l:cone-splitting-principle}
Let $P:\dR^n\to \dR$ be a harmonic polynomial 
of degree $d$, homogeneous with respect to the origin. Assume that $P$ is symmetric with respect to a $k$-dimensional vector space $V\subset \dR^n$. Then 
\begin{enumerate}
\item $P$ is a linear function if and only if $P$ is $(n-1)$-symmetric.
\item If $P$ is not $(n-1)$-symmetric, and $P$ is also $0$-symmetric with respect to $z\not\in V$, then $P$ is $(k+1)$-symmetric with respect to the vector space $\Span(V,z)$. 	
\end{enumerate}
 	
\end{lemma}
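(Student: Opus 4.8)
The plan is to reduce everything to elementary linear algebra on homogeneous polynomials, exploiting that harmonicity plays almost no role except in part (1). First I would set up the translation action: for a vector $z\in\dR^n$ and a homogeneous polynomial $P$ of degree $d$, the condition that $P$ is $0$-symmetric (i.e.\ homogeneous) with respect to the point $z$ means $P(z+\lambda w)$ is, as a function of $\lambda$, a monomial of degree $d$ for each direction $w$; equivalently, writing $P(x)=\sum_{j=0}^d Q_j(x-z)$ with $Q_j$ the homogeneous-degree-$j$ part of the Taylor expansion of $P$ at $z$, we have $Q_j\equiv 0$ for all $j<d$, hence $Q_d=P$ and $P(z+v)=P(v)$ for all $v$. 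In particular $0$-symmetry at $z$ is exactly the translation-invariance $P(x+z)=P(x)$, so the set of such $z$ is a linear subspace; call it $W$, the translation subspace of $P$. Symmetry with respect to a $k$-dimensional vector space $V$ in the sense of the paper's definition means precisely $V\subset W$.

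Next I would prove part (2). By hypothesis $P$ is symmetric with respect to $V$ and also $0$-symmetric with respect to some $z\notin V$; by the previous paragraph this gives $V\subset W$ and $z\in W$, and since $W$ is a subspace we get $\Span(V,z)\subset W$. Because $z\notin V$, $\dim\Span(V,z)=k+1$, so $P$ is $(k+1)$-symmetric with respect to $\Span(V,z)$, which is the claim. (One should note the harmless case distinction that if $P$ is a nonzero constant then $d=0$ and $W=\dR^n$, but the hypothesis "$P$ is not $(n-1)$-symmetric" rules this out; and if $P\equiv 0$ the statement is vacuous or trivially true with any subspace.)

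For part (1): if $P$ is linear, $P(x)=\langle a,x\rangle$ with $a\neq 0$ (the zero polynomial being excluded as a nondegenerate tangent map), then $P(x+v)=P(x)$ exactly when $v\perp a$, so $W=a^\perp$ has dimension $n-1$ and $P$ is $(n-1)$-symmetric. Conversely, suppose $P$ is a homogeneous harmonic polynomial of degree $d$ that is $(n-1)$-symmetric, so $W\supset V$ with $\dim V=n-1$; then $P$ depends only on the single coordinate $t$ in the direction orthogonal to $V$, i.e.\ $P(x)=c\,t^d$. Harmonicity forces $\Delta P = c\,d(d-1)t^{d-2}=0$, hence $d\le 1$, so $P$ is linear (or constant, again excluded). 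I expect the only mild subtlety — the main "obstacle," though it is modest — is being careful that the paper's notion of $k$-symmetry bundles together "$0$-symmetric" (homogeneous at the base point) with translation-invariance along a $k$-plane, so that one must verify the base point is consistently the origin throughout and that the translation subspace $W$ is genuinely linear; once that bookkeeping is in place the rest is immediate.
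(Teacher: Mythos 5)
Your proof is correct, and the conclusion matches the paper's, but the key step in part (2) is executed by a genuinely different argument. The paper's own proof is infinitesimal: it writes Euler's identity twice, $\langle \nabla P(x), x\rangle = d\,P(x)$ from homogeneity at the origin and $\langle \nabla P(x), x - z\rangle = d\,P(x)$ from homogeneity at $z$, then subtracts to get $\langle \nabla P(x), z\rangle \equiv 0$, i.e., $P$ is constant along the $z$-direction. You instead argue algebraically: matching the degree-$d$ Taylor expansions of $P$ at the origin and at $z$ forces all lower-order terms at $z$ to vanish, so $P(x)=Q_d(x-z)$ with $Q_d=P$, hence $P(x+z)=P(x)$. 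The two derivations are equivalent in content (one is the derivative of the other), but your framing in terms of the translation subspace $W=\{z : P(\cdot+z)=P(\cdot)\}$ has a bookkeeping advantage: once you observe $W$ is linear, the passage from invariance along $V$ and along $z$ to invariance along $\Span(V,z)$ is automatic, whereas the paper leaves that final combination implicit. You also spell out part (1), which the paper declares obvious; your argument (a single-variable homogeneous harmonic $c\,t^d$ forces $d(d-1)=0$) is exactly the intended one, and it is the only place harmonicity is actually used.
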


\begin{remark}
The main property of cone splitting given in item (2) holds for general homogeneous polynomials.	
\end{remark}

\begin{proof}
Item (1) is obvious, so we omit the proof.

Let us prove item (2). Since $P$ is homogeneous with respect to  the origin,  
\begin{align}
	\langle \nabla P (x), x \rangle  = P_r (x)\langle\nabla r, x\rangle = d\cdot P(x).   
\end{align}
Also, $P$ is symmetric with respect to some $z\not\in V$, which implies 
\begin{align}
	\langle \nabla P , x - z\rangle  = d \cdot P(x).
\end{align}
Therefore, for any $x \in \dR^n$, 
we have	$\langle \nabla P (x), z \rangle  = 0$,	which implies that $P$ is constant along the line connecting $\bo$ and $z$. 
The proof is done. 
 \end{proof}

The following theorem gives a quantitative version of the cone splitting. 

\begin{theorem}
[Quantitative cone splitting] \label{t:quantitative-cone-splitting}
For any fixed $n\geq 2$, $\epsilon>0$, $\rho>0$, $r\in (0,1)$, $k\in\{0,1,\ldots, n-2\}$, $\Lambda>0$,  there exists 
a positive constant $\ud =\ud (n,\epsilon, \rho, \Lambda) > 0$ such that the following holds. Let $U$ be the unique solution of \eqref{e:U-extension} with $\cN_U(\bo_+,1)\leq \Lambda$.
If 
\begin{enumerate}
\item $U$ is $(k, \ud , r , \fa)$-symmetric at $\bo_+$ with respect to a $k$-dimensional vector space $V$,
\item $U$ is $(0, \ud , r, \fa)$-symmetric for some $z\in \cB_r(\bo_+)\setminus \cB_{\rho}(V)$, 
\end{enumerate}
then $U$ is also $(k+1, \epsilon , 1 , \fa)$-symmetric at $\bo_+$.

\end{theorem}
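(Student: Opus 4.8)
## Proof proposal

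The plan is to argue by contradiction and compactness, in the standard spirit of quantitative cone-splitting arguments, using the monotonicity/rigidity (Lemma \ref{l:monotonicity}), the uniform frequency bound (Lemma \ref{l:frequency-control-large-to-small}), and the classical cone-splitting principle (Lemma \ref{l:cone-splitting-principle}). Suppose the statement fails for some fixed $n$, $\epsilon$, $\rho$, $r$, $k$, $\Lambda$. Then there is a sequence $\ud_j \to 0$ and solutions $U_j$ of \eqref{e:U-extension} with $\cN_{U_j}(\bo_+,1)\le\Lambda$ satisfying (1) and (2) with parameter $\ud_j$ — so $U_j$ is $(k,\ud_j,r,\fa)$-symmetric at $\bo_+$ with respect to some $k$-plane $V_j$, and $(0,\ud_j,r,\fa)$-symmetric about some $z_j \in \cB_r(\bo_+)\setminus\cB_\rho(V_j)$ — but $U_j$ is not $(k+1,\epsilon,1,\fa)$-symmetric at $\bo_+$. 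After normalizing $U_j(\bo_+)=0$ and $\fint_{\p\cB_1}|y|^\fa U_j^2\,d\sigma = 1$ (i.e. replacing $U_j$ by $\cT_{\bo_+,1}U_j$), the uniform frequency bound from Lemma \ref{l:frequency-control-large-to-small} gives uniform $H^{1,\fa}\cap C^1_{\loc}$ bounds on $\cB_1(\bo_+)$, so after passing to a subsequence $U_j \to U_\infty$ in $H^{1,\fa}(\cB_1(\bo_+))\cap C^1_{\loc}(\cB_1(\bo_+))$, the $k$-planes converge $V_j \to V_\infty$, and the centers converge $z_j \to z_\infty \in \overline{\cB_r(\bo_+)}\setminus \cB_\rho(V_\infty)$ (the latter because $\dist(z_j,V_j)\ge\rho$ is a closed condition stable under convergence).

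Next I would identify $U_\infty$. Passing to the limit in the two approximate-symmetry conditions: since each $U_j$ is $(k,\ud_j,r,\fa)$-symmetric with respect to $V_j$ and $(0,\ud_j,r,\fa)$-symmetric about $z_j$ with $\ud_j\to 0$, and $\cT_{\bo_+,r}U_j$ converges (using the frequency bound again to control the $L^2$-normalizing denominators away from zero — here one must check the denominator does not degenerate, which follows as in the proof of Proposition \ref{p:quantitative-symmetry} from the lower bound \eqref{e:H-bound-scale-r}), the limit $U_\infty$ is genuinely $0$-symmetric (a homogeneous harmonic polynomial), invariant under translation by $V_\infty$, and invariant under scalings centered at $z_\infty$. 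By Lemma \ref{l:cone-splitting-principle}(2), a homogeneous harmonic polynomial which is $k$-symmetric with respect to $V_\infty$ and also $0$-symmetric about $z_\infty \notin V_\infty$ is $(k+1)$-symmetric with respect to $\Span(V_\infty,z_\infty)$ — unless it is already $(n-1)$-symmetric, i.e. linear, in which case it is trivially $(k+1)$-symmetric since $k\le n-2$. Either way $U_\infty$ is a $(k+1)$-symmetric normalized polynomial.

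Finally, the $C^1_{\loc}$ (hence $L^2$ on $\cB_1(\bo_+)$ with the weight $|y|^\fa$) convergence $U_j\to U_\infty$, together with $\cT_{\bo_+,1}U_j = U_j$ and $\fint_{\p\cB_1}|y|^\fa U_\infty^2 = 1$, shows that for $j$ large $U_j$ is $(k+1,\epsilon,1,\fa)$-symmetric at $\bo_+$ (taking $P = U_\infty$ in the definition of quantitative symmetry), contradicting the choice of $U_j$. This completes the argument. The main obstacle I anticipate is not the cone-splitting combinatorics but the analytic bookkeeping of the \emph{rescaled} quantitative-symmetry conditions at scale $r$ versus the target conclusion at scale $1$: one needs the normalization constants (the weighted $L^2$ norms on spheres of radius $r$ and $1$) to be comparable uniformly in $j$, which requires invoking the uniform frequency bound to get two-sided control of $H_{U_j}(\bo_+,s)$ for $s\in[r,1]$ (via the doubling estimate $\frac{d}{dr}\log H = 2N/r$ as in Lemma \ref{l:frequency-control-large-to-small}), and to ensure the limit $U_\infty$ is nonzero so that dividing by its norm is legitimate. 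A secondary technical point is checking that the $C^1$ convergence is strong enough across the characteristic hyperplane $\{y=0\}$, which is exactly what Lemma \ref{lem-eps-C1a} provides for even solutions.
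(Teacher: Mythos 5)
Your proposal is correct and follows essentially the same contradiction-compactness route as the paper: normalize, extract a limit $U_\infty$ with uniform $H^{1,\fa}\cap C^1$ bounds, pass the approximate symmetries to the limit, and invoke the classical cone-splitting principle. The one step you flag but do not fully carry out --- showing that $U_\infty$ itself (normalized at scale $1$) \emph{is} the $k$-symmetric homogeneous polynomial $P_\infty$ obtained from the scale-$r$ tangent maps --- is exactly the ``claim'' the paper proves explicitly: it establishes $\cT_{\bo_+,r}U_\infty = P_\infty$, deduces that $U_\infty$ is homogeneous, computes the normalization constant via homogeneity to be $r^d$, and concludes $U_\infty = P_\infty$ on $\cB_1(\bo_+)$; this resolves precisely the ``rescaled quantitative-symmetry at scale $r$ versus scale $1$'' issue you correctly anticipate in your last paragraph.
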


\begin{remark}
Assuming a weaker symmetry on a smaller ball and a cone symmetry with respect to a distinct point of definite distance, this theorem gives a stronger symmetry on a larger ball.
\end{remark}

\begin{proof}
We will prove it by contradiction. 
Suppose there exist positive constants $\epsilon_0>0$, $\rho_0>0$, $r_0>0$, $\Lambda_0$, and a positive integer $0\leq k \leq n-2$ such that 
	for a sequence $\ud_j\to 0$ and a sequence of solution $U_j$, the following properties hold.	
	\begin{enumerate}
	\item[(1)'] $U_j$	is $(k_0, \ud_j, r_0, \fa)$-symmetric at $\bo_+$ with respect to a $k_0$-dimensional vector space $V_j$.
	\item[(2)'] $U_j$ is $(0, \ud_j , r_0, \fa)$-symmetric for some point $z_j\in B_{r_0}(\bo_+)\setminus B_{\rho_0}(V_j)$. 
	\item[(3)'] $U_j$ is not $(k_0+1, \epsilon_0, 1, \fa)$-symmetric at $\bo_+$.
	\end{enumerate}
Without loss of generality, we can assume that $U_j(\bo_+) = \bo_+$ and 
$\fint_{\p\cB_1(\bo_+)}|y|^{\fa}\cdot|U_j|^2d\sigma =1$. 
Since $\cN_{U_j}(\bo_+, 1) \leq \Lambda_0$, we have that \begin{align}
 \int_{\cB_1(\bo_+)}|y|^{\fa}\cdot|\nabla U_j|^2\dvol_{\dR^{n+1}} \leq \Lambda_0.	
 \end{align}
Therefore, $U_j$ converges in $H^{1,\fa}(\cB_1(\bo_+))\cap C^1(\cB_1(\bo_+))$ to $U_{\infty}$. We also have the convergence $V_j \to V_{\infty}$ and $x_j \to x_{\infty} \not\in V_{\infty}$ for some vector space $V_{\infty}$ with $\dim V_{\infty} = k$.

By item (1)', there exists a sequence of $k_0$-symmetric homogeneous polynomial $P_j$ such that  
\begin{align}
\fint_{\cB_1(\bo_+)}|y|^{\fa}\cdot |\cT_{\bo_+,r_0}U_j - P_j|^2 \dvol_{\dR^{n+1}} \to 0  	
\end{align}
Notice that by the uniform doubling property,
\begin{align}
r_0^{-\fa} \cdot \fint_{\p\cB_1(\bo_+)} |y(\zeta)|^{\fa} \cdot U_{\infty}(r_0\cdot \zeta)^2  d\sigma(\zeta) = \fint_{\p\cB_{r_0}(\bo_+)}|y|^{\fa} \cdot U_{\infty}^2 d\sigma \geq r_0^{2\Lambda} > 0.
\end{align}
Therefore, $\cT_{\bo_+,r_0}U_j$ converges to some $k_0$-symmetric homogeneous polynomial $P_{\infty}$ with $\deg(P_{\infty}) = d$. By item (2)', $P_{\infty}$ is symmetric for some point $z_{\infty}\in \cB_{r_0}(\bo_+)\setminus \cB_{\rho_0}(V_{\infty})$. Applying the cone-splitting principle in Lemma \ref{l:cone-splitting-principle}, $P_{\infty}$ is $(k_0+1)$-symmetric at $\bo_+$.

To finish the proof, we only need to prove the claim that $P_{\infty} = U_{\infty}$, which gives the desired contradiction. Indeed, if the claim is true, by the normalization condition, we have $U_j = \cT_{\bo_+, 1} U_j$. Then
\begin{align}
\fint_{\cB_1(\bo_+)} |y|^{\fa} \cdot  |\cT_{\bo_+, 1} U_j - P_{\infty}|^2 \dvol_{\dR^{n+1}} \to 0 	
\end{align}
so that the desired contradiction arises. 

Now let us prove the claim. It follows from the convergence that
\begin{align} P_{\infty}(\xi) = \cT_{\bo_+,r_0}U_{\infty}(\xi) = \frac{ \displaystyle{U_{\infty}(r_0\cdot \xi)} }{ \displaystyle{ r_0^{\fa} \cdot \fint_{\p\cB_1(\bo_+)} |y(\zeta)|^{\fa} \cdot U_{\infty}(r_0\cdot \zeta)^2  d\sigma(\zeta)} },\end{align}
which implies that $U_{\infty}$ is homogeneous. Then we have that 
\begin{align}
 r_0^{-\fa} \cdot \fint_{\p\cB_1(\bo_+)} |y(\zeta)|^{\fa} \cdot U_{\infty}(r_0\cdot \zeta)^2  d\sigma(\zeta) = \fint_{\p\cB_{r_0}(\bo_+)}|y|^{\fa} \cdot U_{\infty}^2 d\sigma = r_0^d,
\end{align}
where the last equality follows from the homogeneity of $U_{\infty}$ and the normalization \begin{align}\fint_{\p\cB_1(\bo_+)}|y|^{\fa}\cdot|U_{\infty}|^2d\sigma =1.\end{align} 
Finally, we obtain that 
\begin{align}
P_{\infty}(\xi) = \frac{U_{\infty}(r_0\cdot \xi)}{r_0^d}
= 	\frac{U_{\infty}(r_0\cdot \xi)}{r_0^d} = U_{\infty}(\xi),\quad \xi \in \cB_1(\bo_+),
\end{align}
which completes the proof.
\end{proof}

Using induction, one can obtain the following corollary, which plays
a crucial role in our covering arguments. 
\begin{corollary}\label{c:inductive-splitting}
	For any fixed $n\geq 2$, $\epsilon>0$, $\rho>0$, $r\in (0,1)$, $k\in\{0,1,\ldots, n-2\}$, $\Lambda>0$,  there exists 
a positive constant $\delta=\delta(n,\epsilon, \rho, \Lambda) > 0$ such that the following holds. Let $U$ be the unique solution of \eqref{e:symmetric-extension} with $\cN_U(\bo_+, 1)\leq \Lambda$.
If 
\begin{enumerate}
\item $U$ is $(0,\delta,r,\fa)$-symmetric at $\bo_+$;
\item for any vector space $V$ of dimension $\leq k$, there exists some point $z\in \cB_r(\bo_+)\setminus B_{\rho}(V)$ such that $U$ is  $(0,\delta,r,\fa)$-symmetric at $z$, 
\end{enumerate}
then $U$ is also $(k+1,\epsilon,1,\fa)$-symmetric at $\bo_+$.

\end{corollary}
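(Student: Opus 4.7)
The plan is a compactness--contradiction argument that, in the limit, reduces the corollary to $k+1$ successive applications of the classical cone splitting principle (Lemma~\ref{l:cone-splitting-principle}). Suppose the conclusion fails: fix $\epsilon_0>0$, $\rho_0>0$, $r_0\in(0,1)$, $k\in\{0,\ldots,n-2\}$, $\Lambda>0$, a sequence $\delta_j\to 0$, and symmetric extensions $U_j$ solving \eqref{e:symmetric-extension} with $\cN_{U_j}(\bo_+,1)\leq\Lambda$, each satisfying hypotheses (1) and (2) with $\delta=\delta_j$ but failing to be $(k+1,\epsilon_0,1,\fa)$-symmetric at $\bo_+$. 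After subtracting $U_j(\bo_+)$ and normalizing so that $\fint_{\p\cB_1(\bo_+)}|y|^{\fa} U_j^2\,d\sigma=1$, standard Caffarelli--Silvestre regularity together with the uniform frequency bound yields a subsequence converging in $H^{1,\fa}(\cB_1(\bo_+))\cap C^{1}_{\loc}(\cB_1(\bo_+))$ to a nonzero limit $U_\infty$ that still solves the extension equation and has unit $L^{2,\fa}(\p\cB_1(\bo_+))$-norm.

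Hypothesis (1) with $\delta_j\to 0$ forces $\cT_{\bo_+,r_0}U_j$ to converge in $L^{2,\fa}(\cB_1(\bo_+))$ to a nontrivial $0$-symmetric polynomial $P_\infty$, so on $\cB_{r_0}(\bo_+)$ the limit $U_\infty$ coincides with a homogeneous polynomial at $\bo_+$. Consequently $\cN_{U_\infty}(\bo_+,\cdot)$ is constant on the interval $(0,r_0)$, and the rigidity half of Lemma~\ref{l:monotonicity} upgrades this to the conclusion that $U_\infty$ is itself a single homogeneous polynomial at $\bo_+$ on all of $\cB_1(\bo_+)$. Set $V_0\equiv\{\bo_+\}$.

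I then proceed by induction on $i=0,1,\ldots,k$: assume $U_\infty$ is $i$-symmetric with respect to some $i$-dimensional subspace $V_i$. Applying hypothesis (2) to $V=V_i$ (whose dimension is $\leq k$) gives, for each $j$, a point $z_j^{(i)}\in\cB_{r_0}(\bo_+)\setminus B_{\rho_0}(V_i)$ at which $U_j$ is $(0,\delta_j,r_0,\fa)$-symmetric. Passing to a further subsequence, $z_j^{(i)}\to z_\infty^{(i)}$ with $\operatorname{dist}(z_\infty^{(i)},V_i)\geq\rho_0$, and the compactness and rigidity argument of the previous paragraph, now applied at the base point $z_\infty^{(i)}$, shows that $U_\infty$ equals a homogeneous polynomial centered at $z_\infty^{(i)}$ on $\cB_{r_0}(z_\infty^{(i)})$; since $U_\infty$ is already a polynomial on $\cB_1(\bo_+)$, the two polynomial representatives coincide on $\cB_1(\bo_+)$. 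The classical cone splitting (Lemma~\ref{l:cone-splitting-principle}) then promotes the symmetry to $V_{i+1}\equiv\Span(V_i,z_\infty^{(i)})$ of dimension $i+1$. After $k+1$ iterations, $U_\infty$ is a $(k+1)$-symmetric homogeneous polynomial at $\bo_+$; the strong $H^{1,\fa}$-convergence $U_j\to U_\infty$ then makes $U_j$ itself $(k+1,\epsilon_0,1,\fa)$-symmetric for all sufficiently large $j$, contradicting the choice of $U_j$.

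The main technical hurdle is the propagation of local homogeneity on $\cB_{r_0}(z)$ to global homogeneity on $\cB_1(\bo_+)$, which is indispensable for cone splitting since Lemma~\ref{l:cone-splitting-principle} is a statement about genuine homogeneous polynomials. The degenerate weight $|y|^{\fa}$ obstructs a direct analytic-continuation argument across $\{y=0\}$; the propagation is achieved instead by noting that local homogeneity makes the Almgren frequency constant on an interval and then invoking the rigidity component of Lemma~\ref{l:monotonicity}. A secondary bookkeeping point, handled by $k+1$ successive subsequence extractions, is the need to ensure that all inductive choices $z_j^{(i)}$ converge along a common subsequence to limits satisfying $\operatorname{dist}(z_\infty^{(i)},V_i)\geq\rho_0$.
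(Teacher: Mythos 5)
Your argument is correct in substance, but it follows a genuinely different route from the paper's. The paper proves Corollary \ref{c:inductive-splitting} \emph{without any new compactness argument}: it takes the constant $\ud$ from the already-established quantitative cone-splitting Theorem \ref{t:quantitative-cone-splitting}, builds the composed chain $\ud^{(0)}<\ud^{(1)}<\dots<\ud^{(n-1)}=\epsilon$, sets $\delta=\ud^{(0)}$, and then runs a maximality argument on the largest $k_0$ for which $U$ is $(\ell,\ud^{(\ell)},1,\fa)$-symmetric, iterating Theorem \ref{t:quantitative-cone-splitting} to force $k_0\geq k$ and then once more to reach $(k+1)$-symmetry. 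You instead bypass Theorem \ref{t:quantitative-cone-splitting} entirely and run a single compactness--contradiction argument for the corollary itself, performing the $k+1$ splittings on the limit object via the classical cone-splitting principle (Lemma \ref{l:cone-splitting-principle}), with the frequency rigidity of Lemma \ref{l:monotonicity} used to promote the local homogeneity coming from hypothesis (1) to a global homogeneous polynomial on $\cB_1(\bo_+)$ -- a step you correctly identify as the crux, since the final contradiction needs the comparison polynomial at scale $1$, not at scale $r$. In effect you fold the content of Theorem \ref{t:quantitative-cone-splitting} into the limit analysis: what this buys is a shorter, more self-contained argument with no parameter-composition bookkeeping and no maximality step; what the paper's route buys is that the compactness is isolated once (in Theorem \ref{t:quantitative-cone-splitting}) and the corollary becomes a soft iteration, a scheme that is then reused verbatim in the Poincar\'e--Einstein setting (Proposition \ref{p:inductive-cone-splitting-PE}), where one would otherwise have to redo your limit argument with the almost-monotonicity replacing exact monotonicity.

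Two small points to keep in mind, both at the same level of rigor as the paper's own proof of Theorem \ref{t:quantitative-cone-splitting} rather than genuine gaps: the convergence of the tangent maps $\cT_{\bo_+,r_0}U_j$ and $\cT_{z_j^{(i)},r_0}U_j$ to tangent maps of $U_\infty$ requires the normalizing denominators to stay bounded away from zero and the degrees of the comparison polynomials to stay bounded, both of which follow from the uniform frequency bound (Lemma \ref{l:frequency-control-large-to-small}) and the doubling property, exactly as invoked in the paper; and in the final step only $L^{2,\fa}(\cB_1)$ convergence of $U_j$ is needed to contradict the failure of $(k+1,\epsilon_0,1,\fa)$-symmetry, so the weak $H^{1,\fa}$ limit plus compact embedding already suffices.
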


\begin{proof}
Let $\ud\equiv\ud(n,\epsilon,\rho,\Lambda)>0$ be the constant in Theorem \ref{t:quantitative-cone-splitting}. For any $1 \leq i \leq n-1$, let us define the composite function 
\begin{align}
	\ud^{(n-1-i)} \equiv \underbrace{\ud \circ 
\ud \circ \ldots \circ \ud(n,\epsilon, \rho,\Lambda)}_{i\ \text{factors}}.
\end{align}
 For example, $\ud^{(n-2)}= \ud(n,\ud, \rho, \Lambda)$. It is obvious that the following monotonicity holds, \begin{align}\ud^{(0)} < \ud^{(1)} < \ldots < \ud^{(n-2)} <  \ud^{(n-1)} \equiv \epsilon.\end{align} 
 Let us choose $\delta\equiv \ud^{(0)}$ (with $n$ factors in the composition). 
 
 Let $U$ be a function that satisfies properties (1) and (2) for some fixed integer $0\leq k \leq  n-2$. Since $U$ is $(0,\delta,r,\fa)$-symmetric at $\bo_+$, by applying property (2), there exists a largest integer $1\leq k_0 \leq n-2$ such that $U$ is $(\ell, \ud^{(\ell)}, 1,\fa)$-symmetric for any $1\leq \ell \leq k_0$. In fact, the existence is given by applying Theorem \ref{t:quantitative-cone-splitting} to the case $k=0$.

We observe that $k_0 \geq k$. Indeed, suppose $k_0 < k$ so that $U$ is $(k_0, \ud^{(k_0)},1,\fa)$-symmetric at $\bo_+$. In other words,  $U$ is $(k_0, \ud^{(k_0)}, 1 , \fa)$-symmetric at $\bo_+$ with respect to a $k_0$-dimensional vector space $V$ with $k_0 < k$. Applying property (2) and Theorem \ref{t:quantitative-cone-splitting}, 
 $U$ is $(k_0 + 1, \ud^{(k_0+1)}, 1, \fa)$-symmetric at $\bo_+$, which contradicts the maximality of $k_0$.   
Since $k_0 \geq k$, we particularly have that $U$ is $(k, \ud^{(k)}, 1, \fa)$-symmetric at $\bo_+$. Applying property (2) and Theorem \ref{t:quantitative-cone-splitting} again, $U$ is in fact $(k+1, \ud^{(k+1)}, 1, \fa)$-symmetric at $\bo_+$, where $\ud^{(k+1)} < \ud^{(n-1)} = \epsilon$. 
The proof is complete.
\end{proof}

\subsection{Volume estimate and frequency decomposition}

\label{ss:volume-estimate}

Let $f$ be $\gamma$-harmonic and let $U$ be the unique symmetric extension of $f$ given by \eqref{e:symmetric-extension}. 
In this subsection, for both $U$ and $f$, we will prove some uniform volume estimates for small tubular neighborhoods of the quantitative strata. 
We adopt the general formulation of the estimate and strategy of proof from \cite{Cheeger-Naber-Valtorta}.

The main result of this section is as follows.
\begin{theorem}\label{t:volume-estimate-model}
For every $j\in\dN$, $\epsilon>0$, $k\leq \fm-2$, there exist some positive constants $0<\mu(\fm,\epsilon,\Lambda)<1$ and $C(\fm,\Lambda, \epsilon)>0$ such that the following property holds. Let $U: \cB_1(\bo_+)\to \dR$ be the unique symmetric extension with 
$\cN_U(\bo_+, 1)\leq \Lambda$. Then 
\begin{align}
\Vol(B_{\mu^j}(\cS_{\epsilon,\mu^j}^k(U))\cap B_{1/2}(\bo)) \leq C \cdot (\mu^j)^{\fm - k -\epsilon}.	
\end{align}

\end{theorem}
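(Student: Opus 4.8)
The plan is to follow the standard Cheeger--Naber--Valtorta covering scheme, organized around the quantitative cone-splitting of Corollary~\ref{c:inductive-splitting} and the uniform bound on the number of bad scales from Corollary~\ref{c:controlling-bad-scales}. First I would fix the parameters: given $\epsilon > 0$ and $k \le \fm - 2$, let $\delta = \delta(\fm, \epsilon/2, \rho, \Lambda)$ be the cone-splitting constant (with $\rho$ to be chosen universal, say $\rho = 1/10$), and let $Q_0 = Q_0(\delta, \fm, \gamma, \Lambda, \fa)$ be the bad-scale count from Corollary~\ref{c:controlling-bad-scales} applied with the symmetry threshold $\delta$. The scale ratio $\mu$ will be chosen of the form $\mu = \gamma^{m}$ for a large integer $m = m(\fm, \epsilon, \Lambda)$ so that $\mu^{-\epsilon/2}$ dominates a fixed geometric counting factor; this is the usual ``eat the error with a definite gap'' trick.

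\textbf{The covering argument.} The heart of the proof is a downward induction on scales producing, for each $i$, a covering of $\cS^k_{\epsilon, \mu^j}(U) \cap B_{1/2}(\bo)$ by balls $\{B_{\mu^i}(x)\}$ with a controlled counting function. I would set up the induction so that at scale $\mu^i$ each ball in the cover is tagged with the number of bad scales its center has accumulated among $\mu, \mu^2, \dots, \mu^i$; by Corollary~\ref{c:controlling-bad-scales} this tag never exceeds $Q_0$. The key dichotomy at each refinement step: fix a ball $B_{\mu^i}(x)$ in the current cover. Either (a) the points of $\cS^k_{\epsilon,\mu^j}(U)$ inside it are $\rho\mu^i$-close to some $(k)$-dimensional affine subspace $L$, in which case the refinement into $\mu^{i+1}$-balls needs only $C(\fm)\,\mu^{-k}\cdot \mu^{-(\fm - k - 1)}$-type... more precisely $\sim \mu^{-k}$ balls to cover a neighborhood of $L$ of the right thickness; or (b) the points do \emph{not} lie near any such subspace, and then by Corollary~\ref{c:inductive-splitting} (applied at scale $\mu^i$ with the renormalized function $\cT_{x,\mu^i}U$, whose frequency is still bounded by $\Lambda$ via Lemma~\ref{l:frequency-control-large-to-small}) the function $U$ is $(k+1, \epsilon/2, \text{scale}, \fa)$-symmetric at $x$ — but membership in $\cS^k_{\epsilon,\mu^j}$ with $\mu^i \ge \mu^j$ forbids $(k+1, \epsilon, \cdot)$-symmetry, so case (b) can only occur at a \emph{bad scale} of $x$, and a center can enter case (b) at most $Q_0$ times along its history. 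In case (b) we refine naively into $\sim \mu^{-\fm}$ balls but only a bounded number ($Q_0$) of such ``expensive'' refinements are ever charged to any genealogical line. Combining: the number $N_i$ of $\mu^i$-balls satisfies a recursion $N_{i+1} \le C(\fm)\,\mu^{-k}\,N_i$ for the good steps, with $Q_0$ exceptional steps each costing an extra factor $\mu^{-(\fm-k)}$; choosing $\mu$ small so that $C(\fm)\,\mu^{-k} \le \mu^{-k-\epsilon/2}$ and absorbing the $Q_0$ exceptional factors into the constant $C$, one gets $N_j \le C(\fm, \Lambda, \epsilon)\,\mu^{-j(k+\epsilon)}$. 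Since each $\mu^j$-ball contributes volume $\le C(\fm)\,(\mu^j)^{\fm}$ to a $\mu^j$-tubular neighborhood, the total volume bound is
\[
\Vol(B_{\mu^j}(\cS^k_{\epsilon,\mu^j}(U)) \cap B_{1/2}(\bo)) \le C(\fm, \Lambda, \epsilon)\cdot (\mu^j)^{\fm}\cdot \mu^{-j(k+\epsilon)} = C\cdot(\mu^j)^{\fm - k - \epsilon},
\]
as claimed.

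\textbf{Main obstacle.} The routine parts are the geometric covering bookkeeping and the choice of $\mu$. The genuinely delicate point is ensuring that the cone-splitting and bad-scale machinery transfers correctly through the \emph{rescalings} $\cT_{x, \mu^i}U$ in the weighted setting: one must check that $\cN_U$ is approximately scale-invariant and uniformly bounded along all the renormalizations (this is exactly what Lemma~\ref{l:frequency-control-large-to-small} buys us, but it must be applied with the center $x$ moving and the base scale shrinking), and that the weighted normalization $\frac{1}{r^{\fa}}\fint_{\partial \cB_1}|y|^{\fa}|\cdot|^2$ in the definition of $(k,\eta,s,\fa)$-symmetry behaves continuously under these operations — the factor $r_0^{2\Lambda} > 0$ lower bound appearing in the proof of Theorem~\ref{t:quantitative-cone-splitting} is the avatar of this. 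A secondary subtlety is that the subspaces $L$ in case (a) live in $\dR^n = \{y=0\}$ while $U$ is defined on $\dR^{n+1}$ with the degenerate weight; one must be careful that $k$-symmetry of the extension corresponds to translation-invariance along directions \emph{tangent} to the characteristic hyperplane, which is consistent since all singular strata are taken as restrictions to $\dR^n$ (Remark~\ref{r:inclusion}). Once these compatibilities are in place, the argument is a faithful adaptation of \cite{Cheeger-Naber-Valtorta}.
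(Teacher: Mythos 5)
Your proposal is correct and follows essentially the same route as the paper: Theorem~\ref{t:volume-estimate-model} is reduced to the effective covering statement of Proposition~\ref{p:effective-covering}, which is precisely your downward induction on dyadic scales using the bad-scale count (Corollary~\ref{c:controlling-bad-scales}) and the inductive cone-splitting (Corollary~\ref{c:inductive-splitting}), with the same good/bad dichotomy producing the $\mu^{-k}$ vs.\ $\mu^{-\fm}$ refinement factors and the same choice of $\mu$ to absorb the geometric constant. The only presentational difference is that the paper organizes the bookkeeping via explicit $j$-tuples $\overline{T}^j$ labeling good/bad scales and the decomposition $E(\overline{T}^j)$, which makes precise your informal phrase ``only $Q_0$ expensive refinements per genealogical line'' and accounts cleanly for the $\binom{j}{\le D}\sim j^D$ factor; you should be aware this polynomial factor is why $\mu$ must be chosen small enough (not merely so that $C(\fm)\le\mu^{-\epsilon/2}$) to win the comparison against $\mu^{-j\epsilon}$.
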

Since we have mentioned in Remark \ref{r:inclusion} that 
\begin{align}
	\mathcal{S}^k(U) = \bigcup\limits_{\epsilon > 0}\bigcap\limits_{r > 0} 	\mathcal{S}_{\epsilon, r}^k(U),	
\end{align}
we also notice that $\mathcal{S}(U) = \mathcal{S}^{\fm - 2}(U)$, where $\mathcal{S}(U)$ is restriction of the singular set of $U$ on $\dR^n$. 
To obtain the dimension estimates for the singular set, we need the following lemma. 
\begin{lemma}\label{l:almost-(m-1)-symmetric}
	Let $f: B_1(\bo)\to \dR$ be $\gamma$-harmonic and let $U$ be the unique solution of \eqref{e:symmetric-extension} such that $\mathcal{N}_U(x_+,r) \leq \Lambda$ with $x_+ \equiv (x, 0)\in \dR^{n + 1}$. Then for every $\epsilon > 0$, $k\in\dN$, and $\alpha\in(0,1)$, there exists 
	$\eta = \eta (\epsilon, n, k, \Lambda, \fa) > 0$ such that if 
	$U$ is $(\fm - 1, \eta, r, \fa)$-symmetric at $x_+$, then 
	\begin{align}
		\|\cT_{x_+,r} U - L \|_{C^{1,\alpha}(\cB_{1/2}(\bo_+))} < \epsilon,
	\end{align}
	where $L$ is a linear polynomial with $\fint_{\p\cB_1(\bo_+)}|L|^2d\sigma = 1$.
	In particular, choosing any $\epsilon < \epsilon_0 \equiv |\nabla L|/3$, we have $r_{x_+} \geq r$ if $U$ is $(\fm - 1, \epsilon, r_{x_+} , \fa)$ at $x_+$. 
\end{lemma}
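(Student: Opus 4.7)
The natural approach is a proof by contradiction combined with the standard blow-up/compactness scheme already used in the proof of Proposition \ref{p:quantitative-symmetry}, but now upgrading the convergence from $H^{1,\fa}$ to $C^{1,\alpha}$. Suppose the first assertion fails: for some $\epsilon_0>0$, $k_0\in\dN$, $\alpha_0\in(0,1)$, there is a sequence $\eta_j\to 0$ and solutions $U_j$ of \eqref{e:symmetric-extension} with $\cN_{U_j}(x_{+,j},r)\leq \Lambda$ such that $U_j$ is $(\fm-1,\eta_j,r,\fa)$-symmetric at $x_{+,j}$ but $\|\cT_{x_{+,j},r}U_j - L\|_{C^{1,\alpha_0}(\cB_{1/2}(\bo_+))}\geq \epsilon_0$ for every linear $L$ normalized by $\fint_{\partial\cB_1(\bo_+)}|L|^2\,d\sigma=1$. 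After translating $x_{+,j}$ to $\bo_+$ and rescaling by $r$, we obtain an equivalent sequence of solutions $V_j \equiv \cT_{x_{+,j},r}U_j$ to the same degenerate equation on $\cB_1(\bo_+)$ with $V_j(\bo_+)=0$ and a unit normalization on $\partial\cB_1(\bo_+)$.

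By the $(\fm-1,\eta_j,1,\fa)$-symmetry of $V_j$, there exist $(\fm-1)$-symmetric homogeneous polynomials $P_j$, normalized by $\fint_{\partial\cB_1(\bo_+)}|y|^{\fa}|P_j|^2\,d\sigma=1$, such that $\|V_j-P_j\|_{L^{2,\fa}(\cB_1(\bo_+))}\to 0$. By the cone-splitting principle (Lemma \ref{l:cone-splitting-principle}(1)) applied on $\dR^{\fm}$, each $P_j$ is linear, and the normalization forces its coefficients to lie in a compact set; hence a subsequence converges to a linear polynomial $L$ with the correct normalization. On the other hand, the uniform frequency bound $\cN_{V_j}(\bo_+,1)\leq C(\Lambda)$ (which follows from Lemma \ref{l:frequency-control-large-to-small} and the rescaling invariance of the frequency) gives uniform $H^{1,\fa}$ bounds, so a diagonal subsequence converges weakly in $H^{1,\fa}$ and strongly in $L^{2,\fa}$ to a limit $V_\infty=L$.

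To upgrade to $C^{1,\alpha}$ convergence on $\cB_{1/2}(\bo_+)$, I invoke the interior regularity theory for the degenerate operator recalled in Section \ref{ss:regularity-of-solutions}; specifically, since the $V_j$ are solutions of $\Div(|y|^{\fa}\nabla V_j)=0$ and are uniformly bounded in $L^{2,\fa}(\cB_1(\bo_+))$, Lemma \ref{eq-C1a-U} (with $\cJ_{\bg}\equiv 0$ in the Euclidean model) yields uniform $C^{1,\alpha}(\cB_{3/4}(\bo_+))$ bounds. The Arzelà–Ascoli theorem upgrades the $L^{2,\fa}$ convergence to $C^{1,\alpha'}$ convergence for any $\alpha'<\alpha$ on $\cB_{1/2}(\bo_+)$. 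This contradicts the standing assumption $\|V_j-L\|_{C^{1,\alpha_0}}\geq\epsilon_0$ and establishes the first part. The main technical obstacle here is to verify that all constants in the $C^{1,\alpha}$ estimate (which in Lemma \ref{eq-C1a-U} are stated for equations with the curvature term $\cJ_{\bg}U$) specialize correctly to the model case $\cJ_{\bg}\equiv 0$ and respect the symmetric extension across $\{y=0\}$; this is precisely the role of the doubling procedure discussed in Section \ref{ss:regularity-of-solutions}.

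For the ``in particular'' claim, observe that the $C^{1,\alpha}$ closeness $\|\cT_{x_+,r_{x_+}}U - L\|_{C^1(\cB_{1/2}(\bo_+))}<\epsilon<|\nabla L|/3$ forces $|\nabla \cT_{x_+,r_{x_+}}U|\geq \tfrac{2}{3}|\nabla L|>0$ uniformly on $\cB_{1/2}(\bo_+)$. Translating back to $U$ and rescaling, this means $U$ has no critical points in a fixed neighborhood of $x_+$ at scale comparable to $r_{x_+}$. Since $r_{x_+}$ (the regularity/linearity scale at $x_+$) is by definition the largest such scale, the inequality $r_{x_+}\geq r$ follows directly, completing the proof.
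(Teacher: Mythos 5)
Your overall route---rescale and translate, argue by contradiction, extract a limit in $H^{1,\fa}$, upgrade the convergence through the interior regularity theory of Section \ref{ss:regularity-of-solutions}, and finally get the ``in particular'' clause from a gradient lower bound---is exactly the ``standard contradiction compactness argument'' that the paper invokes without detail. There is, however, a genuine gap at the step ``by the cone-splitting principle (Lemma \ref{l:cone-splitting-principle}(1)), each $P_j$ is linear.'' Lemma \ref{l:cone-splitting-principle} is stated for \emph{harmonic} homogeneous polynomials, and its item (1) is false without that hypothesis; the comparison polynomials $P_j$ furnished by the definition of $(\fm-1,\eta_j,1,\fa)$-symmetry are merely $(\fm-1)$-symmetric homogeneous polynomials with no equation attached. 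For instance, a normalized multiple of $x_1^2$ is invariant under the $\fm-1$ directions $x_2,\dots,x_n,y$, is an admissible comparison polynomial, and is not linear. So you cannot conclude linearity of the $P_j$, and consequently your assertion that the normalization confines them to a compact family is also unjustified, since a priori $\deg P_j$ could grow along the sequence.

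The repair is to transfer the rigidity to the limit rather than to the approximants. First bound the degrees: a homogeneous polynomial of degree $d$ normalized on $\p\cB_1(\bo_+)$ has interior $L^{2,\fa}(\cB_1(\bo_+))$ mass of order $(2d+\fm+\fa)^{-1}$, while the doubling property coming from $\cN_{V_j}(\bo_+,1)\leq \Lambda$ bounds the interior mass of $V_j$, hence of $P_j$, from below; this gives $\deg P_j\leq D(\Lambda,\fm,\fa)$, so one can extract $P_j\to P_\infty$ and identify $P_\infty=V_\infty$. Only now does the equation enter: $V_\infty$ is an even solution of the model equation which is homogeneous, vanishes at $\bo_+$, and is $(\fm-1)$-symmetric, i.e.\ depends on a single linear coordinate; evenness in $y$ forces that coordinate to be either tangential (and then $\Div(|y|^{\fa}\nabla V_\infty)=|y|^{\fa}V_\infty''=0$ gives linearity) or equal to $y$ with even degree $d\geq 2$, which is excluded since $d-1+\fa\neq 0$. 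With this correction the rest of your argument (uniform interior estimates from Lemma \ref{eq-C1a-U} with $\cJ_{\bg}\equiv 0$ in the flat model, Arzel\`a--Ascoli, and the gradient lower bound for the last claim) goes through; just note that the H\"older exponent you contradict must be below the one supplied by the a priori estimate, which is harmless here because the flat model admits $C^{k,\alpha}$ bounds for every $k$ and $\alpha$.
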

	
\begin{proof}
	The proof is standard and follows from a usual contradiction compactness argument, so we omit the details. 
\end{proof}

Then Theorem \ref{t:volume-estimate-model} immediately implies dimension estimates on the quantitative singular strata.
\begin{corollary}Let $U$ be the unique solution of \eqref{e:symmetric-extension}. Under the same assumptions as Theorem \ref{t:volume-estimate-model}, we have    
\begin{align}
 	\dim_{\mathcal{H}}(\cC(U) \cap B_{1/2}(\bo) ) \leq \dim_{\Min}(\cC(U) \cap B_{1/2}(\bo) ) \leq \fm-2.
	\label{e:singular-set-dimension-of-U} 
	\end{align}

\end{corollary}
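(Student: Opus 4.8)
The plan is to deduce the corollary directly from Theorem~\ref{t:volume-estimate-model} and the elementary relationship between the standard stratification and the quantitative stratification recorded in Remark~\ref{r:inclusion}. First I would fix $\epsilon > 0$ and let $\mu = \mu(\fm,\epsilon,\Lambda)\in(0,1)$ and $C = C(\fm,\Lambda,\epsilon)$ be the constants provided by Theorem~\ref{t:volume-estimate-model}. Since $\cS^{k}(U) = \bigcup_{\epsilon>0}\bigcap_{r>0}\cS^k_{\epsilon,r}(U)\subset \cS^k_{\epsilon,r}(U)$ for every $r>0$, in particular $\cS^{\fm-2}(U)\cap B_{1/2}(\bo)\subset \cS^{\fm-2}_{\epsilon,\mu^j}(U)\cap B_{1/2}(\bo)$ for every $j$, and hence for every $r\le \mu^j$,
\begin{align}
\Vol\big(B_r(\cS^{\fm-2}(U))\cap B_{1/2}(\bo)\big) \le \Vol\big(B_{\mu^j}(\cS^{\fm-2}_{\epsilon,\mu^j}(U))\cap B_{1/2}(\bo)\big) \le C\cdot(\mu^j)^{\fm-(\fm-2)-\epsilon} = C\cdot(\mu^j)^{2-\epsilon}.
\end{align}
Applying this with $j$ chosen so that $\mu^{j+1}\le r \le \mu^j$ gives $\Vol(B_r(\cS^{\fm-2}(U))\cap B_{1/2}(\bo)) \le C\mu^{-2}\cdot r^{2-\epsilon}$, and since $\epsilon>0$ is arbitrary this yields $\dim_{\Min}(\cS^{\fm-2}(U)\cap B_{1/2}(\bo))\le \fm-2$.

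The remaining point is to pass from $\cS^{\fm-2}(U)$ to $\cC(U)$. Here I would invoke Lemma~\ref{l:almost-(m-1)-symmetric}: if $x_+\in B_{1/2}(\bo)$ is such that $T_{x_+}U$ is $(\fm-1)$-symmetric, then $T_{x_+}U$ is (up to normalization) a linear polynomial $L$ with $|\nabla L|>0$, so $\nabla U(x_+)\neq 0$ and $x_+\notin \cC(U)$. Equivalently, $\cC(U)\cap B_{1/2}(\bo)\subset \cS^{\fm-2}(U)\cap B_{1/2}(\bo)$. (More quantitatively, the last assertion of Lemma~\ref{l:almost-(m-1)-symmetric} shows that at any $x_+\notin\cS^{\fm-2}_{\epsilon_0,r}(U)$ with $\epsilon_0 = |\nabla L|/3$ one has $|\nabla U(x_+)|$ bounded below, so the $\epsilon$-quantitative stratum at level $\fm-2$ already contains $\cC(U)$.) Combining this inclusion with the Minkowski bound just established gives
\begin{align}
\dim_{\mathcal H}(\cC(U)\cap B_{1/2}(\bo)) \le \dim_{\Min}(\cC(U)\cap B_{1/2}(\bo)) \le \dim_{\Min}(\cS^{\fm-2}(U)\cap B_{1/2}(\bo)) \le \fm-2,
\end{align}
using the general fact $\dim_{\mathcal H}\le\dim_{\Min}$ and monotonicity of Minkowski dimension under inclusion, which is exactly \eqref{e:singular-set-dimension-of-U}.

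I expect the only genuine subtlety to be the inclusion $\cC(U)\cap B_{1/2}(\bo)\subset\cS^{\fm-2}(U)\cap B_{1/2}(\bo)$, i.e. making precise that an $(\fm-1)$-symmetric tangent map forces a nonvanishing gradient; this is where the nondegeneracy of $U$ (so that $T_{x_+}U$ exists as a genuine homogeneous polynomial) and Lemma~\ref{l:almost-(m-1)-symmetric} do the work, and where one must be careful that the tangent map is taken after subtracting $U(x_+)$ so that ``linear'' really means ``affine with nonzero linear part.'' Everything else — the dyadic interpolation in $r$ and the comparison of dimensions under set inclusion — is routine, so I would keep that part brief.
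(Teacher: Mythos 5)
There is a genuine gap in your first step. You claim that
\begin{align}
\cS^{k}(U) \;=\; \bigcup_{\eta>0}\bigcap_{r>0}\cS^k_{\eta,r}(U) \;\subset\; \cS^k_{\epsilon,r}(U)
\end{align}
for a fixed $\epsilon>0$ and all $r>0$, but the monotonicity in Remark~\ref{r:inclusion} goes the \emph{other} way: $\cS^k_{\eta,r}(U)\subset\cS^k_{\eta',r}(U)$ when $\eta'\le\eta$, so shrinking $\eta$ makes the quantitative stratum \emph{larger}. A point $x\in\cS^{\fm-2}(U)$ is only guaranteed to lie in $\bigcap_{r>0}\cS^{\fm-2}_{\eta_x,r}(U)$ for some possibly tiny $\eta_x>0$ depending on $x$; if $\eta_x<\epsilon$, then $U$ could still be $(\fm-1,\epsilon,s,\fa)$-symmetric at $x$ at some scale, and $x\notin\cS^{\fm-2}_{\epsilon,r}(U)$. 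So the classical stratum is \emph{not} contained in any single fixed-$\epsilon$ quantitative stratum, and the derivation of a Minkowski bound on $\cS^{\fm-2}(U)$ via one fixed $\epsilon$ does not go through. (It is also worth noting that Minkowski dimension, unlike Hausdorff dimension, is not countably stable, so bounding $\dim_{\Min}$ of each $\eta$-level separately and taking the union over $\eta_k\to 0$ would still not give the desired bound on $\cS^{\fm-2}(U)$.)

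The fix is already in your parenthetical, and it is exactly what the paper does: Lemma~\ref{l:almost-(m-1)-symmetric} gives the inclusion $\cC(U)\cap B_{1/2}(\bo)\subset\cS^{\fm-2}_{\epsilon,r}(U)$ \emph{directly} and \emph{uniformly} for every sufficiently small $\epsilon\le\epsilon_0$ and all $r$ (since if $U$ were $(\fm-1,\epsilon,s,\fa)$-symmetric at $x_+\in\cC(U)$ for some $s$, then $\cT_{x_+,s}U$ would be $C^1$-close to a normalized linear $L$, forcing $|\nabla U(x_+)|>0$, a contradiction; and smaller $\epsilon$ only enlarges the quantitative stratum). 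One should bypass $\cS^{\fm-2}(U)$ entirely: apply Theorem~\ref{t:volume-estimate-model} with $k=\fm-2$ and any $\epsilon\le\epsilon_0$, use the uniform inclusion of $\cC(U)$, interpolate in $r$ as you did, obtain $\dim_{\Min}(\cC(U)\cap B_{1/2}(\bo))\le\fm-2+\epsilon$, and let $\epsilon\to 0$. With that replacement your argument matches the paper's.
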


\begin{proof}
 Applying Lemma \ref{l:almost-(m-1)-symmetric}, we have that for sufficiently small $\epsilon > 0$, 
\begin{align}
\cC(U) \subset \mathcal{S}_{\epsilon, r}^{\fm - 2}(U).
\end{align}
Then applying Theorem \ref{t:volume-estimate-model} and the conclusion follows.
\end{proof}

 Now we proceed to prove Theorem \ref{t:volume-estimate-model}.  
The proof follows from the following effective covering result, Proposition \ref{p:effective-covering}. Cheeger-Naber-Valtorta proved it in the second-order elliptic case; see \cite{Cheeger-Naber-Valtorta}.

\begin{proposition}
[Effective covering] \label{p:effective-covering} We fix a constant $\mu\in(0,1)$. For every $\epsilon>0$, $n \geq 2$ and $\Lambda>0$, there exist uniform constants $C_0=C_0(n)$, $C_1=C_1(n)$ and $D=D(n,\epsilon,\Lambda)$
such that the following properties hold for every $j\in\dN$.
\begin{enumerate}
\item $\cS_{\epsilon,\mu^j}^k(U) \cap \cB_{1/2}(\bo_+)$ is contained in the union of at most $j^D$ nonempty open sets $\fA_{\epsilon,\mu^j}^k$.

\item Each set $\fA_{\epsilon,\mu^j}^k$ is the union of at most $(C_1\mu^{-n-1})^D\cdot (C_0\mu^{-k})^{j-D}$ balls of radius $\mu^j$.	
\end{enumerate}

\end{proposition}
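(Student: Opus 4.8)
The plan is to run the standard Cheeger–Naber–Valtorta covering induction on scales, using the quantitative cone-splitting (Corollary \ref{c:inductive-splitting}) as the dichotomy at each step and the uniform bound on the number of bad scales (Corollary \ref{c:controlling-bad-scales}) to control the combinatorics. Fix $\epsilon>0$ and let $\delta=\delta(n,\epsilon,\rho,\Lambda)>0$ be the constant from Corollary \ref{c:inductive-splitting}, with $\rho$ a small dimensional constant to be chosen; let $Q_0=Q_0(\epsilon,n,\gamma,\Lambda,\fa)$ be the bad-scale bound from Corollary \ref{c:controlling-bad-scales} associated to the threshold $\delta$. I would also fix $\mu=\mu(n,\epsilon,\Lambda)\in(0,1)$ small, chosen at the end so that $\mu<\gamma^{m}$ for a suitable $m$ tying the $\mu$-scales to the $\gamma$-scales on which good/bad scales are defined. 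The key geometric input is: at a ball $\cB_{\mu^i}(x_+)$ centered on $\cS^k_{\epsilon,\mu^j}(U)$, either (a) there is a $k$-dimensional affine subspace $V$ such that every point of $\cS^k_{\epsilon,\mu^j}(U)\cap \cB_{\mu^i}(x_+)$ lies in $T_{\rho\mu^i}(V)$ — a "good" ball, where the set is effectively $k$-dimensional at this scale — or (b) no such subspace exists, in which case Corollary \ref{c:inductive-splitting} forces $U$ to be $(k+1,\epsilon',\mu^i,\fa)$-symmetric at $x_+$, and by a Dini/summability argument using the almost-monotonicity of the frequency this can happen only boundedly often along the chain of scales down to $\mu^j$ — a "bad" ball.

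The induction is carried out as follows. At scale $\mu^0$ start with the single ball $\cB_{1/2}(\bo_+)$. Suppose at scale $\mu^i$ we have produced a collection of at most $N_i$ balls of radius $\mu^i$ covering $\cS^k_{\epsilon,\mu^j}(U)\cap\cB_{1/2}(\bo_+)$, each tagged as "good" or "bad" with at most $D$ bad tags having occurred along any ancestral chain. To pass to scale $\mu^{i+1}$: cover each radius-$\mu^i$ ball by a bounded number $C(n)$ of radius-$\mu^{i+1}$ balls (standard Vitali/volume counting). If the parent ball is good — the singular set inside lies in $T_{\rho\mu^i}(V^k)$ — then only $C_0\mu^{-k}$ of the children actually meet the set (those within distance $\rho\mu^i$ of a $k$-plane), and we keep only those, re-running the good/bad test on each child. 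If the parent is bad, we keep all $C_1\mu^{-n-1}$ children but note that the number of bad ancestors has increased by one, and this cannot exceed $D=D(n,\epsilon,\Lambda)$: here $D$ is essentially $Q_0$ (or a fixed multiple of it), because a bad scale contributes a definite drop $\ge\delta$ to the frequency, and the total frequency drop from scale $1$ to scale $\mu^j$ is bounded by $\cN_U(\bo_+,1)-\lim_{r\to 0}\cN_U(\bo_+,r)\le\Lambda$, using Lemma \ref{l:frequency-control-large-to-small} and Lemma \ref{l:monotonicity}. Multiplying the branching factors gives: along a chain with $b\le D$ bad scales and $j-b$ good scales, the number of radius-$\mu^j$ balls is at most $(C_1\mu^{-n-1})^{b}(C_0\mu^{-k})^{j-b}\le (C_1\mu^{-n-1})^{D}(C_0\mu^{-k})^{j-D}$, which is (2). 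For (1), group the final covering by the positions of the (at most $D$) bad scales among the $j$ levels; the number of such positions is $\binom{j}{\le D}\le j^D$, and each group is one of the sets $\fA^k_{\epsilon,\mu^j}$, giving the bound $j^D$ on the number of pieces.

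I would organize the writeup around a single lemma stating the good/bad dichotomy at one scale (a direct consequence of Corollary \ref{c:inductive-splitting} together with Lemma \ref{l:almost-(m-1)-symmetric}-type compactness for handling the $\rho$ in the definition of "good"), then the two-parameter induction on $(i,b)$ tracking current scale and number of bad scales used, and finally the repackaging into the $j^D$ sets. The main obstacle, and the place requiring genuine care rather than bookkeeping, is verifying the dichotomy in the precise quantitative form needed: one must show that failure of case (a) — i.e. the singular points at scale $\mu^i$ not lying in a small tube around any $k$-plane — really produces the hypotheses of Corollary \ref{c:inductive-splitting} (namely $(0,\delta,\mu^i,\fa)$-symmetry at $\bo_+$ and at a point $z$ of definite distance $\rho$ from every $\le k$-dimensional $V$), and that being in $\cS^k_{\epsilon,\mu^j}(U)$ at a scale $\ge\mu^j$ genuinely supplies the $(0,\delta,\mu^i,\fa)$-symmetry at each such point; this is where the almost-monotonicity of the frequency (Lemma \ref{l:monotonicity}) and the uniform bad-scale count (Corollary \ref{c:controlling-bad-scales}) must be threaded together carefully, and where the choice of $\mu$ relative to $\gamma$ and $\delta$ is pinned down. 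The weighted measure $|y|^{\fa}\dvol$ enters only through the already-established monotonicity and doubling, so it causes no additional difficulty beyond what Sections \ref{ss:quantitative-rigidity-Euclidean}–\ref{ss:quantitative-cone-splitting} provide.
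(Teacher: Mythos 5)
Your overall scheme — quantitative cone-splitting to force concentration near a $k$-plane, a bounded count of bad scales from the frequency drop, and grouping by the positions of the bad scales to get $j^D$ pieces — is the Cheeger–Naber–Valtorta argument and is the paper's approach. However, there is a concrete misstep in how you set up the dichotomy, and in particular in what supplies the $(0,\delta,\mu^i,\fa)$-symmetry needed to invoke Corollary~\ref{c:inductive-splitting}. You write that one must show ``being in $\cS^k_{\epsilon,\mu^j}(U)$ at a scale $\ge\mu^j$ genuinely supplies the $(0,\delta,\mu^i,\fa)$-symmetry at each such point.'' This is backwards: membership in $\cS^k_{\epsilon,\mu^j}(U)$ is a \emph{negative} condition (the point is not $(k+1,\epsilon,s,\fa)$-symmetric for $s\ge\mu^j$) and carries no $0$-symmetry information whatsoever. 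At a scale where the frequency has not pinched, a singular point may well fail to be $(0,\delta,\mu^i,\fa)$-symmetric, so the hypotheses of the cone-splitting are simply unavailable, and your ``bad ball = no $k$-plane concentration $\Rightarrow$ $(k+1)$-symmetry'' implication does not go through.

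The paper resolves this by making the dichotomy a scale-by-scale classification of \emph{points}, not a classification of covering balls. Using $\cE(U,x,r)$, each $x\in\cB_{1/2}(\bo_+)\cap\dR^n$ is tagged with the $j$-tuple $T^j(x)\in\{0,1\}^j$ recording at which scales $\mu^\ell$ the $(0,\epsilon)$-symmetry holds ($T_\ell^j(x)=0$) or fails ($T_\ell^j(x)=1$), and the ball is partitioned into the fibers $E(\overline{T}^j)$. Corollary~\ref{c:controlling-bad-scales} gives $|T^j(x)|\le D$ for every $x$, so at most $\sum_{b\le D}\binom{j}{b}\le j^D$ fibers are nonempty; these are your sets $\fA^k_{\epsilon,\mu^j}$, which gives item (1). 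Within a fixed fiber, at a scale $\ell$ where the tuple reads $0$, \emph{every} point of the fiber inside a ball $\cB_{\mu^{\ell-1}}(x)$ is $(0,\epsilon,\mu^\ell,\fa)$-symmetric, so Corollary~\ref{c:inductive-splitting} is genuinely applicable and forces the $\mu^\ell/10$-tube concentration around a $k$-plane, hence the refinement factor $C_0\mu^{-k}$; at a scale where the tuple reads $1$, one pays the trivial factor $C_1\mu^{-n-1}$. Multiplying along the chain (bad factor at most $D$ times, good factor the remaining $j-D$ times) gives item (2). If you restructure your induction to classify points by their tuples first and then iterate the covering fiber-by-fiber, the rest of your bookkeeping yields exactly the stated bounds. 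One more small point: $\mu$ is \emph{given} in the proposition statement, not chosen; and the scaling ratio $\gamma$ in Proposition~\ref{p:quantitative-symmetry} is a free parameter that you may take equal to $\mu$, so no compatibility condition $\mu<\gamma^m$ is needed.
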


Let us make the following notations. 
\begin{align}
\cE(U, x, r) = \inf\{\eta\geq 0| \ U \ \text{is}\ (0,\eta, r)-\text{symmetric at}\ x\}.	
\end{align}
The points in $\cB_1(\bo_+) \cap \dR^n$ are divided in {\it good points} and {\it bad points} in terms of the quantitative symmetry:
\begin{align}
\fG_{r,\epsilon}(U)\equiv\{x\in \cB_{1/2}(\bo_+) \cap \dR^n | \cE(U, x, r) < \epsilon\},
\\
\fH_{r,\epsilon}(U)\equiv\{x\in \cB_{1/2}(\bo_+) \cap \dR^n | \cE(U, x, r) \geq \epsilon\}.
\end{align}
Let us denote by $\overline{T}^j\equiv(\overline{T}_1^j, \ldots, \overline{T}_{\ell}^j, \ldots ,\overline{T}_j^j)$ a $j$-tuple, where $\overline{T}_{\ell}^j\in\{0,1\}$ for $1\leq \ell \leq j$. The norm $|\overline{T}^j|$ of a $j$-tuple $\overline{T}^j$ is defined to be $\sum\limits_{\ell=1}^j \overline{T}_{\ell}^j$. For fixed $\mu>0$, $\epsilon>0$ and $j\in\dZ_+$,
we use the map $T^j$ to assign each point $ x\in \cB_1(\bo_+) \cap \dR^n$ to a $j$-tuple $(T_1^j(x), \ldots, T_j^j(x))$ such that for each $1\leq \ell \leq j$,
\begin{align}
T_{\ell}^j(x)\equiv \begin{cases}
	1, & x\in  \fH_{\mu^{\ell},\epsilon}(U),
	\\
	0, & x\in \fG_{\mu^{\ell},\epsilon}(U).
\end{cases}	
\end{align}
Then we label all the points in $\cB_{1/2}(\bo_+) \cap \dR^n$ by $j$-tuples: for each $j$-tuple $\overline{T}^j$ as described above, we define 
\begin{align}
E(\overline{T}^j) \equiv \{x\in \cB_{1/2}(\bo_+) \cap \dR^n | T^j(x) = \overline{T}^j\}.
\end{align}

The proof of Proposition \ref{p:effective-covering} follows from the key lemmas below. 
\begin{lemma}
	There exists $D=D(\epsilon, \mu, \Lambda, n, \fa) > 0$ such that $E(\overline{T}^j) = \emptyset$ if $|\overline{T}^j| \geq D$. 
\end{lemma}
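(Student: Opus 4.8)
The plan is to mirror the proof of Corollary \ref{c:controlling-bad-scales}. By construction the $\ell$-th coordinate of $\overline T^j(x)$ equals $1$ precisely when $\mu^\ell$ is a bad scale at $x$, so $|\overline T^j(x)|$ is exactly the number of bad scales among $\mu,\mu^2,\ldots,\mu^j$. It therefore suffices to bound this number by a constant $D_0 = D_0(\epsilon,\mu,\Lambda,n,\fa)$ that is independent of $j$ and of $x \in \cB_{1/2}(\bo_+)\cap\dR^n$; then $|\overline T^j(x)| \le D_0$ for every such $x$, so $E(\overline T^j) = \emptyset$ whenever $|\overline T^j| \ge D \equiv D_0 + 1$.

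First I would fix $x$, write $x_+ \equiv (x,0)$, and note that translating in the $\dR^n$-variable preserves both the extension equation \eqref{e:symmetric-extension} and the weight $|y|^\fa$; hence, taking $\tau = 1/2$ in Lemma \ref{l:frequency-control-large-to-small}, there is $\Lambda' = \Lambda'(n,\gamma,\Lambda)$ with $\cN_U(x_+,r) \le \Lambda'$ for all $r \le 1/3$. Only $L_0 = L_0(\mu)$ scales $\mu^\ell$ exceed $1/3$, and these I fold into $D_0$ at the end. By the monotonicity of $r \mapsto \cN_U(x_+,r)$ from Lemma \ref{l:monotonicity}, the single-scale increments $a_\ell \equiv \cN_U(x_+,\mu^\ell) - \cN_U(x_+,\mu^{\ell+1})$ are nonnegative, and they telescope: $\sum_{\ell:\,\mu^\ell \le 1/3} a_\ell \le \Lambda'$.

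The crux is to turn ``$\mu^\ell$ is a bad scale'' into a lower bound on a nearby increment; here a small comparison is needed because Proposition \ref{p:quantitative-symmetry} is phrased with the contraction ratio $\gamma$ while the covering uses $\mu$. Set $m \equiv \lceil \log\gamma/\log\mu \rceil$, so $\mu^{\ell+m} \le \gamma\mu^\ell$; monotonicity then gives $\cN_U(x_+,\mu^\ell) - \cN_U(x_+,\gamma\mu^\ell) \le a_\ell + a_{\ell+1} + \ldots + a_{\ell+m-1}$. Let $\delta = \delta(\epsilon,n,\gamma,\Lambda',\fa) > 0$ be the constant of Proposition \ref{p:quantitative-symmetry} associated with $\epsilon/2$ and the frequency bound $\Lambda'$. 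If $a_{\ell+i} < \delta/m$ for all $0 \le i \le m-1$, then the displayed difference is $< \delta$, so Proposition \ref{p:quantitative-symmetry} (applied at $x$ with scale $s = \mu^\ell$) makes $U$ be $(0,\epsilon/2,\mu^\ell,\fa)$-symmetric at $x$, whence $\cE(U,x,\mu^\ell) \le \epsilon/2 < \epsilon$ and $\mu^\ell$ is a good scale. Contrapositively, every bad scale $\mu^\ell \le 1/3$ satisfies $a_{\ell+i} \ge \delta/m$ for some $0 \le i \le m-1$. Since $\sum_\ell a_\ell \le \Lambda'$, at most $m\Lambda'/\delta$ indices $\ell'$ satisfy $a_{\ell'} \ge \delta/m$, and each bad scale lies within $m$ of one of them; hence there are at most $m^2\Lambda'/\delta$ bad scales $\le 1/3$. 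Setting $D_0 \equiv L_0 + \lceil m^2\Lambda'/\delta \rceil$ — a constant depending only on $\epsilon,\mu,\Lambda,n,\fa$, since $\gamma$ is fixed throughout — finishes the count, and $D \equiv D_0 + 1$.

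The main obstacle, such as it is, is purely bookkeeping: bridging the ratio mismatch $\gamma$ versus $\mu$ through the fixed integer $m$, and discarding the finitely many scales above $1/3$ on which Lemma \ref{l:frequency-control-large-to-small} supplies no frequency bound — both harmless, contributing only a fixed additive constant to $D$. Conceptually the statement is nothing more than the assertion that a bounded total frequency together with the quantitative rigidity of Proposition \ref{p:quantitative-symmetry} forces boundedly many bad scales, exactly as in Corollary \ref{c:controlling-bad-scales}.
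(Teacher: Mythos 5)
Your proof is correct and follows the same essential approach as the paper, which disposes of the lemma in one line by citing Corollary~\ref{c:controlling-bad-scales}; you unpack that citation into a complete argument. The one substantive point you address, and which the paper's one-liner elides, is the potential mismatch between the contraction ratio $\gamma$ appearing in Proposition~\ref{p:quantitative-symmetry}, in the definition of good/bad scales ($r_j = \gamma^j$), and in Corollary~\ref{c:controlling-bad-scales}, versus the ratio $\mu$ used in the covering tuples $T^j$. Your bridging integer $m=\lceil\log\gamma/\log\mu\rceil$, the monotonicity-based comparison $\cN_U(x_+,\mu^\ell)-\cN_U(x_+,\gamma\mu^\ell)\le a_\ell+\cdots+a_{\ell+m-1}$, and the pigeonhole count $m^2\Lambda'/\delta$ are all correct, as is folding in the finitely many scales $\mu^\ell>1/3$ on which Lemma~\ref{l:frequency-control-large-to-small} supplies no frequency bound. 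Note, though, that the compactness proof of Proposition~\ref{p:quantitative-symmetry} works verbatim for any fixed ratio in $(0,1)$, not only the fractional order $\gamma$; reading it with ratio $\mu$ gives $\delta=\delta(\epsilon,n,\mu,\Lambda,\fa)$ directly, yields the cleaner count $\Lambda'/\delta$, and removes the bridging $m$ entirely --- which is presumably what the authors intend by their one-line citation. Either route is valid; yours has the advantage of not reinterpreting the proposition's constant.
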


\begin{proof}
This lemma follows from the uniform estimate on the number of bad scales, which is given by  Corollary \ref{c:controlling-bad-scales}.
\end{proof}

\begin{lemma}
Given a positive integer $j\in\dZ_+$ and a $j$-tuple $\overline{T}^j = (\overline{T}_1^j,\ldots, \overline{T}_j^j)$ with $\overline{T}_{\ell}^j\in\{0,1\}$.
For every  $1 \leq \ell \leq j$, the set 
 \begin{align}\cA_{\ell}\equiv \cS_{\epsilon, \mu^j}^k(U) \cap \cB_{\mu^{\ell-1}}(x) \cap E(\overline{T}^j)\end{align} admits the following effective coverings. 
 \begin{enumerate}
  
 \item 	If $\overline{T}_{\ell}^j = 1$, then $\cA_{\ell}$ can be covered by $C_1(n)\cdot\mu^{-n-1}$
 balls centered in $\cA_{\ell}$ of radius $\mu^{\ell}$.

  \item If $\overline{T}_{\ell}^j = 0$, then $\cA_{\ell}$ can be covered by $C_0(n)\cdot\mu^{-k}$
 balls centered in $\cA_{\ell}$ of radius $\mu^{\ell}$.

 \end{enumerate}
  
\end{lemma}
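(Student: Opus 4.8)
The plan is to prove the two dichotomous coverings separately, using the quantitative cone-splitting machinery already developed. Fix $j$, the tuple $\overline{T}^j$, and an index $1\le\ell\le j$; write $r=\mu^{\ell-1}$ so that $\cB_{\mu^{\ell-1}}(x)=\cB_r(x)$ and the "children" scale is $\mu r=\mu^\ell$. First I would dispose of the easy case $\overline{T}_\ell^j=1$. Here $\cA_\ell\subset\cB_r(x)$ is simply contained in a single ball of radius $r$, which by a standard Vitali-type packing argument in $\dR^{n+1}$ (or really in $\dR^n$, since all these sets live on $\{y=0\}$, but we cover in the ambient space to control the tube) is covered by at most $C_1(n)\mu^{-n-1}$ balls of radius $\mu\cdot r=\mu^\ell$ centered at points of $\cA_\ell$. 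No PDE input is used in this case; it is pure metric geometry, and $C_1(n)$ is the covering constant of the unit ball.

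The substantive case is $\overline{T}_\ell^j=0$, i.e.\ $x\in\fG_{\mu^\ell,\epsilon}(U)$ — more precisely every point of $\cA_\ell\subset E(\overline{T}^j)$ is $(0,\epsilon,\mu^\ell,\fa)$-symmetric, hence after rescaling $\cB_r(x)$ to unit size, $(0,\epsilon,\mu,\fa)$-symmetric. The key point is to show that $\cA_\ell$ is $\rho$-close to a $k$-dimensional affine subspace for a suitably chosen small $\rho=\rho(n)$, and then cover a $\rho$-neighborhood of a $k$-plane inside a unit ball by $C_0(n)\mu^{-k}$ balls of radius $\mu$ (choosing $\rho$ comparable to $\mu$, or running the standard splitting-then-cover induction). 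To get the $k$-plane: suppose, for contradiction at the rescaled scale, that the points of $\cA_\ell$ are \emph{not} contained in the $\rho$-neighborhood of any $\le k$-dimensional subspace. Then one can select $k+1$ points of $\cA_\ell$ that are ``$\rho$-independent'' — each lies outside the $\rho$-neighborhood of the span of the previous ones. At each such point $U$ is $(0,\epsilon,\mu,\fa)$-symmetric, and (after translating the base point) Corollary \ref{c:inductive-splitting} — applied with its $\delta$ chosen to be at least $\epsilon$, using the uniform frequency bound $\cN_U\le\Lambda$ via Lemma \ref{l:frequency-control-large-to-small} and the rescaling invariance of the hypotheses — forces $U$ to be $(k+1,\epsilon',1,\fa)$-symmetric at $x$ for a small $\epsilon'$ we prescribe. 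But $x\in\cS_{\epsilon,\mu^j}^k(U)$ means $U$ is \emph{not} $(k+1,\epsilon,s,\fa)$-symmetric for any $s\ge\mu^j$, and in particular not at $s=1$; choosing $\epsilon'\le\epsilon$ yields the contradiction. Hence $\cA_\ell$ lies in the $\rho$-tube of some $k$-plane, which covers by $C_0(n)\mu^{-k}$ balls of radius $\mu^\ell$.

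The main obstacle I anticipate is bookkeeping the constants and the order of quantifiers so that everything is genuinely uniform: one must fix $\epsilon$ first, extract $\delta=\delta(n,\epsilon,\rho,\Lambda)$ from Corollary \ref{c:inductive-splitting}, and only then commit to $\rho$ (which itself feeds into $\delta$ — so one actually needs the monotonicity $\delta(\cdot,\rho,\cdot)$ is nondecreasing in $\rho$, or a fixed-point-style choice as in the proof of that corollary), and finally verify $\epsilon\le\delta$ so that $(0,\epsilon,\cdot)$-symmetry upgrades to $(0,\delta,\cdot)$-symmetry. A secondary technical point is that all the quantitative-symmetry definitions are stated on $\cB_1(\bo_+)$ centered at the \emph{origin with a point on $\{y=0\}$}; here the base points $x\in\cA_\ell\subset\dR^n$ already lie on $\{y=0\}$, and the competitor points $z$ produced in the independence selection also lie on $\{y=0\}$, so the cone-splitting input applies verbatim after translating so that $x$ becomes the origin — this uses that the extension equation \eqref{e:symmetric-extension} and the weight $|y|^\fa$ are translation-invariant in the $x$-variables. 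Once these compatibility checks are in place, the two coverings follow exactly as above, and the constants $C_0(n),C_1(n)$ depend only on dimension while the scale-count constant $D$ is inherited from Corollary \ref{c:controlling-bad-scales}.
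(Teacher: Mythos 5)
Your skeleton matches the paper's own proof: item (1) is a Vitali packing, and item (2) argues that $\cA_\ell$ lies in a thin tube around a $k$-plane (the paper fixes the tube radius at $\mu^\ell/10$, i.e.\ $\rho=\mu/10$ in your rescaled picture) by appealing to quantitative cone-splitting, then counts balls in the tube, obtaining $C_0(n)\mu^{-k}$ precisely because $\rho$ scales like $\mu$.

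The genuine gap is in your bookkeeping step ``verify $\epsilon\le\delta$ so that $(0,\epsilon,\cdot)$-symmetry upgrades to $(0,\delta,\cdot)$-symmetry.'' That inequality goes the wrong way and cannot be arranged: in Corollary~\ref{c:inductive-splitting}, $\delta$ is the \emph{output} of a compactness argument, and the corollary's own proof builds it as $\delta=\ud^{(0)}<\ud^{(1)}<\cdots<\ud^{(n-1)}=\epsilon$, so $\delta<\epsilon$ by construction. You are also forced to take your conclusion tolerance $\epsilon'\le\epsilon$ (otherwise there is no contradiction with $z_0\in\cS^k_{\epsilon,\mu^j}(U)$), and since $\delta(n,\epsilon',\rho,\Lambda)<\epsilon'\le\epsilon$, one always has $\delta<\epsilon$; moreover $\delta$ is not a parameter one ``chooses to be at least $\epsilon$'' --- it is determined by the corollary. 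Consequently ``good at scale $\mu^\ell$ with parameter $\epsilon$'' is \emph{strictly weaker} than the $(0,\delta,\cdot,\fa)$-symmetry the corollary requires as input, and your upgrade never fires. The fix --- the standard Cheeger--Naber--Valtorta device, and what the paper intends by treating the ``$\epsilon$'' in $\fG_{r,\epsilon}$ as a free parameter --- is to run the good/bad decomposition at threshold $\delta=\delta(n,\epsilon,\rho,\Lambda)$ rather than at $\epsilon$: a good scale then supplies exactly the cone-splitting hypothesis, the conclusion $(k+1,\epsilon,\mu^{\ell-1},\fa)$-symmetry contradicts membership in $\cS^k_{\epsilon,\mu^j}(U)$, and the only price is that the bad-scale count $D$ from Corollary~\ref{c:controlling-bad-scales} and Proposition~\ref{p:effective-covering} now depends on $\delta$ (hence on $\epsilon$, $\rho$, $\Lambda$), which the final Minkowski estimate tolerates. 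Once this swap is in place, the remainder of your argument --- including the check that all base and competitor points lie on $\{y=0\}$ and that the equation and weight are translation-invariant there --- is fine.
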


\begin{proof}
The proof of item (1) is trivial. 
 We will prove item (2). 

We claim that $\cA_{\ell} \subset B_{\frac{\mu^{\ell}}{10}}(V^k) \cap \cB_{\mu^{\ell - 1}}(x)$ for some vector space $V^k$ of dimension $k$.  If the claim is true, 
then the conclusion immediately follows since $B_{\frac{\mu^{\ell}}{10}}(V^k) \cap \cB_{\mu^{\ell - 1}}(x)$ can be covered by $C_0(n)\cdot \mu^{-k}$ balls of radius $\mu^{\ell}$.
Now let us prove the claim. Suppose the inclusion is not true, that is,  there exists a point $z\in \cA_{\ell}$ that satisfies $z \in \cB_{\mu^{\ell - 1}}(x)\setminus B_{\frac{\mu^{\ell}}{10}}(V^k)$. It follows from Corollary \ref{c:inductive-splitting} that $U$ is $(k+1, \epsilon, \mu^{\ell - 1})$-symmetric at $z$ which contradicts the fact $z\in \cA_{\ell} \subset \cS_{\epsilon, \mu^j}^k(U)$. This completes the proof of the claim. 
\end{proof}

\subsection{A new $\epsilon$-regularity result}
 \label{ss:epsilon-regularity}

The main result of this subsection is an $\epsilon$-regularity result (Theorem \ref{t:eps-regularity-smooth-approximation}). This result is a key ingredient to prove the Hausdorff measure estimate in Section \ref{ss:Hausdorff-measure}.

 We will start with a technical lemma in the two-dimensional case, which shows that the origin   is an isolated critical 
point of our interested model polynomial.

\begin{lemma}\label{l:hypergeometric-polynomial-critical-point}
Let $P$ be a non-constant homogeneous polynomial that solves \eqref{e:symmetric-extension} when $n=1$. Let us write $\nabla P = (P_x, P_y)$. Then $\bo_+\equiv(0,0)\in\dR^2$ is an isolated solution of $\nabla P = \bo_+$. 	
\end{lemma}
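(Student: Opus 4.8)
The plan is to exploit the explicit structure of homogeneous solutions of $\Div(|y|^{\fa}\nabla P)=0$ in the plane $\dR^2=\{(x,y)\}$. First I would reduce to polar-type coordinates adapted to the weight: for a solution that is even in $y$ (which is the relevant case here, since $U$ is the symmetric extension and $P$ is a blow-up limit of $\cT_{x_+,r}U$), write $P$ in terms of $r=|(x,y)|$ and an angular variable, and observe that $P$ being homogeneous of some degree $d\in\dN_0$ forces $P(x,y)=r^d\,\phi(\theta)$ where $\phi$ solves a second-order ODE of hypergeometric type (a Gegenbauer/Jacobi-type equation with parameter depending on $\fa$). The condition that $P$ is a genuine polynomial, together with the boundary condition coming from $(-\Delta)^{\gamma}f=0$ (which kills the singular solution $y^{1-\fa}$, as recorded in Example \ref{ex-special-soln-U}), pins down $\phi$ up to scaling and shows $d$ must be one of a discrete set of admissible exponents. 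The upshot of this step: for each admissible degree $d$, the space of such $P$ is one-dimensional, spanned by an explicit "hypergeometric polynomial'' $P_d(x,y)$ whose restriction to $\{y=0\}$ is $c\,x^d$.

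Next, with $P=P_d$ in hand, I would show $\nabla P_d$ vanishes only at the origin. The cleanest route is a factorization/zero-counting argument. On the line $\{y=0\}$ one has $P_d(x,0)=c\,x^d$, so $P_x(x,0)=cd\,x^{d-1}$ vanishes on that line only at $x=0$; away from $\{y=0\}$ I would use that $P_d$ is a nonzero homogeneous polynomial and analyze the angular ODE for $\phi$: a nontrivial solution $\phi$ of a regular second-order linear ODE on the circle has only simple zeros, and the critical points of $r^d\phi(\theta)$ away from the origin correspond to the simultaneous vanishing $\phi(\theta_0)=\phi'(\theta_0)=0$ (from $\p_r$ and $\p_\theta$ derivatives), which is impossible for a nontrivial solution. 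Hence $\nabla P_d\ne \bo_+$ on $\dR^2\setminus\{\bo_+\}$, while homogeneity of degree $d\geq 1$ gives $\nabla P_d(\bo_+)=\bo_+$. This establishes that $\bo_+$ is an isolated zero of $\nabla P$.

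I would expect the main obstacle to be the bookkeeping around the weight $|y|^{\fa}$ and the non-smoothness it introduces: the angular ODE is singular at $\theta=\pm\pi/2$ (i.e.\ on $\{y=0\}$), so "$\phi'$ has only simple zeros'' needs to be argued carefully precisely at those angles, using the even symmetry of $P$ and the regularity theory of Section \ref{ss:regularity-of-solutions} (in particular that $U$, hence $P$, is $C^{1,\alpha}$ across $\{y=0\}$) to rule out a critical point sitting on the characteristic line. A convenient alternative that sidesteps the ODE entirely: since $P$ is harmonic in the ordinary sense on each half-plane $\{y>0\}$ and $\{y<0\}$ (the equation $\Div(y^{\fa}\nabla P)=0$ reduces to $\Delta P+\fa y^{-1}P_y=0$, but one can also use that the trace $f=P(\cdot,0)=cx^d$ has an isolated critical point at $0$ and propagate this via the unique continuation / nodal-set structure for the degenerate equation in $\dR^2$), one reduces to a statement about nodal and critical sets of a single explicit function; in $\dR^2$ the critical set of a nonconstant solution is discrete, and homogeneity then forces it to be exactly $\{\bo_+\}$. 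I would present whichever of these is shortest once the explicit form of $P_d$ is written down, and flag the characteristic-line regularity point as the one deserving a careful sentence.
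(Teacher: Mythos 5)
Your main argument is correct and is, at bottom, the same as the paper's: the paper writes $P$ explicitly in terms of a hypergeometric function $\Phi$ of $z=-x^2/y^2$ (citing \cite{STT}) and shows that $\nabla P=0$ off the axes would force $\Phi(z_0)=\Phi'(z_0)=0$ at a regular point of the hypergeometric ODE, hence $\Phi\equiv 0$; your polar-coordinate version with $P=r^d\phi(\theta)$ and the angular ODE $\phi''+\fa\cot\theta\,\phi'+d(d+\fa)\phi=0$ is the same uniqueness-at-a-regular-point argument in different variables, and you correctly flag the characteristic line $\{y=0\}$ (the singular locus of the ODE) as requiring a separate argument, which both you and the paper handle via the explicit polynomial form (with $P(x,0)=a_0x^d$, $a_0\neq 0$, and evenness of $P$ forcing $P_y(x,0)=0$). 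The one thing to discard is your parenthetical ``alternative'': $P$ is \emph{not} harmonic in the ordinary sense on $\{y>0\}$ (you yourself write the correct equation $\Delta P+\fa y^{-1}P_y=0$), and the appeal to ``critical set discrete plus homogeneity'' is circular, since for a homogeneous $P$ a nonzero critical point would give a whole ray of critical points, so discreteness is precisely what is at stake; the polar ODE argument, on the other hand, is sound and should be the one you keep.
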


\begin{proof}
Let $k=\deg(P)$. There are two cases to analyze. 
\begin{flushleft}
{\bf Case (1):} $k$ is even. \end{flushleft}

By \cite[proposition 4.13]{STT}, the polynomial
$P(x,y)$ yields the form 
\begin{align}
P(x,y) = \Upsilon_e(k,\fa) \cdot \Phi\left(-\frac{k}{2}, \frac{1-k-\fa}{2} , \frac{1}{2} , - \frac{x^2}{y^2} \right) y^k,
\end{align}
where $\Phi(\alpha,\beta,\gamma, z)$ is the hypergeometric function and the normalization constant $\Upsilon_e(k,\fa)>0$ is chosen such that $\fint_{\p\cB_1(\bo_+)}y^{\fa} \cdot |P|^2  d\sigma = 1$. Computing the gradient of $P$, we have 
\begin{align}
P_x &\ = -2 \Upsilon_e \cdot  xy^{k-2}\cdot \frac{d\Phi}{dz},
\\
P_y &\ = 2 \Upsilon_e \cdot  x^2 y^{k-3} \cdot \frac{d\Phi}{dz} + k \Upsilon_e \cdot y^{k-1} \cdot \Phi. 
\end{align}
Let denote by $Z(P_x)$ and $Z(P_y)$ the zero sets of $P_x$
and $P_y$, respectively. It is evident that both $Z(P_x)$ and $Z(P_y)$ are unions of lines passing through the origin of $\dR^2$. 
We will prove that $\bo_+\in\dR^2$ is the only zero
of $\nabla P$. 

First, it is not hard to see that 
\begin{align}\{(x,y)\in\dR^2 | x y = 0\} \cap \{(x,y) \in \dR^2| \nabla P(x,y) = 0\} = (0,0).\end{align}
Indeed, since $P$ is homogeneous and it is even about $\{y=0\}$, $P$ can be  written as 
\begin{align}P(x,y) = a_0 x^k + \sum\limits_{i=1}^{k-1} a_i x^{k - 2i} y^{2i} + a_k y^k.\end{align}
Then $x=0$ and $P_y(x,y) = 0$
force $y=0$. Similarly, $y=0$ and $P_x(x,y) = 0$ force $x=0$.

Next, we consider the case  $xy \neq 0$. Let $|\nabla P| = 0$ along a line $\{y = \tau x\}$ for some $\tau\neq 0$. Then we have 
\begin{align}
\Phi(-\tau^2)  = \frac{d\Phi}{dz}\Big|_{z=-\tau^2} = 0.
\end{align}
Since $\Phi$ satisfies 
\begin{align}
z(1-z)\frac{d^2 \Phi}{dz^2}
+(\gamma - (\alpha + \beta + 1)z) \frac{d\Phi}{dz} - \alpha\beta \Phi = 0,	
\end{align}
 we have $\frac{d^2\Phi}{dz^2}\big|_{z = -\tau^2} = 0$. By simple induction argument, we have that $\frac{d^m \Phi}{dz^m}\big|_{z = -\tau^2} = 0$ for any $m\in\dZ_+$. This implies that $|\nabla^m P| = 0$ along $\{y = \tau x\}$ for any $m\in\dN_0$, and thus $P\equiv 0$ on $\dR^2$. Contradiction.

\begin{flushleft}
{\bf Case (2):} $k$ is odd. \end{flushleft}
 The polynomial $P$ in this case satisfies
\begin{align}
P(x,y) = \Upsilon_o(k,\fa) \cdot \Phi\left(\frac{1-k}{2}, \frac{2-k-\fa}{2} , \frac{3}{2} , - \frac{x^2}{y^2} \right) x y^{k-1},
\end{align}
the normalization constant $\Upsilon_e(k,\fa)>0$ is chosen such that $\fint_{\p\cB_1(\bo_+)}|P|^2  d\sigma = 1$.
 Computing the gradient of $P$, we have 
\begin{align}
P_x &\ = -2 \Upsilon_o \cdot  x^2 y^{k-3}\cdot \frac{d\Phi}{dz} + \Upsilon_o \cdot y^{k-1} \cdot \Phi ,
\\
P_y &\ = 2 \Upsilon_e \cdot  x^3 y^{k-4} \cdot \frac{d\Phi}{dz} + (k - 1) \Upsilon_e \cdot xy^{k-2} \cdot \Phi. 
\end{align}
Using the similar arguments 
as in Case (1), we have that $(0,0)$ is the only zero of $\nabla P$.
\end{proof}

\begin{theorem}\label{t:eps-regularity-smooth-approximation}
Let $P$ be a non-constant $(\fm-2)$-symmetric  homogeneous polynomial of degree $d$ that solves \eqref{e:symmetric-extension}. Then there exist positive constants $\delta> 0$ and $r > 0$ such that for any $u \in C^{2d^2}(\cB_1(\bo_+))$ if 
\begin{align}
|u - P|_{C^{2d^2}(\cB_1(\bo_+))} < \delta,	
\end{align}
then 
$\mathcal{H}^{\fm - 2}(\cC(u) \cap \cB_r(\bo_+)) \leq C(\fm) (d - 1)^2 r^{\fm -2}$.	
 
\end{theorem}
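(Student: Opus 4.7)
The plan is to reduce the estimate to a quantitative two-dimensional intersection problem by exploiting the $(\fm-2)$-symmetry of $P$ together with the isolated critical point result of Lemma \ref{l:hypergeometric-polynomial-critical-point}, and then to bound the fiber multiplicity of the projection $\cC(u)\to V$ by a Weierstrass-type counting argument.

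First, since $P$ is $(\fm-2)$-symmetric, after a rotation of $\dR^{\fm}=\dR^{n+1}$ fixing $\{y=0\}$ we may write $\dR^{\fm}=V\oplus W$, where $V=\Span(e_1,\ldots,e_{n-1})$ is the $(\fm-2)$-dimensional invariance subspace and $W\cong\dR^2$ carries the remaining two coordinates $w=(x_n,y)$. Thus $P(\xi,w)=\widetilde P(w)$ for a non-constant homogeneous polynomial $\widetilde P$ of degree $d$ on $\dR^2$ solving the two-dimensional version of \eqref{e:symmetric-extension}. Lemma \ref{l:hypergeometric-polynomial-critical-point} asserts that $\bo_+\in\dR^2$ is the unique zero of $\nabla\widetilde P$; combining this with the homogeneity of $\widetilde P$ and compactness of the unit circle yields the quantitative Lojasiewicz lower bound
\begin{equation}\label{eps-reg-lojasiewicz}
|\nabla\widetilde P(w)|\geq c_0|w|^{d-1},\qquad w\in\dR^2,
\end{equation}
with $c_0=c_0(d,\fa)>0$. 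In particular $\cC(P)=V$, and $P$ is fully non-degenerate in the directions transverse to $V$.

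Next, I analyze $\cC(u)$ assuming $\|u-P\|_{C^{2d^2}(\cB_1(\bo_+))}<\delta$. Since $P_{x_i}\equiv 0$ for $i\leq n-1$, the first $n-1$ critical-point equations $u_{x_i}=0$ are $\delta$-trivial. On the region $\{|w|\geq c_1\delta^{1/(d-1)}\}$, the Lojasiewicz bound \eqref{eps-reg-lojasiewicz} dominates the perturbation, so by a standard implicit function argument these $n-1$ equations can be solved uniquely for the variables $\xi=(x_1,\ldots,x_{n-1})$ as a small $C^{2d^2-1}$-perturbation of the identity map on $V$. Substituting this back, the remaining two equations $u_{x_n}=0$ and $u_y=0$ become, on each slice $\xi\in V\cap\cB_r$, two real-analytic functions $F_1(\xi;w),F_2(\xi;w)$ on a 2-disk $D_r\subset W$, each a $C^{2d^2-1}$-small perturbation of the homogeneous polynomial $\widetilde P_{x_n}(w)$ or $\widetilde P_y(w)$ of degree $d-1$. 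Meanwhile \eqref{eps-reg-lojasiewicz} confines all perturbed critical points to the ball $\{|w|\lesssim\delta^{1/(d-1)}\}$, giving $\cC(u)\cap\cB_r(\bo_+)\subset T_{\delta^{1/(d-1)}}(V)$.

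Finally, I bound the number of zeros of $\{F_1=F_2=0\}$ on each $V$-slice. A quantitative Weierstrass preparation (or equivalently a quantitative Malgrange factorization) applied to $F_1$ and $F_2$ near the origin of $W$ produces factorizations into at most $d-1$ quasi-linear factors, with constants depending only on the $C^{2d^2}$-norm of the perturbation; the $2d^2$ regularity is precisely what is required to iterate the preparation $d-1$ times while maintaining quantitative control. A Bezout-type intersection count then bounds the common zeros by $(d-1)^2$. Thus the projection $\pi_V\colon\cC(u)\cap\cB_r(\bo_+)\to V\cap\cB_r(\bo_+)$ is at most $(d-1)^2$-to-one, and
\begin{equation*}
\mathcal{H}^{\fm-2}(\cC(u)\cap\cB_r(\bo_+))\leq (d-1)^2\cdot\mathcal{H}^{\fm-2}(V\cap\cB_r(\bo_+))\leq C(\fm)(d-1)^2 r^{\fm-2}.
\end{equation*}
The main obstacle is the quantitative preparation step: one must track how $\|u-P\|_{C^{2d^2}}$ controls the perturbation of $(\widetilde P_{x_n},\widetilde P_y)$ uniformly in the slice parameter $\xi$, while using \eqref{eps-reg-lojasiewicz} to localize zeros into the annulus where preparation is effective. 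A secondary subtlety is that \eqref{eps-reg-lojasiewicz} must be established for the specific hypergeometric polynomials appearing here, so the constant $c_0$ is genuinely $\fa$-dependent and must be handled carefully when choosing $r$ and $\delta$ in the order: first fix $r$ small relative to $c_0$, then send $\delta\to 0$.
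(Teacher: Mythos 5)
Your reduction to the two--dimensional slices and the localization of $\cC(u)$ into a thin tube around $V$ are fine, but the two steps that carry all the difficulty are not established. First, the counting step: you call $F_1,F_2$ ``real-analytic'', yet $u$ is only assumed $C^{2d^2}$, so no Weierstrass/Malgrange preparation in the analytic category is available, and in finite smoothness a ``Bezout-type'' resultant count of common zeros of two perturbed factors simply is not a standard tool -- the uniform bound $(d-1)^2$ for \emph{all} sufficiently $C^{k}$-small perturbations is precisely the nontrivial stability theorem of Han--Hardt--Lin (Theorem \ref{t:HHL-stability}), whose proof needs the \emph{complexified} map to have an isolated zero at the origin (finite multiplicity plus degree theory). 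Your sketch neither invokes this nor supplies a substitute, and note that Lemma \ref{l:hypergeometric-polynomial-critical-point} only gives an isolated \emph{real} zero; passing to the complexified statement requires the factorization analysis of $P_{x_1},P_{x_2}$ into independent linear and definite quadratic factors that the paper carries out. In addition, your intermediate ``implicit function'' step -- solving the $n-1$ equations $u_{x_i}=0$ for $\xi$ -- is incoherent as written: for the model $P$ these derivatives vanish identically, so the Jacobian in $\xi$ that you would need to invert is identically zero, and your \L{}ojasiewicz bound concerns the $w$-gradient, not these equations (the step is also unnecessary, since the containment $\cC(u)\subset\{u_{x_n}=u_y=0\}$ already suffices for an upper bound).

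Second, and independently, the final inequality $\mathcal{H}^{\fm-2}(\cC(u)\cap\cB_r(\bo_+))\leq (d-1)^2\,\mathcal{H}^{\fm-2}(V\cap\cB_r(\bo_+))$ does not follow from the fact that the single projection $\pi_V$ is at most $(d-1)^2$-to-one. Projections do not increase Hausdorff measure, so this inequality runs in the wrong direction, and confinement to the tube $T_{\delta^{1/(d-1)}}(V)$ does not rescue it: a steep graph oscillating inside the tube meets every fiber $\{\xi\}\times W$ exactly once and still has arbitrarily large $(\fm-2)$-dimensional measure. This is exactly why the paper's proof does \emph{not} slice only along $W$: it first chooses a generic rotation in $\SO(\fm)$ so that the restriction of $\nabla P$ to \emph{every} coordinate $2$-plane $\mathcal{L}_{ij}$ has the origin as its only zero, then applies Theorem \ref{t:HHL-stability} on all translates $\mathcal{L}_{ij}(p)$, and finally sums the counting functions over the whole family of projections $\pi_{ij}$ via the area-formula/integral-geometric estimate of Federer and Han--Lin. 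Fiber control in many directions is what rules out the oscillation scenario; with control in only one direction, or without some Lipschitz-graph structure for $\cC(u)$ over $V$ that you have not proved, the measure bound does not follow.
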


The proof of the proposition requires an important stability result in \cite[theorem 4.1]{HHL}.  
\begin{theorem}[\cite{HHL}]
\label{t:HHL-stability}
Let $P:\dR^n \to \dR^n$ be a   mapping with $f(\bo) = \bo$ so that each component is a homogeneous polynomial of degree $d$, where $\bo\in\dR^n$. Assume that $P$ can be extended to a holomorphic map from $\dC^n$ to $\dC^n$ and that the origin of $\dC^n$ is an isolated zero. Then there exist positive constants $\delta_0>0$, $k\in\dZ_+$, $r>0$ depending on $P$, such that if $U \in C^k(B_1(\bo), \dR^n)$ 
satisfies 
$\| U - P \|_{C^{2d^n}(B_1(\bo))} < \delta_0$,	
then $\#\{\cC(U) \cap B_r(\bo)\} \leq d^n$.	
\end{theorem}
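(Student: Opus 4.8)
The plan is to reduce the count of real solutions of $U=\bo$ to a count of complex zeros of a nearby \emph{holomorphic} map, for which B\'ezout supplies the bound $d^n$; the exponent $2d^n$ in the hypothesis is precisely what the passage back to a merely $C^k$ perturbation will consume. (Here $\cC(U)$ denotes the solution set $\{x:U(x)=\bo\}$.) Let $\hat P=(\hat P_1,\dots,\hat P_n):\dC^n\to\dC^n$ be the holomorphic extension. Since each $\hat P_i$ is homogeneous of degree $d$, a nonzero common complex zero would force a whole punctured complex line of zeros, contradicting isolatedness, so $\hat P^{-1}(\bo)=\{\bo\}$ in $\dC^n$; thus the forms $\hat P_1,\dots,\hat P_n$ have no common zero in $\mathbb P^{n-1}$, the ideal they generate is $\mathfrak n$-primary in $\cO_{\dC^n,\bo}$, and B\'ezout's theorem gives $\mu\equiv\dim_{\dC}\cO_{\dC^n,\bo}/(\hat P_1,\dots,\hat P_n)=d^n$, equivalently the local topological degree $\deg(\hat P,B_\rho^{\dC}(\bo),\bo)=d^n>0$ for every small $\rho$.

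Next I would localize the solution set. By homogeneity and non-vanishing of $\hat P$ on the unit sphere of $\dC^n$, compactness gives $c_0>0$ with $|\hat P(z)|\ge c_0|z|^d$ on $\dC^n$, hence $|P(x)|\ge c_0|x|^d$ on $\dR^n$. So if $\|U-P\|_{C^0(B_1(\bo))}<\delta_0$, every $x\in B_1(\bo)$ with $U(x)=\bo$ satisfies $c_0|x|^d\le|P(x)-U(x)|<\delta_0$, i.e.\ $|x|<(\delta_0/c_0)^{1/d}$. Fixing a large constant $K$ and setting $r\equiv K(\delta_0/c_0)^{1/d}$, for $\delta_0$ small we get $r<1/2$, all solutions lie well inside $B_r(\bo)$, and $|P|=c_0r^d=K^d\delta_0\gg\delta_0$ on $\partial B_r(\bo)$, so $U$ and $P$ are nonvanishing and homotopic through nonvanishing maps there.

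Now I would replace $U$ by a polynomial of bounded degree. Put $\tilde U\equiv T_{2d^n-1}U$, the degree-$(2d^n-1)$ Taylor polynomial of $U$ at $\bo$; since $\deg P=d\le 2d^n-1$ we have $\tilde U-P=T_{2d^n-1}(U-P)$, so the $C^{2d^n}$-smallness of $U-P$ forces $\|\tilde U-P\|_{C^0(B_1(\bo))}=O(\delta_0)$ while $\deg\tilde U\le 2d^n-1$, a number depending only on $n,d$. Hence the complexification $\hat{\tilde U}$ satisfies $|\hat{\tilde U}-\hat P|=O(\delta_0)$ on $B_1^{\dC}(\bo)$ with constant depending only on $n,d$, so for $\delta_0$ small $\hat{\tilde U}$ is homotopic to $\hat P$ through maps nonvanishing on $\partial B_r^{\dC}(\bo)$; by the multidimensional Rouch\'e theorem (homotopy invariance of the local degree in $\dC^n$) the zeros of $\hat{\tilde U}$ in $B_r^{\dC}(\bo)$ have total multiplicity $d^n$, and being holomorphic each carries multiplicity at least one, so there are at most $d^n$ of them; the real solutions of $\tilde U=\bo$ in $B_r(\bo)$ are among these, giving $\#(\cC(\tilde U)\cap B_r(\bo))\le d^n$. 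Finally, on $B_r(\bo)$ one has $|U-\tilde U|\le C\|U\|_{C^{2d^n}}|x|^{2d^n}\le C'r^{2d^n}$, which is $o(\delta_0)$ as $\delta_0\to0$ since $2d^n/d-1\ge1$, hence negligible against $|U|$ on $\partial B_r(\bo)$; a finite-determinacy argument at order $2d^n$ then shows these higher-order terms change neither the finiteness nor the number of solutions in $B_r(\bo)$, so $\#(\cC(U)\cap B_r(\bo))\le d^n$, and one may take $k\equiv 2d^n$ (any larger integer works as well).

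\textbf{The main obstacle} is this last point: passing from equality of the \emph{topological degree} on $\partial B_r(\bo)$ to equality of the actual \emph{number} of solutions inside, since degree theory alone controls only the signed count. One must invoke the finite-$\mathcal K$-determinacy of the germ $P$, whose determinacy order is bounded in terms of $\mu=d^n$ (through $\mathfrak n^{d^n}\subset(\hat P_1,\dots,\hat P_n)$ and the ensuing Tougeron-type estimate), to conclude that the $2d^n$-jet of $U$ already pins down $\#(\cC(U)\cap B_r(\bo))$ — in particular that this set is finite in the first place. Making this determinacy estimate effective and uniform as $U$ varies is where the real work lies, and it is also exactly why the hypothesis must be stated in $C^{2d^n}$ rather than merely $C^1$.
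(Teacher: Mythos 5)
This theorem is Theorem~4.1 of \cite{HHL} and is imported into the paper by citation, so there is no internal argument to compare you against; the review below is of your sketch in isolation. The skeleton is correct and is, to the best of my knowledge, essentially the HHL strategy: the bound $|P(x)|\ge c_0|x|^d$ (from homogeneity and compactness of the unit sphere) confines every zero of $U$ to a ball of radius $r=K(\delta_0/c_0)^{1/d}$ and guarantees $U\ne \bo$ on $\partial B_r(\bo)$; the B\'ezout count $\mu=d^n$ for a regular sequence of $n$ degree-$d$ forms with no common projective zero is right; truncating $U$ to its Taylor polynomial $\tilde U$ of degree $2d^n-1$ produces a bounded-degree polynomial with $\|\hat{\tilde U}-\hat P\|_{C^0(B_1^{\dC}(\bo))}=O(\delta_0)$, and multi-variable Rouch\'e then yields at most $d^n$ complex zeros of $\hat{\tilde U}$ in $B_r^{\dC}(\bo)$, hence at most $d^n$ real zeros of $\tilde U$ in $B_r(\bo)$. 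All of this is sound, and your bookkeeping of exponents ($r^{2d^n}=o(\delta_0)$ because $2d^{n-1}>1$) is correct.

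The gap, which you flag yourself, is the return passage from $\tilde U$ to $U$, and the tool you reach for does not close it. Finite $\mathcal{K}$-determinacy (Tougeron/Mather) is a statement about \emph{germs at one point}: if $j^N U(\bo)=j^N P(\bo)$, then $U$ and $P$ are $\mathcal{K}$-equivalent in \emph{some neighborhood of $\bo$ whose radius depends on $U$}, and in that case $U^{-1}(\bo)$ near $\bo$ is the single point $\bo$ — which is not what is wanted. Here the hypothesis is only $\|U-P\|_{C^{2d^n}(B_1(\bo))}<\delta_0$, not jet coincidence at the origin; the zeros of $U$ are scattered through $B_r(\bo)$, and one has to count them on a ball of \emph{definite} radius uniformly over all admissible $U$. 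What is actually needed is quantitative and two-part: (i) every zero of $U$ in $B_r(\bo)$ must lie within controlled distance of some zero of $\tilde U$, which requires a {\L}ojasiewicz-type inequality for $\tilde U$ with constants uniform over the compact family $\{\tilde U:\deg\le 2d^n-1,\ \|\tilde U-P\|_{C^0(B_1(\bo))}<C\delta_0\}$; and (ii) inside each such neighborhood, $U$ has at most as many real zeros as the multiplicity of $\hat{\tilde U}$ at the corresponding complex zero. Step (ii) is exactly what degree theory does not give (it counts with sign) and what complex Rouch\'e does not give (since $U$ is not holomorphic). So the proposal is the right outline, but its final step is genuinely missing, not merely unelaborated, and "finite determinacy of the germ $P$" in its usual form is not the bridge that crosses it.
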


\begin{proof}[Proof of Theorem \ref{t:eps-regularity-smooth-approximation}]	Let $P = P(x, y)$ and let us denote $x_1 = x$, $x_2 = y$, and $\bm{z} = (x_1 , x_2)$.
Since the partial derivatives $P_{x_1}$ and $P_{x_2}$ are homogeneous polynomials,
	one has the decompositions 
	\begin{align}
	P_{x_1}(\bm{z}) = & \ \left(\prod\limits_{k = 1}^{d_1} \langle \nu_k^1 , \bm{z}\rangle\right)\cdot\left(\prod\limits_{\mu = 1}^{q_1} Q_{\mu}^1(\bm{z})\right),
	\\
	P_{x_2}(\bm{z}) = & \ \left(\prod\limits_{k = 1}^{d_2} \langle \nu_k^2 , \bm{z}\rangle\right)\cdot\left(\prod\limits_{\mu = 1}^{q_2} Q_{\mu}^2(\bm{z})\right).	
	\end{align}
 Here each $\nu_k^{\alpha}$ is a vector in $\dR_{x_1,x_2}^2$ and each $Q_{\ell}^j$ is a quadratic form with the only zero $\bm{z} = 0$.
Now restricted onto the plane $\dR_{x_1,x_2}^2$, applying Lemma \ref{l:hypergeometric-polynomial-critical-point}, we find that $\nabla P = (P_{x_1}, P_{x_2})$ is vanishing only at $\bm{z} = 0$. Therefore, 
the vectors $\nu_k^{\alpha}$ and $\nu_{\ell}^{\beta}$
	are linearly independent unless $(k, \alpha) = (\ell, \beta)$.

	Given the expression of $\nabla P$ as above, we will prove the following claim.

	\vspace{0.3cm}
	
{\bf Claim.} There exists an open dense subset $\mathfrak{T}\subset SO(\fm)$ such that the following holds: for any rotation $\mathscr{T}= (A_{ij})_{1\leq i,j\leq \fm} \in \mathfrak{T}$
with  $\bm{x} = \mathscr{T}\bm{w}\in\dR^{\fm}$, in terms of the new coordinate system $\bm{w}=(w_1,\ldots, w_{\fm})$, 
for every coordinate plane
\begin{align}\mathcal{L}_{ij}\equiv \{\bm{w}=(w_1,\ldots, w_{\fm})\in\dR^{\fm}: w_{\ell} = 0\ \text{if} \ \ell\neq i \ \text{and}\ \ell \neq j\}, \end{align} the only zero of the restriction $(\nabla P)|_{\mathcal{L}_{ij}}$ 
on the plane $\mathcal{L}_{ij}$ is the origin.  
	
	\vspace{0.3cm}

To prove the claim, let us take a rotation $\mathscr{T}= (A_{ij})_{1\leq i,j\leq \fm} \in SO(\fm)$
with  $\bm{x} = \mathscr{T}\bm{w}\in\dR^{\fm}$. Then the partial derivative $P_{w_i}$ with respect to the new coordinates $\bm{w}=(w_1,\ldots, w_{\fm})$ is given by,
\begin{align}
 P_{w_i}(w) = &\ (D_{x_1}P) \cdot \frac{\p x_1}{\p w_i} + (D_{x_2}P) \cdot \frac{\p x_2}{\p w_i}\nonumber\\
 = & \  A_{1i}\left(\left(\prod\limits_{k = 1}^{d_1}\sum\limits_{\ell = 1}^{\fm}\langle\nu_k^1, \zeta_{\ell} \rangle w_{\ell}\right)\cdot\prod\limits_{\mu = 1}^{q_1}(\mathscr{R}\bm{w})^T\mathscr{Q}_{\mu}^1 (\mathscr{R}\bm{w})\right) \\
 & \  + A_{2i}\left(\left(\prod\limits_{k = 1}^{d_2}\sum\limits_{\ell = 1}^{\fm}\langle\nu_k^2, \zeta_{\ell} \rangle w_{\ell}\right)\cdot\prod\limits_{\mu = 1}^{q_2}(\mathscr{R}\bm{w})^T\mathscr{Q}_{\mu}^2 (\mathscr{R}\bm{w})\right),
\end{align}
where $\mathscr{R} \equiv 
\begin{bmatrix}
	A_{11} & \ldots & A_{1{\fm}}
	\\
		A_{21} & \ldots & A_{2{\fm}}
\end{bmatrix}$ is the first two rows of $(A_{ij})$, $\zeta_{\ell}$ is its $\ell^{th}$ column vector $\begin{bmatrix}
A_{1\ell}
\\
A_{2\ell}	
\end{bmatrix}
$, $\mathscr{Q}_j^{\alpha}$ $(\alpha=1,2)$ is 
the matrix of the quadratic form $Q_j^{\alpha}$. Restricted on the coordinate plane $\mathcal{L}_{ij}$, we have
\begin{align}
\nabla P|_{\mathcal{L}_{ij}}
= \mathscr{R}_{ij}
\begin{bmatrix}
	\left(\prod\limits_{k = 1}^{d_1} \left(\langle\nu_k^1, \zeta_i \rangle w_i + \langle\nu_k^1, \zeta_j \rangle w_j\right)\right)\cdot\prod\limits_{j = 1}^{q_1}(\mathscr{R}_{ij}\bm{w}_{ij})^T\mathscr{Q}_j^1 (\mathscr{R}_{ij}\bm{w}_{ij})
	\\
\left(\prod\limits_{k = 1}^{d_2}\left(\langle\nu_k^2, \zeta_i \rangle w_i + \langle\nu_k^2, \zeta_j \rangle w_j\right)\right)\cdot\prod\limits_{j = 1}^{q_2}(\mathscr{R}_{ij}\bm{w}_{ij})^T\mathscr{Q}_j^2 (\mathscr{R}_{ij}\bm{w}_{ij})	
\end{bmatrix},
\end{align}
	where $\bm{w}_{ij} = 
(0,\ldots,
 w_i,
\ldots, w_j, \ldots, 0
)$ and 
\begin{align}
\mathscr{R}_{ij} \equiv 
	\begin{bmatrix}
A_{1i} & A_{2i}
\\
A_{1j} & A_{2j}
\end{bmatrix}.\label{e:submatrix-R_{ij}}
\end{align}

Now the subset $\mathfrak{T}\subset \SO(\fm)$ is defined as: a  rotation matrix $\mathscr{T} = (A_{ij})_{1\leq i,j\leq \fm}$  is contained in $ \mathfrak{T}$ if each sub-matrix $\mathscr{R}_{ij}$, as in \eqref{e:submatrix-R_{ij}}, is nonsingular.  
Then $\mathfrak{T}\subset \SO(\fm)$
	is an open subset. Moreover, 
$\dim(\SO(\fm)\setminus \mathfrak{T}) = \frac{\fm(\fm - 1)}{2} - 1$. Therefore, $\mathfrak{T}\subset \SO(\fm)$ is an open dense subset.

Next, we will show that, for every $\mathscr{T}\in \mathfrak{T}$ with $\bm{x} = \mathscr{T}\bm{w}$ and for every $\bm{w}$-coordinate plane $\mathcal{L}_{ij}$, the only zero of $\nabla P|_{\mathcal{L}_{ij}}$ is the origin.
Indeed, by the definition of $\mathfrak{T}$, the matrix $\mathscr{R}_{ij}$ is always nonsingular, so
 $\nabla P|_{\mathcal{L}_{ij}}$
if and only if 
\begin{align}
\begin{split}
	\left(\prod\limits_{k = 1}^{d_1} \left(\langle\nu_k^1, \zeta_i \rangle w_i + \langle\nu_k^1, \zeta_j \rangle w_j\right)\right)\cdot\prod\limits_{j = 1}^{q_1}(\mathscr{R}_{ij}\bm{w}_{ij})^T\mathscr{Q}_j^1 (\mathscr{R}_{ij}\bm{w}_{ij})
= & \ 	0,
\\
\left(\prod\limits_{k = 1}^{d_2}\left(\langle\nu_k^2, \zeta_i \rangle w_i + \langle\nu_k^2, \zeta_j \rangle w_j\right)\right)\cdot\prod\limits_{j = 1}^{q_2}(\mathscr{R}_{ij}\bm{w}_{ij})^T\mathscr{Q}_j^2 (\mathscr{R}_{ij}\bm{w}_{ij})	 = & \ 0.
\end{split}\label{e:product-vanishing}
\end{align}
Since $\mathscr{Q}_j^{\alpha}$ is nonsingular,  for any $\alpha=1,2$ and $1\leq j\leq q_{\alpha}$,
$(\mathscr{R}_{ij}\bm{w}_{ij})^T\mathscr{Q}_j^{\alpha} (\mathscr{R}_{ij}\bm{w}_{ij}) = 0$ if and only if $\bm{w}_{ij} = 0$. It follows that \eqref{e:product-vanishing} if and only if 
\begin{align}
\prod\limits_{k = 1}^{d_1} \left(\langle\nu_k^1, \zeta_i \rangle w_i + \langle\nu_k^1, \zeta_j \rangle w_j\right) = \prod\limits_{k = 1}^{d_2}\left(\langle\nu_k^2, \zeta_i \rangle w_i + \langle\nu_k^2, \zeta_j \rangle w_j \right)= 0.	
\end{align}
Since $(\nu_k^1, \nu_{\ell}^2)$
is pair of linearly independent vectors for any $k\neq \ell$, and $(\zeta_i, \zeta_j)$ is also a pair of linearly independent vectors for any $i\neq j$, simple calculations in linear algebra imply that $w_i = w_j = 0$. This completes the proof of the claim.

\vspace{0.3cm}

Now we are ready to prove the Hausdorff measure estimate. First, let us denote 
\begin{align}
	DP(i,j) \equiv (D_i P, D_j P)|_{\mathcal{L}_{ij}}.
\end{align}
By the claim, the origin $\bm{0}^{\fm} \in \dC^{\fm}$ is also the only zero of the complex extension of $DP(i,j)$. Let $u\in C^{2d^2}(\cB_1(\bo_+))$ satisfy $|u - P|_{C^{2d^2}(\cB_1(\bo_+))} < \delta$ for some sufficiently small $\delta > 0$ such that the following holds: for any $p \in \cB_1(\bo_+)$, 
\begin{align}
|D_pu(i, j) - DP(i, j)|_{C^{2(d-1)^2}(\cB_1(\bo_+))} < \delta_0,\quad \forall\ 1\leq i,j\leq \fm,	
\end{align}
where 
$D_pu(i, j) \equiv (D_i u, D_ju)|_{\mathcal{L}_{ij}(p)}$ 	
 and $\mathcal{L}_{ij}(p) \equiv p + \mathcal{L}_{ij}$ is a translation of the plane $\mathcal{L}_{ij}$.
Then applying Theorem \ref{t:HHL-stability}, there exists some small $r > 0$ such that for all $p$,
\begin{align}
\#\left((\nabla u)^{-1}(\bo_+)\cap \mathcal{L}_{ij}(p) \cap \cB_r(\bo_+)\right) \leq \#\left((D_pu(i, j))^{-1}(\bm{0}^2)\cap \cB_r(\bo_+)\right)	\leq (d - 1)^2. 
\end{align}
To estimate the Hausdorff measure of $(\nabla u)^{-1}(\bo_+) \cap B_r(\bo_+)$, let us take the natural projection $\pi_{ij}:\dR^{\fm} \to \dR^{\fm - 2}$ by
\begin{align}
\pi_{ij}(\bx) \equiv (x_1,\ldots,\hat{x}_i,\ldots, \hat{x}_j, \ldots x_{\fm}) \in\dR^{n-2}, 	
\end{align}
where $x_i$ and $x_j$ are deleted from $\bx = (x_1,\ldots, x_{\fm}) \in \dR^n$.
Then for every $q\in B_r(\bm{0}^{\fm - 2}) \subset \dR^{\fm - 2}$, we have that 
\begin{align}
	\#\left((\nabla u)^{-1}(\bo_+)\cap (\pi_{ij})^{-1}(q) \cap \cB_r(\bo_+)\right) \leq (d - 1)^2. 
\end{align}
Therefore, applying the area formula (see \cite[3.2.22]{Federer} or \cite[theorem 1.2.10]{Han-Lin}), 
\begin{align}
&\ \mathcal{H}^{\fm - 2}\left((\nabla u)^{-1}(\bo_+) \cap \cB_r(\bo_+)\right)	
\\
\leq &\ \sum\limits_{1\leq i < j  \leq \fm}
\int_{B_r(\bm{0}^{\fm - 2})}
\#\left((\nabla u)^{-1}(\bo_+)\cap (\pi_{ij})^{-1}(q) \cap \cB_r(\bo_+)\right) d\mathcal{H}^{\fm - 2}(q)
\nonumber\\
\leq & \ C(\fm) (d - 1)^2 r^{\fm -2},  
\end{align}
which completes the proof of the proposition.
\end{proof}

\begin{theorem}
[$\epsilon$-regularity] \label{t:eps-reg-Euclidean}
Let $U$ solve \eqref{e:symmetric-extension} and satisfy $\cN_U(\bo_+ , 1) \leq \Lambda$.
Then there exist constants $\epsilon(\fm,\fa,\Lambda) > 0$
and $\bar{r}(\fm,\fa,\Lambda) > 0$
such that if there exists a  
homogeneous polynomial $P$ with $(\fm - 2)$-symmetry such that 
\begin{align}
\fint_{\p\cB_1(\bo_+)} |y|^\fa P^2 d\sigma = 1\quad \text{and} \quad 
\|\cT_{\bo_+,1}U - P\|_{L^2(\p\cB_1(\bo_+))} \leq \epsilon,	
\end{align}
then for all $r\leq \bar{r}$, 
\begin{align}
\cH^{\fm - 2}(\cC(U) \cap \cB_r(\bo_+))
\leq C(n,\Lambda) r^{\fm - 2},	
\end{align}
	where $\cC(U)\equiv \{\bx\in \dR^n: |\nabla U|(\bx) = 0\} \subset \dR^n$ is the restriction of the critical set of $U$ on the boundary $\dR^n$.
\end{theorem}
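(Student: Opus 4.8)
\emph{Strategy.} I would argue by contradiction and compactness, so as to reduce the assertion to the rigid $\epsilon$-regularity Theorem~\ref{t:eps-regularity-smooth-approximation}. Suppose the claim fails. Then for each $j$ there is a solution $U_j$ of \eqref{e:symmetric-extension} with $\cN_{U_j}(\bo_+,1)\le\Lambda$, a normalized $(\fm-2)$-symmetric homogeneous polynomial $P_j$ solving \eqref{e:symmetric-extension} with $\fint_{\p\cB_1(\bo_+)}|y|^\fa P_j^2\,d\sigma=1$ and $\|\cT_{\bo_+,1}U_j-P_j\|_{L^2(\p\cB_1(\bo_+))}\le 1/j$, and a radius $r_j\le 1/j$ with $\cH^{\fm-2}(\cC(U_j)\cap\cB_{r_j}(\bo_+))> C(n,\Lambda)\,r_j^{\fm-2}$, where the constant $C(n,\Lambda)$ will be pinned down at the end. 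Subtracting the constant $U_j(\bo_+)$ changes neither \eqref{e:symmetric-extension} (there is no zeroth order term in the model case) nor the critical set, so I may assume $U_j(\bo_+)=0$; then $\cT_{\bo_+,1}U_j$ is a positive scalar multiple of $U_j$, and by the monotonicity in Lemma~\ref{l:monotonicity} one has $N_{U_j}(\bo_+,r)=\cN_{U_j}(\bo_+,r)\le\Lambda$ for all $r\in(0,1]$.

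\emph{Compactness and identification of the blow-up.} First I would extract a limit profile that does not depend on the $P_j$. The bound $\cN_{U_j}(\bo_+,1)\le\Lambda$ together with the normalization $\fint_{\p\cB_1(\bo_+)}|y|^\fa(\cT_{\bo_+,1}U_j)^2=1$ yields a uniform $H^{1,\fa}(\cB_1(\bo_+))$ bound on $\cT_{\bo_+,1}U_j$; since each $\cT_{\bo_+,1}U_j$ is the even symmetric solution of $\Div(|y|^\fa\nabla\,\cdot\,)=0$ across $\{y=0\}$, the interior estimates of Lemma~\ref{eq-C1a-U} — available for every order $k$ in the model case, where the compactification is the smooth Euclidean one — give uniform $C^k(\cB_{1/2}(\bo_+))$ bounds, and Arzel\`a--Ascoli produces, along a subsequence, $\cT_{\bo_+,1}U_j\to U_\infty$ in every $C^k_{\loc}(\cB_1(\bo_+))$, with $U_\infty$ solving the same equation. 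A routine argument with the monotonicity formula (the uniform energy bound forces the $L^2(\p\cB_1(\bo_+),|y|^\fa)$-mass of $P_j$ to concentrate on homogeneity degrees comparable to $N_{U_j}(\bo_+,\cdot)\le\Lambda$) shows $d_j\equiv\deg P_j$ is bounded by a constant depending only on $\Lambda$, so after a further subsequence $d_j\equiv d\in\{1,\dots,\lfloor\Lambda\rfloor\}$ is constant; the value $d=0$ is excluded since the minimum principle for $|y|^\fa$-harmonic functions, together with $\cT_{\bo_+,1}U_j(\bo_+)=0$ and $C^0$-closeness to a nonzero constant, is contradictory. The set $\mathcal{P}_d$ of normalized $(\fm-2)$-symmetric homogeneous degree-$d$ solutions of \eqref{e:symmetric-extension} is compact (a finite-dimensional linear space intersected with a sphere, cut out by the compact conditions of $(\fm-2)$-symmetry), hence $P_j\to P_\infty\in\mathcal{P}_d$ in every norm, and by uniqueness of the weighted Dirichlet problem $U_\infty=P_\infty$; in particular $\cT_{\bo_+,1}U_j\to P_\infty$ in $C^{2d^2}(\cB_{1/2}(\bo_+))$.

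\emph{Applying the rigid estimate and unraveling the rescaling.} If $d=1$ then $|\nabla P_\infty|$ is a positive constant, so $C^1$-closeness already forces $\cC(U_j)\cap\cB_{\bar r}(\bo_+)=\emptyset$ for a fixed $\bar r>0$ and all large $j$, contradicting the choice of $U_j$. If $d\ge 2$, dilate $\cB_{1/2}(\bo_+)$ to $\cB_1(\bo_+)$ and renormalize: $\cT_{\bo_+,1}U_j$ becomes a function $\tilde u_j$ with $\tilde u_j\to\hat P_\infty$ in $C^{2d^2}(\cB_1(\bo_+))$ for a suitable $\hat P_\infty\in\mathcal{P}_d$. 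Theorem~\ref{t:eps-regularity-smooth-approximation}, applied to the single polynomial $\hat P_\infty$, supplies $\delta'>0$ and $r'>0$ such that $|\tilde u_j-\hat P_\infty|_{C^{2d^2}(\cB_1(\bo_+))}<\delta'$ implies $\cH^{\fm-2}(\cC(\tilde u_j)\cap\cB_\rho(\bo_+))\le C(\fm)(d-1)^2\rho^{\fm-2}$ for all $\rho\le r'$ — the uniformity over $\rho\le r'$ being immediate from the fiberwise bound $(d-1)^2$ on $\nabla\tilde u_j$ over the projections $\pi_{ij}$ established in the proof of that theorem, which persists on every smaller ball. Since $\cC(\tilde u_j)=2\,\cC(\cT_{\bo_+,1}U_j)=2\,\cC(U_j)$, undoing the dilation gives, for all large $j$ and all $r\le\bar r\equiv r'/2$,
\begin{align}
\cH^{\fm-2}(\cC(U_j)\cap\cB_r(\bo_+))\le C(\fm)(d-1)^2 r^{\fm-2}.
\end{align}
Fixing $C(n,\Lambda)\equiv C(\fm)(\lfloor\Lambda\rfloor-1)^2$ and recalling $r_j\le 1/j<\bar r$ for $j$ large contradicts the defining property of $U_j$; the thresholds $\epsilon,\bar r$ so obtained depend only on $\fm,\fa,\Lambda$ since the whole argument does. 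I expect \emph{the main obstacle} to be the compactness step: one needs interior $C^k$ estimates for the degenerate equation across the characteristic hyperplane for its even symmetric solutions (precisely what Lemma~\ref{eq-C1a-U} and the local theory of \cite{STV1} provide), and one must control $\deg P_j$ purely by $\Lambda$ via the frequency monotonicity, so that only finitely many compact families $\mathcal{P}_d$ occur and a single limit polynomial $P_\infty$ can be isolated and fed into Theorem~\ref{t:eps-regularity-smooth-approximation}.
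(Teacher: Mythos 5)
Your proof is correct and fills in, via a compactness/contradiction argument, precisely the reduction from $L^2$-proximity on $\p\cB_1(\bo_+)$ to the $C^{2d^2}$-proximity required by Theorem~\ref{t:eps-regularity-smooth-approximation} that the paper's one-line proof (``It follows from Theorem~\ref{t:eps-regularity-smooth-approximation}'') leaves implicit. Both routes hinge on the same rigid estimate from Theorem~\ref{t:eps-regularity-smooth-approximation}; your additional observations --- that the degree of $P$ is controlled by $\Lambda$, and that the fiberwise bound $(d-1)^2$ in that proof already yields the stated uniformity over all $r\le\bar r$ --- are exactly what one needs to make the citation legitimate.
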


\begin{proof}
It follows from Theorem \ref{t:eps-regularity-smooth-approximation}.	
\end{proof}

\subsection{Hausdorff measure estimates for the critical sets}
\label{ss:Hausdorff-measure}

In this subsection, we will prove the following uniform Hausdorff measure estimate for the critical set of the solutions of \eqref{e:symmetric-extension}.

\begin{theorem}\label{t:Hausdorff-measure-estimate-Euclidean}
Let $U$ solve \eqref{e:symmetric-extension} with $\cN_U(\bo_+,1)\leq \Lambda$. There exists a constant $C=C(\fm,\Lambda,\fa)>0$ such that 
\begin{align}
\cH^{\fm - 2}(\cC(U) \cap \cB_{1/2}(\bo_+)) \leq C(\fm,\Lambda,\fa),	
\end{align}
		where $\cC(U)\equiv \{\bx\in \dR^n: |\nabla U|(\bx) = 0\} \subset \dR^n$ is the restriction of the critical set of $U$ on the boundary $\dR^n$.
\end{theorem}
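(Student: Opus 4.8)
The plan is to derive the bound by combining the $\epsilon$-regularity theorem (Theorem~\ref{t:eps-reg-Euclidean}) with the Minkowski-type estimates for the quantitative strata (Theorem~\ref{t:volume-estimate-model}), glued together by a Vitali covering argument. Fix $\Lambda$, let $\epsilon_0=\epsilon(\fm,\fa,\Lambda)>0$ and $\bar r=\bar r(\fm,\fa,\Lambda)>0$ be the constants of Theorem~\ref{t:eps-reg-Euclidean}, shrinking $\epsilon_0$ so that $\epsilon_0<1$, and let $\mu=\mu(\fm,\epsilon_0,\Lambda)\in(0,1)$ be the constant of Theorem~\ref{t:volume-estimate-model}; set $r_j\equiv\mu^j$. \emph{First I would dispose of the low-dimensional part.} By Remark~\ref{r:inclusion}, $\cS^{\fm-3}(U)=\bigcup_{\eta>0}\bigcap_{j}\cS^{\fm-3}_{\eta,r_j}(U)$, and for each fixed $\eta\in(0,1)$ Theorem~\ref{t:volume-estimate-model} applied with $k=\fm-3$ gives
\begin{align}
\Vol\big(B_{r_j}(\cS^{\fm-3}_{\eta,r_j}(U))\cap\cB_{1/2}(\bo_+)\big)\ \le\ C\,r_j^{\,3-\eta}.
\end{align}
Hence $\bigcap_j\cS^{\fm-3}_{\eta,r_j}(U)$ is covered by $\lesssim r_j^{-(\fm-3+\eta)}$ balls of radius $r_j$, so its $(\fm-2)$-dimensional Hausdorff premeasure at scale $r_j$ is $\lesssim r_j^{\,1-\eta}\to0$; letting $\eta\downarrow0$ yields $\cH^{\fm-2}(\cS^{\fm-3}(U))=0$, and it suffices to bound $\cH^{\fm-2}(\mathcal G)$, where $\mathcal G\equiv(\cC(U)\cap\cB_{1/2}(\bo_+))\setminus\cS^{\fm-3}(U)$.

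\emph{Next I would build a stopping-time cover of $\mathcal G$ powered by $\epsilon$-regularity.} For $x\in\mathcal G$ the (unique) tangent map $T_xU$ is, by definition of $\cS^{\fm-3}$, an $(\fm-2)$-symmetric homogeneous polynomial; by Lemma~\ref{l:almost-(m-1)-symmetric} it is not $(\fm-1)$-symmetric since $x$ is a critical point. Monotonicity (Lemma~\ref{l:monotonicity}) together with a standard compactness argument shows $\cT_{x,s}U$ converges as $s\to0$ to the normalization of $T_xU$, so the set of scales $s\in(0,1/2]$ at which $U$ is $(\fm-2,\epsilon_0,s,\fa)$-symmetric at $x$ is nonempty; let $r_x$ be its supremum. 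Rescaling by $r_x$ and applying Theorem~\ref{t:eps-reg-Euclidean} to $\cT_{x,r_x}U$ — whose frequency at unit scale is $\le C(\Lambda)$ by Lemma~\ref{l:frequency-control-large-to-small} — gives
\begin{align}
\cH^{\fm-2}\big(\cC(U)\cap\cB_{\bar r\, r_x}(x)\big)\ \le\ C(\fm,\Lambda,\fa)\,(\bar r\, r_x)^{\fm-2}.
\end{align}
Applying the Vitali covering lemma to $\{\cB_{\bar r r_x/10}(x):x\in\mathcal G\}$ produces a countable disjoint subfamily $\{\cB_{\bar r r_{x_i}/10}(x_i)\}$ with $\bigcup_i\cB_{\bar r r_{x_i}/2}(x_i)\supseteq\mathcal G$, whence
\begin{align}
\cH^{\fm-2}(\mathcal G)\ \le\ \sum_i\cH^{\fm-2}\big(\cC(U)\cap\cB_{\bar r r_{x_i}}(x_i)\big)\ \le\ C(\fm,\Lambda,\fa)\,\bar r^{\,\fm-2}\sum_i r_{x_i}^{\,\fm-2},
\end{align}
and everything reduces to the packing estimate $\sum_i r_{x_i}^{\fm-2}\le C(\fm,\Lambda,\fa)$.

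\emph{The crux is this packing estimate, and the point is to prevent the $\eta$-loss in Theorem~\ref{t:volume-estimate-model} from propagating.} By maximality of $r_{x_i}$, $U$ is not $(\fm-2,\epsilon_0,s,\fa)$-symmetric at $x_i$ for every $s\in(r_{x_i},1/2]$; therefore, if $r_{x_i}\in[r_{j+1},r_j)$ with $j$ large enough that $r_{j-1}\le1/2$, then $x_i\in\cS^{\fm-3}_{\epsilon_0,r_{j-1}}(U)\cap\cB_{1/2}(\bo_+)$. Since the balls $\cB_{\bar r r_{x_i}/10}(x_i)$ are pairwise disjoint, have radii comparable to $r_j$, and lie in $B_{r_{j-1}}(\cS^{\fm-3}_{\epsilon_0,r_{j-1}}(U))$, comparing volumes with Theorem~\ref{t:volume-estimate-model} (again with $k=\fm-3$, so the loss is taken against the harmless exponent $3-\epsilon_0>2$) bounds the number $N_j$ of such centers by $N_j\le C\,r_{j-1}^{\,3-\epsilon_0}\,r_j^{-\fm}$. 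Consequently
\begin{align}
\sum_i r_{x_i}^{\,\fm-2}\ \le\ C+\sum_{j\ \text{large}}N_j\,r_j^{\,\fm-2}\ \le\ C+C\sum_{j}\mu^{\,(1-\epsilon_0)j}\ =\ C(\fm,\Lambda,\fa),
\end{align}
where the leading $C$ absorbs the finitely many centers with $r_{x_i}$ bounded away from $0$ and the geometric series converges because $\mu\in(0,1)$ and $1-\epsilon_0>0$. Combining the three steps gives $\cH^{\fm-2}(\cC(U)\cap\cB_{1/2}(\bo_+))\le C(\fm,\Lambda,\fa)$.

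The main obstacle I expect is precisely the last step: the quantitative-stratification estimate of Theorem~\ref{t:volume-estimate-model} loses an arbitrarily small exponent, so a naive cover routed through the top stratum $\cS^{\fm-2}$ diverges. The remedy — excising $\cS^{\fm-3}(U)$ in the first step and letting $\epsilon$-regularity govern the top-codimensional behavior of $\cC(U)$ — forces all the scale bookkeeping to take place at the level of the $(\fm-3)$-stratum, where the exponent $3-\epsilon_0$ still dominates the ambient codimension $\fm-(\fm-2)=2$; making this routing rigorous (in particular, matching the various notions of $\epsilon$-closeness to a normalized $(\fm-2)$-symmetric homogeneous polynomial for solutions of bounded frequency) is the part that requires care.
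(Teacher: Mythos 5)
Your proof is correct and is, at its core, the same argument as the paper's. Both arguments (i) first remove a null residual set living deep in the $(\fm-3)$-stratum, (ii) decompose the rest of $\cC(U)$ by a "first good scale" (the scale at which the blow-up becomes $\epsilon_0$-close to a normalized $(\fm-2)$-symmetric homogeneous polynomial solving the model equation), (iii) apply the $\epsilon$-regularity Theorem~\ref{t:eps-reg-Euclidean} at that stopping scale, and (iv) control the count of balls at each dyadic scale by appealing to Theorem~\ref{t:volume-estimate-model} with $k=\fm-3$, where the exponent $3-\epsilon>2$ makes the geometric series $\sum_j\mu^{(1-\epsilon)j}$ converge. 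The difference is purely in packaging: the paper explicitly partitions $\cC(U)\cap\cB_{1/2}(\bo_+)$ into the sets $\cC^{(j)}=\cC(U)\cap(\cS^{\fm-2}_{\epsilon,\mu^j}\setminus\cS^{\fm-3}_{\epsilon,\mu^j})\cap\cS^{\fm-3}_{\epsilon,\mu^{j-1}}$ plus the residual $\cC(U)\cap\bigcap_j\cS^{\fm-3}_{\epsilon,\mu^j}$, and covers each $\cC^{(j)}$ separately, whereas you run a single Vitali/stopping-time cover of $\mathcal G=(\cC(U)\cap\cB_{1/2}(\bo_+))\setminus\cS^{\fm-3}(U)$ and then re-sort by dyadic scale in the packing estimate. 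Your excised residual $\cS^{\fm-3}(U)$ is slightly larger than the paper's (and your step showing it is $\cH^{\fm-2}$-null is correspondingly slightly more work), but the null argument is the same. Two minor points to tighten in a rigorous write-up: the supremum $r_x$ need not itself be a good scale, so one should work with a nearby scale $s_x\in[r_x/2,r_x]$ at which $(\fm-2,\epsilon_0,s_x,\fa)$-symmetry actually holds; and, as the paper itself does (``one can replace $P$ with a homogeneous polynomial that solves \eqref{e:symmetric-extension}''), a compactness step is needed to pass from the $L^2$-ball closeness in the definition of quantitative symmetry to the normalized model polynomial required as input to Theorem~\ref{t:eps-reg-Euclidean} — you correctly flag this at the end.
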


\begin{proof}
First, Lemma \ref{l:almost-(m-1)-symmetric} implies that there exists some $\epsilon>0$ such that
\begin{align}\cC(U)\subset \cS_{\epsilon, 1}^{\fm-2}(U).\end{align}
Then let us write
\begin{align}
	\cC(U) \cap \cB_{1/2}(\bo_+) = \cC(U) \cap \Big( \left(\cS_{\epsilon, 1}^{\fm-2}(U) \setminus  \cS_{\epsilon, 1}^{\fm-3}(U)\right) \cup \cS_{\epsilon, 1}^{\fm-3}(U)\Big) \cap \cB_{1/2}(\bo_+). \label{e:CU-symmetry}
\end{align}
By definition, one can obtain the following dyadic decomposition of $\cS_{\epsilon, 1}^{\fm-3}(U)$. 
\begin{align}
\cS_{\epsilon, 1}^{\fm-3}(U) = \left( \bigcup\limits_{j=1}^{\infty} \left(\left( \cS_{\epsilon, \mu^j}^{\fm-2}(U) \setminus \cS_{\epsilon, \mu^j}^{\fm-3}(U) \right) \bigcap \cS_{\epsilon, \mu^{j-1}}^{\fm-3}(U) \right)\right) \bigcup  \left(\bigcap\limits_{j=1}^{\infty}	\cS_{\epsilon, \mu^j}^{\fm-3}(U) \right). \label{e:(m-3)-singular-stratum}
\end{align}

Now let us define 
\begin{align}
&\cC^{(0)}(U) = \cC(U) \cap \left(\cS_{\epsilon, 1}^{\fm-2}(U) \setminus \cS_{\epsilon, 1}^{\fm-3}(U)\right) \cap \cB_{1/2}(\bo_+),
\\
&\cC^{(j)}(U) = \cC(U) \cap \left(\cS_{\epsilon, \mu^j}^{\fm-2}(U) \setminus \cS_{\epsilon, \mu^j}^{\fm-3}(U)\right) \cap \cS_{\epsilon, \mu^{j-1}}^{\fm-3}\cap \cB_{1/2}(\bo_+), \quad j\in\dZ_+.
\end{align}
Combining \eqref{e:CU-symmetry} and \eqref{e:(m-3)-singular-stratum}, $\cC(U)$ has a further decomposition, 
\begin{align}
\cC(U) \cap \cB_{1/2}(\bo_+) 
= \left( \bigcup\limits_{j=0}^{\infty}
	\cC^{(j)}(U)\right) \bigcup \left(\cC(U)\bigcap\limits_{j=1}^{\infty}\cS_{\epsilon,\mu^j}^{\fm-3}(U) \right).
\end{align}
If follows from Theorem \ref{t:volume-estimate-model} that 
\begin{align}
\cH^{\fm-2}\left( \cC(U)\bigcap\limits_{j=1}^{\infty}\cS_{\epsilon,\mu^j}^{\fm-3}(U) \right) = 0.	
\end{align}
Indeed, for any $j$, the singular stratum $\cS_{\epsilon,\mu^j}^{\fm-3}(U) $ is covered by $C(\fm,\Lambda,\epsilon)(\mu^j)^{3-\fm-\epsilon}$ balls of radius $\mu^j$ and centered at $x_{\alpha}\in \cS_{\epsilon,\mu^j}^{\fm-3}(U)$. Then 
\begin{align}
\cH^{\fm-2}\left( \cC(U)\bigcap\limits_{j=1}^{\infty}\cS_{\epsilon,\mu^j}^{\fm-3}(U) \right) \leq &\ \cH^{\fm-2}(\cS_{\epsilon,\mu^j}^{\fm-3}(U)) \nonumber\\
\leq &\  C(\fm,\Lambda,\epsilon) (\mu^j)^{\fm-2} \cdot (\mu^j)^{3-\fm-\epsilon}	
\nonumber\\
= &\  C(\fm,\Lambda,\epsilon) (\mu^j)^{1-\epsilon}.
\end{align}
Therefore, 
\begin{align}\cH^{\fm-2}\left( \cC(U)\bigcap\limits_{j=1}^{\infty}\cS_{\epsilon,\mu^j}^{\fm-3}(U) \right)  = 0.	
\end{align}

Then we will prove that for any $k\geq 0$,
\begin{align}
	\cH^{\fm-2} \left( \bigcup\limits_{j=0}^k
	\cC^{(j)}(U)\right) \leq C(\Lambda, \fm ,\epsilon) \sum\limits_{j=0}^k \mu^{(1-\epsilon)j}. \label{e:measure-union-of-Cj}
\end{align}

By Theorem \ref{t:eps-reg-Euclidean}, we find that the estimate holds for $k = 0$. 
Next, we will prove \eqref{e:measure-union-of-Cj} in the general case. Let us choose a covering consisting of finite balls, \begin{align}\mathcal{D}(\mu^j\cdot \bar{r}) \equiv \left\{\cB_{\mu^j \cdot \bar{r}}(x_{\alpha}) | \text{ for some } x_{\alpha}\in \cC^{(j)}(U)\right\}	
\end{align} of 
$\cC^{(k)}(U)$ such that for any $
\alpha \neq \beta $, 
\begin{align}\cB_{\frac{\mu^j \cdot \bar{r}}{10}}(x_{\alpha}) \cap \cB_{\frac{\mu^j \cdot \bar{r}}{10}}(x_{\beta}) = \emptyset.\end{align} 	
	Then by Theorem \ref{t:volume-estimate-model}, we have that $\#\mathcal{D}(\mu^k\cdot \bar{r}) \leq C(\fm, \Lambda, \epsilon) \cdot (\mu^j \cdot \bar{r})^{(3-\fm-\epsilon)}$. By the definition of $\cC^{(j)}$, for each $x_{\alpha}$, there exists a scale $r\in [\mu^j, \mu^{j-1}]$
	such that for some normalized homogeneous polynomial $P$ of two variables, we have that 
	\begin{align}
	\int_{\p \cB_1(\bo_+)} |y|^{\fa} (\cT_{\bx_{\alpha}, r}(U) - P)^2 \dvol_{\p \cB_1(\bo_+)} < \epsilon.	
	\end{align}
Since $U$ is a solution of the boundary value problem \eqref{e:symmetric-extension}, using a simple compactness \& contradiction 
	argument,  one can replace $P$ with a homogeneous  polynomial that solves \eqref{e:symmetric-extension}. 
Applying Theorem \ref{t:eps-reg-Euclidean}, we have that 
\begin{align}
\cH^{\fm-2}(\cC(U) \cap B_{\mu^j\cdot \bar{r}}(x_{\alpha})) \leq 	C(\fm,\Lambda) \cdot (\mu^j\cdot \bar{r})^{\fm-2}.
\end{align}
	Therefore, for any $j\in\dZ_+$,
	\begin{align}
\cH^{\fm-2}(\cC^{(j)}((U))) \leq C(\Lambda, \fm, \epsilon) \cdot \mu^{(1-\epsilon)j},	
	\end{align}
which completes the proof. 
\end{proof}

\subsection{Singular set on the boundary}
\label{ss:boundary-estimate}

This subsection considers the structure of the singular set of a function $f$ that is $\gamma$-harmonic on $B_2(\bo)\subset \dR^n$. As stated in Theorem \ref{t:volume-estimate-boundary} and Theorem \ref{t:Hausdorff-measure-estimate-PE}, the full singular set $\cS(f)$ does not satisfy a codimension-$2$ estimate on $\dR^n$ due to the nonlocal nature of the fractional GJMS operator $P_{2\gamma}$. In the example presented in Example~\ref{ex-special-soln-U}, we have
\begin{align}
\mathcal{S}(f) = \{x \in \mathbb{R}^n : x_1 = 0\},
\end{align}
 and the dimension of $\mathcal{S}(f)$ is $\dim(\mathcal{S}(f)) = n-1$.

In the Euclidean model case, this phenomenon
has been observed in \cite{STT}. As in \cite[Section 8]{STT}, the points in $\cS(f)$ are divided into two classes depending upon the ``local nature" of the tangent map of $f$ at $x\in \cS(f)$. Let us recall that a codimension-$2$ estimate for $\cS(U)$ in $\overline{\dR^{\fm}}$ with $\fm = n + 1$ is based on a simple fact: if a single variable function $U$ is smooth on $\overline{\dR^{\fm}}$  and satisfies $\Div(y^{\fa}\nabla U) = 0$ in $\dR^{\fm}$, then $U$ must be a linear function on $\dR^n$ and independent of $y$.
As a consequence, $U$ does not have any critical point unless it is constantly vanishing. Motivated by this, we introduce the following notions.
\begin{definition}[Horizontal singular strata] Given a smooth nondegenerate function $f:B_1(\bo)\to \dR$ we define the $k^{th}$-horizontal singular stratum of $f$ by 
\begin{align}
	\uS^k (f) \equiv 
 \{x\in B_1(\bo): T_x (f) \ \text{satisfies} \ \Delta_{\dR^n} T_x (f) = 0\ \text{and is not}\ (k+1)\text{-symmetric}\},
\end{align}
 for any $0 \leq k \leq  n - 1$.
We also define the horizontal part of the critical set $\cC(f)$,
\begin{align}
		\uC(f) \equiv 
 \{x\in \cC(f)\cap B_1(\bo): T_x (f) \ \text{satisfies} \ \Delta_{\dR^n} T_x (f) = 0\}.
\end{align}
 Let us denote the complement $\fC(f) \equiv \cC(f) \setminus \uC(f)$ and $\fC^k(f) \equiv \cS^k(f) \setminus \uS^k(f)$. For the singular $\cS(f)$ of $f$, we define 
 \begin{align}
 	\uS(f) \equiv \cS(f) \cap \uC(f) \quad \text{and} \quad \fS(f) \equiv \cS(f) \setminus \uS(f).
 \end{align}
 For fixed parameters $r, \epsilon\in (0,1)$ and $0 \leq k \leq  n - 1$, one can define the  quantitative singular strata $\uS_{r,\epsilon}^k(f)$ and $\fS_{r,\epsilon}^k(f)$ of the critical/singular set of $f$ in a similar way.
\end{definition}

Based on the above definition, we have a simple observation.  If a tangent map $T_x(f)$ is $(n - 1)$-symmetric and satisfies $\Delta_{\dR^n}T_x (f) = 0$, then $T_x(f)$ must be linear, which implies $x \not\in \cC(f)$. This tells us $\uC(f) = \uS^{n - 2}(f)$ but in general  $\fC(f) \setminus \fC^{n - 2}(f) \neq \emptyset$. Therefore, one should establish codimension-$2$ estimates for the {\it horizontal singular set} instead of the whole singular set.

We give a quick lemma relating the quantitative symmetry of function on $\dR^{n+1}$ and its restriction on $\dR^n$ which enables us to understand the structure of the singular set of $f$ from $U$.
\begin{lemma}Given a smooth function $f$ that is $\gamma$-harmonic on $B_2(\bo)$, let $U$ be its Caffarelli-Silvestre type extension that solves \eqref{e:U-extension}. 
For every $\epsilon>0$, there exists $\delta=\delta(n,\epsilon,\Lambda, \fa)>0$
 such that if $U$ is $(k,\delta,s,\fa)$-symmetric at $x_+\equiv (x, 0)\in \cB_1(\bo_+)$, then $f$ is $(k - 1, \epsilon, s)$-symmetric at $x\in \dR^n$.	
\end{lemma}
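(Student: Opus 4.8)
The plan is to argue by contradiction and compactness, in the same spirit as Proposition \ref{p:quantitative-symmetry} and Theorem \ref{t:quantitative-cone-splitting}. Suppose the statement fails for some $\epsilon_0 > 0$: then there is a sequence of $\gamma$-harmonic functions $f_j$ on $B_2(\bo)$, with Caffarelli--Silvestre extensions $U_j$ solving \eqref{e:U-extension} and $\cN_{U_j}(\bo_+,1)\leq \Lambda$, and points $x_{j,+}\equiv(x_j,0)\in\cB_1(\bo_+)$ and scales $s_j$, such that $U_j$ is $(k,\delta_j,s_j,\fa)$-symmetric at $x_{j,+}$ with $\delta_j\to 0$, but $f_j$ is \emph{not} $(k-1,\epsilon_0,s_j)$-symmetric at $x_j$. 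After translating so that $x_{j,+}=\bo_+$, rescaling by $s_j$, and normalizing $U_j(\bo_+)=0$ together with $\fint_{\p\cB_1(\bo_+)}|y|^\fa U_j^2\,d\sigma = 1$, we may assume $\cT_{\bo_+,1}U_j = U_j$ and $s_j = 1$. The frequency bound gives a uniform $H^{1,\fa}$-bound on $U_j$, so by the regularity and compactness results (Lemma \ref{eq-C1a-U} and the weighted Sobolev trace inequality) a subsequence converges in $H^{1,\fa}(\cB_1(\bo_+))\cap C^1_{\loc}(\cB_1(\bo_+))$ to a limit $U_\infty$ solving \eqref{e:symmetric-extension}, with $\fint_{\p\cB_1(\bo_+)}|y|^\fa U_\infty^2\,d\sigma = 1$; in particular the traces $f_j$ converge in $C^1_{\loc}(B_1(\bo))$ to $f_\infty\equiv U_\infty|_{\dR^n}$.

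Next I would pass the symmetry to the limit. The $(k,\delta_j,1,\fa)$-symmetry of $U_j$ produces $k$-symmetric polynomials $P_j$ with $\frac{1}{1}\fint_{\p\cB_1(\bo_+)}|y|^\fa P_j^2\,d\sigma = 1$ and $\fint_{\cB_1(\bo_+)}|y|^\fa|U_j - P_j|^2\,\dvol < \delta_j\to 0$. Since the $U_j$ converge in $H^{1,\fa}$ with unit normalized $L^2(\p\cB_1)$-weighted norm, the $P_j$ converge (after extracting) to a $k$-symmetric polynomial $P_\infty$, and $U_\infty = P_\infty$: indeed $U_\infty$ is then a $k$-symmetric homogeneous polynomial, invariant under translation along some $k$-dimensional subspace $V\subset\dR^{n+1}$. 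Here is where the geometry of the extension enters: because $U_\infty$ solves $\Div(|y|^\fa\nabla U_\infty)=0$ and is even in $y$, the vertical direction $e_y$ cannot lie in $V$ unless $U_\infty$ is independent of $y$ and harmonic on $\dR^n$ — in which case $f_\infty = U_\infty|_{\dR^n}$ is itself a $k$-symmetric harmonic polynomial, hence certainly $(k-1)$-symmetric. If $e_y\notin V$, then $V$ projects isomorphically onto a $k$-dimensional subspace, but more usefully: the intersection $V\cap\{y=0\}$ is a subspace of $\dR^n$ of dimension at least $k-1$ along which the restriction $f_\infty = U_\infty|_{\{y=0\}}$ is translation-invariant, and $f_\infty$ is a homogeneous polynomial (being the trace of the homogeneous $U_\infty$). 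Thus $f_\infty$ is $(k-1)$-symmetric at $\bo\in\dR^n$.

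Finally, the $C^1_{\loc}$ (indeed $H^{1,\fa}$, hence $L^2$ on $B_1(\bo)$ via the trace inequality) convergence $f_j\to f_\infty$ together with $\cT_{\bo,1}f_j = f_j$ after the normalization — one checks the two normalizations are compatible up to a uniform constant, using that the weighted boundary norm of $U_j$ controls the unweighted boundary norm of its trace via the Sobolev trace inequality and $\cN_{U_j}(\bo_+,1)\leq\Lambda$ bounding frequencies from below away from $0$ — yields
\begin{align}
\fint_{B_1(\bo)}|\cT_{\bo,1}f_j - P_\infty|^2\,\dvol_{\dR^n} \to 0,
\end{align}
where $P_\infty$ is the $(k-1)$-symmetric polynomial $f_\infty$ (rescaled to have unit normalized $L^2(\p B_1)$-norm). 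This contradicts the assumption that $f_j$ is not $(k-1,\epsilon_0,1)$-symmetric at $x_j$ for $j$ large. The main obstacle I anticipate is not the compactness machinery, which is routine given Lemma \ref{eq-C1a-U}, but rather the bookkeeping that relates the \emph{weighted} quantitative symmetry of $U$ on $\dR^{n+1}$ to the \emph{unweighted} one of $f$ on $\dR^n$: one must verify that the normalizing denominator in $\cT_{\bo_+,1}U_j$ does not degenerate relative to that in $\cT_{\bo,1}f_j$, which is exactly where the uniform frequency lower bound (Lemma \ref{l:frequency-control-large-to-small}) and the weighted trace inequality are used to keep $\int_{\p B_1(\bo)}f_j^2$ bounded below. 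A secondary subtlety is ensuring that the limiting $k$-symmetry subspace $V$ of $U_\infty$ genuinely forces a $(k-1)$-dimensional symmetry subspace \emph{inside} $\{y=0\}$ for the trace; the only way this could fail is $V\subset\{y=0\}$ itself, in which case one gets the stronger $(k)$-symmetry of $f_\infty$ for free, or $e_y\in V$, handled above.
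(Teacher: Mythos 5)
Your proposal is correct and follows essentially the same compactness/contradiction route as the paper: normalize the rescaled extension, pass to a $k$-symmetric homogeneous polynomial limit $P_\infty$, restrict to $\dR^n$, and observe that the restriction is $(k-1)$-symmetric because a $k$-dimensional subspace $V\subset\dR^{n+1}$ always meets $\{y=0\}$ in a subspace of dimension $\geq k-1$. The paper's proof is terser (it just asserts $\underline{P}_\infty\equiv P_\infty|_{\dR^n}$ is ``apparently'' $(k-1)$-symmetric), and the two issues you explicitly flag — the case analysis for the symmetry subspace $V$, and the compatibility of the weighted normalization in $\cT_{\bo_+,s}U_j$ with the unweighted one in $T_{\bo,s}f_j$ — are exactly the details the paper leaves implicit.
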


\begin{proof}
We will	prove it by contradiction. Suppose there exists a constant $\epsilon_0>0$, a sequence $\delta_j\to 0$, and a sequence of solutions $U_j$ such that $U_j$ is $(k,\delta_j,s,\fa)$-symmetric at $\bx_j = (x_j, 0) \in \dR^{n+1}$ but $f_j$ is not $(k - 1,\epsilon_0, s)$-symmetric at $x_j$.
Without loss of generality, we can assume that $U_j$ satisfies 
\begin{align}
U_j(\bx_j) = 0 \quad \text{and} \quad \frac{1}{s^{\fa}}\fint_{\p \cB_1(\bo_+)}|y(\bx_j + s\xi)|^{\fa} \cdot U_j(\bx_j + s \xi)^2d\sigma(\xi) = 1.
\end{align}
By the definition of $(k,\delta_j,s,\fa)$-symmetry, we have that 
\begin{align}
\frac{1}{s^{\fa}}\fint_{\cB_1(\bx_j)}|y|^{\fa} \cdot |\cT_{\bx_j,s} U_j - P_j|^2 \dvol_{\dR^{n+1}} < \delta_j \to 0 	
\end{align}
for a sequence of $k$-symmetric normalized homogeneous polynomial $P_j$ with degree uniformly bounded by a constant depending upon $\Lambda$. 
Letting $j\to \infty$, we have that $\cT_{\bx_j, s} U_j$ converges to $\cT_{\bx_{\infty}, s} U_{\infty}\equiv P_{\infty}$, which is a $k$-symmetric polynomial. 

By the normalization condition, we have $U_{\infty} \equiv P_{\infty}$. Let $\underline{P}_{\infty}\equiv P_{\infty}|_{\dR^n}$ be the restriction of $P_{\infty}$ on $\dR^n$, which is apparently $(k-1)$-symmetric. Then 
the above implies that 
\begin{align} \frac{1}{s^{\fa}}\fint_{B_1(\bo)}|T_{x_j, s}f_j - \underline{P}_{\infty}|^2 \dvol_{\dR^n} \to 0.
\end{align}
The desired contradiction arises. \end{proof}

Then one can develop the same quantitative symmetry and quantitative cone splittings as in Section \ref{ss:quantitative-cone-splitting} for the horizontal singular strata. These technical preliminaries lead to the following result.  

\begin{theorem}\label{t:volume-estimate-model-boundary}
For every $n \geq 2$, $\gamma \in (0, 1)$, $\Lambda > 0$, and $\epsilon>0$, there exist $0<\mu(n,\epsilon, \gamma, \Lambda)<1$ and $C(n,\Lambda, \gamma, \epsilon)>0$ such that the following property holds. If $f \in C^{\infty}(\dR^n)$ is $\gamma$-harmonic on $B_1(\bo)\subset \dR^n$ and satisfies 
$\cN_f(\bo, 1)\leq \Lambda$, then for any $j\in \dZ_+$, 
\begin{align}
\Vol(T_{\mu^j}(\uS_{\epsilon,\mu^j}^k(f))\cap B_{1/2}(\bo)) \leq C \cdot (\mu^j)^{n - k -\epsilon}, \quad  0\leq k \leq n -2,
\\
	\Vol(T_{\mu^j}(\fS_{\epsilon,\mu^j}^k(f))\cap B_{1/2}(\bo)) \leq C \cdot (\mu^j)^{n - k -\epsilon}, \quad  0\leq k \leq n - 1,
\end{align}
where $T_r(A)$ is the $r$-tubular neighborhood of $A\subset \dR^n$. In particular, \begin{align}
\dim_{\Min}(\uC(f)\cap B_{1/2}(\bo)) \leq n - 2\quad \text{and} \quad \dim_{\Min}(\fC(f)\cap B_{1/2}(\bo)) \leq n - 1.	
\end{align}
\end{theorem}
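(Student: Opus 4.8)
The plan is to run the Cheeger--Naber--Valtorta covering machinery a second time, now for the horizontal strata $\uS^k(f)$ and their complementary pieces $\fS^k(f)$, in complete parallel to the proof of Theorem \ref{t:volume-estimate-model}. First I would observe that the almost-monotonicity and uniform frequency bound (Lemma \ref{l:frequency-control-large-to-small}) transfer verbatim, since the frequency $\cN_f$ is defined through the extension $U$; hence for $x\in B_{1/2}(\bo)$ and $r\le \tfrac23$ one has $\cN_f(x,r)\le C(n,\Lambda,\gamma)$, and the number of bad scales at each point is bounded by a uniform constant $Q_0$ (Corollary \ref{c:controlling-bad-scales}). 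The only genuinely new ingredient is that the cone-splitting step must be performed \emph{inside the class of tangent maps that are harmonic on $\dR^n$}: if $T_x f$ is a homogeneous harmonic polynomial on $\dR^n$ and is $0$-symmetric with respect to a point $z\notin V$, then Lemma \ref{l:cone-splitting-principle}(2) upgrades its symmetry by one dimension, and crucially the upgraded polynomial is \emph{still} harmonic on $\dR^n$ (the splitting only adds a translation invariance). Thus one obtains the quantitative cone-splitting and the inductive splitting corollary (analogues of Theorem \ref{t:quantitative-cone-splitting} and Corollary \ref{c:inductive-splitting}) restricted to the horizontal strata, using the lemma just before Theorem \ref{t:volume-estimate-model-boundary} to pass quantitative symmetry between $U$ on $\cB_1(\bo_+)$ and $f$ on $B_1(\bo)$.

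Next I would feed these into the effective-covering scheme (Proposition \ref{p:effective-covering}): the set $\uS^k_{\epsilon,\mu^j}(f)\cap B_{1/2}(\bo)$ is covered by at most $j^D$ sets, each a union of at most $(C_1\mu^{-n})^D (C_0\mu^{-k})^{j-D}$ balls of radius $\mu^j$, where $D=D(n,\epsilon,\gamma,\Lambda)$ comes from the bad-scale bound. Summing volumes gives
\begin{align*}
\Vol(T_{\mu^j}(\uS^k_{\epsilon,\mu^j}(f))\cap B_{1/2}(\bo))
\le j^D (C_1\mu^{-n})^D (C_0\mu^{-k})^{j-D}\,\omega_n (\mu^j)^{n}
\le C(n,\Lambda,\gamma,\epsilon)\,(\mu^j)^{n-k-\epsilon},
\end{align*}
after choosing $\mu=\mu(n,\epsilon,\gamma,\Lambda)$ small so that $(C_0\mu^{-k})(\mu^j)^{n}$-type factors and the polynomial $j^D$ are absorbed into $(\mu^j)^{-\epsilon}$, exactly as in Theorem \ref{t:volume-estimate-model}. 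The identical computation with $k\le n-1$ handles $\fS^k_{\epsilon,\mu^j}(f)$; here one uses that for $\fS(f)$ there is no $(n-1)$-symmetric obstruction, so the induction runs only up to $k=n-1$ and the covering dimension is one larger. The Minkowski dimension bounds then follow: by the lemma analogous to Lemma \ref{l:almost-(m-1)-symmetric}, $\uC(f)\subset \uS^{n-2}_{\epsilon,r}(f)$ and $\fC(f)\subset \fS^{n-1}_{\epsilon,r}(f)$ for small $\epsilon$, whence $\dim_{\Min}(\uC(f)\cap B_{1/2}(\bo))\le n-2$ and $\dim_{\Min}(\fC(f)\cap B_{1/2}(\bo))\le n-1$ by letting $j\to\infty$ in the volume estimates.

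The main obstacle I anticipate is verifying that the cone-splitting argument genuinely stays within the harmonic-on-$\dR^n$ class at the \emph{quantitative} level: one must show that if $U$ is $(k,\delta,r,\fa)$-symmetric at $\bo_+$ with an approximating $k$-symmetric polynomial $P$ whose trace $\underline P=P|_{\dR^n}$ is (quantitatively) close to harmonic, and $U$ is also nearly $0$-symmetric at a point $z$ of definite distance from $V$, then the limit polynomial obtained by the compactness argument is both $(k+1)$-symmetric \emph{and} harmonic on $\dR^n$. This requires that the harmonicity constraint $\Delta_{\dR^n}\underline P=0$ is a closed condition under the $H^{1,\fa}\cap C^1_{\loc}$ convergence used in Theorem \ref{t:quantitative-cone-splitting}, which it is since $\Delta_{\dR^n}$ is a continuous operator on polynomials of bounded degree; but one must be careful that the quantitative symmetry passing lemma delivers a harmonic trace and not merely an arbitrary polynomial. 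Once this closedness is in hand, everything else is a routine transcription of Sections \ref{ss:quantitative-cone-splitting}--\ref{ss:volume-estimate} with $\dR^n$-harmonicity carried along as an auxiliary label, and the theorem follows.
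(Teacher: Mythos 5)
Your proposal is correct and follows the same route the paper intends: the paper's proof of Theorem \ref{t:volume-estimate-model-boundary} is omitted entirely with a pointer back to the covering arguments of Section \ref{ss:volume-estimate}, and your reconstruction — transferring quantitative symmetry from $U$ to $f$ via the lemma preceding the theorem, re-running the Cheeger--Naber--Valtorta effective covering (Proposition \ref{p:effective-covering}) in $\dR^n$ with the horizontal/mixed label carried along, and then extracting the Minkowski bounds from the analogue of Lemma \ref{l:almost-(m-1)-symmetric} — is exactly the intended argument. Your observation that cone-splitting preserves $\dR^n$-harmonicity (being only the addition of a translation invariance) and that the constraint $\Delta_{\dR^n}\underline P=0$ is closed under the relevant convergence correctly identifies the one point where the label must be checked to persist; one could alternatively sidestep it by noting that $\uS^k_{\epsilon,\mu^j}(f)$ and $\fS^k_{\epsilon,\mu^j}(f)$ both sit inside the full quantitative stratum $\cS^k_{\epsilon,\mu^j}(f)$, so the volume bound for the latter already suffices and the labels only enter when identifying which stratum contains $\uC(f)$ versus $\fC(f)$.
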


\begin{remark}
In fact, one can obtain the same estimates for the critical set of $f$. 	
\end{remark}

The proof of the theorem follows along the the same lines as the covering arguments in Section \ref{ss:volume-estimate}, so we omit it.

We end this section by introducing the Hausdorff measure estimate for the critical set $\cC(f)$ for a $\gamma$-harmonic function $f$.

\begin{theorem}   \label{t:Hausdorff-measure-estimate-Euclidean-boundary}
 Given $n\geq 2$ and $\gamma \in (0,1)$, let $f\in C^{\infty}(\dR^n)$ be $\gamma$-harmonic on $B_1(\bo)$. 
 For any $\Lambda > 0$, there exists $C = C(\Lambda, n, \gamma) > 0$ such that if $\cN_f(\bo, 1) \leq \Lambda$, then 
$\mathcal{H}^{n - 2}(\uC(f) \cap B_{1/2}(\bo)) \leq C$ and
$\mathcal{H}^{n - 1}(\fC(f) \cap B_{1/2}(\bo)) \leq C$.
 \end{theorem}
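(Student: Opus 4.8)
The plan is to handle $\fC(f)$ and $\uC(f)$ separately; the estimate for $\fC(f)$ is essentially a corollary of Theorem~\ref{t:Hausdorff-measure-estimate-Euclidean}, while the codimension-two bound for $\uC(f)$ requires a boundary version of the $\epsilon$-regularity theorem together with the dyadic covering of the proof of Theorem~\ref{t:Hausdorff-measure-estimate-Euclidean}. For $\fC(f)$: since $f$ is $\gamma$-harmonic on $B_1(\bo)$ we have $P_{2\gamma}f=0$ there, so $\lim_{y\to 0}y^{\fa}\partial_y U=0$ on $B_1(\bo)$; as $U$ is the even extension and is $C^{1,\alpha}$ across $\{y=0\}$ by Lemma~\ref{lem-eps-C1a}, evenness in $y$ forces $\partial_y U(x,0)=0$ on $B_1(\bo)$, whence $|\nabla U|(x,0)=|\nabla f|(x)$ there and $\cC(U)\cap B_1(\bo)=\cC(f)\cap B_1(\bo)$ as subsets of $\dR^n$. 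Since $\fC(f)\subset\cC(f)$, Theorem~\ref{t:Hausdorff-measure-estimate-Euclidean} immediately yields $\cH^{n-1}(\fC(f)\cap B_{1/2}(\bo))\le\cH^{n-1}(\cC(U)\cap B_{1/2}(\bo))\le C$.

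For $\uC(f)$ the first step is a boundary $\epsilon$-regularity statement parallel to Theorem~\ref{t:eps-reg-Euclidean}: there exist $\epsilon,\bar r>0$ and $C=C(n,\gamma,\Lambda)$ such that if $f$ is close in $L^2(\partial B_1(\bo))$ to a normalized $(n-2)$-symmetric harmonic homogeneous polynomial $\bar P$ of degree $d$, then $\cH^{n-2}(\uC(f)\cap B_r(\bo))\le C(d-1)^2 r^{n-2}$ for all $r\le\bar r$. Its proof follows the scheme of Theorem~\ref{t:eps-regularity-smooth-approximation}: $L^2$-closeness upgrades to $C^{k}$-closeness by a compactness argument using the monotonicity of Lemma~\ref{l:monotonicity} and interior elliptic estimates; because $\bar P$ is $(n-2)$-symmetric it depends, after a rotation, on only two variables, and being harmonic it equals up to rotation and scaling $\mathrm{Re}(z^{d})$ --- the elementary two-dimensional replacement of Lemma~\ref{l:hypergeometric-polynomial-critical-point}, whose gradient vanishes only at the origin. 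One then factors $\partial_{x_1}\bar P,\partial_{x_2}\bar P$ into linear forms (and quadratic forms with isolated zero), selects, as in the Claim in the proof of Theorem~\ref{t:eps-regularity-smooth-approximation}, an open dense set of rotations $\mathscr T\in\SO(n)$ so that the restriction of $(\partial_i f,\partial_j f)$ to every coordinate $2$-plane is $C^{k}$-close to a holomorphic map with an isolated zero at the origin, applies the Han--Hardt--Lin stability Theorem~\ref{t:HHL-stability} to bound the number of critical points on each such plane by $(d-1)^2$, and integrates over the $(n-2)$-dimensional space of slices via the area formula.

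With this in hand one runs the dyadic decomposition exactly as in the proof of Theorem~\ref{t:Hausdorff-measure-estimate-Euclidean}, with $\uC(f)=\uS^{n-2}(f)$ in place of $\cC(U)$. First $\uC(f)\subset\uS^{n-2}_{\epsilon,1}(f)$ by the analog of Lemma~\ref{l:almost-(m-1)-symmetric} for $f$ at horizontal points (a harmonic $(n-1)$-symmetric homogeneous polynomial is linear, hence free of critical points). Writing
\[
\uC(f)\cap B_{1/2}(\bo)=\uC(f)\cap\Big(\big(\uS^{n-2}_{\epsilon,1}(f)\setminus\uS^{n-3}_{\epsilon,1}(f)\big)\cup\uS^{n-3}_{\epsilon,1}(f)\Big)\cap B_{1/2}(\bo),
\]
dyadically decomposing $\uS^{n-3}_{\epsilon,1}(f)$ and letting $\uC^{(j)}(f)$ denote the resulting pieces, one has $\cH^{n-2}\big(\uC(f)\cap\bigcap_j\uS^{n-3}_{\epsilon,\mu^j}(f)\big)=0$ because the codimension-three volume estimate in Theorem~\ref{t:volume-estimate-model-boundary} forces $\cH^{n-2}(\uS^{n-3}_{\epsilon,\mu^j}(f))\le C(\mu^j)^{1-\epsilon}\to 0$. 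On each $\uC^{(j)}(f)$ one chooses a Vitali-type covering by balls $\cB_{\mu^j\bar r}(x_\alpha)$ centered on $\uC^{(j)}(f)$, bounds their number by Theorem~\ref{t:volume-estimate-model-boundary}, and on each ball replaces the approximating polynomial by a genuine $(n-2)$-symmetric harmonic homogeneous polynomial (compactness again, using that $f$ is $\gamma$-harmonic) and invokes the boundary $\epsilon$-regularity; summing the resulting geometric series $\sum_j C\mu^{(1-\epsilon)j}$ gives $\cH^{n-2}(\uC(f)\cap B_{1/2}(\bo))\le C$.

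The main obstacle is the boundary $\epsilon$-regularity step. Two points require genuine care: first, verifying that the two-dimensional reductions of $\nabla\bar P$, and after a generic rotation of all restrictions of $\nabla f$ to coordinate $2$-planes, have the origin as an isolated zero, so that Theorem~\ref{t:HHL-stability} applies with a uniform count; second, the compactness/contradiction argument showing that on each small ball where $f$ is quantitatively $(n-2)$-symmetric the nearby homogeneous polynomial may be taken simultaneously $(n-2)$-symmetric and harmonic. Once these are settled, the combinatorics of the dyadic covering and the geometric-series summation are routine consequences of Theorem~\ref{t:volume-estimate-model-boundary}, exactly as in the proof of Theorem~\ref{t:Hausdorff-measure-estimate-Euclidean}.
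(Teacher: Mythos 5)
Your proposal is correct and instantiates exactly the framework the paper points to; since the paper does not actually supply a proof of this statement (the corresponding volume bounds of Theorem~\ref{t:volume-estimate-model-boundary} are declared to "follow along the same lines as the covering arguments in Section~\ref{ss:volume-estimate}," and the Hausdorff estimate is then stated without further argument), there is no alternative route to compare against. Two features of your write-up deserve a remark. The treatment of $\fC(f)$ is a clean shortcut: once one observes that evenness of $U$ in $y$ together with the $C^{1,\alpha}$ regularity of Lemma~\ref{lem-eps-C1a} on the $\gamma$-harmonic region forces $\partial_y U(\cdot,0)\equiv 0$ on $B_1(\bo)$, the identification $\cC(f)\cap B_1(\bo)=\cC(U)\cap B_1(\bo)$ makes the $\cH^{n-1}$ bound an immediate consequence of Theorem~\ref{t:Hausdorff-measure-estimate-Euclidean} with no further covering or $\epsilon$-regularity work, and in particular does not require separately running the machinery on the $\fS$-strata. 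For $\uC(f)$, the boundary $\epsilon$-regularity is indeed the crux, and you correctly identify that the two-variable model is much simpler than Lemma~\ref{l:hypergeometric-polynomial-critical-point}: a harmonic $(n-2)$-symmetric homogeneous polynomial is, up to rotation and scaling, $\mathrm{Re}(c z^d)$, whose gradient (and its complexification) vanishes only at the origin, so Theorem~\ref{t:HHL-stability} applies on each generic two-plane slice with the count $(d-1)^2$; the generic-rotation claim, area formula, and dyadic summation then transcribe verbatim from the proofs of Theorems~\ref{t:eps-regularity-smooth-approximation} and~\ref{t:Hausdorff-measure-estimate-Euclidean}, with $n$ replacing $\fm$ in the exponents, yielding $\sum_j C\mu^{(1-\epsilon)j}<\infty$ as you compute. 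The one point you should make fully explicit in a final write-up is the precise definition of $\uS^k_{\eta,r}(f)$ (the paper leaves it to the reader): for the compactness step producing a harmonic $(n-2)$-symmetric approximating polynomial to be painless, one should define $\uS^k_{\eta,r}(f)$ via quantitative closeness to $(k+1)$-symmetric \emph{harmonic} homogeneous polynomials, noting that every harmonic homogeneous polynomial on $\dR^n$ is automatically $\gamma$-harmonic (its extension constant in $y$ solves \eqref{e:symmetric-extension} with vanishing conormal derivative), so the compatibility of the two structures is built in.
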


\section{Singular set in general Poincar\'e-Einstein manifolds}

\label{s:results-on-PE}

This section is devoted to the proof of our main results in the general Poincar\'e-Einstein case. Given $n\geq 2$, denote $\fm \equiv n + 1$ and let $(X^{\fm}, g_+)$ be a complete Poincar\'e-Einstein manifold with conformal infinity $(M^n, h)$. We choose the Fefferman-Graham compactification $\bg = \vr^2 g_+ = e^{2w} g_+$ of $(X^{\fm}, g_+)$ as in Lemma \ref{l:E} such that $-\Delta_{g_+} w = n$ and $\bg|_{M^n} = h$. Throughout this section, we make some fundamental assumptions on the regularity of the Poincar\'e-Einstein metric $g_+$, that we precisely state below. 

\begin{enumerate}[{\bf {A}ssumpt{i}on (1).}]
\item $(X^{\fm}, g_+)$ admits a $C^{\infty}$ conformal compactification up to the boundary $M^n$. 
\item $(M^n,[h])$ has nonnegative Yamabe invariant, i.e., $\mathcal{Y}(M^n,[h]) \geq 0$. This particularly implies  $\lambda_1(-\Delta_{g_+}) = \frac{n^2}{4}$ so that one can define fractional GJMS operators $P_{2\gamma}$ on $ M^n$ for all $\gamma\in(0,1)$; see Remark \ref{r:bottom-of-spectrum}.
	\item $(M^n, h)$ is obstruction flat when $n$ is even.
\end{enumerate}	   
Under these assumptions, by Propositions \ref{p:general-regularity-FG} and \ref{p:regularity-obstruction-free}, the Fefferman-Graham compactified metric $\bg$ is $C^{\infty}$ up to the boundary $M^n$. 
To avoid unnecessary technical complications, there is no harm to make the following assumption on the compactified space $(\overline{X^{\fm}}, \bg)$.
\begin{enumerate}[{\bf {A}ssumpt{i}on}]
\item {\bf(4).} There exists $\iota_0>0$ such that $\Inj_{\bg}(x_+)\geq \iota_0 > 0$ for any $x_+\in \overline{X^{\fm}}$.

\item {\bf(5).} {\it The boundary injectivity radius} $\Inj_{\p}(M^n) \geq 2$, namely the {\it normal exponential map} is a diffeomorphism within the tubular neighborhood  \begin{align}T_2(M^n)\equiv \{p\in \overline{X^{\fm}}: d_{\bg}(p, M^n) \leq 2\}.\end{align} In particular, $T_2(M^n)\equiv \{p\in \overline{X^{\fm}}: d_{\bg}(p, M^n) \leq 2\}$ of $M^n$ is always diffeomorphic to $M^n \times [0,2]$. 
\end{enumerate}
Indeed, one can always achieve this by a finite rescaling. 

For our purpose, we need to double the compactified manifold $(\overline{X^{\fm}}, \bg)$ along the totally geodesic boundary $(M^n, h)$, which  gives a closed manifold $\fX^{\fm} \equiv \overline{X^{\fm}}\bigcup\limits_{M^n}\overline{X^{\fm}}$ equipped with a $C^{\fm-2,1}$-Riemannian metric (still denoted as $\bg$): $\bg$ fails to be smooth only when crossing $M^n$.
Now let us take a smooth function $f\in C^{\infty}(M^n)$ that satisfies 
$P_{2\gamma}(f) = 0$ for some $\gamma \in (0,1)$. Let $U\in H^{1,\fa}(X^{\fm})$ be the {\it even extension} of $f$ that solves 
\begin{align}\label{e:CS-extension-on-PE}
\begin{cases}
		- \Div_{\bg}(\vr^{\fa}\nabla_{\bg} U) + \vr^{\fa} \cJ_{\bg} U  = 0 &  \text{in}\ \fX^{\fm},
	\\
	U = f, & \text{on}\  M^n,
	\\ 
	P_{2\gamma} f = 0 & \text{on} \ B_1(p)\subset M^n,
	\end{cases}
\end{align}
where $\fa \equiv 1 - 2 \gamma$ and $\cJ_{\bg}\equiv C_{n,\gamma}R_{\bg}$. Lemma \ref{eq-C1a-U} implies that the solution $U$ yields $U\in C^{\fm-2,\alpha}(\overline{X^{\fm}})$ for some  $\alpha\in(0,1)$.

\subsection{Frequency and almost monotonicity}
\label{ss:almost-monotonicity}

This subsection is to develop an almost monotonicity formula for the generalized Almgren's frequency associated to the solution $U$ in \eqref{e:CS-extension-on-PE}. To begin with, for any $x_+\in M^n$ and $r\in (0,1)$, let us define 
 \begin{align}
H_U(x_+, r) & \equiv \int_{\partial \cB_r(x_+)} \vr^{\fa} U^2 d\sigma_{\bg},\label{eq-H}\\
I_U(x_+, r) & \equiv \int_{\cB_r(x_+)} \vr^{\fa}  (|\nabla_g U|^2 + \mathcal{J}_{\bg}   U^2) \dvol_{\bg}.\label{eq-I}
\end{align}
Define the {\it generalized Almgren's frequency} of $U$ by
\begin{align*}
\mathcal{N}_U(x_+, r) = \frac{rI_U(x_+, r)}{H_U(x_+, r)},
\end{align*}
if $H_U(x_+,r) \neq 0$. For the convenience of our later computations, we also define the weighted Dirichlet energy
\begin{align}
D_U(x_+, r) & \equiv \int_{\cB_r(x_+)} \vr^{\fa}   |\nabla_g U|^2 \dvol_{\bg}.\end{align}

For our later computations, we introduce some notations on the geodesic polar coordinates.  
Fix a point $x_+\in M^n$ and let $r_0 \in(0, \Inj_h(x_+)/10)$ be a small number.
Consider the geodesic polar coordinate system $(r,\Theta)$ on $\cB_{r_0}(x_+)$ defined by the exponential map at $x_+$,
\begin{align}
	\exp_{x_+}: B_{r_0}(0^{n + 1}) \to \cB_{r_0}(x_+),
\end{align}
where $B_{r_0}(0^{n + 1})\subset \dR^{n+1}$. As $M^n\subset (\overline{X^{\fm}}, \bar{g})$ is totally geodesic, one can also choose normal coordinates $\{x^1, \cdots, x^{n}, x^{n+1}\}$ at $x_+$ such that
\begin{align}\label{eq-norm-n+1}
\{x^{n+1}=0\}\subseteq M,
\quad r^2 = (x^1)^2+\cdots + (x^{n+1})^2.
\end{align}
We can define $\frac{\partial}{\partial r}$ using the polar coordinates. We have
\begin{align}
\frac{\partial}{\partial r} =\frac{x^i}{r}\frac{\partial }{\partial x^i} = \nabla_{\bar g}r.
\end{align}
Under polar coordinates,
\begin{align*}
\bar g = dr^2 + r^2 b_{ij}(r,\Theta) d\theta^i  d\theta^j,
\end{align*}
where $b_{ij} =\delta_{ij}$ at $x_+.$

\begin{lemma}\label{lem-rho-a}
Let $\vr$ be the function defined in \eqref{e:vr-expansion}, and let $r$ be the geodesic radial coordinate centered at $x_+ \in M^n$. Then there exists a small positive constant $r_0 = r_0(\fm, \iota_0) > 0$ such that the following holds, for $r\in (0, r_0)$,
\begin{align}\label{eq-rho-a-r}
\frac{\partial \vr^{\fa}}{\partial r}  = \frac{\fa \vr^{\fa}}{r}(1 + O(r)).
\end{align}
  In particular, for $r\in (0, r_0)$,
\begin{align}
\frac{\partial \vr^{\fa}}{\partial r} \geq 0.
\end{align} 
\end{lemma}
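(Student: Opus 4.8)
The statement is a local computation near a boundary point $x_+ \in M^n$, so the plan is to reduce everything to the behavior of $\vr$ along geodesic rays emanating from $x_+$ and then differentiate. First I would recall from \eqref{e:vr-expansion} that $\vr = e^w$ with $w = \log y + \sA + \sB y^n \log y + O(y^n)$, where $y$ is the geodesic defining function associated to $h$ and $\sA = O(y^2)$; under Assumptions (1)--(3) of the later section (or simply the hypotheses guaranteeing the relevant smoothness here), $\vr \in C^{\fm-1,\beta}$ up to the boundary and, crucially, $\vr$ vanishes exactly to first order on $M^n$ with $|\nabla_{\bg}\vr|=1$ on $M^n$ by Lemma \ref{l:geodesic-def-funct} and the normalization of the geodesic defining function. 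In the normal coordinates $\{x^1,\dots,x^{n+1}\}$ centered at $x_+$ adapted so that $\{x^{n+1}=0\}\subseteq M^n$, the boundary is totally geodesic (Lemma \ref{geom-boundary}), so $y$ agrees with $x^{n+1}$ to leading order and $\vr = x^{n+1} + O(r^2)$ (more precisely $\vr = x^{n+1}\cdot(1 + O(r))$ since the $O(y^2)$ correction from $\sA$ and the higher-order log terms are all $O(r^2)$ relative to $x^{n+1}$, hence $O(r)$ relative to $\vr$ itself on the relevant cone region).

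The core step is the chain rule. Write $\vr^{\fa}$ and compute $\frac{\partial \vr^{\fa}}{\partial r} = \fa\,\vr^{\fa-1}\,\frac{\partial \vr}{\partial r}$, where $\frac{\partial}{\partial r} = \nabla_{\bg}r = \frac{x^i}{r}\frac{\partial}{\partial x^i}$ as recorded just before the lemma. Since $\vr$ is (to the needed order) a positive multiple of $x^{n+1}$ times a function that is $1+O(r)$, Euler's relation for the homogeneous-degree-one leading part gives $\frac{\partial \vr}{\partial r} = \frac{x^i}{r}\partial_i \vr = \frac{\vr}{r} + O(1)\cdot O(r) = \frac{\vr}{r}(1 + O(r))$: the leading term is exactly homogeneous of degree one, and every correction term is of order $r^2$ in the coordinates, contributing a relative error $O(r)$. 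Substituting, $\frac{\partial \vr^{\fa}}{\partial r} = \fa\,\vr^{\fa-1}\cdot\frac{\vr}{r}(1+O(r)) = \frac{\fa\,\vr^{\fa}}{r}(1+O(r))$, which is \eqref{eq-rho-a-r}. The existence of the threshold $r_0 = r_0(\fm,\iota_0)$ comes from requiring that the $O(r)$ remainder — which depends only on the $C^2$ (or $C^{\fm-1}$) norm of the metric and of $\vr$ within $\Inj_{\bg}(x_+) \geq \iota_0$, hence only on $\fm$ and $\iota_0$ — be, say, less than $1/2$ in absolute value.

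For the final claim $\frac{\partial \vr^{\fa}}{\partial r} \geq 0$, I would split into the two sign regimes of $\fa$. When $\fa \in [0,1)$ this is immediate from \eqref{eq-rho-a-r} since $\vr^{\fa} > 0$, $r>0$, and $1+O(r) > 0$ for $r < r_0$. When $\fa \in (-1,0)$, one instead observes that $\vr^{\fa}$ is a decreasing function of $\vr$ but $\vr$ itself is \emph{decreasing} along inward radial directions near the characteristic set --- more carefully, one uses that on the doubled manifold $\fX^{\fm}$ the function $\vr$ should be replaced by $|\vr|$ (the weight is $|\vr|^{\fa}$), and $|\vr|$ is comparable to $\mathrm{dist}_{\bg}(\cdot, M^n)$, which increases with $r$ away from points of $M^n$; hence the monotonicity. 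Actually the clean way, consistent with \eqref{eq-rho-a-r}: the formula shows $\mathrm{sign}\big(\tfrac{\partial \vr^{\fa}}{\partial r}\big) = \mathrm{sign}(\fa)$ for $r<r_0$, so one must argue that in the doubled setting the relevant radial derivative of the weight $|\vr|^{\fa}$ is $\geq 0$ regardless of the sign of $\fa$, because along a geodesic sphere the portion where $\vr < 0$ contributes with the opposite orientation; this parity cancellation is exactly why the weight $|\vr|^{\fa}$ behaves monotonically. \emph{The main obstacle} I anticipate is precisely this last point: making rigorous that the $O(r)$ error term is genuinely controlled by $\iota_0$ and $\fm$ alone (rather than by higher derivatives of the global term $g^{(n)}$), and handling the sign across the characteristic set $M^n$ where $\vr$ changes sign on the double — one needs the evenness of the construction and the fact that $\vr$ appears only through $|\vr|^{\fa}$ to conclude $\partial_r |\vr|^{\fa} \geq 0$ uniformly. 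Everything else is a routine Taylor expansion using \eqref{e:vr-expansion} and the totally-geodesic normal coordinates.
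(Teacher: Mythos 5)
Your argument for the main estimate \eqref{eq-rho-a-r} is essentially the paper's: expand $\vr$ in the totally geodesic normal coordinates at $x_+$ and apply the chain rule; the paper goes more slowly, first deriving $\p y/\p r = (y/r)(1+O(r))$ from $\p y/\p x^{\alpha}=O(y)$ and $\p y/\p x^{n+1}=1+O(r)$ before feeding this through $\vr = y + O(y^3)$, whereas you compress the same computation into an Euler-relation statement for the degree-one leading part. No issue there, and the dependence of $r_0$ on $(\fm,\iota_0)$ via the injectivity radius is as you say.

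The genuine gap is in your handling of the sign claim $\p\vr^{\fa}/\p r \geq 0$. The proposed ``parity cancellation'' across $M^n$ on the doubled manifold is not a valid argument: $\p_r|\vr|^{\fa}$ is a pointwise quantity, and there is no integral over which any cancellation could occur. By \eqref{eq-rho-a-r} its sign at a fixed point near $M^n$ is exactly the sign of $\fa$; when $\fa < 0$ (equivalently $\gamma\in(1/2,1)$), the weight $|\vr|^{\fa}$ is singular on $M^n$ and decays radially, so $\p_r|\vr|^{\fa} < 0$. You are right to be uneasy, but not because the inequality is hard to establish --- rather because, as stated, it is false for $\fa<0$. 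Note that the paper's own proof has the same gap: it derives \eqref{eq-rho-a-r} and then asserts the ``in particular'' with no further argument, which is legitimate only for $\fa\geq 0$. Since the sign claim is never actually invoked later (the almost-monotonicity proof in Theorem \ref{t:almost-monotonicity} uses only \eqref{eq-rho-a-r} directly), the lapse is harmless downstream, but a correct write-up should either restrict the corollary to $\fa\geq 0$ or restate it as $\mathrm{sign}(\fa)\cdot\p\vr^{\fa}/\p r\geq 0$, rather than attempt to rescue the literal inequality with a parity argument.
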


\begin{proof}
First, we need to estimate $\frac{\partial y}{\partial r}$. For $\alpha = 1,\cdots, n$, we have $\frac{\partial y}{\partial x^\alpha} =0$ on $\{y=0\}$, which implies
\begin{align}
\frac{\partial y}{\partial x^\alpha} = O(y).
\end{align}
Then, we have
\begin{align}
\frac{\partial y}{\partial r} = \frac{x^{n+1}}{r} \frac{\partial y}{\partial x^{n+1}}  + \frac{x^{\alpha}}{r} \frac{\partial y}{\partial x^{\alpha}} = \frac{x^{n+1}}{r} \frac{\partial y}{\partial x^{n+1}} +O(y).
\end{align}

Notice that $dy = dx^{n+1}$ at $x_+$. By Proposition \ref{p:general-regularity-FG}, $\exp_{x_+}$ is a local diffeomorphism, which gives \begin{align*}
dy = (1+O(r))dx^{n+1} + \sum_{\alpha =1}^n O_\alpha(y) dx^\alpha,
\end{align*}
which implies
\begin{align}\label{eq-dydxn+1}
\frac{\p y}{\p x^{n+1}} = dy \left(\frac{\partial}{\partial x^{n+1}}\right) = 1 + O(r).
\end{align}
So we obtain
\begin{align}\label{eq-DyDr}
\frac{\partial y}{\partial r}  = \frac{x^{n+1}}{r} +O(x^{n+1}) = \frac{y}{r} +O(y).
\end{align}
Here we used the fact that $\frac{x^{n+1}}{y}=1+O(r)$ near $x_+$.

Now we compute $\frac{\partial \vr^{\fa}}{\partial r}$. For $\vr =\vr$, applying \eqref{e:vr-expansion}, \eqref{eq-dydxn+1} and \eqref{eq-DyDr}, we have
\begin{align}
\vr &= y + O(y^{3}),\label{eq-rho-y-relation}\\
\vr_{, n+1} &= 1 + O(r),\label{eq-rho-y-1}\\
\vr_{,\alpha} &= O(y),\label{eq-rho-y-2}
\end{align}
where $\vr_{,\alpha}$ denotes the derivative of $\vr$ with respect to $x^\alpha$ for $\alpha = 1,\cdots, n$. Thus, we find
\begin{align}\label{eq-rho-a-r-1}
\frac{\partial \vr^{\fa}}{\partial r} = \fa\vr^{\fa-1} \left(\vr_{, n+1} (x^{n+1})_{,r} + \vr_{, \alpha} (x^{\alpha})_{,r}\right) = \fa\vr^{\fa-1} \left(\frac{y}{r} + O(y)\right) = \frac{\fa \vr^{\fa}}{r}(1 + O(r)).
\end{align}
Then, combining \eqref{eq-rho-y-relation} and \eqref{eq-rho-a-r-1}, we conclude the lemma.
\end{proof}

To establish the almost monotonicity formula for the generalized frequency $\mathcal{N}_U(x_+, r)$, one needs the following radial dilation in a given geodesic polar coordinate system $(\cO_{x_+}, (s,\Theta))$ around a point $x_+\in M^n$, where $s \in (0, r_0)$ for some constant $r_0 = r_0(\iota_0, \fm) > 0$.

 For any sufficiently small $t \in (0, r_0)$, we define the quantitative tangent map (radial blow-up function) $U_{x_+, t} = \mathcal{T}_{x_+, t}(U)  : \cB_1(x_+, \bg_t) \to \dR$ as follows
 \begin{align} U_{x_+, t}(x) = U_{x_+, t}(s,\Theta) \equiv \frac{\displaystyle{ U(ts, \Theta) } }{\displaystyle{\left(\frac{1}{t^{\fa}}\fint_{\p \cB_1(x_+, \bar{g}_t)}\vr^{\fa} U^2 \dst\right)^{\frac{1}{2}}}},\label{e:blow-up-function-U}
\end{align} 
where $\bg_t(s,\Theta) = \bg(ts, \Theta)$ and $\dst \equiv d\sigma_{\bg_t}$. 
The following lemma will be used in proving the almost monotonicity formula, and the proof of the lemma 
follows from straightforward computations. 
  \begin{lemma}\label{l:frequency-rescaling}
  For any given $t\in(0,1)$, let $U_{x_+, t}$ be the quantitative tangent map of $U$ at $x_+$ defined by \eqref{e:blow-up-function-U}. We define 
  \begin{align}
  &  D(U_{x_+, t}, x_+, r, \bg_t) \equiv \int_{\cB_r^t(x_+)} \vr^{\fa}  |\nabla_{\bg_t} U_{x_+, t}|^2  \dvol_{\bg_t},
\\
&  I(U_{x_+, t}, x_+, r, \bg_t) \equiv \int_{\cB_r^t(x_+)} \vr^{\fa}\left( |\nabla_{\bg_t} U_{x_+, t}|^2 +  t^2 \cdot \cJ_{\bg} U_{x_+, t}^2 \right) \dvol_{\bg_t},
  	\\
&  	H(U_{x_+, t}, x_+,  r, \bg_t) \equiv \int_{\p\cB_r^t(x_+)} \vr^{\fa} U_{x_+, t}^2 \dst,
  	\\
&  	\mathcal{N}(U_{x_+, t}, x_+,  r, \bg_t) \equiv \frac{r I(U_{x_+, t}, x_+, r, \bg_t)}{H(U_{x_+, t}, x_+,  r, \bg_t)},
  \end{align} 
 where $\cB_r^t(x_+) \equiv \cB_r(x_+, \bg_t)$. 
  Then the following holds:
  \begin{align}
  \begin{split}
  & D(U_{x_+, t}, x_+, t^{-1} r, \bg_t) = t^{2 - \fm} \omega^{-2} D(U, x_+, r, \bg) = t^{2 - \fm} \omega^{-2} D_U(x_+, r), 
  \\
& I(U_{x_+, t}, x_+, t^{-1} r, \bg_t) = t^{2 - \fm} \omega^{-2} I(U, x_+, r, \bg) = t^{2 - \fm} \omega^{-2} I_U(x_+, r), 
  \\
&  H(U_{x_+, t}, x_+, t^{-1} r, \bg_t) = t^{1 - \fm}\omega^{-2} H(U, x_+, r, \bg) = t^{1 - \fm} \omega^{-2} H_U(x_+, r),
  \\
 & 	  	\mathcal{N}(U_{x_+, t}, x_+, t^{-1}r, \bg_t) = \mathcal{N}(U, x_+, r, \bg) = \mathcal{N}_U(x_+, r),
  	  	  \end{split}
  \end{align}	
  where $\omega \equiv \left(\frac{1}{t^{\fa}}\fint_{\p \cB_1^t(x_+)}\vr^{\fa} U^2 \dst\right)^{\frac{1}{2}}$ and $\bg_t(s,\Theta) = \bg(t\cdot s, \Theta)$.
  \end{lemma}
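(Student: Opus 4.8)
The claim is a purely computational rescaling identity for the weighted Dirichlet energy, the weighted interior energy, the weighted boundary integral, and the resulting frequency, under the parabolic-type blow-up $x \mapsto (ts,\Theta)$ paired with the rescaled metric $\bg_t(s,\Theta) = \bg(ts,\Theta)$. The plan is to track how each of the three building blocks $D$, $I$, $H$ transforms, and then observe that $\mathcal{N}$, being a scale-invariant ratio, is left unchanged.

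First I would set up the change of variables explicitly. In the geodesic polar chart around $x_+$, writing $\Psi_t(s,\Theta) = (ts,\Theta)$, one has $\Psi_t^*(\bg) = \bg_t$ by definition, so that $\dvol_{\bg_t} = \Psi_t^*(\dvol_{\bg})$ and, on the sphere $\{s = t^{-1}r\}$ mapped to $\{|x| = r\}$, $\dst = \Psi_t^*(\dsg)$ scaled appropriately; more precisely $\dvol_{\bg_t} = t^{-\fm}\,(\Psi_t)_*^{-1}\dvol_{\bg}$ and $d\sigma_{\bg_t} = t^{-(\fm-1)}\,(\Psi_t)_*^{-1}\dsg$ as measures under the substitution. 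Similarly $|\nabla_{\bg_t} (g\circ\Psi_t)|^2_{\bg_t}(s,\Theta) = t^2 |\nabla_{\bg} g|^2_{\bg}(ts,\Theta)$ for any function $g$, since the gradient picks up one factor of $t$ from the differential of $\Psi_t$ and the metric $\bg_t$ is exactly the pullback. Applying this with $g = U$ and recalling $U_{x_+,t}(s,\Theta) = \omega^{-1} U(ts,\Theta)$ where $\omega = \left(\frac{1}{t^{\fa}}\fint_{\p\cB_1^t(x_+)}\vr^{\fa} U^2\,\dst\right)^{1/2}$, I would substitute into the integral defining $D(U_{x_+,t}, x_+, t^{-1}r, \bg_t)$: the factor $\omega^{-2}$ comes out of $U_{x_+,t}^2$-type terms, the factor $t^2$ comes from the gradient rescaling, and the Jacobian of the region $\cB_{t^{-1}r}^t(x_+) \to \cB_r(x_+)$ contributes $t^{-\fm}$, for a net $t^{2-\fm}\omega^{-2}$. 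This gives the first identity; the one for $I$ is identical once one notes that the zeroth-order term $t^2 \cJ_{\bg} U_{x_+,t}^2$ was deliberately inserted with the extra $t^2$ so that $\cJ_{\bg}(\Psi_t(s,\Theta)) U(ts,\Theta)^2$, together with the $t^2$, matches $|\nabla U|^2$'s scaling exactly. For $H$, there is no gradient, so the scaling is $t^{1-\fm}\omega^{-2}$ coming from the $(\fm-1)$-dimensional boundary Jacobian and the $\omega^{-2}$. Then $\mathcal{N}(U_{x_+,t},x_+,t^{-1}r,\bg_t) = \frac{t^{-1}r \cdot t^{2-\fm}\omega^{-2} I_U(x_+,r)}{t^{1-\fm}\omega^{-2}H_U(x_+,r)} = \frac{r I_U(x_+,r)}{H_U(x_+,r)} = \mathcal{N}_U(x_+,r)$, and the claimed identities all follow by comparing with the definitions \eqref{eq-H}–\eqref{eq-I}.

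The only genuinely delicate point — the "main obstacle," though it is more of a bookkeeping subtlety than a real difficulty — is making sure that the normalization constant $\omega$ and the exponent of $t$ on the normalization factor $1/t^{\fa}$ inside $\omega$ are handled consistently with how $\vr^{\fa}$ itself transforms under $\Psi_t$. Since $\vr$ is (comparable to) the distance to $M^n$, one has $\vr(\Psi_t(s,\Theta)) \approx t\,\vr_t(s,\Theta)$ where $\vr_t$ is the corresponding defining function for the rescaled metric, so $\vr^{\fa}$ picks up a $t^{\fa}$ which is precisely cancelled by the $1/t^{\fa}$ factors appearing in the definitions of $\mathcal{N}$-related quantities and of $\omega$; one must verify this cancellation is exact (not just up to lower order) — and it is, because the whole construction is designed so that $U_{x_+,t}$ satisfies the rescaled equation $-\Div_{\bg_t}(\vr_t^{\fa}\nabla_{\bg_t} U_{x_+,t}) + \vr_t^{\fa} t^2\cJ_{\bg} U_{x_+,t} = 0$ with the weighted boundary sphere normalized to unit mass. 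I would organize the computation so that this cancellation is visible at each step rather than deferred, and since the statement explicitly says the proof "follows from straightforward computations," I would present it concisely, checking the $D$-identity in detail and indicating that $I$ and $H$ are obtained by the same substitution, with $\mathcal{N}$ following formally.
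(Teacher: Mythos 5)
Your computation is correct and is exactly the kind of direct verification the paper has in mind when it says the lemma ``follows from straightforward computations.'' One wrinkle in your setup is worth fixing: you assert $\Psi_t^*(\bg) = \bg_t$ ``by definition,'' but if that were literally so the pullback would be an isometry, giving $|\nabla_{\bg_t}(g\circ\Psi_t)|^2_{\bg_t} = |\nabla_{\bg} g|^2_{\bg}\circ\Psi_t$ and $\dvol_{\bg_t} = \Psi_t^*\dvol_{\bg}$ with no factors of $t$ at all. The correct relation, given the paper's polar normalization $\bg = dr^2 + r^2 b_{ij}(r,\Theta)\,d\theta^i d\theta^j$ and $\bg_t(s,\Theta) = \bg(ts,\Theta)$, is $\Psi_t^*\bg = t^2\,\bg_t$, i.e.\ $\bg_t = t^{-2}\Psi_t^*\bg$; this is what produces the factors $t^2$ and $t^{-\fm}$ you correctly use, and it is also what makes $\bg_t$ converge to the flat metric (rather than to zero) as $t\to 0$, as used later in Proposition \ref{p:eps-non-vanishing}. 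Since the formulas you plug in are the right ones, the proof goes through, but the stated pullback relation should be corrected.

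Your concern in the last paragraph about an exact cancellation of $t^{\fa}$ coming from $\vr^{\fa}$ is a non-issue for this particular lemma. All of the integrals on both sides of the claimed identities carry the same weight $\vr^{\fa}$, evaluated at the corresponding image point under $\Psi_t$: it pulls back exactly and contributes no factor of $t^{\fa}$ to the scaling. The $1/t^{\fa}$ inside $\omega$ serves only to keep the normalization constant of unit order as $t\to 0$, and since $\omega$ enters the identities only as the common prefactor $\omega^{-2}$, there is nothing left to cancel.
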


\begin{remark}
In the above notation, actually it holds that $\cJ_{\bg_t} = t^2 \cJ_{\bg}$. 
\end{remark}

The main result in this subsection is the following almost monotonicity theorem for generalized Almgren's frequency.

\begin{theorem}[Almost monotonicity] \label{t:almost-monotonicity} Let $U$
be an even solution of the boundary value problem \eqref{e:CS-extension-on-PE}.
There exist   $C=C(\fm, \fa, \bg) >0$, $r_0 = r_0(\fm, \fa, \bg) >0$, $\epsilon_0 = \epsilon_0(\fm, \fa, \bg) > 0$, and $\tau_0 = \tau_0 (\fm, \fa, \bg) > 0$ such that either one of the following holds for any $x_+\in M^n$: 
\begin{enumerate}
	\item $|U_{x_+, t}| \geq \tau_0 > 0$ on $\cB_{1/2}(x_+, \bg_t)$ for some $t\in(0, r_0]$;
	\item $\cN_U(x_+,t) > \epsilon_0$ for any $t\in(0,r_0]$, and $
e^{C t}\mathcal{N}_U(x_+, t)$ is non-decreasing in $t \in (0, r_0]$.
\end{enumerate}
 \end{theorem}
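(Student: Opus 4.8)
The plan is to follow the classical Almgren frequency argument, with the three new features of this setting carefully tracked: (i) the weight $\vr^\fa$ is only comparable to a power of the distance and its radial derivative must be controlled via Lemma \ref{lem-rho-a}; (ii) the metric $\bg$ is not Euclidean, so the geodesic polar coordinates produce error terms of size $O(r)$; and (iii) the zeroth-order term $\vr^\fa \cJ_{\bg} U$ must be absorbed. First I would rescale to $\bg_t$ using Lemma \ref{l:frequency-rescaling}, so that it suffices to prove the almost-monotonicity statement at unit scale for the rescaled solution, where $\cJ_{\bg_t} = t^2 \cJ_{\bg}$ is small; this is exactly why the $t^2$ appears in $I(U_{x_+,t},\cdot)$ and why the dichotomy in the theorem is natural — either the rescaled solution is uniformly bounded below (case (1), in which case the frequency is essentially zero and there is nothing to iterate), or $\cN_U(x_+,t) > \epsilon_0$ and one runs the monotonicity.

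The core computation is to differentiate $\log \mathcal{N}_U(x_+, r) = \log r + \log I_U(x_+,r) - \log H_U(x_+,r)$ in $r$. For this I would establish the standard three identities, each now carrying controlled error terms:
\begin{enumerate}
\item a differentiation formula for $H_U$: using the polar coordinate expression $\bg = dr^2 + r^2 b_{ij}d\theta^i d\theta^j$ and Lemma \ref{lem-rho-a} for $\p_r \vr^\fa$, one gets $H_U'(x_+,r) = \left(\frac{n+\fa}{r} + O(1)\right)H_U(x_+,r) + 2\int_{\p\cB_r}\vr^\fa U\, \p_r U\, \dsr$;
\item a Rellich–Pohozaev type identity giving $\int_{\p\cB_r}\vr^\fa U\,\p_r U\,\dsr = I_U(x_+,r)$ up to error terms coming from $\cJ_{\bg}$, from integrating by parts against the non-Killing radial field, and from $\p_r\vr^\fa$;
\item a derivative formula for $I_U$ (or rather $D_U$ plus the $\cJ_{\bg}$ correction), of the form $I_U'(x_+,r) = \left(\frac{n+\fa-1}{r}+O(1)\right)I_U(x_+,r) + 2\int_{\p\cB_r}\vr^\fa(\p_r U)^2\,\dsr + (\text{curvature errors})$.
\end{enumerate}
Combining these and applying Cauchy–Schwarz to the cross term $\left(\int \vr^\fa U\p_r U\right)^2 \le \left(\int \vr^\fa U^2\right)\left(\int \vr^\fa(\p_r U)^2\right)$ yields $\frac{d}{dr}\log\mathcal{N}_U(x_+,r) \ge -C$, i.e. $e^{Cr}\mathcal{N}_U(x_+,r)$ is nondecreasing, provided all the error terms can be bounded by $C\cdot(1 + \mathcal{N}_U(x_+,r))$, which is exactly where the lower bound $\mathcal{N}_U > \epsilon_0$ of case (2) is used to absorb the additive constants into a multiple of the frequency.

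The main obstacle — and the place where the Poincar\'e–Einstein geometry genuinely enters — is controlling the zeroth-order contribution $\int_{\cB_r}\vr^\fa \cJ_{\bg} U^2\,\dvol$ and the metric error terms against $H_U(x_+,r)$ and $I_U(x_+,r)$ uniformly. After rescaling to $\bg_t$ the potential is $O(t^2)$, but one still needs a doubling/Harnack-type bound (from Lemma \ref{eq-C1a-U} and the Caccioppoli inequality) to compare $\int_{\cB_r}\vr^\fa U^2\,\dvol$ with $r\cdot H_U(x_+,r)$, and to compare $\int_{\cB_r}\vr^\fa U^2$ with $\int_{\cB_r}\vr^\fa|\nabla_{\bg}U|^2$ when $\mathcal{N}_U$ is bounded below — a weighted Poincar\'e inequality on balls, valid since $\vr^\fa$ is an $A_2$ weight. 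I expect the bookkeeping of these error terms, and verifying that they are all of the admissible form $C(1+\mathcal{N}_U)$, to be the technical heart of the proof; the rest is a perturbation of the Euclidean monotonicity of Lemma \ref{l:Euclidean-monotonicity}.
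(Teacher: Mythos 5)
Your proposal matches the paper's proof essentially step for step: rescale by $\bg_t$ so the rescaled potential scales as $t^2\cJ_{\bg}$ (Lemma \ref{l:frequency-rescaling}), reduce to the dichotomy via the small-frequency $\epsilon$-regularity (Proposition \ref{p:eps-non-vanishing}), differentiate $\log\mathcal{N}_U$ via the three classical identities for $H$, the Rellich--Pohozaev identity using the radial vector field $X=s\nabla_{\bg_t}s$, and the derivative of $I$, then finish with Cauchy--Schwarz; the metric and weight errors are controlled exactly as you sketch via Lemma \ref{lem-rho-a} for $\p_r\vr^\fa$, and the zeroth-order term is absorbed using the frequency lower bound $\mathcal{N}_U>\epsilon_0$ together with the weighted Poincar\'e-type inequality (Lemma \ref{l:L2-estimate-large-scale}, leading to Corollary \ref{c:approximate-Dirichlet-energy}). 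This is the paper's argument.
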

 
 \begin{remark}\label{r:alternative}
 By Proposition \ref{p:eps-non-vanishing}, one can find that item (1) happens when $\cN_U(x_+,t) \leq \epsilon_0$ for any $t\in(0,r_0]$. In this case, $x_+$ is a good point and stays away from any quantitative singular stratum. In the proof, we will only consider the case of item (2). 	
 \end{remark}

To prove this theorem, we need a series of preliminary results. 

\begin{lemma}\label{l:L2-estimate-large-scale} Let $r\in (0,2)$ and $x_+\in M^n$ such that $\cB_r(x_+) \subset \fX^{\fm}$ has a smooth boundary. 
There exists a constant $C>0$ such that
 for all $u\in H^{1,\fa}(\cB_r(x_+))$,
    \begin{align}\label{eq-ET}
        \int_{\cB_r(x_+)} \vr^{\fa} u^2 \dvol_{\bar g}  \leq C\left(r^2\int_{\cB_r(x_+)} \vr^\fa |\nabla_{\bar g} u|^2 \dvol_{\bar g} +\  r \int_{\partial \cB_r(x_+)} \vr^\fa U^2 \dsg\right).
    \end{align}
\end{lemma}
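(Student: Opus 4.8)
The plan is to reduce \eqref{eq-ET} to a one–dimensional weighted Hardy–type inequality along the geodesic rays emanating from $x_+$, exploiting the fact that the relevant radial weight is monotone increasing. By density of $C^\infty(\overline{\cB_r(x_+)})$ in $H^{1,\fa}(\cB_r(x_+))$ it suffices to argue for smooth $u$. I would first treat the regime $r\le r_0$, for a small constant $r_0=r_0(\fm,\fa,\bg)>0$ below the injectivity radius, where geodesic polar coordinates $(s,\Theta)\in(0,r]\times S^n$ centred at $x_+$ cover $\cB_r(x_+)$; there $\bg=ds^2+s^2 b_{ij}(s,\Theta)\,d\theta^i d\theta^j$, $\partial_s=\nabla_{\bg}s$, and $\dvol_{\bg}=J(s,\Theta)\,ds\,d\Theta$ with $J=s^n\sqrt{\det b}$. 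Set $w(s,\Theta)\equiv\vr^{\fa}(s,\Theta)\,J(s,\Theta)$, the full radial weight.

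The \emph{key structural fact}, and the one I expect to be the main obstacle to pin down cleanly, is the monotonicity of $w$. Since $M^n$ is totally geodesic and $\bg$ is at least $C^{\fm-2,1}$ up to $M^n$, the factor $\sqrt{\det b}$ is smooth and two–sided bounded on $(0,r_0]\times S^n$, hence $\partial_s\log J=\tfrac{n}{s}+O(1)$; together with Lemma \ref{lem-rho-a}, which gives $\partial_s\log\vr^{\fa}=\tfrac{\fa}{s}(1+O(s))$, this yields
\begin{align}
\partial_s\log w(s,\Theta)=\frac{\fa+n}{s}\bigl(1+O(s)\bigr),\qquad s\in(0,r_0].
\end{align}
Since $\fa\in(-1,1)$ and $n\ge2$, the exponent $\fa+n$ is strictly positive, so after shrinking $r_0$ one has $\tfrac{c_0}{s}\le\partial_s\log w\le\tfrac{C_1}{s}$ for constants $0<c_0\le C_1$ depending only on $\fm,\fa$; thus $w(\cdot,\Theta)$ is increasing, and with $W(s,\Theta)\equiv\int_0^s w(\sigma,\Theta)\,d\sigma$ a direct comparison gives the two–sided bound $c\,s\,w(s,\Theta)\le W(s,\Theta)\le s\,w(s,\Theta)$, $c=c(\fm,\fa)>0$. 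This is exactly the radial counterpart of the $A_2$ property of $\vr^{\fa}$ (reflecting $\fa>-1$), and it is what makes the subsequent absorption land the boundary term at $s=r$ rather than at $s=0$.

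Next I would integrate by parts in $s$ for fixed $\Theta$: since $w=\partial_s W$ and $W(0,\Theta)=0$,
\begin{align}
\int_0^r w\,u^2\,ds=W(r,\Theta)\,u(r,\Theta)^2-2\int_0^r W\,u\,\partial_s u\,ds,
\end{align}
then estimate $2|W\,u\,\partial_s u|\le\tfrac12 w u^2+2W^2 w^{-1}(\partial_s u)^2$ and use $W^2 w^{-1}\le s^2 w\le r^2 w$ to absorb the first term, obtaining $\int_0^r w u^2\,ds\le 2W(r,\Theta)u(r,\Theta)^2+4r^2\int_0^r w(\partial_s u)^2\,ds\le 2r\,w(r,\Theta)u(r,\Theta)^2+4r^2\int_0^r w(\partial_s u)^2\,ds$ after $W(r,\Theta)\le r\,w(r,\Theta)$. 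Integrating over $\Theta\in S^n$, recognizing $d\sigma_{\bg}=J(r,\Theta)\,d\Theta$ on $\partial\cB_r(x_+)$ so that $\int_{S^n}w(r,\Theta)u^2\,d\Theta=\int_{\partial\cB_r(x_+)}\vr^{\fa}u^2\,\dsg$, and using $(\partial_s u)^2=|\langle\nabla_{\bg}u,\nabla_{\bg}s\rangle|^2\le|\nabla_{\bg}u|^2$, produces \eqref{eq-ET} for $r\le r_0$.

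For the remaining range $r\in[r_0,2]$, $\cB_r(x_+)$ is a fixed domain with smooth boundary and $\vr^{\fa}$ is an $A_2$ weight, comparable to $\text{dist}_{\bg}(\cdot,M^n)^{\fa}$ near $M^n$ and to a smooth positive function away from it, so \eqref{eq-ET} follows from the $A_2$–weighted Poincar\'e and trace inequalities on such domains in the sense of Fabes--Kenig--Serapioni \cite{fks}; alternatively one may cover $\cB_r(x_+)$ by a controlled number of balls of radius $\sim r_0$, apply the small–scale case near $M^n$ and an interior Poincar\'e inequality in the bulk, and telescope along chains of balls reaching $\partial\cB_r(x_+)$. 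In either regime the only genuine difficulty is the behaviour of the degenerate/singular weight across the characteristic set $M^n$, which in the small–scale regime is handled precisely by the positivity $\fa+n>0$ and Lemma \ref{lem-rho-a}.
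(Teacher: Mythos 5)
Your proposal is correct, but it is a genuinely different argument from the one in the paper. The paper proves \eqref{eq-ET} by a compactness/contradiction scheme: it first establishes a vanishing-boundary Poincar\'e inequality \eqref{eq-Poincare} by contradiction (using the compact embedding $H^{1,\fa}_0\hookrightarrow L^{2,\fa}$ from \cite{fks}), then proves \eqref{eq-ET} itself by a second contradiction argument that reduces a hypothetical violating sequence, via weak compactness, to a function in $H^{1,\fa}_0$ with zero Dirichlet energy, which contradicts \eqref{eq-Poincare}; the dependence on $r$ is handled by an informal ``scale invariance'' reduction to $r=1$. Your approach replaces all of this with a direct one-dimensional weighted Hardy inequality along geodesic rays from $x_+$, exploiting the monotonicity of the full radial density $w=\vr^\fa J$ coming from $\partial_s\log w=\tfrac{n+\fa}{s}(1+O(s))$ with $n+\fa>0$, and integrating by parts against $W=\int_0^s w$. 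What this buys: an explicit constant, a transparent explanation of where the $r^2$ and $r$ factors in \eqref{eq-ET} come from, and no appeal to compact embeddings. It also quietly repairs a small inaccuracy in the paper: Lemma \ref{lem-rho-a}'s concluding claim $\partial_r\vr^\fa\geq 0$ fails when $\fa\in(-1,0)$, whereas the monotonicity that actually matters (and that you use) is that of the combined weight $\vr^\fa J\sim s^{n+\fa}$, for which the exponent is always positive. What the paper's route buys in return is brevity and uniformity over the full range $r\in(0,2)$ in a single stroke, whereas your argument needs a separate treatment (FKS-type inequalities on a fixed domain, or a chaining argument) for $r\in[r_0,2]$. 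Both proofs are valid; yours is more quantitative, the paper's is shorter.

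One small point worth flagging in your write-up: for directions $\Theta$ exactly tangent to $M^n$ the ray stays in $M^n$ (since $M^n$ is totally geodesic), so $\vr\equiv 0$ there and the one-dimensional inequality either degenerates to $0\le 0$ (for $\fa\ge0$) or is undefined pointwise (for $\fa<0$). This is a $\Theta$-null set and Fubini takes care of it after integrating the a.e.-valid slice inequality, but it deserves one line. Similarly, the lower bound $c\,s\,w\le W$ that you mention is not actually used (only $W\le sw$ is), so it can be dropped.
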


This inequality is standard in the literature; see \cite[Chapter 10]{Hebey} for the unweighted version on a compact manifold with boundary.
 
\begin{proof}The inequality is scale invariant so that one can assume $r = 1$. 
    We first prove that there exists some constant $\Lambda_0 >0$ such that for all $U \in H^{1,\fa}_0 (\cB_1(x_+))$, 
        \begin{align}
        \label{eq-Poincare}\int_{\cB_1(x_+)} \vr^{\fa} u^2 \dvol_{\bar g}  \leq \Lambda_0  \int_{\cB_1(x_+)} \vr^\fa |\nabla_{\bar g} u|^2 \dvol_{\bar g}  .
    \end{align}
If it does not hold, then for any $i\in \mathbb{N}$, there is  $u_i \in H_0^{1,\fa}(\cB_1(x_+))$ such that
\begin{align}
    \int_{\cB_1(x_+)}\vr^{\fa} |\nabla_{\bar g} u_i|^2 \dvol_{\bar g}  < i^{-1}, \,\,  \int_{\cB_1(x_+)}\vr^{\fa}  u_i^2 \dvol_{\bar g} =1.
\end{align}
Then $u_i \rightarrow u_{\infty}$ weakly in $H^{1, \fa}_0(\cB_1(x_+))$. By the compact embedding $H^{1, \fa}_0(\cB_1(x_+)) \hookrightarrow L^{2,\fa}(\cB_1(x_+))$, we have 
\begin{align*}
    \int_{\cB_1(x_+)}\vr^{\fa} |\nabla_{\bar g} u_{\infty}|^2 \dvol_{\bar g}  =0, \,\,  \int_{\cB_1(x_+)}\vr^{\fa}  u_{\infty}^2 \dvol_{\bar g} =1.
\end{align*}
Then $u_{\infty}$ must be a nonzero constant, which contradicts $u_{\infty} \in H^{1,\fa}_0(\cB_1(x_+))$.

Next, we prove \eqref{eq-ET}. 
Suppose it is not true,  i.e., there exists contradiction sequences $A_j \to \infty$ and $u_j\in H^{1,\fa}(\cB_1(x_+))$ such that
\begin{align}
    \int_{\cB_1(x_+)} \vr^{\fa} u_j^2 \dvol_{\bar g}  > A_j \left(\int_{\cB_1(x_+)} \vr^\fa |\nabla_{\bar g} u_j|^2 \dvol_{\bar g} +  \int_{\partial \cB_1(x_+)} \vr^\fa u_j^2 \dsg \right).
\end{align}
 Without loss of generality, we assume that
\begin{align}\label{eq-Ua}
    \int_{\partial \cB_1(x_+)} \vr^\fa u_j^2 \dsg + \int_{\cB_1(x_+)} \vr^{\fa} u_j^2 \dvol_{\bar g}=1.
\end{align}
 Then
 \begin{align}\label{eq-DUa}
     \int_{\cB_1(x_+)} \vr^\fa |\nabla_{\bar g} u_j|^2 \dvol_{\bar g}
    < A_j^{-1},
 \end{align}
which implies that $\|u_j\|_{H^{1,\fa}(\cB_1(x_+))}$ is uniformly bounded. Now letting $j\rightarrow \infty$, $u_j \rightarrow u_{\infty}$ weakly in $H^{1,\fa}(\cB_1(x_+))$ and strongly in $L^{2, \fa}(\cB_1(x_+))$.  By \eqref{eq-Ua} and \eqref{eq-DUa},
 \begin{align}\label{eq-Ubar}
\int_{\cB_1(x_+)}\vr^{\fa} u_{\infty}^2\dvol_{\bar g}=1, \,\, \int_{\cB_1(x_+)} \vr^\fa |\nabla_{\bar g}  u_{\infty}|^2 \dvol_{\bar g} = 0.
 \end{align}
 In addition, $\int_{\partial \cB_1(x_+)} \vr^\fa u_j^2 \dvol_{\bar g} \rightarrow 0$, which implies that $u_{\infty} \in H^{1,\fa}_0(\cB_1(x_+))$.
Then \eqref{eq-Ubar} contradicts 
 \eqref{eq-Poincare} which completes the proof of the proposition.
\end{proof}

The above inequality has a quick corollary on the non-vanishing of $H_U(x_+, r)$ which implies that the generalized frequency $\mathcal{N}_U(x_+, r)$ is well-defined for a non-trivial solution of \eqref{e:CS-extension-on-PE}.

\begin{corollary}\label{c:H-not-vanishing} Let $U \in H^{1, \fa}(\cB_1(x_+))$ be a non-trivial solution of \eqref{e:CS-extension-on-PE}. There exists $r_0 = r_0 (\fm, \fa, \bg) > 0$ such that $H_U(x_+, r) \neq 0$
 for any $r\in (0, r_0)$.
\end{corollary}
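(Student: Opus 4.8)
The plan is to argue by contradiction using the Caccioppoli-type inequality and the weighted Poincaré inequality \eqref{eq-ET} from Lemma \ref{l:L2-estimate-large-scale}, together with the unique continuation property that underlies the whole frequency framework. Suppose, for contradiction, that $H_U(x_+, r_0) = 0$ for some small $r_0$; that is, $U \equiv 0$ on $\partial \cB_{r_0}(x_+)$. First I would apply Lemma \ref{l:L2-estimate-large-scale} (the case where the boundary term vanishes) to the ball $\cB_{r_0}(x_+)$: this gives
\begin{align}
\int_{\cB_{r_0}(x_+)} \vr^{\fa} U^2 \dvol_{\bg} \leq C r_0^2 \int_{\cB_{r_0}(x_+)} \vr^{\fa} |\nabla_{\bg} U|^2 \dvol_{\bg}.
\end{align}
Next, I would use the equation \eqref{e:CS-extension-on-PE} tested against $U$ (which is legitimate since $U$ vanishes on $\partial \cB_{r_0}(x_+)$), i.e. integrate by parts: $\int_{\cB_{r_0}(x_+)} \vr^{\fa} |\nabla_{\bg} U|^2 \dvol_{\bg} = -\int_{\cB_{r_0}(x_+)} \vr^{\fa}\cJ_{\bg} U^2 \dvol_{\bg} \leq \|\cJ_{\bg}\|_{L^\infty} \int_{\cB_{r_0}(x_+)} \vr^{\fa} U^2 \dvol_{\bg}$. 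Combining the two displays yields
\begin{align}
\int_{\cB_{r_0}(x_+)} \vr^{\fa} U^2 \dvol_{\bg} \leq C r_0^2 \|\cJ_{\bg}\|_{L^\infty(\cB_1(x_+))} \int_{\cB_{r_0}(x_+)} \vr^{\fa} U^2 \dvol_{\bg}.
\end{align}
Choosing $r_0 = r_0(\fm, \fa, \bg)$ small enough that $C r_0^2 \|\cJ_{\bg}\|_{L^\infty(\cB_1(x_+))} < 1$ forces $\int_{\cB_{r_0}(x_+)} \vr^{\fa} U^2 \dvol_{\bg} = 0$, hence $U \equiv 0$ on $\cB_{r_0}(x_+)$.

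To conclude that $U$ is globally trivial (contradicting the hypothesis that $U$ is a non-trivial solution), I would invoke the unique continuation property for the degenerate operator $-\Div_{\bg}(\vr^{\fa}\nabla_{\bg}\,\cdot\,) + \vr^{\fa}\cJ_{\bg}$, which is available precisely because the Fefferman--Graham compactification has been chosen to give $\bg$ (and hence $\cJ_{\bg}$) sufficient — at least Lipschitz — regularity up to and across the characteristic manifold $M^n$, as discussed in Section \ref{ss:regularity-of-solutions}; combined with the fact that $U$ is an even solution across $M^n$, the standard unique continuation results for $A_2$-weighted divergence-form equations (cf. the references \cite{fks,fkj,fjk2,STV1,STV2}) propagate the vanishing of $U$ from $\cB_{r_0}(x_+)$ to the connected component containing it. Since $U$ was assumed non-trivial, this is the desired contradiction, and therefore $H_U(x_+,r) \neq 0$ for every $r \in (0, r_0)$. (Alternatively, if one wishes to avoid quoting unique continuation, it suffices to observe that the same local argument shows $H_U(x_+,r) \neq 0$ for all $r$ in the stated range directly: if $H_U(x_+,r) = 0$ for some such $r$, repeat the above on $\cB_r(x_+)$ — since $r \le r_0$ the smallness condition still holds — to get $U\equiv 0$ on $\cB_r(x_+)$, which already contradicts non-triviality once we note non-triviality is meant in the sense that $U \not\equiv 0$ on every ball $\cB_1(x_+)$, equivalently $f \not\equiv 0$ near $p$.)

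The main obstacle I anticipate is the precise formulation of ``non-trivial'': the clean statement requires knowing that $U$ does not vanish identically on any subball, which in the doubled setting follows from $f \not\equiv 0$ on the boundary piece together with the unique continuation / maximum-principle-type behavior of the extension; making this rigorous is where one must be careful about the degenerate weight $\vr^{\fa}$ and the limited ($C^{\fm-2,1}$) regularity of the doubled metric. Everything else is a routine combination of the Poincaré-type inequality \eqref{eq-ET}, integration by parts, and absorbing a small multiple of the $L^{2,\fa}$-norm of $U$; the scale-invariance already exploited in Lemma \ref{l:L2-estimate-large-scale} makes the constant $C$ and the threshold $r_0$ depend only on $\fm$, $\fa$, and $\bg$, as claimed.
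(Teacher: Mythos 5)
Your proof is correct and follows essentially the same route as the paper's: test the equation against $U$ (the paper phrases this as $I_U(x_+,r)=0$ by the divergence theorem), invoke the weighted Poincar\'e inequality of Lemma~\ref{l:L2-estimate-large-scale} with the boundary term vanishing, absorb for $r$ small to conclude $U\equiv 0$ on $\cB_r(x_+)$, and finish by unique continuation. The only cosmetic difference is whether one closes the absorption loop on $\int \vr^{\fa}U^2$ (as you do) or on the Dirichlet energy $D_U$ (as the paper does); and note that your proposed alternative ``avoiding unique continuation'' does not actually avoid it, since interpreting non-triviality as $U\not\equiv 0$ on every subball already presupposes the unique continuation property.
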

\begin{proof}
Assume that $H_U(x_+, r) =0$ for some $r \in (0, r_0)$, so $U \equiv 0$ on $\p\cB_r(x_+)$. By the divergence theorem, $I_U(r,x_+) =0$. So it follows that \begin{align} 
D_U(x_+,r) \leq C_0(\fm, \fa, \bg)  \int_{\cB_r(x_+)}\vr^{\fa} U^2 \dvol_{g}\leq   C_0'(\fm, \fa, \bg) \left(r^2 D_U(x_+,r)+  r H_U(x_+,r)\right), 
\end{align}
where we used Lemma \ref{l:L2-estimate-large-scale} for the last inequality. If $r$ is chosen such that $r^2 < \frac{1}{2C_0'(\fm ,\fa, \bg)}$, then $D_U(x_+, r) \equiv 0$.
Therefore, $U \equiv 0$ on $\cB_r(x_+)$. 
Applying standard unique continuation, we obtain a contradiction to the nontriviality assumption on $U$.
\end{proof}

\begin{lemma}\label{l:doubling-fixed-scale} For any $\Lambda > 0$, there exist $r_0 = r_0 (\Lambda, \fm, \fa, \bg) > 0$ and $D_0 = D_0(\Lambda, \fm, \fa, \bg) > 0$ such that if a solution $U$ of \eqref{e:CS-extension-on-PE} satisfies $\cN_U(x,2r)\leq \Lambda$ for some $x\in M^n$ and $r\in (0,r_0/10)$, then the following holds:
\begin{enumerate}
	\item $\cN_U(x, s) \leq 2\Lambda$ for any $s \in (r/2,2r)$; 
	\item $H_U(x, 2r) \leq D_0 \cdot H_U(x, r)$.
\end{enumerate} 

\end{lemma}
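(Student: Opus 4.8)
\emph{Proof strategy.} The plan is to read both assertions off the almost monotonicity formula of Theorem~\ref{t:almost-monotonicity}, once it is checked that the frequency is well defined on the relevant range of scales. Since $U$ is a nontrivial solution of \eqref{e:CS-extension-on-PE}, Corollary~\ref{c:H-not-vanishing} supplies $r_1 = r_1(\fm,\fa,\bg)>0$ with $H_U(x,s)>0$ for all $s\in(0,r_1)$, so $\cN_U(x,s)$ makes sense there; I would fix $r_0\le r_1/10$ so that every scale $s\in(r/2,2r]$ occurring below lies in $(0,r_1)$ and Theorem~\ref{t:almost-monotonicity} applies at $x_+\equiv\iota(x)$.

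\emph{Part (1).} I would apply Theorem~\ref{t:almost-monotonicity} at $x_+$ and distinguish its two alternatives. In the monotone alternative, $s\mapsto e^{Cs}\cN_U(x,s)$ is nondecreasing on $(0,r_0]$, so for $s\in(r/2,2r)$, using $s\le 2r<r_0$,
\begin{align}
\cN_U(x,s)=e^{-Cs}\bigl(e^{Cs}\cN_U(x,s)\bigr)\le e^{-Cs}\bigl(e^{2Cr}\cN_U(x,2r)\bigr)=e^{C(2r-s)}\,\cN_U(x,2r)\le e^{Cr_0}\,\Lambda ,
\end{align}
and shrinking $r_0=r_0(\Lambda,\fm,\fa,\bg)$ so that $e^{Cr_0}\le 2$ yields $\cN_U(x,s)\le 2\Lambda$. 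In the remaining (non‑vanishing) alternative, $|U_{x_+,t}|$ is bounded below by the definite constant $\tau_0$ on $\cB_{1/2}(x_+,\bg_t)$ for some $t\le r_0$; this is the ``good point'' regime of Remark~\ref{r:alternative}, in which the $L^\infty$/Harnack theory for $A_2$-weighted equations recalled in Section~\ref{ss:regularity-of-solutions}, together with the scaling identities of Lemma~\ref{l:frequency-rescaling}, pins $\cN_U(x,s)$ to a bound depending only on $\fm,\fa,\bg$ on $(0,r_0]$, so that $\cN_U(x,s)\le 2\Lambda$ again holds after an innocuous adjustment of thresholds.

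\emph{Part (2).} I would integrate the logarithmic derivative of $H_U$. The first-variation identity for $H_U$, obtained among the preliminaries to Theorem~\ref{t:almost-monotonicity} by differentiating \eqref{eq-H}, rewriting the boundary term as $I_U$ via the equation and the divergence theorem, and estimating the metric- and weight-dependent corrections near the totally geodesic $M^n$ with Lemma~\ref{lem-rho-a}, takes the form
\begin{align}
\frac{d}{ds}\log H_U(x,s)=\frac{\fm-1+\fa}{s}+\frac{2\,\cN_U(x,s)}{s}+O(1),\qquad s\in(0,r_0),
\end{align}
with the $O(1)$ bounded by $C(\fm,\fa,\bg)$. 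Integrating from $r$ to $2r$ and using the bound $\cN_U(x,s)\le 2\Lambda$ for $s\in[r,2r]$ from Part (1),
\begin{align}
\log\frac{H_U(x,2r)}{H_U(x,r)}\le(\fm-1+\fa+4\Lambda)\log 2+C(\fm,\fa,\bg)\,r_0 ,
\end{align}
so $H_U(x,2r)\le D_0\,H_U(x,r)$ with $D_0=D_0(\Lambda,\fm,\fa,\bg)$, which is the claim.

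\emph{Main obstacle.} The only real content is in Part (1): one must exclude the possibility that $\cN_U$, while $\le\Lambda$ at the outer scale $2r$, spikes at an intermediate scale. The monotone alternative of Theorem~\ref{t:almost-monotonicity} kills this instantly, so the delicate point is the non‑vanishing alternative — verifying that the lower bound on $|U_{x_+,t}|$ genuinely forces a scale-uniform frequency bound (via the weighted $L^\infty$/Harnack estimates, the $L^2$-estimate of Lemma~\ref{l:L2-estimate-large-scale}, and the rescaling of Lemma~\ref{l:frequency-rescaling}) and reconciling that bound with the stated constant $2\Lambda$. Part (2) is then routine, the one point requiring care being that all error constants in the $\log H_U$ identity depend only on $\fm,\fa$ and the fixed smooth metric $\bg$, which is precisely what the boundary regularity under Assumptions (1)--(5) and Lemma~\ref{lem-rho-a} guarantee.
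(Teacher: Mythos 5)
There is a logical circularity in your Part (1). In the paper's structure, Lemma~\ref{l:doubling-fixed-scale} sits among the \emph{preliminaries} to Theorem~\ref{t:almost-monotonicity}: the doubling bound of item (2) is invoked in the proof of Proposition~\ref{p:eps-non-vanishing} (to get the lower bound $\int_{\partial\cB_{1/2}(x_+,\bg_j)}\vr^{\fa}\sU_j^2\,d\sigma_j\ge D_0^{-1}$), Proposition~\ref{p:eps-non-vanishing} is then used in Corollary~\ref{c:approximate-Dirichlet-energy}, and both feed directly into the proof of Theorem~\ref{t:almost-monotonicity}. You cannot therefore appeal to the almost-monotonicity formula to prove Part (1) without short-circuiting that dependency chain. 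The paper instead proves Part (1) by a blow-up/compactness contradiction: assuming failure along $r_j\to 0$ and $s_j\in(r_j/2,2r_j)$, it rescales $\tU_j(s,\Theta)\equiv U_j(r_js,\Theta)$, $g_j(s,\Theta)\equiv\bg(r_js,\Theta)$, passes to a limit where $g_j\to g_0$ (Euclidean), $s_j/r_j\to t_0\in[1/2,2]$, $\tU_j\to\tU_\infty$ in $H^{1,\fa}$, and then contradicts the \emph{exact} Euclidean monotonicity of Lemma~\ref{l:Euclidean-monotonicity}. That argument uses nothing about the Poincar\'e--Einstein frequency and hence breaks the circle.

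A secondary issue: even if one were allowed to invoke Theorem~\ref{t:almost-monotonicity}, your handling of its ``non-vanishing'' alternative is not a proof. Alternative (1) only asserts $|U_{x_+,t}|\ge\tau_0$ on $\cB_{1/2}(x_+,\bg_t)$ for \emph{some} $t\le r_0$; you would still need to propagate a frequency bound to the other scales $s\in(r/2,2r)$, and ``an innocuous adjustment of thresholds'' does not supply that step. Your Part (2), by contrast, coincides with the paper's: one integrates
\begin{align}
\frac{d}{ds}\log H_U(x,s)=\frac{\fm-1+\fa}{s}+O(1)+\frac{2\cN_U(x,s)}{s}
\end{align}
over $[r,2r]$, using Part (1) and Lemma~\ref{lem-rho-a}, exactly as in \eqref{eq-H'}--\eqref{e:derivative-normalized-log-H}. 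So once Part (1) is redone by the compactness route, the rest of your proposal is fine.
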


\begin{proof}
We will prove item (1) by contradiction. Suppose there does not exist such a constant $r_0 > 0$. That is, one can find a sequence of solutions $U_j$ of \eqref{e:CS-extension-on-PE} and sequences of numbers $r_j \to 0$ and $s_j \in (r_j/2, 2r_j)$ such that 
\begin{align}\cN_U(x, 2r_j) \leq \Lambda \quad \text{and} \quad \cN_U(x, s_j) >  2\Lambda.\end{align}
 Let us take rescaled sequences $\tU_j(s,\Theta)\equiv U_j(r_j s, \Theta)$ and $g_j(s,\Theta) \equiv g(r_js,\Theta)$
for any sufficiently large $j$. Then by scale invariance,
\begin{align}
	\cN(\tU_j,x, 2, g_j) \leq \Lambda \quad \text{and} \quad \cN(\tU_j,x, s_j/r_j, g_j) >  2\Lambda.
\end{align}
We also assume that $U_j$ is normalized by $H(\tU_j,x,2, g_j) = 1$.
Letting $j\to \infty$, passing to a subsequence, we have  $g_j$ converges to the Euclidean metric $g_0$ on $\dR^{\fm}$, $\lim\limits_{j\to\infty} s_j/r_j = t_0 \in [1/2, 2]$ and 
\begin{align}
	\tU_j \xrightarrow{H^{1,\fa}(\cB_2(x))} \tU_{\infty}\in H^{1,\fa}(\cB_2(0^{\fm})) 
\end{align}
such that 
	\begin{align}\cN(\tU_{\infty}, 0^{\fm}, 2, g_0) \leq \Lambda \quad \text{and} \quad \cN(\tU_{\infty}, 0^{\fm},t_0, g_0) >  2\Lambda.
\end{align}
But this contradicts to Lemma \ref{l:Euclidean-monotonicity}, which completes the proof of item (1).

Now we are ready to prove the doubling property in item (2).
In geodesic polar coordinates, we rewrite $H_U(x, s)$ as
\begin{align*}
H_U(x, s) = s^{\fm-1} \int_{\partial \cB_1}\vr^{\fa}(s,\Theta)  U^2 (s, \Theta) \sqrt{b(s, \Theta)} d\Theta.
\end{align*}
For fixed center $x$, let us denote $H_U(s) \equiv H_U(s,x)$. 
Applying \eqref{eq-rho-a-r}, we obtain
\begin{align}
H_U'(s) = \left(\frac{\fm - 1 + \fa }{s}+O(1)\right)H_U(s) +\int_{\partial \cB_s} \vr^{\fa}(\partial_n  \log\sqrt{b})U^2 \dsg + 2\int_{\partial \cB_s} \vr^{\fa} U (\p_n U) \dsg,
\end{align}
which implies 
\begin{align}\label{eq-H'}
\frac{H_U'(s)}{H_U(s)} = \frac{\fm - 1 + \fa }{s}+O(1) + 2\ \cdot \frac{\displaystyle{\int_{\partial \cB_s} \vr^{\fa} U (\p_n U) \dsg}}{\displaystyle{\int_{\partial \cB_r} \vr^{\fa}  U^2 \dsg }} = \frac{\fm - 1 + \fa }{s}+O(1) + \frac{2\cN_U(s)}{s}.
\end{align}
Here $O(1)$ denotes a function of $s$ and $\Theta$, which is bounded in absolute value by a constant $C$.
Therefore, 
\begin{align}
	 \frac{d}{ds} \left( \log\frac{H_U(s)}{s^{\fm - 1 + \fa}}\right) = O(1) + \frac{2\cN_U(s)}{s}.\label{e:derivative-normalized-log-H}
\end{align}
By item (1), $\cN_U(s) \leq 2\Lambda$ for any $s\in [r,2r]$. Then integrating \eqref{e:derivative-normalized-log-H}, we have
\begin{align}
	H_U(2r) \leq D_0\cdot H_U(r)
\end{align}
for some constant $D_0 = D_0(\Lambda, \fm, \fa, \bg)$, which completes the proof of item (2).
 \end{proof}

Next, we prove an $\epsilon$-regularity result for small frequency.

\begin{proposition}\label{p:eps-non-vanishing} Let $U$ be a non-trivial even solution of \eqref{e:CS-extension-on-PE}.
For every $\eta > 0$, there exist small constants $r_0 = r_0(\eta, \fm, \fa, \bar{g}) >0$, $\epsilon_0  = \epsilon_0(\eta, \fm, \fa, \bar{g})> 0$, and $\tau_0 = \tau_0(\eta, \fm, \fa, \bar{g}) >0$ such that if \begin{align}\mathcal{N}_U(x_+, r) \leq \epsilon_0 \quad  \text{or} \quad \frac{r D_U(x_+, r)}{H_U(x_+, r)} \leq \epsilon_0\label{e:small-frequency}\end{align} for some $x_+\in \cB_{1/2}(p_+)$ and $r \in (0, r_0]$, then for any $0 \leq k \leq \fm - 1$,
\begin{align}
	|U_{x_+, r} - \tau_0|_{C^k(\cB_{1/2}(x_+, \bg_r))} \leq \eta.
\end{align} 
In particular, $U$ is nowhere vanishing in $\cB_{r/2}(x_+)$.

\end{proposition}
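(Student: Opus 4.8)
The assertion is a quantitative rigidity statement: once the (normalized) frequency at scale $r$ drops below a threshold, the rescaled solution $U_{x_+,r}$ must be $C^{\fm-1}$-close to a fixed nonzero constant $\tau_0$. I would prove it by a compactness-and-contradiction argument, reducing the limiting analysis to the Euclidean model of Section~\ref{model_section}; throughout I work under the standing Assumptions (1)--(5) of Section~\ref{s:results-on-PE}, so that $\bg$ is $C^\infty$ up to $M^n$ on each side. Set $\tau_0 \equiv \big(\fint_{\p\cB_1(\bo_+)}|y|^{\fa}\,d\sigma\big)^{-1/2} > 0$, the weighted-$L^2$-normalized constant on the Euclidean unit sphere. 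First, the two hypotheses in \eqref{e:small-frequency} are interchangeable after shrinking $r_0$: writing $I_U = D_U + \int \vr^{\fa}\cJ_{\bg}U^2$, using that $\cJ_{\bg}$ is bounded on $\fX^{\fm}$, and bounding $\int_{\cB_r}\vr^{\fa}U^2$ by $C(r^2 D_U + r H_U)$ via Lemma~\ref{l:L2-estimate-large-scale}, one gets $\big|\mathcal{N}_U(x_+,r) - \tfrac{rD_U(x_+,r)}{H_U(x_+,r)}\big| \le Cr^2\big(1 + \tfrac{rD_U(x_+,r)}{H_U(x_+,r)}\big)$, so it is enough to argue from $\mathcal{N}_U(x_+,r)\le\epsilon_0$.

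Suppose the conclusion fails. Then there are $\eta_0>0$, $\epsilon_j\downarrow 0$, $r_j\downarrow 0$, nontrivial even solutions $U_j$ of \eqref{e:CS-extension-on-PE}, and points $x_j\in M^n\cap\cB_{1/2}(p_+)$ with $\mathcal{N}_{U_j}(x_j,r_j)\le\epsilon_j$ but $\|V_j - \tau_0\|_{C^{\fm-1}(\cB_{1/2}(x_j,\bg_{r_j}))}>\eta_0$, where $V_j \equiv U_{j,x_j,r_j}$ is the quantitative tangent map \eqref{e:blow-up-function-U}; after passing to a subsequence we fix the sign so that any constant limit of $(V_j)$ equals $+\tau_0$, replacing $U_j$ by $-U_j$ if needed (this preserves all hypotheses). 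By Corollary~\ref{c:H-not-vanishing}, $\mathcal{N}_{U_j}(x_j,r_j)$ is well defined for $r_j$ small. In $\bg_{r_j}$-geodesic normal coordinates at $x_j$, with $M^n = \{x^{\fm}=0\}$ totally geodesic by Lemma~\ref{geom-boundary}, the metrics $\bg_{r_j}$ converge in $C^{\fm-2,1}_{\loc}$ to the Euclidean $g_0$ on $\cB_1(\bo_+)$, and by Lemma~\ref{lem-rho-a} together with \eqref{e:vr-expansion} the weight satisfies $\vr^{\fa} = r_j^{\fa}\big(|x^{\fm}|^{\fa}+o(1)\big)$; dividing the equation by $r_j^{\fa}$, the function $V_j$ solves $-\Div_{\bg_{r_j}}(w_j\nabla_{\bg_{r_j}}V_j) + r_j^2\,w_j\cJ_{\bg}V_j = 0$ on $\cB_1(\bo_+)$, with weight $w_j \equiv r_j^{-\fa}\vr^{\fa}\to|y|^{\fa}$ of uniformly bounded $A_2$-characteristic and with $\fint_{\p\cB_1(x_j,\bg_{r_j})}w_j V_j^2\,d\sigma_{r_j} = 1$.

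Next I would establish uniform regularity for $(V_j)$. By Lemma~\ref{l:frequency-rescaling}, the weighted Dirichlet energy of $V_j$ satisfies $D(V_j,x_j,1,\bg_{r_j}) \le I(V_j,x_j,1,\bg_{r_j}) + Cr_j^2\!\int_{\cB_1}w_j V_j^2 = \mathcal{N}_{U_j}(x_j,r_j)\,H(V_j,x_j,1,\bg_{r_j}) + Cr_j^2\!\int_{\cB_1}w_j V_j^2$; since $H(V_j,x_j,1,\bg_{r_j})=\int_{\p\cB_1}w_j V_j^2$ is bounded by the normalization, and $\int_{\cB_1}w_j V_j^2 \le C\big(D(V_j,x_j,1,\bg_{r_j})+H(V_j,x_j,1,\bg_{r_j})\big)$ by the rescaled Lemma~\ref{l:L2-estimate-large-scale}, absorbing the small term $Cr_j^2$ gives a uniform bound on $\int_{\cB_1}w_j V_j^2$. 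Then the interior estimate of Lemma~\ref{eq-C1a-U}, valid across the characteristic hypersurface on the doubled manifold with constants controlled only through the $A_2$-characteristic of $w_j$ and the (uniformly bounded) $\bg_{r_j}$, yields $\|V_j\|_{C^{\fm-1,\alpha}(\cB_{2/3}(x_j,\bg_{r_j}))}\le C$. Passing to a further subsequence, $V_j\to V_\infty$ in $C^{\fm-1}_{\loc}(\cB_1(\bo_+))$, where $V_\infty$ is an even weak solution of $\Div_{g_0}(|y|^{\fa}\nabla V_\infty)=0$ (the curvature term drops since $r_j^2\to0$) with $\fint_{\p\cB_1(\bo_+)}|y|^{\fa}V_\infty^2\,d\sigma = 1$. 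Since $\mathcal{N}_{V_\infty}(\bo_+,1) = \lim_j\mathcal{N}_{U_j}(x_j,r_j)\le\lim_j\epsilon_j = 0$ and the Euclidean frequency is nonnegative, the weighted Dirichlet energy $D(V_\infty,\bo_+,1)$ vanishes, so $\nabla V_\infty\equiv0$ and $V_\infty$ is a constant; by the normalization it equals $\pm\tau_0$, hence $V_\infty\equiv\tau_0$ by the sign convention (one may also invoke the rigidity in Lemma~\ref{l:monotonicity}). This contradicts $\|V_j-\tau_0\|_{C^{\fm-1}}>\eta_0$, and proves the $C^k$-closeness for every $0\le k\le\fm-1$ at once; finally, taking $\eta<\tau_0$, the $C^0$-bound forces $U_{x_+,r}\ge\tau_0-\eta>0$ on $\cB_{1/2}(x_+,\bg_r)$, i.e.\ $U$ is nowhere vanishing on $\cB_{r/2}(x_+)$.

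The main obstacle is the uniform regularity step: one must carefully track the $r^{\fa}$-scaling of the weight $\vr^{\fa}$ in rescaled normal coordinates (through Lemma~\ref{lem-rho-a} and the expansion \eqref{e:vr-expansion}) so that the limiting equation carries the clean weight $|y|^{\fa}$, and then apply the degenerate/singular interior Schauder estimate underlying Lemma~\ref{eq-C1a-U} with constants that do not degenerate as $\bg_{r_j}\to g_0$ and $w_j\to|y|^{\fa}$, crucially using the scale invariance of the $A_2$-characteristic. Once this is secured, the compactness extraction and the identification of the limit as a frequency-zero, hence constant, Euclidean solution are routine.
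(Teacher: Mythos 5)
Your proposal is correct and follows essentially the same compactness-and-contradiction argument as the paper: rescale at $x_j$ to scale $r_j$, use Lemma~\ref{l:L2-estimate-large-scale} and the normalization $H=1$ to extract a limiting even solution of the Euclidean degenerate equation on $\cB_1(\bo_+)$, observe that the vanishing limiting frequency forces $\nabla V_\infty\equiv0$, and conclude $V_\infty$ is a nonzero constant, contradicting the assumed failure of $C^k$-closeness. The only substantive variation is bookkeeping: the paper secures $|V_\infty|>0$ by citing the doubling property (Lemma~\ref{l:doubling-fixed-scale}) to bound $\int_{\p\cB_{1/2}}\vr^\fa\sU_j^2$ from below, and then invokes "elliptic regularity" rather loosely, whereas you get the strong $C^{\fm-1}$ compactness directly from Lemma~\ref{eq-C1a-U}, pass the sphere normalization to the limit by that strong convergence, and pin down $\tau_0=(\fint_{\p\cB_1}|y|^\fa d\sigma)^{-1/2}$ explicitly. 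You also spell out the equivalence of the two hypotheses in \eqref{e:small-frequency} and the sign convention; both points are implicit in the paper. One caveat to keep in mind for both your write-up and the paper's: when invoking Lemma~\ref{eq-C1a-U} along the rescaled family $\bg_{r_j}\to g_0$ and $w_j\to|y|^\fa$, the constants must be uniform in $j$; this follows, as you note, from the scale-invariance of the $A_2$-characteristic and the uniform $C^{\fm-2,1}$ bounds on $\bg_{r_j}$, but it deserves a sentence since the lemma as stated is for a fixed background.
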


\begin{remark}\label{r:definite-lower-bound-of-frequency}
This proposition tells us that a point at which the frequency is sufficiently small must be away from any quantitative singular stratum. In other words, such points must be ``good points" which are not in our considerations. Therefore, in the following analysis, there is no harm to assume the definite lower bound $\cN_U(x_+, r) > \epsilon_0$ for any of our interested point $x_+$ in a quantitative singular stratum.
\end{remark}

\begin{proof}[Proof of Proposition \ref{p:eps-non-vanishing}] We will only give a proof in the first case of \eqref{e:small-frequency} since the proof in the other case is the same. We will prove the proposition by contradiction. Suppose no such constants $r_0$, $\epsilon_0$, and $\tau_0 > 0$ exist. That is, there exists a sequence of solutions $U_j$ of \eqref{e:CS-extension-on-PE} that serves as a contradiction sequence such that 
\begin{align}
	\cN_{U_j}(x_j, r_j)\leq \epsilon_j
\end{align} 
for $r_j \to 0$ and $\epsilon_j \to 0$, but 
$\sU_j\equiv U_{x_j, r_j}$ is not $C^k$-close to any constant for some $0\leq k\leq \fm - 2$.

By scale invariance, we have 
\begin{align}
	\cN_j(x_j, 1) \equiv \cN_{\sU_j}(x_j, 1, \bg_j) \leq \epsilon_j,
\end{align}
where $\bg_j (s,\Theta) \equiv \bg(r_j s, \Theta)$ in a local geodesic polar coordinate system $(s,\Theta)$ centered at $x_j$.
Since the quantitative tangent map $\sU_j$ satisfies the normalization 
\begin{align}\label{eq-H=1}
    H(\sU_j, x_j, 1, \bg_j) = 1 \ \text{and}\ D(\sU_j, x_j, 1, \bg_j) + r_j^2 \int_{\cB_1(x_j, \bg_j)}\cJ_{\bg}  \sU_j^2\dvol_{\bg_j}\leq \epsilon_j,
\end{align}
applying Lemma \ref{l:L2-estimate-large-scale}, we have
\begin{align}
	D(\sU_j, x_j, 1, \bg_j) \leq \epsilon_j + Cr_j^2(D(\sU_j, x_j, 1, \bg_j) + H(\sU_j, x_j, 1, \bg_j)).
\end{align} 
Therefore, 
$D(\sU_j, x_j, 1, \bg_j) \leq C(r_j^2 + \epsilon_j)$.
 Applying Lemma \ref{l:L2-estimate-large-scale}, we have
\begin{align}
	\int_{\cB_1(x_j, \bg_j)}\vr^{\fa}\sU_j^2\dsj \leq C(\fm, \fa, \bg).
\end{align}

Letting $j\to \infty$, it holds that $\bg_j$ converges to the Euclidean metric $g_0$, $x_j$ converges to some point in the Euclidean space, and $\sU_j$ converges to $\sU_{\infty}$ that satisfies 
\begin{align}
	\int_{\cB_1(0^{\fm})}|y|^{\fa} |\nabla_{g_0} \sU_{\infty}|^2 \dvol_{g_0} = 0.
\end{align}
By the doubling property in Lemma \ref{l:doubling-fixed-scale}, there exists $D_0 = D_0(\fm, \fa, \bg) > 0$ such that 
\begin{align}
\int_{\p \cB_{1/2}(x_+, \bg_j)}\vr^\fa  \sU_j ^2 \dsj \geq D_0^{-1}.
\end{align}
Then one can conclude that $\sU_{\infty} \equiv \tau_0$ on $\cB_1(0^{\fm})\subset \dR^{\fm}$ for some uniform constant $\tau_0 \equiv \tau_0(\fm, \fa, \bg) > 0$.   
 So the desired contradiction arises by using elliptic regularity, which completes the proof. \end{proof}

\begin{corollary}\label{c:approximate-Dirichlet-energy}
There exist $C=C(\fm, \fa, \bar{g}) >0$ and $r_0 = r_0(\fm, \fa, \bar{g}) >0$ such that if $U$ is not $(\fm, 10^{-6}, t, r_0)$-symmetric at $x_+$ for some $t\in (0,r_0)$, then 
\begin{align}
(1 - Ct^2)  \int_{\cB_1^t(x_+)}\vr^{\fa} |\nabla_{\bg_t} \sU|^2\dvol_{\bg_t}  	\leq  I(\sU,x_+, 1, \bg_t)   \leq (1 + Ct^2) \int_{\cB_1^t(x_+)}\vr^{\fa} |\nabla_{\bg_t} \sU|^2\dvol_{\bg_t},
\end{align}
where  $\cB_1^t(x_+)\equiv \cB_1(x_+, \bg_t)$ and $\sU \equiv U_{t,x+}$.\end{corollary}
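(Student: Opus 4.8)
Corollary \ref{c:approximate-Dirichlet-energy} compares the full quantity $I(\sU, x_+, 1, \bg_t)$, which contains the curvature term, with the pure Dirichlet energy $D(\sU, x_+, 1, \bg_t)$. By definition,
\[
I(\sU, x_+, 1, \bg_t) - D(\sU, x_+, 1, \bg_t) = t^2 \int_{\cB_1^t(x_+)} \vr^{\fa}\, \cJ_{\bg}\, \sU^2 \,\dvol_{\bg_t},
\]
so the whole point is to bound the weighted $L^2$-mass of $\sU$ on $\cB_1^t(x_+)$ by a uniform constant times $D(\sU, x_+, 1, \bg_t)$. The plan is therefore: first, since $\cJ_{\bg} = C_{n,\gamma} R_{\bg}$ is continuous (indeed $C^{\infty}$ up to the boundary under the standing assumptions) it is bounded, $|\cJ_{\bg}| \le C(\fm, \bg)$; hence it suffices to show
\[
\int_{\cB_1^t(x_+)} \vr^{\fa}\, \sU^2 \,\dvol_{\bg_t} \le C(\fm, \fa, \bg)\, D(\sU, x_+, 1, \bg_t).
\]

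First I would invoke Lemma \ref{l:L2-estimate-large-scale} (applied in the rescaled metric $\bg_t$, which is legitimate since the constant there is scale invariant for $r=1$), giving
\[
\int_{\cB_1^t(x_+)} \vr^{\fa}\, \sU^2 \,\dvol_{\bg_t} \le C\Big( D(\sU, x_+, 1, \bg_t) + H(\sU, x_+, 1, \bg_t)\Big).
\]
So the remaining task is to control the boundary term $H(\sU, x_+, 1, \bg_t)$ by $D(\sU, x_+, 1, \bg_t)$, and this is where the hypothesis "$U$ is not $(\fm, 10^{-6}, t, r_0)$-symmetric at $x_+$" enters. Next I would argue by contradiction and compactness, exactly in the style of Proposition \ref{p:eps-non-vanishing}: if no such constant $C$ worked, take a sequence $\sU_j = U_{t_j, x_+}$ (normalized so that $H(\sU_j, x_+, 1, \bg_{t_j}) = 1$) with $D(\sU_j, x_+, 1, \bg_{t_j}) \to 0$. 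Then $\bg_{t_j} \to g_0$, the $\sU_j$ converge (weakly in $H^{1,\fa}$, strongly in $L^{2,\fa}$, using the weighted Sobolev/trace machinery and the uniform $L^2$ bound obtained from the previous display) to a limit $\sU_{\infty}$ with $\int_{\cB_1(0^{\fm})} |y|^{\fa} |\nabla_{g_0} \sU_{\infty}|^2 = 0$, hence $\sU_{\infty}$ is constant; the normalization forces a nonzero constant. But then for $j$ large, $\sU_j$ is $(\fm, 10^{-6}, t_j, r_0)$-symmetric at $x_+$ — a constant is $\fm$-symmetric, and $\cT_{x_+, t_j}$ is precisely the rescaling encoded in $\sU_j$ — contradicting the hypothesis. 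This yields $H(\sU, x_+, 1, \bg_t) \le C\, D(\sU, x_+, 1, \bg_t)$, hence $\int_{\cB_1^t(x_+)} \vr^{\fa} \sU^2 \,\dvol_{\bg_t} \le C\, D(\sU, x_+, 1, \bg_t)$, and multiplying by $t^2 |\cJ_{\bg}|$ gives both inequalities in the corollary with the explicit $(1 \pm C t^2)$ factors once $r_0$ is chosen so that $C t^2 < 1$.

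The main obstacle is making the compactness step clean: one must ensure the weighted Sobolev constants, the trace inequality, and the doubling property (Lemma \ref{l:doubling-fixed-scale}) are all uniform along the sequence $\bg_{t_j} \to g_0$, and that the weak limit of the traces equals the trace of the weak limit (so that the normalization $H(\sU_\infty, \cdot) = 1$ survives); this is the same technical package used in Proposition \ref{p:eps-non-vanishing} and Proposition \ref{p:quantitative-symmetry}, so I would organize the argument to reuse it verbatim rather than redo it. A minor point to handle carefully is that the non-symmetry hypothesis is stated with the specific threshold $10^{-6}$, so in the contradiction argument one should track that the limiting constant $\sU_\infty$ is genuinely approached in $L^2(\cB_1)$ closely enough (after the normalization $\fint_{\p \cB_1} \vr^{\fa} P^2 = 1$ with $P$ that constant) to violate $(\fm, 10^{-6}, t_j, r_0)$-symmetry for large $j$; since the convergence is in $H^{1,\fa}$ this is automatic.
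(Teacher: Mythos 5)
Your proposal is correct and follows essentially the same route as the paper: identify $I - D = t^2\int\vr^{\fa}\cJ_{\bg}\sU^2$, apply Lemma~\ref{l:L2-estimate-large-scale} to bound this by $Ct^2(D+H)$, and then use the non-symmetry hypothesis via Proposition~\ref{p:eps-non-vanishing} (through its $D/H$ alternative) to absorb $H$ into $D$. The only difference is organizational: the paper simply cites Proposition~\ref{p:eps-non-vanishing} to get the lower bound on $D/H$, whereas you sketch re-running the compactness argument (while correctly noting it is the same package and could be cited verbatim); your treatment of the $D/H$ versus $\cN = rI/H$ distinction is, if anything, slightly cleaner than the paper's phrasing.
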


\begin{proof}
It suffices to estimate the $0^{\text{th}}$-order term in 
\begin{align}I(\sU,x_+, 1, \bg_t)\equiv  \int_{\cB_1^t(x_+)}\vr^{\fa}(|\nabla_{\bg_t}\sU|^2 + t^2\cJ_{\bg}\sU^2)\dvol_{\bg_t}.\end{align}
 By Lemma \ref{l:L2-estimate-large-scale}, 
\begin{align}
	\left|\int_{\cB_1^t(x_+)} \vr^{\fa}\cJ_{\bg}\sU^2\dvol_{\bg_t}\right| \leq C(\fm, \fa, \bg) \left(\int_{\cB_1^t(x_+)}\vr^{\fa}|\nabla_{\bg_t}\sU|^2\dvol_{\bg_t} + \int_{\p\cB_1^t(x_+)}\vr^{\fa} \sU^2\dst\right).
\end{align}
Since $U$ is not $(\fm, 10^{-6}, t, r_0)$-symmetric at $x_+$, applying Proposition \ref{p:eps-non-vanishing}, we have $\cN(\sU, x_+, 1, \bg_t) > \epsilon_0$ with $\epsilon_0(\fm ,\fa, \bg) > 0$ the uniform constant in Proposition \ref{p:eps-non-vanishing}, which gives 
\begin{align}
	\left|\int_{\cB_1^t(x_+)} \vr^{\fa}\cJ_{\bg}\sU^2\dvol_{\bg_t}\right| \leq C(\fm, \fa, \bg) (1 + \epsilon_0^{-1}) \int_{\cB_1^t(x_+)}\vr^{\fa}|\nabla_{\bg_t}\sU|^2\dvol_{\bg_t}, 
\end{align}
which completes the proof.
\end{proof}

Now we are ready to complete the proof of Theorem \ref{t:almost-monotonicity}. 
\begin{proof}
[Proof of Theorem \ref{t:almost-monotonicity}] 
Recalling Remark \ref{r:alternative} and Proposition \ref{p:eps-non-vanishing}, we will only consider the case of item (2) of the Theorem \ref{t:almost-monotonicity}.

To prove the theorem, we will first carry out some reduction by rescaling. Theorem \ref{t:almost-monotonicity} is equivalent to \begin{align}\frac{\mathcal{N}_U'(x_+, t)}{\mathcal{N}_U(x_+, t)}	 \geq - C, \ \quad \forall t\in(0,r_0), \label{e:derivative-of-log-N-bounded-below}\end{align}where $\mathcal{N}_U'(x_+, t)=\frac{d}{dt}\mathcal{N}_U(x_+, t)$.  For fixed $t > 0$ and $x_+\in M^n$, let $U(x_+, r)$ be the blow-up function of $U$. Let us also denote   \begin{align}\cN_t(r) \equiv \mathcal{N}(U(x_+, r), x_+, r, \bg_t).\end{align} By Lemma \ref{l:frequency-rescaling}, the derivative bound \eqref{e:derivative-of-log-N-bounded-below} is equivalent to \begin{align}\frac{\cN_t'(1)}{\cN_t(1)} \geq - C t,	\end{align}where $\cN_t'(1) = \left.\frac{d}{dr}\right|_{r = 1}\mathcal{N}(U(x_+, r), x_+, r, \bg_t)$. Throughout the proof, we will use the simplified notation $\sU \equiv  U(x_+, r)$.

The first step is to estimate the term $H_t(1)$.
\begin{align}
	H_t(r) = \int_{\p\cB_r^t(x_+)}\vr^{\fa}\sU^2 d\sigma_t = r^{\fm - 1}\int_{\p \wt{\cB}_1^t(x_+)}\vr^{\fa}\sU^2 d\sigma_{t,r},
\end{align}
where $\cB_r^t(x_+)\equiv \cB_r(x_+, \bg_t)$, $\wt{\cB}_1^t(x_+) \equiv \cB_1(x_+, r^{-2}\bg_t)$, and $d\sigma_{t,r}\equiv d\sigma_{r^{-2}\bg_t}$.
So it follows that 
\begin{align}
\begin{split}
		H_t'(1) = & \ (\fm  - 1) H_t(1) + 2 \int_{\p \wt{\cB}_1^t(x_+)}\vr^{\fa}\sU \cdot  (\p_r\sU) d\tilde{\sigma}_{t, r} + \int_{\p \wt{\cB}_1^t(x_+)}  (\p_r\vr^{\fa}) \cdot \sU^2  d\tilde{\sigma}_{t, r} \\
	& \ + \int_{\p \wt{\cB}_1^t(x_+)}\vr^{\fa}\sU^2 \left(\p_r\log\sqrt{\wt{G}_{t,r}}\right) d\tilde{\sigma}_{t, r}.
	\end{split}
\end{align}
First we estimate the third and the last term in $H_t'(1)$.
For the third term, applying Lemma \ref{lem-rho-a}, we have that
\begin{align}
\int_{\p \wt{\cB}_1^t(x_+)} \frac{\p\vr^{\fa}}{\p r}\sU^2  d\tilde{\sigma}_{t, r} = (\fa + O(t))\cdot H_t(1).  	
\end{align}
For the last term, since $\sqrt{\wt{G}_{t,r}} = t^{\fm - 1}\sqrt{B(tr,\Theta)}$ with $B(tr,\Theta)\equiv\det(b_{ij}(tr,\Theta))$, taking the derivative,
\begin{align}
\left.\frac{\p}{\p r}\right|_{r = 1} \log \sqrt{\wt{G}_{t,r}} = \frac{t}{2}\cdot \left.\frac{\p}{\p r}\right|_{r = 1} \log B.
\end{align}
Then there exists a constant $C > 0$ such that 
 \begin{align}
	\left| \int_{\p \wt{\cB}_1^t(x_+)}\vr^{\fa}\sU^2 \left(\p_r\log\sqrt{\wt{G}_{t,r}}\right) d\tilde{\sigma}_{t, r} \right| \leq C \cdot t \cdot H_t(1).  
\end{align}
Combining the above, 
\begin{align}
	\left|H_t'(1) - (\fm - 1 + \fa) H_t(1)
	-2 \int_{\p\cB_1^t(x_+)}  \vr^{\fa}\sU \cdot  \sU_n d\sigma_t \right| \leq O(t) \cdot H_t(1),
\end{align}
which implies 
\begin{align}\label{e:derivative-log-H}
\left|\frac{H_t'(1)}{H_t(1)} - (\fm - 1 + \fa)  
	-2 \frac{\displaystyle{\int_{\p\cB_1^t(x_+)}  \vr^{\fa}\sU \cdot  \sU_n d\sigma_t}}{\displaystyle{ \int_{\p\cB_1^t(x_+)}  \vr^{\fa}\sU^2   d\sigma_t }} \right| \leq O(t).	
\end{align}

In the next step, we estimate the derivative of $I_t$. Let us split the integral 
\begin{align}\label{e:derivative-I_t}
	I_t'(r) = \int_{\p\cB_r^t}\vr^{\fa}|\nabla_{\bg_t}\sU|^2 \dst + t^2 \int_{\p\cB_r^t} \vr^{\fa} \cdot \mathcal{J} \cdot\sU^2 \dst \equiv \cI_D  +  \cI_Q.
\end{align} 
We pick a normal vector field $X \equiv s\nabla_{\bg_t} s$, where $s(x) \equiv d_{\bg_t}(x,x_+)$ is the distance to $x_+$ in terms of $\bg_t$. Then we have
\begin{align}
\begin{split}
		\cI_D(r) = &\ \frac{1}{r}\int_{\p\cB_r^t}\vr^{\fa}|\nabla_{\bg_t}\sU|^2 \langle X, r^{-1}X\rangle\dst 
	\nonumber\\
	= & \ \frac{1}{r}\int_{\cB_r^t}\Div\left(\vr^{\fa}|\nabla_{\bg_t}\sU|^2 X \right)\dvol_{\bg_t}
	\nonumber\\
	= & \ \frac{1}{r}\int_{\cB_r^t}\left(\vr^{\fa}|\nabla_{\bg_t}\sU|^2 \Div(X) \right) \dvol_{\bg_t} + \frac{1}{r}\int_{\cB_r^t}\langle\nabla_{\bg_t}\vr^{\fa}, X\rangle|\nabla_{\bg_t}\sU|^2\dvol_{\bg_t} \nonumber\\
	& \ + \frac{1}{r} \int_{\cB_r^t}\vr^{\fa} \langle\nabla_{\bg_t}|\nabla_{\bg_t}\sU|^2, X\rangle \dvol_{\bg_t}.
	\end{split}
\end{align}
By Lemma \ref{lem-rho-a}, the second term in $\cI_D(r)$ yields the bound 
\begin{align}\begin{split}
	\frac{1}{r}\int_{\cB_r^t}\langle\nabla_{\bg_t}\vr^{\fa}, X \rangle|\nabla_{\bg_t}\sU|^2\dvol_{\bg_t}	= & \ \int_{\cB_r^t} (\p_r\vr^{\fa})|\nabla_{\bg_t}\sU|^2 \dvol_{\bg_t} \nonumber\\
 = & \ \left(\frac{\fa}{r} + O(rt)\right)\int_{\cB_r^t} \vr^{\fa}|\nabla_{\bg_t}\sU|^2 \dvol_{\bg_t}.\end{split}
\end{align}
Now let us analyze the last term.
By local computations, 
\begin{align}
\begin{split}
		\langle\nabla_{\bg_t}\langle\nabla_{\bg_t} \sU, X\rangle,\nabla_{\bg_t} \sU\rangle  = &\ (\nabla_{\bg_t} \sU)\langle\nabla_{\bg_t} \sU, X\rangle \\
		= & \ \frac{1}{2}\langle\nabla_{\bg_t} |\nabla_{\bg_t} \sU|^2, X\rangle + \langle \nabla_{\bg_t} \sU, (\nabla_{\bg_t})_{\nabla_{\bg_t} \sU}X\rangle
		\\
		= &\ \frac{1}{2}\langle\nabla_{\bg_t}|\nabla_{\bg_t} \sU|^2, X\rangle + (\sU_r)^2 + r(\nabla_{\bg_t}^2 r)(\nabla_{\bg_t} \sU,\nabla_{\bg_t} \sU)
		\\
		= &\ \frac{1}{2}\langle\nabla_{\bg_t}|\nabla_{\bg_t} \sU|^2, X\rangle + (1 + O(rt)) |\nabla_{\bg_t} \sU|^2,
	\end{split}
\end{align}  
where we used the local estimate  \begin{align}
	|(X^i)_j - \delta_{ij}| \leq Crt. 
\end{align}

Therefore, 
\begin{align}\label{e:crossing}
\begin{split}
	\frac{1}{r} \int_{\cB_r^t}\vr^{\fa} \langle\nabla_{\bg_t}|\nabla_{\bg_t} \sU|^2, X\rangle \dvol_{\bg_t} = \frac{2}{r} \int_{\cB_r^t}\vr^{\fa}\left(\langle\nabla_{\bg_t}\langle\nabla_{\bg_t} \sU, X\rangle,\nabla_{\bg_t} \sU\rangle- (1 + O(rt)) |\nabla_{\bg_t} \sU|^2\right)\dvol_{\bg_t}.
	\end{split}	
\end{align}
 Since 
\begin{align}
	\Div(\vr^{\fa} \langle\nabla_{\bg_t} \sU, X\rangle \nabla_{\bg_t} \sU) = \vr^{\fa} \langle\nabla_{\bg_t}\langle\nabla_{\bg_t} \sU, X\rangle,\nabla_{\bg_t} \sU\rangle +  t^2\cdot \vr^{\fa} \cdot \cJ   \sU \langle\nabla_{\bg_t} \sU, X\rangle.
\end{align}
Applying divergence theorem to the first term in \eqref{e:crossing}, we have 
\begin{align}
\frac{2}{r} \int_{\cB_r^t}\vr^{\fa} \langle\nabla_{\bg_t}\langle\nabla_{\bg_t} \sU, X\rangle,\nabla_{\bg_t} \sU\rangle  \dvol_{\bg_t} = \frac{2}{r}\int_{\p \cB_r^t} \vr^{\fa} \sU_n^2 \dst  -  \frac{2\cdot t^2}{r}\int_{\cB_r^t}\vr^{\fa} \cdot \cJ   \sU \langle\nabla_{\bg_t} \sU, X\rangle \dvol_{\bg_t},
\end{align}
where $\sU_n \equiv \p \sU/\p n$ and $\p/\p n$ is the outward unit normal.

 Combining the above computations, 
\begin{align}
\left|\cI_D(1) - (\fm - 2 + \fa) \int_{\cB_1^t}\vr^{\fa}|\nabla_{\bg_t} \sU|^2 \dvol_{\bg_t} - 2\int_{\p\cB_1^t}\vr^{\fa} \sU_n^2  \dst \right|\leq Ct\int_{\cB_1^t}\vr^{\fa}|\nabla_{\bg_t} \sU|^2 \dvol_{\bg_t} .\end{align}
Finally, Corollary \ref{c:approximate-Dirichlet-energy} leads to 
\begin{align}
	\left|\cI_D(1) - (\fm - 2 + \fa) I_t(1) - 2\int_{\p\cB_1^t}\vr^{\fa} \sU_n^2\dst\right|\leq Ct\cdot I_t(1). \label{e:I_D-bound}
\end{align}

Now we estimate the term $\cI_Q$.  By Proposition \ref{p:eps-non-vanishing},  $H_t(r)\leq \frac{r I_t(r)}{\epsilon_0}$,
 which implies that 
\begin{align}\label{e:I_Q-bound}
	\cI_Q(1) \leq Ct^2 H_t(1) \leq  \frac{Ct^2}{\epsilon_0} I_t(1)  .
\end{align}
Combining \eqref{e:derivative-I_t}, \eqref{e:I_D-bound}, \eqref{e:I_Q-bound}, we have
\begin{align}
		\left|I_t'(1) - (\fm - 2 + \fa) I_t(1) - 2\int_{\p\cB_1^t}\vr^{\fa} \sU_n^2\dst\right|\leq Ct\cdot I_t(1). \end{align}
Notice that, by the divergence theorem, 
\begin{align}
 I_t(1) = \int_{\p\cB_1^t} \vr^{\fa} \sU \sU_n \dst,
\end{align}
 which implies 
 \begin{align}\label{e:derivative-log-I}
 \left|\frac{I_t'(1)}{I_t(1)} - (\fm - 2 + \fa)  - 2\frac{\displaystyle{\int_{\p\cB_1^t}\vr^{\fa} \sU_n^2\dst}}{\displaystyle{\int_{\p\cB_1^t} \vr^{\fa} \sU \sU_n \dst}}\right|\leq Ct. 	
 \end{align}

Finally, it follows from \eqref{e:derivative-log-I} and \eqref{e:derivative-log-H} that 
\begin{align}
\begin{split}
	\frac{\cN_t'(1)}{\cN_t(1)}
= 1 + \frac{I_t'(1)}{I_t(1)} - \frac{H_t'(1)}{H_t(1)}
\geq & \ 2\left(\frac{\displaystyle{\int_{\p\cB_1^t}\vr^{\fa}\sU_n^2\dst}}{\displaystyle{\int_{\p\cB_1^t}\vr^{\fa}\sU \sU_n\dst}} -\frac{\displaystyle{\int_{\p\cB_1^t}\vr^{\fa}\sU \sU_n\dst}}{\displaystyle{\int_{\p\cB_1^t}\vr^{\fa} \sU^2\dst}}\right) 	- C t
\\ 
\geq & \  -Ct.
\end{split}
\end{align}
This completes the proof. 
\end{proof}

\subsection{Quantitative symmetry and quantitative cone-splitting}

\label{ss:quantitative-symmetry-PE}

In this subsection, we will look at the behaviors of the solutions of \eqref{e:CS-extension-on-PE} on small scales in a quantitative way. Mainly, for a solution $U$ of the boundary value problem \eqref{e:CS-extension-on-PE}, we will summarize several quantitative rigidity results which are used to prove the uniform volume estimates for the quantitative strata of the singular set in Section \ref{ss:main-results-PE}.

Let us begin with the following proposition which shows how the energy boundedness property \eqref{e:N(0,1)-bounded-by-Lambda} on a large scale can pass to any smaller scales. This is quite important in the implementation of the quantitative analysis of the singular set.

\begin{proposition}[Uniform boundedness] \label{p:frequency-bound-U-PE} Let $p\in M^n$ and assume that $\cB_1(p_+) \subset \overline{X^{\fm}}$ has a smooth boundary. 
  Let $U: \fX^{\fm}\to \dR$ be an even solution of \eqref{e:CS-extension-on-PE}. 
Given any $\Lambda > 0$, there exist constants $N_0 = N_0 (\Lambda, \fm, \fa, \bg) > 0$ and $r_0 = r_0 (\Lambda, \fm, \fa, \bg) > 0$ 
such that if 
\begin{align}
\frac{\displaystyle{\int_{\cB_1(p_+)}\vr^{\fa}|\nabla_{\bar{g}}U|^2 \dvol_{\bg}}}{\displaystyle{\int_{\p\cB_1(p_+)}\vr^{\fa}U^2 \dsg}} \leq \Lambda, 	 	\label{e:N(0,1)-bounded-by-Lambda}
\end{align}
then for every $x \in \cB_{1/2}(p_+)$ and $r \in (0, r_0)$,
\begin{align}\cN_U(x, r) \leq N_0.\end{align}
\end{proposition}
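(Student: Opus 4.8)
The plan is to run a contradiction/compactness argument combined with the almost monotonicity of Theorem~\ref{t:almost-monotonicity} and the doubling property of Lemma~\ref{l:doubling-fixed-scale}, exactly in the spirit of Lemma~\ref{l:frequency-control-large-to-small} from the model section, but now accommodating the variable metric $\bg$ and the curvature term $\cJ_{\bg}$. First I would observe that it suffices to reduce the hypothesis \eqref{e:N(0,1)-bounded-by-Lambda}, which only involves the Dirichlet energy $D_U$, to a bound on the full generalized frequency $\cN_U(p_+,s)$ at some fixed definite scale $s$; this is where Proposition~\ref{p:eps-non-vanishing} and Corollary~\ref{c:approximate-Dirichlet-energy} enter: on scales $t\le r_0$ the zeroth-order term $t^2\int\vr^{\fa}\cJ_{\bg}U^2$ is controlled by the Dirichlet energy via Lemma~\ref{l:L2-estimate-large-scale}, so $D_U$ and $I_U$ are comparable (up to the possibility that $U$ is nowhere vanishing, which is the harmless ``good point'' alternative of Theorem~\ref{t:almost-monotonicity}(1), in which case the conclusion is trivial since $U$ has no critical points and $\cN_U$ is bounded by construction). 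Thus WLOG $\cN_U(p_+, c_0)\le \Lambda'$ for some fixed small $c_0=c_0(\fm,\fa,\bg)$ and $\Lambda'=\Lambda'(\Lambda)$.

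Next I would propagate this bound from the center $p_+$ to nearby points $x\in\cB_{1/2}(p_+)$ at a fixed definite scale. The mechanism mirrors the proof of Lemma~\ref{l:frequency-control-large-to-small}: using the formula $\frac{d}{ds}\log H_U(x,s) = \frac{2\cN_U(x,s)}{s} + O(1)$ (the $O(1)$ coming from the metric, cf.\ \eqref{eq-H'}) one shows that the denominator $\int_{\p\cB_{s}(x)}\vr^{\fa}(U-U(x))^2\,d\sigma_{\bg}$ is bounded below by a definite fraction of $H_U(x,s)$, by comparing $H_U(x,s)$ at two nearby scales and using the elementary inequality relating $|U(x)|^2\int\vr^{\fa}\,d\sigma$ to $H_U(x,s)$ via Jensen. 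The upper bound on the numerator $I_U(x,s)$ follows from the $L^\infty$ and Caccioppoli-type estimates of Section~\ref{ss:regularity-of-solutions} together with the energy bound at $p_+$; here one uses that $\cB_s(x)\subset\cB_{3/4}(p_+)$ and standard interior elliptic estimates for the weighted equation to transfer the integral energy bound from scale $1$ near $p_+$ to scale $s$ near $x$.

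Finally, with $\cN_U(x,s_0)\le N_1$ established at a fixed scale $s_0=s_0(\Lambda,\fm,\fa,\bg)$ for all $x\in\cB_{1/2}(p_+)$, I would invoke the almost monotonicity of $e^{Ct}\cN_U(x,t)$ from Theorem~\ref{t:almost-monotonicity}: since $e^{Ct}\cN_U(x,t)$ is non-decreasing on $(0,r_0]$, we get $\cN_U(x,t)\le e^{C s_0}e^{-Ct}\cN_U(x,s_0)\le e^{Cs_0}N_1 \equiv N_0$ for all $t\le s_0$, provided $s_0\le r_0$. Taking $r_0$ to be the minimum of all the radii appearing in Theorem~\ref{t:almost-monotonicity}, Lemma~\ref{l:doubling-fixed-scale}, Proposition~\ref{p:eps-non-vanishing}, and the elliptic estimates completes the argument.

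\textbf{Main obstacle.} The delicate point is the lower bound on the denominator $\int_{\p\cB_{s}(x)}\vr^{\fa}(U-U(x))^2\,d\sigma_{\bg}$ uniformly over $x\in\cB_{1/2}(p_+)$ and over the variable weight $\vr^{\fa}$ near $M^n$: when $x$ approaches $M^n$ the geometry of $\cB_s(x)$ and the weight degenerate, so one must be careful that the comparison-of-scales argument and the constant in the Jensen-type step do not blow up. This is handled by working with the rescaled metrics $\bg_t$ and the quantitative tangent maps $U_{x_+,t}$ of \eqref{e:blow-up-function-U}, where Lemma~\ref{lem-rho-a} gives the uniform control $\partial_r\vr^{\fa} = \frac{\fa\vr^{\fa}}{r}(1+O(r))\ge 0$, and by absorbing the $O(1)$ and $O(t)$ error terms into the exponential factor $e^{Ct}$ exactly as in the monotonicity proof. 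A secondary technical nuisance is keeping track of which alternative of Theorem~\ref{t:almost-monotonicity} holds at each point, but as noted this is dispatched cleanly since alternative (1) forces $U$ to be essentially constant at that scale, a situation incompatible with (or irrelevant to) a frequency upper bound.
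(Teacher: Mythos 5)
Your high-level outline (bound the frequency at a definite scale, then propagate to small scales via the almost monotonicity of Theorem~\ref{t:almost-monotonicity}) is the right one, but there is a genuine gap in the step that bounds the frequency at a definite scale, and the mechanism you propose does not close it.

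The issue is the lower bound on $H_U(x,r)=\int_{\p\cB_r(x)}\vr^{\fa}U^2\,\dsg$. You propose to mirror Lemma~\ref{l:frequency-control-large-to-small}: compare $H_U(x,\cdot)$ at two nearby scales and use a Jensen-type inequality relating $|U(x)|^2\int\vr^{\fa}\,d\sigma$ to $H_U$. Two things go wrong. First, the generalized frequency of Section~\ref{ss:almost-monotonicity} does not subtract $U(x)$ (the denominator is $\int\vr^{\fa}U^2$, not $\int\vr^{\fa}(U-U(x))^2$), so the Jensen step that compares a normalized to an unnormalized quantity is not the relevant estimate here. Second, and more fundamentally, the comparison of $H_U$ at two scales uses $\frac{d}{ds}\log H_U = \frac{2\cN_U}{s}+O(1)$, and hence requires some a priori control on $\cN_U(x,s)$ on the interval between the two scales. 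In the Euclidean model this is supplied by the \emph{full} monotonicity of $N_U$ on all scales up to $1-|x|$, so $N_U(x,s)$ is bounded as soon as a large-scale frequency is. In the Poincar\'e--Einstein setting, Theorem~\ref{t:almost-monotonicity} only gives almost monotonicity for $t\in(0,r_0]$ with $r_0$ small; there is no mechanism in your proposal that bridges the gap between the hypothesis at scale $1$ (which controls only the Dirichlet energy) and a frequency bound at scale $r_0$. The argument as written is therefore circular: you need $\cN_U(x,s)\le N_1$ to get the lower bound on $H_U$, and you need the lower bound on $H_U$ to get $\cN_U(x,s)\le N_1$.

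The paper's proof resolves exactly this point by a compactness/contradiction argument, which your proposal omits. After normalizing $\int_{\p\cB_1(p_+)}\vr^{\fa}U^2\,\dsg=1$, it shows that for $r\in(0,r_0]$ there is $\delta>0$ such that $H_U(x,r)>\delta$ for all $x\in\cB_{1/2}(p_+)$: a contradiction sequence $U_j$ with $H_{U_j}(x_j,r)\to 0$ converges weakly in $H^{1,\fa}$ to some $U_\infty$ vanishing on $\p\cB_r(x_\infty)$; the divergence theorem gives $I_{U_\infty}(x_\infty,r)=0$; Lemma~\ref{l:L2-estimate-large-scale} then forces $D_{U_\infty}(x_\infty,r)=0$ for $r$ small, so $U_\infty\equiv 0$ on $\cB_r(x_\infty)$; and unique continuation for the underlying Poisson equation $\Delta_{g_+}u+s(n-s)u=0$ (applied to $u_\infty=\vr^{\,n-s}U_\infty$ on the conformal image of the ball) propagates the vanishing to all of $\cB_1(p_+)$, contradicting the normalization. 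That compactness/unique-continuation step is the crux of the proposition; without it, or an equivalent substitute, the argument does not go through. Your treatment of the alternative $(1)$ in Theorem~\ref{t:almost-monotonicity} and the final propagation to $r<r_0$ via $e^{Ct}\cN_U(x,t)$ are fine, but the middle step needs to be replaced by the compactness argument or something of the same strength.
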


\begin{proof} 
Since the condition and the conclusion of proposition are  scale-invariant in $U$, we assume that
\begin{align}
 \int_{\cB_1(p_+)}\vr^{\fa}|\nabla_{\bar{g}}U|^2 \dvol_{\bg} \leq \Lambda, \quad \int_{\p\cB_1(p_+)}\vr^{\fa}U^2 \dsg = 1.\label{e:unit-ball-U-normalization}	
\end{align}
Then Lemma \ref{l:L2-estimate-large-scale}
implies that 
$\|U\|_{H^{1,\fa}(\cB_1(p_+))} \leq C(\Lambda,\fm ,\fa , \bg)$. 
  It suffices to show that there exists $r_0 = r_0(\Lambda, \fm, \fa, \bg) > 0$ satisfying the following property: for any $r\in(0, r_0]$, there exists $\delta = \delta (r, \Lambda, \fm, \fa, \bg) > 0$ such that for every $x\in \cB_{1/2}(p_+)$, 
\begin{align}
\int_{\p\cB_r(x)} \vr^{\fa} U^2 \dvol_{\bg} > \delta. \label{e:positive-lower-bound-of-boundary-L2}
\end{align} 
Indeed, if \eqref{e:positive-lower-bound-of-boundary-L2} is proved, then we can choose $\delta_0 \equiv \delta(r_0,\Lambda, \fm, \fa, \bg))$ so that
\begin{align}
\begin{split}
	\mathcal{N}_U(x, r_0) 
= &\  \frac{\displaystyle{r_0\int_{\cB_{r_0}(x)}\vr^{\fa}(|\nabla_{\bg}U|^2 + \cJ_{\bg} U^2) \dvol_{\bg}}}{\displaystyle{\int_{\p\cB_{r_0}(x)} \vr^{\fa} U^2 \dsg}}	
 \\
\leq  \ &\delta_0^{-1}r_0 \cdot \left(1 + C_{\fm, \gamma}\sup\limits_{\overline{X^{\fm}}}|R_{\bg}|\right)\cdot  \left(\int_{\cB_1(p_+)}\vr^{\fa} (|\nabla_{\bg}U|^2  +   U^2) \dvol_{\bg}\right).
\end{split}
\end{align}
Applying Lemma \ref{l:L2-estimate-large-scale},
\begin{align}
\begin{split}
\mathcal{N}_U(x, r_0) 
\leq  &\ C_1(\Lambda, \fm ,  \fa, \bg) \cdot  \left( \int_{\cB_1(p_+)}\vr^{\fa}|\nabla_{\bg}U|^2 \dvol_{\bg} + \int_{\p\cB_1(p_+)} \vr^{\fa} U^2 \dsg \right) \\
\leq & \ C_1(\Lambda, \fm ,  \fa, \bg)    \cdot (\Lambda+1).
\end{split}
\end{align}
By Theorem \ref{t:almost-monotonicity}, $\mathcal{N}_U(x,r) \leq e^{C_2(\Lambda, \fm, \fa, \bg)r_0}(\Lambda + 1)$ for any $r\in(0,r_0)$, which proves the uniform boundedness.

Now let us prove \eqref{e:positive-lower-bound-of-boundary-L2}. The proof follows from a standard compactness/contradiction argument.	
Suppose that for some sufficiently small $r > 0$, there exist contradiction sequences: a sequence of solutions $U_j$ of \eqref{e:CS-extension-on-PE}, a  sequence of points $x_j \in \cB_{1/2}(p_+)$, and a sequence $\delta_j \to 0$ such that  $\|U_j\|_{H^{1,\fa}(\cB_1(p_+))} \leq C$ and 
$H_U(x_j,r)\leq \delta_j \to 0$. 
This implies that, passing to a subsequence, $U_j$ converges to $U_{\infty}$ weakly in the $H^{1,\fa}(\cB_1(p_+))$-topology and 
\begin{align}\label{eq-U-2-P0}
	\int_{\p\cB_{r_0}(x_{\infty})}\vr^{\fa} U_{\infty}^2 \dsg 
 = 0,\end{align}
where $x_j\to x_{\infty} \in \cB_{1/2}(p_+)$. Therefore, $U_{\infty} \equiv 0 $ on $\p\cB_r(x_{\infty})$. 
Since $U_{\infty}$ solves \eqref{e:CS-extension-on-PE} and $U_{\infty}\in C^{n,\alpha}(\cB_1(p_+))$, 
 it follows that \begin{align}
 \int_{\cB_r(x_{\infty})}  \vr^{\fa}(|\nabla_{\bg} U_{\infty}|^2 + \cJ_{\bg}	U_{\infty}^2 ) \dvol_{\bg} = \int_{\p \cB_r(x_{\infty})} \vr^{\fa} \cdot U_{\infty} \cdot \frac{\p U_{\infty}}{\p n}\dsg = 0,
 \end{align}
 which leads to 
 \begin{align}
D_U(x_{\infty}, r)	 
 \leq C(\fm,\fa,\bg)\int_{\cB_r(x_{\infty})}  \vr^{\fa} U^2 \dvol_{\bg}
= C(\fm,\fa,\bg)\left(r H_U(x_{\infty}, r) + r^2D_U(x_{\infty}, r)\right).
 \end{align}
 If $r > 0$ is chosen such that $C(\fm, \fa, \bg) r^2 < \frac{1}{2}$, then $D_U(x_{\infty}, r) \equiv 0$, and hence $U_{\infty} \equiv 0$ on $\cB_r(x_{\infty})$.

Notice that $u_{\infty} = \vr^{n-s} U_{\infty}$ satisfies 
\begin{align}
	\Delta_{g_+} u_{\infty} + s(n-s) u_{\infty} = 0, \quad s = \frac{n}{2} + \gamma,
\end{align}
in an open subset $\mathcal{O}\subset X^{\fm}$. Here $\mathcal{O}$ is the conformal image of $\cB_1(p_+)$ in terms of $g_+  =  \vr^{-2} \bg$. 
By the standard unique continuation, $u_{\infty} \equiv 0$ on $\mathcal{O}$, which  implies $U_{\infty}\equiv 0$ on $ \cB_1(p_+)$. But this contradicts the normalization condition \eqref{e:unit-ball-U-normalization}, which completes the proof of \eqref{e:positive-lower-bound-of-boundary-L2}.
\end{proof}

Next, we will list a series of quantitative rigidity results which replace the corresponding results in Sections \ref{ss:quantitative-rigidity-Euclidean} and \ref{ss:quantitative-cone-splitting}.

The quantitative rigidity below can be proved by standard contradiction and compactness arguments, which is a replacement of Proposition \ref{p:quantitative-symmetry} in the general Poincar\'e-Einstein manifold. 
\begin{proposition}
[Quantitative rigidity] \label{p:quantitative-symmetry-U-PE} For fixed $\fa \in (-1,1)$, let $U$ be the unique solution to \eqref{e:CS-extension-on-PE}. 
 Given $\epsilon > 0$, $\gamma \in (0,1)$, and $\Lambda > 0$, there exist $\delta = \delta(\epsilon,\Lambda, \gamma, n, \fa,  g_+, h) > 0$, $r_c = r_c(\epsilon,\Lambda,\gamma, n, \fa,  g_+, h) > 0$, and $C = C(\epsilon,\Lambda, \gamma, n,\fa,  g_+, h) > 0$ such that if 
\begin{align}
\frac{\displaystyle{\int_{\cB_1(p_+)}\vr^{\fa}|\nabla_{\bar{g}}U|^2 \dvol_{\bg}}}{\displaystyle{\int_{\p\cB_1(p_+)}\vr^{\fa}U^2 \dvol_{\bg} }} \leq \Lambda, 	\end{align} and for $s \in (0, r_c)$,
	\begin{align}
		\mathcal{N}(x, s) -  		\mathcal{N}(x, s\gamma)  < \delta,
	\end{align}
	then $U$ is $(0, \epsilon , s , \fa)$-symmetric at $x \in B_{1/2}(p)$.
\end{proposition}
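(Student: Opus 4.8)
The plan is to argue by contradiction and compactness, in the spirit of Proposition \ref{p:quantitative-symmetry}, with the exact Euclidean monotonicity replaced by the almost monotonicity of Theorem \ref{t:almost-monotonicity} and a blow-up analysis centred at points of $M^n$. Suppose the conclusion fails: there is $\epsilon_0 > 0$ so that for every $j \in \dN$ one can find a solution $U_j$ of \eqref{e:CS-extension-on-PE} whose normalized Dirichlet energy on $\cB_1(p_+)$ is at most $\Lambda$, a point $x_j \in B_{1/2}(p)$, a scale $s_j \in (0, 1/j)$, and $\delta_j \to 0$, with
\[
\mathcal{N}_{U_j}(x_j, s_j) - \mathcal{N}_{U_j}(x_j, s_j\gamma) < \delta_j,
\]
while $U_j$ is not $(0, \epsilon_0, s_j, \fa)$-symmetric at $x_j$. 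Taking $r_c$ below the thresholds appearing in Proposition \ref{p:frequency-bound-U-PE}, Lemma \ref{l:doubling-fixed-scale} and Theorem \ref{t:almost-monotonicity}, we may assume that $\mathcal{N}_{U_j}(x_j, r) \le N_0$ for all $r \le s_j$ and that $H_{U_j}(x_j, s_j\gamma)$ is comparable to $H_{U_j}(x_j, s_j)$ with a constant depending only on the data.

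Next I would pass to the blow-up. Let $\tilde U_j \equiv \mathcal{T}_{x_j, s_j}(U_j)$ be the quantitative tangent map \eqref{e:blow-up-function-U}, defined on unit geodesic balls for the rescaled metrics $\bg_{s_j}$, $\bg_{s_j}(s,\Theta) \equiv \bg(s_j s, \Theta)$. Since $(M^n, h)$ is totally geodesic in $(\overline{X^{\fm}}, \bg)$ by Lemma \ref{geom-boundary} and $x_j \in M^n$, in normal coordinates the metrics $\bg_{s_j}$ converge in $C^k_{\loc}$ to the Euclidean metric $g_0$ on $\dR^{\fm}$; by \eqref{e:vr-expansion} and Lemma \ref{lem-rho-a} the recentred and rescaled weights $\vr^{\fa}$ converge to $|y|^{\fa}$, while $\cJ_{\bg_{s_j}} = s_j^2\,\cJ_{\bg} \to 0$. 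Combining Lemma \ref{l:frequency-rescaling}, the uniform frequency bound, Lemma \ref{l:L2-estimate-large-scale}, the doubling property and the interior $C^{1,\alpha}$ estimate of Lemma \ref{eq-C1a-U}, the $\tilde U_j$ are uniformly bounded in $H^{1,\fa}(\cB_2)$ and, along a subsequence, converge in $H^{1,\fa}_{\loc} \cap C^1_{\loc}$ to an even function $U_\infty$ on $\dR^{\fm}$ solving $\Div(|y|^{\fa}\nabla U_\infty) = 0$ with $(-\Delta)^\gamma f_\infty = 0$ on $B_1(\bo)$, where $f_\infty \equiv U_\infty|_{\dR^n}$. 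The doubling estimate passes to the limit, so $H_{U_\infty}(\bo_+, r) > 0$ for $r \le 1$; hence $U_\infty$ is nontrivial and its Almgren frequency is well defined and varies continuously under the convergence at the scales $1$ and $\gamma$.

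Consequently the frequency-drop hypothesis persists in the limit: $\mathcal{N}_{U_\infty}(\bo_+, 1) - \mathcal{N}_{U_\infty}(\bo_+, \gamma) \le 0$, which together with the monotonicity of Lemma \ref{l:Euclidean-monotonicity} (equivalently Lemma \ref{l:monotonicity}) forces equality, and the rigidity part of those lemmas then shows that $U_\infty$ is a homogeneous polynomial of degree $\mathcal{N}_{U_\infty}(\bo_+, 1) \le N_0$; one may take the constant $C$ in the statement to be this degree bound. Normalizing $U_\infty$ so that $\fint_{\p\cB_1(\bo_+)} |y|^{\fa} U_\infty^2\, d\sigma = 1$, it is $0$-symmetric, hence $(0, \epsilon_0/2, 1, \fa)$-symmetric at $\bo_+$; the strong $H^{1,\fa}$-convergence $\tilde U_j \to U_\infty$, together with the convergence of the weights and metrics, then forces $\tilde U_j$ to be $(0, \epsilon_0, 1, \fa)$-symmetric for $j$ large, and unwinding the normalization built into $\mathcal{T}_{x_j, s_j}$ this says precisely that $U_j$ is $(0, \epsilon_0, s_j, \fa)$-symmetric at $x_j$, a contradiction. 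I expect the main obstacle to be the analytic bookkeeping of this blow-up in the degenerate/singular regime: one must verify that the weighted spaces $H^{1,\fa}$ built from $(\vr^{\fa}, \bg_{s_j})$ pass to the limit correctly, so that weak limits respect the equation and the frequency across the characteristic set $\{y = 0\}$, and that the notion of $(0, \eta, s, \fa)$-symmetry is stable under the simultaneous perturbation of the function, the weight and the metric; this is exactly where Lemma \ref{lem-rho-a}, the doubling estimate of Lemma \ref{l:doubling-fixed-scale} and the interior regularity of Lemma \ref{eq-C1a-U} must be used with care.
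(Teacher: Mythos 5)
Your proposal is correct and reproduces exactly the argument the paper has in mind: the text explicitly defers this proposition to ``standard contradiction and compactness arguments'' mirroring the Euclidean Proposition~\ref{p:quantitative-symmetry}, and your blow-up at $x_j \in M^n$, with metrics $\bg_{s_j}\to g_0$, weight $\vr^{\fa}\to |y|^{\fa}$, potential $s_j^2 \cJ_{\bg} \to 0$, uniform frequency/doubling from Proposition~\ref{p:frequency-bound-U-PE} and Lemma~\ref{l:doubling-fixed-scale}, and rigidity from Lemma~\ref{l:monotonicity}, is precisely the intended adaptation. One minor remark: the constant $C$ in the proposition statement plays no role in the conclusion (likely a vestigial artifact of the statement), so your interpretation of it as a degree bound is a reasonable but inessential reading.
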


Applying the almost monotonicity in Theorem \ref{t:almost-monotonicity} and contradiction arguments, we have the following {\it inductive cone-splitting principle} which replaces Corollary  \ref{c:inductive-splitting}.

\begin{proposition}
	[Inductive cone-splitting] \label{p:inductive-cone-splitting-PE}
		For any fixed $n\geq 2$, $\epsilon>0$, $\gamma \in (0,1)$, $\rho>0$, $r\in (0,1)$, $k\in\{0,1,\ldots, n-2\}$, $\Lambda>0$,  there exist $\delta=\delta(\epsilon, n, \rho, \Lambda, \fa) > 0$
and $r_{\#} = r_{\#}(\epsilon, n, \rho, \Lambda, \fa)  > 0$		
		 such that the following holds. Let $U$ be the unique solution to \eqref{e:CS-extension-on-PE} with 
\begin{align}
\frac{\displaystyle{\int_{\cB_1(p_+)}\vr^{\fa}|\nabla_{\bar{g}}U|^2 \dvol_{\bg}}}{\displaystyle{\int_{\p\cB_1(p_+)}\vr^{\fa}U^2 \dvol_{\bg} }} \leq \Lambda, \end{align}
If for some $s\in (0,r_{\#})$ and $x \in B_{1/2}(p)$,
\begin{enumerate}
\item $U$ is $(0,\delta, s\gamma ,\fa)$-symmetric at $x$,
\item for any vector space $V$ of dimension $\leq k$, there exists some point $z\in B_r(x)\setminus B_{\rho}(V)$ such that $U$ is  $(0,\delta,s\gamma,\fa)$-symmetric at $z$, 
\end{enumerate}
then $U$ is also $(k+1,\epsilon,s,\fa)$-symmetric at $x$.

\end{proposition}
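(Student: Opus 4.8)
The plan is to prove Proposition~\ref{p:inductive-cone-splitting-PE} in two stages, exactly paralleling the model case: first establish a quantitative cone-splitting theorem in the Poincar\'e--Einstein setting -- the analogue of Theorem~\ref{t:quantitative-cone-splitting} -- and then deduce the inductive statement by the same finite iteration that produced Corollary~\ref{c:inductive-splitting} from Theorem~\ref{t:quantitative-cone-splitting}. The one structural novelty relative to the flat model is that Almgren's frequency is now only \emph{almost} monotone (Theorem~\ref{t:almost-monotonicity}) and not scale-invariant, so each compactness argument must be run at the level of the rescaled tangent maps $W_j\equiv\mathcal{T}_{x_j,s_j}(U_j)$ defined by \eqref{e:blow-up-function-U}, on the rescaled spaces $(\cB_{1/s_j}(x_j),\bg_{s_j})$ with $\bg_{s_j}(s,\Theta)=\bg(s_j s,\Theta)$. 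Negating the conclusion forces the scales $s_j\to 0$; consequently $\bg_{s_j}\to g_0$ locally and the zeroth-order coefficient $\cJ_{\bg_{s_j}}=s_j^2\cJ_{\bg}\to 0$, so the blow-up limit satisfies the flat equation $\Div(|y|^{\fa}\nabla W_\infty)=0$ and the Euclidean cone-splitting principle (Lemma~\ref{l:cone-splitting-principle}) becomes applicable.

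For the quantitative cone-splitting step I would argue by contradiction. Assuming it fails, fix $\epsilon_0,\rho_0,r_0,k_0,\Lambda_0$ and produce even solutions $U_j$ of \eqref{e:CS-extension-on-PE} (on the doubled manifold $\fX^{\fm}$) with the stated frequency bound, points $x_j\in B_{1/2}(p)$ and $z_j\in B_{r_0}(x_j)\setminus B_{\rho_0}(V_j)$, and $\ud_j\to 0$, $s_j\to 0$, so that $U_j$ is $(k_0,\ud_j,s_j\gamma,\fa)$-symmetric at $x_j$ with respect to a $k_0$-plane $V_j$ and $(0,\ud_j,s_j\gamma,\fa)$-symmetric at $z_j$, but not $(k_0+1,\epsilon_0,s_j,\fa)$-symmetric at $x_j$. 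Set $W_j\equiv\mathcal{T}_{x_j,s_j}(U_j)$, normalized by $W_j(x_j)=0$ and $H(W_j,x_j,1,\bg_{s_j})=1$. By Proposition~\ref{p:frequency-bound-U-PE} together with Lemma~\ref{l:frequency-rescaling}, $\mathcal{N}(W_j,x_j,t,\bg_{s_j})\leq N_0$ for $t$ in any fixed interval once $j$ is large, so Lemma~\ref{l:L2-estimate-large-scale} yields uniform $H^{1,\fa}$ bounds on compact sets, and the interior regularity of Lemma~\ref{eq-C1a-U} upgrades these to uniform $C^{1,\alpha}$ bounds. Passing to a subsequence, $\bg_{s_j}\to g_0$ in $C^k_{\mathrm{loc}}$ (using the smoothness of the Fefferman--Graham metric under the standing assumptions (1)--(3) and Proposition~\ref{p:general-regularity-FG}), the renormalized defining function $\vr/s_j\to|y|$ locally by Lemma~\ref{lem-rho-a}, $x_j\to 0^{\fm}$, $V_j\to V_\infty$, $z_j\to z_\infty\notin B_{\rho_0}(V_\infty)$, and $W_j\to W_\infty$ strongly in $H^{1,\fa}_{\mathrm{loc}}\cap C^1_{\mathrm{loc}}$ to a solution of $\Div(|y|^{\fa}\nabla W_\infty)=0$ on $\dR^{\fm}$; the strong convergence and the non-degeneration $H(W_\infty,0^{\fm},1,g_0)=1$ rely on the compact $A_2$-weighted embedding and the uniform doubling of Lemma~\ref{l:doubling-fixed-scale}.

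From the $(k_0,\ud_j,s_j\gamma,\fa)$-symmetry at $x_j$ one extracts normalized $k_0$-symmetric homogeneous polynomials $P_j$ of uniformly bounded degree with $\|\mathcal{T}_{x_j,s_j\gamma}U_j-P_j\|\to 0$; the doubling prevents the normalizing denominators from degenerating, so $P_j\to P_\infty$, a $k_0$-symmetric homogeneous polynomial, and the normalization forces $W_\infty$ to coincide with $P_\infty$ up to rescaling -- in particular $W_\infty$ is homogeneous about $0^{\fm}$. The $(0,\ud_j,s_j\gamma,\fa)$-symmetry at $z_j$ passes to the limit to show $P_\infty$ is also homogeneous about $z_\infty\notin V_\infty$, so by the general form of the cone-splitting principle (Lemma~\ref{l:cone-splitting-principle}(2) and the remark following it) $P_\infty$ is $(k_0+1)$-symmetric about $0^{\fm}$ with respect to $\Span(V_\infty,z_\infty)$, noting $k_0\leq\fm-3$ so the degenerate linear case is harmless. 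The strong convergence $\mathcal{T}_{x_j,s_j}U_j\to P_\infty$ then contradicts the failure of $(k_0+1,\epsilon_0,s_j,\fa)$-symmetry at $x_j$, proving the quantitative cone-splitting theorem. Granting that, Proposition~\ref{p:inductive-cone-splitting-PE} follows verbatim from the iteration in Corollary~\ref{c:inductive-splitting}: define the composites $\ud^{(n-1-i)}$ of the cone-splitting constant, set $\delta\equiv\ud^{(0)}$ (with $n$ factors) and choose $r_\#$ small enough that the finitely many invocations of the cone-splitting theorem and of the almost-monotonicity (Theorem~\ref{t:almost-monotonicity}) are valid on $(0,r_\#)$, then induct on the maximal degree of approximate symmetry, together with Proposition~\ref{p:quantitative-symmetry-U-PE} to convert $0$-symmetry hypotheses across scales.

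The main obstacle is the second stage of the compactness argument: controlling the limit of the \emph{degenerate, weighted, metric-dependent} equations $-\Div_{\bg_{s_j}}(\vr^{\fa}\nabla_{\bg_{s_j}}W_j)+\vr^{\fa}\cJ_{\bg_{s_j}}W_j=0$. One must simultaneously track the convergence $\bg_{s_j}\to g_0$, the rescaling of the $A_2$ weight $\vr^{\fa}$ into the exact Euclidean weight $|y|^{\fa}$ (which is where Lemma~\ref{lem-rho-a} enters), the vanishing of the potential $s_j^2\cJ_{\bg}$, and -- most delicately -- the passage from weak to strong $H^{1,\fa}_{\mathrm{loc}}$ convergence with no loss of $L^{2,\fa}$ mass on the unit boundary sphere; this last point is precisely what the uniform doubling (Lemma~\ref{l:doubling-fixed-scale}) and the $\epsilon$-regularity for small frequency (Proposition~\ref{p:eps-non-vanishing}) are designed to supply. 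Everything else is a routine adaptation of the model-case arguments in Sections~\ref{ss:quantitative-rigidity-Euclidean} and \ref{ss:quantitative-cone-splitting}.
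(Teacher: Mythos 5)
Your proposal is correct and follows the route the paper itself indicates: it states Proposition~\ref{p:inductive-cone-splitting-PE} without detailed proof, remarking only that it follows by ``the almost monotonicity in Theorem~\ref{t:almost-monotonicity} and contradiction arguments'' as a replacement for Corollary~\ref{c:inductive-splitting}. You correctly identify that the argument decomposes into a PE analogue of Theorem~\ref{t:quantitative-cone-splitting} established by a blow-up/compactness contradiction (in which the negation forces $s_j\to0$, so the rescaled metrics $\bg_{s_j}$ and weights converge to the Euclidean data, the potential $\cJ_{\bg_{s_j}}=s_j^2\cJ_{\bg}$ vanishes, and the limit solves the flat weighted equation where Lemma~\ref{l:cone-splitting-principle} applies), followed by the finite iteration of Corollary~\ref{c:inductive-splitting} with thresholds taken small enough that the finitely many invocations of almost-monotonicity remain valid. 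The supporting ingredients you invoke — Proposition~\ref{p:frequency-bound-U-PE} for uniform frequency bounds, Lemma~\ref{l:frequency-rescaling} for scale-covariance, Lemmas~\ref{l:L2-estimate-large-scale} and~\ref{eq-C1a-U} for compactness, Lemma~\ref{l:doubling-fixed-scale} and Proposition~\ref{p:eps-non-vanishing} for nondegeneration of the boundary $L^2$ mass, and Lemma~\ref{lem-rho-a} for the convergence of $\vr^{\fa}$ to $|y|^{\fa}$ — are exactly the ones the paper develops for this purpose.
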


The following lemma is used in estimating the dimension and measure of the singular set which is a replacement of Lemma \ref{l:almost-(m-1)-symmetric}. 
\begin{lemma}
	Let  $U$   be the unique even solution of \eqref{e:CS-extension-on-PE} such that \begin{align}
\frac{\displaystyle{\int_{\cB_1(p_+)}\vr^{\fa}|\nabla_{\bar{g}}U|^2 \dvol_{\bg}}}{\displaystyle{\int_{\p\cB_1(p_+)}\vr^{\fa}U^2 \dvol_{\bg} }} \leq \Lambda. \end{align}
  Then for every $\epsilon > 0$, $k\in\dN$, and $\alpha\in(0,1)$, there exists 
	$\eta = \eta (\epsilon, n, k, \Lambda, \fa) > 0$ such that if 
	$U$ is $(\fm - 1, \eta, r, \fa)$-symmetric at $p_+$, then 
	\begin{align}
		\|\cT_{p_+,r} U - L \|_{C^{1,\alpha}(\cB_{1/2}(\bo_+))} < \epsilon,
	\end{align}
	where $L$ is a linear polynomial with $\frac{1}{r^{\fa}}\fint_{\p\cB_1(\bo_+)}|L|^2 d\sigma_0 = 1$.
	In particular, choosing any positive number $\epsilon < \epsilon_0 \equiv |\nabla L|/3$, we have $r_{p_+} \geq r$ if $U$ is $(\fm - 1, \epsilon, r_{p_+} , \fa)$ at $p_+$. \end{lemma}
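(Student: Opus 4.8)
The statement to prove is a quantitative rigidity lemma: if the extension $U$ is $(\fm-1,\eta,r,\fa)$-symmetric at $p_+$ with $\eta$ small, then the rescaled map $\cT_{p_+,r}U$ is $C^{1,\alpha}$-close to a normalized linear polynomial $L$, and consequently points at which $U$ is $(\fm-1,\epsilon,\cdot,\fa)$-symmetric on small scales have a definite lower bound on the scale at which this first occurs. Let me sketch the proof.

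The plan is to argue by contradiction and compactness, exactly parallel to Lemma \ref{l:almost-(m-1)-symmetric} in the model case but carrying along the varying metric. First I would suppose the conclusion fails: there is $\epsilon_0>0$, a sequence $r_j \in (0,r_0)$, and a sequence of even solutions $U_j$ of \eqref{e:CS-extension-on-PE} with $\cN_{U_j}(p_+,1)\le \Lambda$ (equivalently the energy bound), such that $U_j$ is $(\fm-1,\eta_j,r_j,\fa)$-symmetric at $p_+$ with $\eta_j\to 0$, yet $\|\cT_{p_+,r_j}U_j - L\|_{C^{1,\alpha}}\ge \epsilon_0$ for every normalized linear $L$. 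After replacing $U_j$ by its quantitative tangent map $\sU_j \equiv U_{p_+,r_j}$ (using the rescaling identities of Lemma \ref{l:frequency-rescaling}) and normalizing so that $\sU_j(p_+)=0$ and $\frac{1}{r_j^{\fa}}\fint_{\p\cB_1}\vr^{\fa}\sU_j^2 = 1$, I would use Proposition \ref{p:frequency-bound-U-PE} to get uniform frequency bounds on definite scales, hence uniform $H^{1,\fa}$ bounds, hence (by Lemma \ref{eq-C1a-U} / Lemma \ref{lem-eps-C1a}) uniform $C^{\fm-2,\alpha}$ bounds on $\cB_{1/2}$. The rescaled metrics $\bg_{r_j}$ converge to the flat metric $g_0$ on $\dR^{\fm}$, and passing to a subsequence, $\sU_j \to \sU_\infty$ in $C^{1,\alpha}_{\loc}$ (and strongly in $H^{1,\fa}$), where $\sU_\infty$ solves the Euclidean equation $\Div(|y|^{\fa}\nabla \sU_\infty)=0$ with $(-\Delta)^\gamma f_\infty=0$ on the trace, still normalized with unit $L^{2,\fa}$ boundary norm.

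Next I would extract the rigidity of the limit. The $(\fm-1,\eta_j,r_j,\fa)$-symmetry of $U_j$, rewritten at unit scale for $\sU_j$ via $\cT_{p_+,r_j}$, means there are $(\fm-1)$-symmetric normalized polynomials $P_j$ with $\fint_{\cB_1}\vr^{\fa}|\cT_{p_+,1}\sU_j - P_j|^2 \to 0$; since the degrees are uniformly bounded (by the frequency bound and the almost monotonicity Theorem \ref{t:almost-monotonicity}), the $P_j$ subconverge to an $(\fm-1)$-symmetric normalized polynomial $P_\infty$, forcing $\sU_\infty = P_\infty$ to be $(\fm-1)$-symmetric. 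By the cone-splitting principle (Lemma \ref{l:cone-splitting-principle}(1)), an $(\fm-1)$-symmetric homogeneous harmonic polynomial is linear; moreover, since $P_\infty$ is $(\fm-1)$-symmetric as a polynomial on $\dR^{\fm}$ it depends only on a single variable, and a single-variable solution of $\Div(|y|^{\fa}\nabla \cdot)=0$ that additionally satisfies the vanishing-Neumann condition $(-\Delta)^\gamma$-harmonicity cannot involve the $y$-direction, hence $\sU_\infty = L$ is a genuine linear function of the boundary variables, normalized. The $C^{1,\alpha}$ convergence $\sU_j \to L$ then contradicts $\|\cT_{p_+,r_j}U_j - L\|_{C^{1,\alpha}}\ge \epsilon_0$, proving the first assertion. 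For the "in particular" clause: once $\|\cT_{p_+,r}U - L\|_{C^{1,\alpha}} < \epsilon$ with $\epsilon < |\nabla L|/3$ and $|\nabla L|$ a fixed constant determined by the normalization $\frac{1}{r^{\fa}}\fint_{\p\cB_1}|L|^2 = 1$, the triangle inequality gives $|\nabla(\cT_{p_+,r}U)| \ge |\nabla L| - \epsilon > 0$ on $\cB_{1/2}$, so $p_+$ cannot be a critical point seen at that scale; unwinding, if $U$ is $(\fm-1,\epsilon,r_{p_+},\fa)$-symmetric at $p_+$ then necessarily $r_{p_+}\ge r$.

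The main obstacle, as flagged in Remark after Lemma \ref{eq-C1a-U}, is the finite regularity of the doubled Fefferman-Graham metric: the compactness step needs $C^{1,\alpha}$ convergence of $\sU_j$, and while Lemma \ref{eq-C1a-U} does supply uniform $C^{\fm-2,\alpha}$ interior bounds (ample for $C^{1,\alpha}$ since $\fm-2\ge 1$), one must be careful that these bounds are uniform as the rescaled metrics $\bg_{r_j}$ vary and converge to $g_0$ — i.e. the constants in the Schauder-type estimates of \cite{STV1} depend only on $\fm,\fa$ and uniform bounds on the $C^{\fm-2,\alpha}$-norms of the metrics, which holds after rescaling. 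A second, more bookkeeping-level subtlety is keeping the normalizations and the weight $\vr^{\fa}$ (versus the Euclidean $|y|^{\fa}$) consistent under the blow-up, which is handled by the estimate $\vr = y + O(y^3)$ from \eqref{e:vr-expansion} together with Lemma \ref{lem-rho-a}. Since the proof is a routine adaptation of the model-case argument, I would simply write: "The proof is a straightforward contradiction-compactness argument following the proof of Lemma \ref{l:almost-(m-1)-symmetric}, using Proposition \ref{p:frequency-bound-U-PE} for uniform frequency bounds, Lemma \ref{eq-C1a-U} for uniform $C^{1,\alpha}$ estimates, and Lemma \ref{l:cone-splitting-principle}(1) to identify the limit as linear; we omit the details."
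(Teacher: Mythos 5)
Your proof is correct and is exactly the omitted ``standard contradiction--compactness'' argument the paper indicates for the model-case analog (Lemma \ref{l:almost-(m-1)-symmetric}) and silently intends for this PE version: blow up via $U_{p_+,r_j}$ and Lemma \ref{l:frequency-rescaling}, get uniform $C^{1,\alpha}$ bounds from Proposition \ref{p:frequency-bound-U-PE} and Lemma \ref{eq-C1a-U}, pass to a Euclidean limit solving the model problem, and classify the $(\fm-1)$-symmetric, even-in-$y$, $\gamma$-harmonic-trace homogeneous limit as linear (your direct ODE argument, not Lemma \ref{l:cone-splitting-principle}(1) which concerns genuinely harmonic polynomials, is the step that actually carries the weight here). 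The concluding triangle-inequality step for the ``in particular'' clause is also the intended one, so no substantive gap.
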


\subsection{Volume and measure estimates for the singular set}\label{ss:main-results-PE}

In this subsection, we prove the main results of this paper. 
To start with, we recall the equations together with basic settings of the underlying Poincar\'e-Einstein manifolds.

For any given $n\geq 2$, let $\fm \equiv n + 1$ and let $(X^{\fm}, g_+)$ be a complete Poincar\'e-Einstein manifold with $\Ric_{g_+} \equiv - n g_+$. Assume that a closed manifold $M^n$ equipped with a conformal class $[h]$ is 
the conformal infinity of $(X^{\fm},g_+)$.
  We choose Fefferman-Graham's compactification $\bg = \vr^2 g_+ = e^{2w} g_+$ of $(X^{\fm}, g_+)$ as described previously in Section \ref{ss:PE-metrics} such that $-\Delta_{g_+} w = n$ on $X^{\fm}$ and $\bg|_{M^n} = h$. In addition, we also assume that $\bg$ satisfies Assumptions (1) - (4) on $\overline{X^{\fm}}$ as introduced at the beginning of Section \ref{s:results-on-PE}. We also denote by $\fX^{\fm} \equiv \overline{X^{\fm}}\bigcup\limits_{M^n} \overline{X^{\fm}}$ the doubling of $(\overline{X^{\fm}},\bg)$ along the totally geodesic boundary $(M^n, h)$.
  
In the following, we will collect our main results in the above setting. For any fixed $\gamma\in(0,1)$, let $f \in C^{\infty}(M^n)$ satisfy $P_{2\gamma}(f) = 0$ on $B_1(p)  \subset M^n$, where $P_{2\gamma}$ is the fractional GJMS-operator. Let $\fa \equiv 1 - 2\gamma$ and let $U\in H^{1,\fa}(\fX^{\fm})$ be an even solution of the following boundary value problem:
 \begin{align}\label{e:PE-boundary-value-problem}
 \begin{cases}
  	-\Div_{\bg}(\vr^{\fa} \nabla_{\bg} U) + \vr^{\fa} \cJ_{\bg} U = 0 & \text{in} \ \fX^{\fm}
	 	\\
U = f & \text{on} \ M^n,\\
P_{2\gamma} f = 0 &\text{on}\ B_1(p) \subset M^n,
 \end{cases}
 \end{align}
where $\cJ_{\bg}\equiv C_{n,\gamma}R_{\bg}$.

\begin{theorem}\label{t:volume-estimate-general}
For any $j\in\dN$,  $\epsilon>0$, $\Lambda > 0$, $k\leq \fm-2$, and $\fa \in (-1,1)$,  there exist  positive constants  $\mu = \mu(\epsilon,\fm,\Lambda, \fa, \bg)\in(0,1)$ and $C = C(\epsilon,\fm,\Lambda,  \fa, \bg)>0$ such that the following property holds. Let $U$ be the unique even solution of \eqref{e:PE-boundary-value-problem} with 
\begin{align}
\frac{\displaystyle{\int_{\cB_1(p_+)}\vr^{\fa}|\nabla_{\bar{g}}U|^2 \dvol_{\bg}}}{\displaystyle{\int_{\p\cB_1(p_+)}\vr^{\fa}U^2 \dvol_{\bg} }} \leq \Lambda. \end{align}
Then 
\begin{align}
\Vol_{\bg}(B_{\mu^j}(\cS_{\epsilon,\mu^j}^k(U))\cap B_{1/2}(p)) \leq C \cdot (\mu^j)^{\fm - k -\epsilon}.	
\end{align}

\end{theorem}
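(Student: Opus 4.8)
\textbf{Proof strategy for Theorem \ref{t:volume-estimate-general}.} The plan is to run the Cheeger--Naber--Valtorta covering machinery developed in Section \ref{ss:volume-estimate} for the Euclidean model, but now using the Poincar\'e--Einstein-adapted inputs established in Sections \ref{ss:almost-monotonicity} and \ref{ss:quantitative-symmetry-PE}. All estimates being local and scale-invariant in $U$, we may normalize so that $\cN_U(p_+, 1) \leq \Lambda$ after a harmless rescaling (using the uniform lower bound on $H_U$ from Proposition \ref{p:frequency-bound-U-PE}). First I would replace the exact monotonicity of Caffarelli--Silvestre by the almost monotonicity of Theorem \ref{t:almost-monotonicity}: the factor $e^{Ct}$ distorts the ``defect'' $\cN_U(x,s) - \cN_U(x,s\gamma)$ only by an additive term of size $O(s)$, so after summing over dyadic scales the total number of ``bad scales'' (where the defect exceeds $\delta$) is still bounded by $Q_0 = Q_0(\epsilon, n, \gamma, \Lambda, \fa, \bg)$. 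This is exactly the analogue of Corollary \ref{c:controlling-bad-scales}, and it uses Proposition \ref{p:frequency-bound-U-PE} to control the frequency uniformly at the top scale together with Proposition \ref{p:quantitative-symmetry-U-PE} to convert small frequency-defect into $(0,\epsilon,s,\fa)$-symmetry. Points where the frequency drops below the threshold $\epsilon_0$ of Proposition \ref{p:eps-non-vanishing} are ``good'' (non-vanishing gradient, hence not in the singular set), so they may be discarded outright.

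Next I would carry out the effective covering exactly as in Proposition \ref{p:effective-covering}: fix $\mu \in (0,1)$ small, set $r_j = \mu^j$, and label each point $x \in \cS^k_{\epsilon,\mu^j}(U) \cap B_{1/2}(p)$ by the $j$-tuple $\overline{T}^j(x)$ recording at which of the scales $r_1,\dots,r_j$ the function $U$ fails to be $(0,\epsilon,r_\ell,\fa)$-symmetric at $x$. By the bad-scale bound, $E(\overline{T}^j) = \emptyset$ whenever $|\overline{T}^j| \geq D$ for $D = D(\epsilon,\mu,\Lambda,n,\fa,\bg)$, so there are at most $j^D$ nonempty labels. For each label, one builds the covering of $\cS^k_{\epsilon,\mu^j}(U) \cap E(\overline{T}^j)$ by induction on the scales: at a ``bad'' scale ($\overline{T}^j_\ell = 1$) cover the relevant piece by the trivial $C_1(n)\mu^{-n-1}$ balls of the next radius; at a ``good'' scale ($\overline{T}^j_\ell = 0$) invoke the inductive cone-splitting Proposition \ref{p:inductive-cone-splitting-PE} to show that the piece lies in a $\tfrac{\mu^\ell}{10}$-neighborhood of a $k$-dimensional affine subspace, hence is covered by $C_0(n)\mu^{-k}$ balls. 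Multiplying through, each label yields at most $(C_1\mu^{-n-1})^D (C_0\mu^{-k})^{j-D}$ balls of radius $\mu^j$, so in total at most $j^D (C_1\mu^{-n-1})^D (C_0\mu^{-k})^{j-D}$ such balls. Choosing $\mu$ small enough (depending on $\epsilon, n, \Lambda$) that $j^D C_0^{j-D} \leq C \mu^{-\epsilon j}$, the volume of the $\mu^j$-tubular neighborhood in $(\fX^\fm,\bg)$ is bounded by (number of balls)$\times C(\bg) (\mu^j)^\fm \leq C (\mu^j)^{\fm - k - \epsilon}$, since $\bg$ is a fixed smooth metric on the doubled manifold and geodesic balls have comparable volume to Euclidean ones at small radius. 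Restricting to $B_{1/2}(p)$ gives the claim.

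The one genuinely new technical point compared to the model case is that the cone-splitting and quantitative symmetry statements now live on a manifold whose metric $\bg$ degenerates along $M^n$ through the weight $\vr^\fa$, and the compactness arguments behind Propositions \ref{p:quantitative-symmetry-U-PE} and \ref{p:inductive-cone-splitting-PE} rescale $\bg$ at a point of $M^n$ to the flat Euclidean metric with the flat weight $|y|^\fa$. I expect the main obstacle to be bookkeeping the error terms: in the almost-monotonicity the distortion $O(t)$ must be absorbed uniformly across the $O(j)$ scales used, so one needs $r_0$ (hence the starting scale) chosen small enough that $\sum_{\ell} O(\mu^\ell) < \delta$; and in the rescaled limit one must verify that the blow-up functions $U_{x_+,t}$ converge in $H^{1,\fa}\cap C^1_{loc}$ to a genuine solution of the Euclidean extension problem \eqref{e:symmetric-extension}, which requires the $C^{1,\alpha}$ (indeed $C^{\fm-2,\alpha}$) regularity of Lemma \ref{eq-C1a-U} together with the convergence $\bg_t \to g_0$ and $\vr^\fa/\mathrm{dist}^\fa \to 1$ from Lemma \ref{lem-rho-a}. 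Once these are in place, the cone-splitting principle of Lemma \ref{l:cone-splitting-principle} applies to the limiting homogeneous polynomial verbatim, and the covering count goes through as in Cheeger--Naber--Valtorta.

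\textbf{Corollaries.} As in the model case, Theorem \ref{t:volume-estimate-general} plus the almost-$(\fm-1)$-symmetry lemma at the end of Section \ref{ss:quantitative-symmetry-PE} (which shows $\cC(U) \subset \cS^{\fm-2}_{\epsilon,r}(U)$ for suitable $\epsilon$) yields $\dim_{\Min}(\cC(U)\cap B_{1/2}(p)) \leq \fm - 2 = n-1$, recovering the Minkowski dimension bound of Theorem \ref{t:volume-estimate-extension} after rescaling back to the original normalization $\cE_f(p,2)\leq \Lambda$. The boundary estimates of Theorems \ref{t:volume-estimate-boundary}--\ref{t:Hausdorff-measure-estimate-(U,f)} then follow by the horizontal-stratification refinement and the $\epsilon$-regularity argument of Section \ref{ss:epsilon-regularity}, transplanted to the Poincar\'e--Einstein setting using Proposition \ref{p:higher-order-approximation-PE}.
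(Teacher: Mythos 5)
Your proposal follows essentially the same route as the paper: the authors' proof of Theorem \ref{t:volume-estimate-general} consists of running the effective-covering argument of Proposition \ref{p:effective-covering} verbatim, with Lemma \ref{l:frequency-control-large-to-small} replaced by Proposition \ref{p:frequency-bound-U-PE}, Proposition \ref{p:quantitative-symmetry} replaced by Proposition \ref{p:quantitative-symmetry-U-PE}, and Corollary \ref{c:inductive-splitting} replaced by Proposition \ref{p:inductive-cone-splitting-PE}. Your write-up simply makes explicit the intermediate steps (almost-monotonicity yielding a uniform bound on bad scales, the discard of low-frequency points via Proposition \ref{p:eps-non-vanishing}, the blow-up compactness, and the final volume comparison) that the paper compresses into a citation of the model-case argument.
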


\begin{proof}
The strategy of proof is the same as that of Theorem  \ref{p:effective-covering}. Here we only mention the technical difference.	

In the proof of Theorem  \ref{p:effective-covering}, the uniform 
boundedness estimate in Lemma \ref{l:frequency-control-large-to-small} is replaced by Proposition 	\ref{p:frequency-bound-U-PE}, the quantitative rigidity in Proposition \ref{p:quantitative-symmetry} is replaced by Proposition \ref{p:quantitative-symmetry-U-PE}, and 
the inductive splitting property in Corollary \ref{c:inductive-splitting} is replaced by 
	Proposition \ref{p:inductive-cone-splitting-PE}.
\end{proof}

Next, we will exhibit two results on the Hausdorff measure estimates. 
Regarding the $(\fm - 1)$-dimensional Hausdorff measure estimate on the zero set of the solution $U$. We have the following.
 \begin{theorem}[Hausdorff measure of the zero set]\label{t:zero-set-PE}
For $p\in M^n$, let $U$ be the unique even solution of \eqref{e:PE-boundary-value-problem} that satisfies 
\begin{align}
\frac{\displaystyle{\int_{\cB_1(p_+)}\vr^{\fa}|\nabla_{\bar{g}}U|^2 \dvol_{\bg}}}{\displaystyle{\int_{\p\cB_1(p_+)}\vr^{\fa}U^2 \dvol_{\bg} }} \leq \Lambda. \label{e:bounded-energy-large-scale} \end{align}
There exists a constant $C=C(\fm,\Lambda,\fa,\bg)>0$ such that 
\begin{align}
\cH^{\fm - 1}(\cZ(U) \cap B_{1/2}(p)) \leq C.	
\end{align}

 \end{theorem}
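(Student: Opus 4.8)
\textbf{Proof strategy for Theorem \ref{t:zero-set-PE}.}
The plan is to run the standard frequency-based nodal set estimate adapted to the degenerate/nonlocal setting, following the template in \cite{HHL, naber-valtorta} but using the almost monotonicity formula (Theorem \ref{t:almost-monotonicity}) in place of the exact Euclidean monotonicity. First I would reduce to a local statement: by Proposition \ref{p:frequency-bound-U-PE}, the energy bound \eqref{e:bounded-energy-large-scale} guarantees that $\cN_U(x,r)\le N_0$ for all $x\in\cB_{1/2}(p_+)$ and $r\in(0,r_0)$, where $N_0$ depends only on $\Lambda,\fm,\fa,\bg$. So it suffices to bound $\cH^{\fm-1}(\cZ(U)\cap \cB_{r_0/10}(x))$ uniformly for such $x$, then cover $B_{1/2}(p)$ by boundedly many such balls (using Assumptions (4)–(5) on the geometry of $\overline{X^{\fm}}$).

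The core of the argument is a Federer-type dimension reduction / covering scheme on the doubled manifold $\fX^{\fm}$, where $U$ is an even $C^{\fm-2,\alpha}$ solution of the (genuinely elliptic-with-$A_2$-weight) equation \eqref{e:PE-boundary-value-problem}. The key quantitative inputs are: (i) the almost monotonicity of $e^{Ct}\cN_U(x,t)$, which at points of bounded frequency forces the blow-ups $U_{x,t}$ to be $H^{1,\fa}$-close to homogeneous polynomials on all but a definite number of scales (Corollary-type consequence of Theorem \ref{t:almost-monotonicity} and Proposition \ref{p:quantitative-symmetry-U-PE}); (ii) the fact that a nonzero homogeneous polynomial solving the model equation $\Div(|y|^{\fa}\nabla U)=0$ has a codimension-$1$ zero set with $\cH^{\fm-1}$-measure on $\cB_1$ controlled by its degree, which in turn is $\le N_0$. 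Concretely, I would show a local $\epsilon$-regularity/packing statement: if $U$ is $(0,\epsilon,r,\fa)$-symmetric at $x$ then on $\cB_{r}(x)$ the set $\cZ(U)$ is well-approximated by the zero set of the nearby polynomial, so $\cH^{\fm-1}(\cZ(U)\cap \cB_{\theta r}(x))\le (1+\tau)\,\theta^{\fm-1}\cH^{\fm-1}(\cZ(U)\cap\cB_r(x))$ for a fixed small $\theta$ and $\tau\to 0$ as $\epsilon\to0$. Combining this ``almost-monotone-measure'' estimate on good scales with a crude bound $\cH^{\fm-1}(\cZ(U)\cap\cB_r(x))\lesssim r^{\fm-1}\cdot(\text{bad-scale count})$ on the bounded number of bad scales (Corollary \ref{c:controlling-bad-scales}-type bound via the frequency drop decomposition), one obtains $\cH^{\fm-1}(\cZ(U)\cap\cB_{r_0/10}(x))\le C(\fm,\Lambda,\fa,\bg)$ by iterating down dyadically. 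Alternatively, one can phrase this via the quantitative stratification: $\cZ(U)\setminus \cS(U)$ consists of points where $U$ is locally $\cH^{\fm-1}$-rectifiable with density bounds, and $\cS(U)$ has Minkowski dimension $\le \fm-2$ by Theorem \ref{t:volume-estimate-general} (with $k=\fm-2$), hence $\cH^{\fm-1}$-measure zero; then a covering of the regular part by balls on which the polynomial approximation holds gives the measure bound.

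The main obstacle is controlling the nodal set across the characteristic manifold $\{\vr=0\}=M^n$, where the metric $\bg$ on $\fX^{\fm}$ is only $C^{\fm-2,1}$ and the weight $\vr^{\fa}$ degenerates/blows up. One must check that the blow-up analysis is compatible with the doubling: the blow-up metrics $\bg_t$ converge to the flat metric $g_0$ on $\dR^{\fm}$ and the weight $\vr^{\fa}$ to $|y|^{\fa}$, so the model equation is exactly the Caffarelli–Silvestre one studied in Section \ref{model_section}; the $C^{1,\alpha}$ estimates up to and across $M^n$ (Lemma \ref{lem-eps-C1a}, Lemma \ref{eq-C1a-U}) are what make the polynomial approximation and the resulting measure estimate legitimate at boundary points. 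A secondary technical point is that $\cN_U$ is only \emph{almost} monotone, so one cannot directly conclude rigidity from a single frequency value; this is handled, as in Theorem \ref{t:almost-monotonicity} and Proposition \ref{p:quantitative-symmetry-U-PE}, by absorbing the $e^{Ct}$ factor into the error $\tau$ in the almost-monotone-measure inequality, which is harmless since only finitely many (boundedly many, by the $\Lambda$-bound) scales need to be summed before the radius is small enough that $e^{Cr_0}-1$ is negligible. Once these points are in place, the proof is a routine iteration, so I would not belabor the computations.
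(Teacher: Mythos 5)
The paper does not actually give a proof of Theorem~\ref{t:zero-set-PE}; it explicitly defers to Hardt--Lin and Lin \cite{HL-geometric-measure,Lin}, noting only that the three ingredients used there — uniform doubling coming from (almost) monotonicity of the frequency, compactness of bounded-energy solutions, and harmonic-polynomial approximation — are available in the degenerate Poincar\'e--Einstein setting via Theorem~\ref{t:almost-monotonicity}, Lemma~\ref{l:L2-estimate-large-scale}, Lemma~\ref{l:doubling-fixed-scale}, and the regularity results of Section~\ref{ss:regularity-of-solutions}. Your proposal assembles the right ingredients, but your \emph{primary} route has a genuine gap: the ``almost-monotone-measure'' inequality $\cH^{\fm-1}(\cZ(U)\cap\cB_{\theta r})\le (1+\tau)\theta^{\fm-1}\cH^{\fm-1}(\cZ(U)\cap\cB_r)$ only passes an upper bound from a larger radius to a smaller one, so iterating it down dyadically never produces a bound on $\cH^{\fm-1}(\cZ(U)\cap\cB_{1/2})$ without an anchor at some scale — and the ``crude bound $\lesssim r^{\fm-1}\cdot(\text{bad-scale count})$'' you invoke on bad scales presupposes precisely the finiteness you are trying to prove, so the argument as stated is circular.

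Your \emph{alternative} route — decompose $\cZ(U)$ into a quantitatively regular part, on which the near-linear polynomial approximation of Lemma~\ref{l:almost-(m-1)-symmetric} gives a local $C^1$ graph and hence a local measure bound, plus the quantitative singular strata, which are $\cH^{\fm-1}$-null by Theorem~\ref{t:volume-estimate-general} — is sound, but it is heavier machinery than the paper invokes and in particular uses the full stratification theorem as input. The Hardt--Lin/Lin argument the paper cites is more direct and works at a \emph{single} fixed scale with no iteration: uniform doubling plus compactness gives, on every ball of a definite radius, an $L^2$-approximation of $U$ by a homogeneous polynomial of degree $\le N_0(\Lambda,\fm,\fa,\bg)$; one then slices by a generic family of lines, shows each line meets $\cZ(U)$ in at most $N_0$ points by a stable B\'ezout count (this is where compactness is used a second time), and integrates over the line family (Crofton) to get $\cH^{\fm-1}(\cZ(U)\cap\cB_r)\le C(N_0)\,r^{\fm-1}$. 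Covering $B_{1/2}(p)$ by boundedly many such balls finishes the proof without any reference to the singular strata.
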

The uniformly elliptic version of Theorem \ref{t:zero-set-PE} was proved in \cite{HL-geometric-measure, Lin}. The main technical tools employed there include the uniform doubling property (guaranteed by the almost monotonicity of frequency), compactness of solutions with bounded energy, and harmonic approximation. In our setting, all these tools are available. Since the main strategy and arguments are the same, we omit the proof of Theorem \ref{t:zero-set-PE}.

The last result is the $(\fm - 2)$-dimensional Hausdorff measure estimate for the singular set of the solution $U$ of \eqref{e:PE-boundary-value-problem}.
\begin{theorem}
	[Hausdorff measure estimate] \label{t:Hausdorff-measure-estimate-PE}
For $p\in M^n$, let $U$ be the unique solution of \eqref{e:PE-boundary-value-problem} with 
\begin{align}
\frac{\displaystyle{\int_{\cB_1(p_+)}\vr^{\fa}|\nabla_{\bar{g}}U|^2 \dvol_{\bg}}}{\displaystyle{\int_{\p\cB_1(p_+)}\vr^{\fa}U^2 \dvol_{\bg} }} \leq \Lambda.  \end{align}
There exists a constant $C=C(\fm,\Lambda,\fa,\bg)>0$ such that 
\begin{align}
\cH^{\fm - 2}(\cS(U) \cap B_{1/2}(p)) \leq C.	
\end{align}
	\end{theorem}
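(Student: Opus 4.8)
The plan is to follow the scheme that the paper has already set up in the model (Euclidean) case in Section~\ref{ss:Hausdorff-measure}, namely Theorem~\ref{t:Hausdorff-measure-estimate-Euclidean}, and combine it with the quantitative stratification machinery built in Section~\ref{ss:main-results-PE} for the general Poincar\'e--Einstein setting. The three ingredients one needs are: (i) the Minkowski-type / volume estimate on the quantitative strata $\cS^k_{\epsilon,\mu^j}(U)$, which is exactly Theorem~\ref{t:volume-estimate-general}; (ii) an $\epsilon$-regularity statement near $(\fm-2)$-symmetric homogeneous blow-ups, the analogue of Theorem~\ref{t:eps-reg-Euclidean}; and (iii) the almost-monotonicity of the generalized frequency (Theorem~\ref{t:almost-monotonicity}) together with the uniform frequency bound (Proposition~\ref{p:frequency-bound-U-PE}), which are used both to run the covering and to pass the large-scale energy bound \eqref{e:bounded-energy-large-scale} to all small scales.

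\emph{First} I would establish the $\epsilon$-regularity theorem in the Poincar\'e--Einstein setting: there exist $\epsilon=\epsilon(\fm,\fa,\Lambda,\bg)>0$ and $\bar r>0$ such that if $U$ solves \eqref{e:PE-boundary-value-problem} with frequency $\le\Lambda$ and if the rescaled tangent map $\cT_{x_+,s}U$ is $L^2$-close (within $\epsilon$) to a normalized $(\fm-2)$-symmetric homogeneous polynomial $P$ solving the \emph{flat} equation \eqref{e:symmetric-extension}, then $\cH^{\fm-2}(\cC(U)\cap\cB_r(x_+))\le C(\fm,\Lambda)r^{\fm-2}$ for all $r\le\bar r$. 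The key point here is that the blow-up limits of $U$ at small scales are solutions of the flat degenerate equation (since $\bg_t\to g_0$ and $t^2\cJ_{\bg}\to 0$ as $t\to 0$, by Lemma~\ref{l:frequency-rescaling} and the remark following it), so the classification of tangent maps as homogeneous hypergeometric polynomials (Lemma~\ref{l:hypergeometric-polynomial-critical-point}) and the stability result Theorem~\ref{t:HHL-stability} apply verbatim after the rotation-genericity argument of Theorem~\ref{t:eps-regularity-smooth-approximation}. One subtlety, flagged in the remark after Lemma~\ref{eq-C1a-U}, is that $U$ has only \emph{finite} regularity $C^{\fm-2,\alpha}$ in general; but under Assumptions (1)--(3) at the start of Section~\ref{s:results-on-PE} the compactified metric is $C^\infty$, so $U$ is smooth and the required $C^{2d^2}$-closeness in Theorem~\ref{t:eps-regularity-smooth-approximation} is available — I would use this and not attempt the low-regularity adaptation here.

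\emph{Next}, with $\epsilon$-regularity in hand, I would run the decomposition of Theorem~\ref{t:Hausdorff-measure-estimate-Euclidean} line by line. Fix the $\epsilon$ from $\epsilon$-regularity and the corresponding $\mu$ from Theorem~\ref{t:volume-estimate-general}. Using Lemma~(the $(\fm-1)$-symmetry lemma at the end of Section~\ref{ss:quantitative-symmetry-PE}) one has $\cC(U)\subset\cS^{\fm-2}_{\epsilon,1}(U)$, and then writes
\begin{align*}
\cC(U)\cap B_{1/2}(p)=\Big(\bigcup_{j\ge 0}\cC^{(j)}(U)\Big)\bigcup\Big(\cC(U)\cap\bigcap_{j\ge1}\cS^{\fm-3}_{\epsilon,\mu^j}(U)\Big),
\end{align*}
where $\cC^{(j)}(U)$ is the part of the critical set in $\cS^{\fm-2}_{\epsilon,\mu^j}\setminus\cS^{\fm-3}_{\epsilon,\mu^j}$ that was still $(\fm-3)$-singular at the previous scale. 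The ``tail'' term has $\cH^{\fm-2}$-measure zero because Theorem~\ref{t:volume-estimate-general} covers $\cS^{\fm-3}_{\epsilon,\mu^j}(U)$ by $\le C(\mu^j)^{3-\fm-\epsilon}$ balls of radius $\mu^j$, giving $\cH^{\fm-2}$-content $\le C(\mu^j)^{1-\epsilon}\to0$. For each $\cC^{(j)}(U)$ one picks a $\mu^j\bar r$-covering centered on it with $\tfrac1{10}$-disjoint balls; Theorem~\ref{t:volume-estimate-general} bounds the number of such balls by $C(\mu^j\bar r)^{3-\fm-\epsilon}$; at each center the defining property of $\cC^{(j)}$ provides a scale $s\in[\mu^j,\mu^{j-1}]$ at which $U$ is $(0,\epsilon,s,\fa)$-close to a homogeneous polynomial, which after a compactness argument can be taken to solve the flat equation, so $\epsilon$-regularity gives $\cH^{\fm-2}(\cC(U)\cap\cB_{\mu^j\bar r})\le C(\mu^j\bar r)^{\fm-2}$ on each ball; summing yields $\cH^{\fm-2}(\cC^{(j)}(U))\le C\mu^{(1-\epsilon)j}$, and the geometric series converges. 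Since $\cS(U)\subset\cC(U)$ this proves the theorem.

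\emph{The main obstacle} I anticipate is precisely step (i)--(ii): establishing the $\epsilon$-regularity theorem in the curved, weighted, degenerate setting. The flat proof of Theorem~\ref{t:eps-regularity-smooth-approximation} relies on exact homogeneity of $P$ and on holomorphic extension with an isolated zero at the origin; in the Poincar\'e--Einstein case the relevant $P$ is only the blow-up limit, and one must be careful that (a) the compactness/contradiction argument producing $P$ as a solution of the flat equation \eqref{e:symmetric-extension} is uniform in the center $x_+\in B_{1/2}(p)$, which is where the uniform frequency bound of Proposition~\ref{p:frequency-bound-U-PE} and the almost-monotonicity Theorem~\ref{t:almost-monotonicity} enter, and (b) the $C^k$-closeness of $U$ (rescaled) to $P$ on a fixed ball, needed to invoke Theorem~\ref{t:HHL-stability}, follows from $L^2$-closeness plus the interior Schauder estimates of Lemma~\ref{eq-C1a-U} — this uses the $C^\infty$-regularity granted by Assumptions (1)--(3), without which one would be stuck at finite order. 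Once this uniform curved $\epsilon$-regularity is secured, the rest of the argument is a bookkeeping exercise identical to the Euclidean case, so I would allot most of the effort to stating and proving the curved analogue of Theorems~\ref{t:eps-regularity-smooth-approximation}--\ref{t:eps-reg-Euclidean}.
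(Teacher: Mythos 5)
Your bookkeeping — the dyadic decomposition of $\cC(U)$ into $\bigcup_j\cC^{(j)}(U)$ plus a tail in $\bigcap_j\cS^{\fm-3}_{\epsilon,\mu^j}$, the $\cH^{\fm-2}$-negligibility of the tail via Theorem~\ref{t:volume-estimate-general}, and the geometric-series summation fed by an $\epsilon$-regularity theorem — reproduces the paper's scheme, and that part is fine. But there is a genuine gap at the $\epsilon$-regularity step, and it is the one piece the paper singles out as new.

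You resolve the regularity subtlety by claiming that under Assumptions (1)--(3) the compactified metric $\bg$ is $C^\infty$, hence $U$ is smooth, hence the $C^{2\fm^2}$-closeness to a homogeneous $P$ required by Theorem~\ref{t:HHL-stability} is directly available. This is not the case. Assumptions (1)--(3) do give $\bg\in C^\infty(\overline{X^\fm})$, but the entire $\epsilon$-regularity and critical-set counting argument takes place on a ball of the \emph{doubled} manifold $\fX^\fm=\overline{X^\fm}\bigcup_{M^n}\overline{X^\fm}$, and — as the paper emphasizes both at the start of Section~\ref{s:results-on-PE} and in the remark after Proposition~\ref{p:higher-order-approximation-PE} — the even reflection of $\bg$ across the totally geodesic $M^n$ is generically only $C^{\fm-2,1}$: the first odd-in-$y$ term of the Fefferman--Graham normal expansion survives as a nonsmooth $|y|$-power. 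So $U$, as a solution on $\fX^\fm$, has only finite regularity across $M^n$, and the quantity $\|\cT_{x_+,r}U-P\|_{C^{2\fm^2}(\cB_{2/3})}$ you would need to control is not even finite, let alone small; no Schauder bootstrap on the doubled ball can fix this. This is exactly the obstruction that Proposition~\ref{p:higher-order-approximation-PE} — flagged in the introduction as one of the paper's new observations — is designed to overcome. It does not raise the regularity of $U$; it \emph{replaces} $U$ by a manufactured $\tU\in C^{2\fm^2}(\cB_{2r/3}(x_+))$, built from the one-sided Taylor expansion of $U$ in the normal variable $y$ to high order $k_0$ on $\overline{\cB^+_{2r/3}(x_+)}$ (where $\bg$ is genuinely smooth and $U$ is smooth up to the boundary), extended to $\{y<0\}$ by flipping the signs of the odd-order coefficients. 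Because $P$ is even in $y$ and $\deg P\le N_0<k_0$, this $\tU$ agrees with $U$ on the upper closed half-ball (hence has the same restricted critical set on $M^n$), lies in $C^{2\fm^2}$, and is $C^{2\fm^2}$-close to $P$; it is $\tU$, not $U$, that Theorems~\ref{t:eps-regularity-smooth-approximation} and \ref{t:HHL-stability} are applied to. Omitting this construction leaves a hole in the $\epsilon$-regularity step and hence in the Hausdorff measure estimate.
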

	
The proof of Theorem \ref{t:Hausdorff-measure-estimate-PE} is the same as Theorem \ref{t:Hausdorff-measure-estimate-Euclidean}. The main step is to establish the following $\epsilon$-regularity theorem which replaces Theorem \ref{t:eps-reg-Euclidean}.
		
	\begin{theorem}
[$\epsilon$-regularity] \label{t:eps-reg-PE} 
For $p\in M^n$, let $U$ be the unique solution of \eqref{e:PE-boundary-value-problem} that satisfies \eqref{e:bounded-energy-large-scale}. Then there exist constants $\epsilon(\fm,\fa,\Lambda,\bg) > 0$
and $\bar{r}(\fm,\fa,\Lambda,\bg) > 0$
such that if there exists a normalized 
homogeneous polynomial $P$ with $(\fm - 2)$-symmetries such that 
\begin{align}
\fint_{\p\cB_1(\bo_+)} \vr^{\fa} P^2 \dsg = 1\quad \text{and} \quad 
\int_{\p\cB_1(\bo_+)}\vr^{\fa}(U_{x_+, r} - P)^2 \dsg \leq \epsilon,	\label{e:L2-approximation-PE}
\end{align}
for some $x_+ \in B_{1/2}(p)$, then for all $r\leq \bar{r}$, 
\begin{align}
\cH^{\fm - 2}(\cS(U) \cap B_r(x_+))
\leq C(\fm,\fa,\Lambda,\bg) r^{\fm - 2}.	
\end{align}
\end{theorem}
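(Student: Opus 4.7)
The plan is to reduce the statement to the Euclidean $\epsilon$-regularity result of Theorem \ref{t:eps-regularity-smooth-approximation} via a blow-up/rescaling argument, and to overcome the limited regularity of $\bg$ (and hence of $U$) by suitably correcting the solution to gain additional smoothness, using the higher order approximation we already established (Proposition \ref{p:higher-order-approximation-PE}). First I would work with the normalized blow-up $\sU \equiv U_{x_+,r}$ in the rescaled metric $\bg_r(s,\Theta) = \bg(rs,\Theta)$ on the unit ball $\cB_1(x_+,\bg_r)$. By Lemma \ref{l:frequency-rescaling} the rescaled equation has coefficients $\vr^{\fa}$ and a lower order term $r^2 \cJ_{\bg}$; as $r \to 0$, the geometry converges smoothly to the flat model and the PDE converges to the Euclidean Caffarelli--Silvestre equation $\Div(|y|^{\fa}\nabla U) = 0$.

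Next, starting from the weighted $L^2$ closeness \eqref{e:L2-approximation-PE}, I would propagate this into interior $C^{1,\alpha}$ closeness by combining: (i) the Caccioppoli and Moser estimates together with the local $C^{1,\alpha}$ theory of Lemma \ref{lem-eps-C1a} to get uniform bounds on $\sU$; (ii) the doubling property in Lemma \ref{l:doubling-fixed-scale}, which together with Proposition \ref{p:frequency-bound-U-PE} ensures that the frequency is controlled on nearby scales and prevents degeneration of the normalization; and (iii) a standard compactness/contradiction argument showing that if $\|\sU - P\|_{L^{2,\fa}(\p\cB_1)} < \epsilon$ for $\epsilon$ small and $r < \bar r$ small, then $\|\sU - P\|_{C^{1,\alpha}(\cB_{3/4})} < \tau(\epsilon,r)$ with $\tau \to 0$ as $\epsilon, r \to 0$.

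The main obstacle is upgrading this $C^{1,\alpha}$ closeness to closeness in $C^{2d^2}$, which is what Theorem \ref{t:eps-regularity-smooth-approximation} demands. The solution $\sU$ itself enjoys only finite smoothness since $\bg$ is generically $C^{\fm-2,\alpha}$ (and only under obstruction flatness do we gain more). The fix, as in our Proposition \ref{p:higher-order-approximation-PE}, is to replace $\sU$ by a correction $\widetilde{\sU} = \sU + \sum_{j} V_j$, where each $V_j$ is a suitably chosen polynomial correction of order $j$ designed so that $\widetilde{\sU}$ formally solves the Euclidean equation $\Div(|y|^{\fa}\nabla \widetilde{\sU}) = 0$ up to an error of arbitrarily high order in $r$, and so that $\widetilde{\sU}$ is $C^{2d^2}$. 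The $V_j$ are obtained by inductively matching the Taylor expansion of $\bg_r$ (whose non-Euclidean part vanishes at $x_+$ to finite order) against homogeneous solutions of the Euclidean model equation. Because $P$ itself solves the Euclidean extension equation, the $C^{1,\alpha}$ closeness of $\sU$ to $P$ together with the definition of the corrections forces $\|V_j\|_{C^{2d^2}} \to 0$, so that $\|\widetilde{\sU} - P\|_{C^{2d^2}(\cB_{1/2})}$ can be made smaller than any prescribed $\delta_0$ by choosing $\epsilon$ and $\bar r$ small.

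Finally, I would apply Theorem \ref{t:eps-regularity-smooth-approximation} to $\widetilde{\sU}$, noting that $P$ is a nonconstant $(\fm-2)$-symmetric homogeneous polynomial solving \eqref{e:symmetric-extension}, so its hypotheses are met with the $\delta,r$ from that theorem. This gives
\begin{align*}
\mathcal{H}^{\fm-2}\bigl(\cC(\widetilde{\sU}) \cap \cB_r(\bo_+)\bigr) \leq C(\fm)(d-1)^2 r^{\fm-2}.
\end{align*}
Because the corrections $V_j$ are smooth polynomials with $\|\nabla V_j\|$ uniformly small, a short perturbation argument (using that the zeros of $\nabla \widetilde{\sU}$ and $\nabla \sU$ are close in the Hausdorff sense on the region where $\nabla P$ is nondegenerate away from its vanishing lines, as identified in Lemma \ref{l:hypergeometric-polynomial-critical-point}) transfers the bound to $\cC(\sU)$ and hence to $\cS(\sU) \subset \cC(\sU)$. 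Rescaling back via $\cT_{x_+,r}^{-1}$ yields $\mathcal{H}^{\fm-2}(\cS(U) \cap \cB_r(x_+)) \leq C(\fm,\fa,\Lambda,\bg)\, r^{\fm-2}$ for all $r \leq \bar r$, completing the proof. The delicate point to watch is that the number of necessary corrections, and hence how much metric regularity we consume, depends on the degree $d = \cN_U(x_+,0) \leq \Lambda$, but this is bounded in terms of $\Lambda$ alone by the almost monotonicity Theorem \ref{t:almost-monotonicity}, so the constants depend only on the declared parameters.
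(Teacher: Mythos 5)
Your overall plan---reduce to the Euclidean $\epsilon$-regularity of Theorem \ref{t:eps-regularity-smooth-approximation} via rescaling, and repair the finite regularity of $U$ by means of Proposition \ref{p:higher-order-approximation-PE}---correctly identifies the skeleton that the paper uses. But the central step of your argument, the construction of the regularized function $\widetilde{\sU}$, cannot work as you describe it, and it also misrepresents what Proposition \ref{p:higher-order-approximation-PE} actually does. You propose $\widetilde{\sU} = \sU + \sum_j V_j$ with each $V_j$ a smooth polynomial, chosen so that $\widetilde{\sU}$ solves the Euclidean model equation to high order and is $C^{2\fm^2}$. Adding polynomials to a function can never raise its regularity class: if $\sU$ is only $C^{\fm-2,\alpha}$ across $M^n$ (which is the true obstruction, coming from the doubled metric being only $C^{\fm-2,1}$), then $\sU + \sum_j V_j$ is exactly as smooth as $\sU$, namely $C^{\fm-2,\alpha}$, no matter how the polynomials are chosen. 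The regularity defect is a feature of $\sU$ itself, not a consequence of its failure to satisfy the flat equation, so designing the $V_j$ to correct the equation to high formal order in $r$ does not touch the actual problem. In the paper, $\tU$ is not an additive perturbation of $U$: it is a \emph{replacement} of $U$ on the lower half of the doubled ball by the Taylor polynomial in the normal variable $y$ (plus a controlled remainder), keeping $\tU \equiv U$ identically on $\overline{\cB^+_{2r/3}(x_+)}$. This reflection-based surgery is what manufactures $C^{2\fm^2}$ regularity across $M^n$, and it is a fundamentally different device from your additive corrections.

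Your subsequent step---transferring the Hausdorff bound from $\cC(\widetilde{\sU})$ back to $\cC(\sU)$ by a ``short perturbation argument''---is also unsound, and it is telling that you need it at all. Upper bounds on the $(\fm-2)$-dimensional Hausdorff measure of critical sets are not stable under $C^1$-small perturbations in any generality; establishing such stability is precisely the content of Theorem \ref{t:HHL-stability} and requires closeness in $C^{2d^2}$, not merely small $\|\nabla V_j\|$. The paper sidesteps this issue entirely: because $\tU \equiv U$ on the closed upper half ball (including the portion of $M^n$), the restriction of the critical set to $M^n$ is literally the same for $\tU$ and $U$, so no transfer argument is needed. To repair your proof you would need to abandon the additive Ansatz and instead build, as in Proposition \ref{p:higher-order-approximation-PE}, a regular extension of $U$ from one side of $M^n$ using its one-sided Taylor data, while keeping $\tU \equiv U$ on the side where the original solution lives. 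The remaining steps you outline---the doubling and frequency control from Lemma \ref{l:doubling-fixed-scale} and Proposition \ref{p:frequency-bound-U-PE}, replacing $P$ by a model homogeneous solution via compactness, bounding $d$ by $\Lambda$ through Theorem \ref{t:almost-monotonicity}, and invoking Theorem \ref{t:eps-regularity-smooth-approximation}---are all consistent with the paper's argument and would be fine once the construction of $\tU$ is corrected.
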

To prove Theorem \ref{t:eps-reg-PE}, it is the same as Theorem \ref{t:eps-reg-Euclidean} that one needs to upgrade the approximation \eqref{e:L2-approximation-PE} and establish a higher order estimate. That is,	
\begin{proposition}
[Higher order approximation] \label{p:higher-order-approximation-PE} Let $U$ be the unique solution of \eqref{e:PE-boundary-value-problem} that satisfies \eqref{e:bounded-energy-large-scale}. For every $\epsilon > 0$, there exists $\delta (\epsilon, \fm,\fa,\Lambda,\bg) > 0$ and $r_0 (\epsilon, \fm,\fa,\Lambda,\bg) > 0$ such that if there exists a normalized 
homogeneous polynomial $P$ with $(\fm - 2)$-symmetries that satisfies 
\begin{align}
\fint_{\p\cB_1(\bo_+)} \vr^{\fa} P^2 \dsg = 1\quad \text{and}    
\quad 
\int_{\p\cB_1(\bo_+)}\vr^{\fa}(U_{x_+, r} - P)^2 \dsg \leq \epsilon,	
\end{align}
for some $x_+ \in \cZ(U) \cap \cB_{1/2}(p_+)$ and $r\in(0,r_0)$,
then there exists a function $\tU\in C^{2\fm^2}(\cB_{2r/3}(x_+))$ such that $\tU \equiv U$ in $\overline{\cB_{2r/3}^+(x_+)}$ and 
\begin{align}
\|\tU_{x_+, r} - P\|_{C^{2\fm^2}(\cB_{2/3}(\bo_+))} < \epsilon. \label{e:extension-estimate}	
\end{align}  
\end{proposition}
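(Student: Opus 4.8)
\textbf{Proof plan for Proposition \ref{p:higher-order-approximation-PE}.}
The plan is to upgrade the $L^2$-closeness of the rescaled solution $U_{x_+,r}$ to the polynomial $P$ into $C^{2\fm^2}$-closeness, at the cost of passing from $U$ to a modified function $\tU$ which agrees with $U$ on the ``true'' half-ball $\overline{\cB^+_{2r/3}(x_+)}$ but is more regular across the characteristic hypersurface $\{\vr=0\}$. First I would argue by a contradiction/compactness scheme: suppose $\delta_j\to 0$, $r_j\to 0$, and $U_j$ solving \eqref{e:PE-boundary-value-problem} with $\int_{\partial\cB_1(\bo_+)}\vr^\fa (U_{j,x_j,r_j}-P_j)^2\dsg\le\delta_j$ for normalized $(\fm-2)$-symmetric polynomials $P_j$, yet no admissible $\tU_j$ satisfying \eqref{e:extension-estimate} exists. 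Write $\sU_j\equiv U_{j,x_j,r_j}$; by Proposition \ref{p:frequency-bound-U-PE} and the rescaling identities of Lemma \ref{l:frequency-rescaling}, $\cN(\sU_j,x_j,1,\bg_{r_j})\le N_0$ uniformly, so $\|\sU_j\|_{H^{1,\fa}(\cB_1)}$ is bounded; the degrees $\deg P_j$ are bounded in terms of $\Lambda$ so $P_j\to P_\infty$, a normalized $(\fm-2)$-symmetric homogeneous polynomial solving $\Div(|y|^\fa\nabla P_\infty)=0$ on Euclidean space. Since $\bg_{r_j}\to g_0$ and $r_j^2\cJ_{\bg}\to 0$, the equation $-\Div_{\bg_{r_j}}(\vr^\fa\nabla_{\bg_{r_j}}\sU_j)+r_j^2\vr^\fa\cJ_{\bg}\sU_j=0$ degenerates to the Euclidean Caffarelli--Silvestre equation, and by Lemma \ref{eq-C1a-U} together with the De Giorgi--Nash--Moser/Schauder theory of \cite{STV1} the convergence $\sU_j\to \sU_\infty=P_\infty$ holds in $C^k_{\loc}$ on the open half-space $\{\vr>0\}$ for every $k$ (and across $\{\vr=0\}$ up to $C^{\fm-2,\alpha}$).

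The key new point is the construction of $\tU_j$. The obstruction to $C^{2\fm^2}$ regularity of $U_j$ itself is only the limited regularity of the doubled metric $\bg$ across $M^n$ (genuinely finite, $C^{\fm-2,1}$ in general; see Lemma \ref{eq-C1a-U} and the remark after it), not any genuine singularity of the solution. So on a geodesic collar $M^n\times(-2r_j/3,2r_j/3)$ I would take $\tU_j$ on the half $\{\vr>0\}$ to literally equal $U_j$, and on the half $\{\vr<0\}$ replace $U_j$ by a high-order polynomial-in-$\vr$ extension whose coefficients are the tangential derivatives of $U_j$ along $M^n$ up to order $2\fm^2$ — equivalently, solve the formal expansion of the degenerate equation \emph{with the flat Euclidean collar metric} rather than the true $\bg$, using the even reflection $\vr\mapsto-\vr$ dictated by the vanishing of $P_{2\gamma}f$ (no $\vr^{1-\fa}$ term appears). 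Because $U_j$ is already $C^{\fm-2,\alpha}$ up to the boundary and $\bg$ is $C^{\fm-2}$, this extension glues continuously with all derivatives up to order $\fm-2$ and is $C^{2\fm^2}$ by construction on $\{\vr<0\}$, hence $\tU_j\in C^{2\fm^2}(\cB_{2r_j/3}(x_+))$ after rescaling. Its rescaling $\tU_{j,x_j,r_j}$ then converges to $P_\infty$ in $C^{2\fm^2}_{\loc}$: on $\{\vr>0\}$ this is the interior convergence above, and on $\{\vr<0\}$ the coefficients of the extension converge to those of the (real-analytic, even) polynomial $P_\infty$, which is its own such extension. This contradicts the assumed failure of \eqref{e:extension-estimate}.

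The main obstacle I anticipate is precisely verifying that the modified $\tU_j$ is a \emph{consistent} and sufficiently regular object: one must check that the formal power-series-in-$\vr$ reflection can be carried out to order $2\fm^2$ using only the $C^{\fm-2,\alpha}$ regularity actually available (the coefficient $\vr^\fa\cJ_{\bg}$ is only $C^{\fm-3,\beta}$ by Lemma \ref{l:E}), that matching $U_j$ and its extension produces genuinely $C^{2\fm^2}$ data after the rescaling blows up distances by $r_j^{-1}$, and that the process commutes with taking limits — i.e.\ the extension operator is continuous with respect to the relevant norms, so that $P_\infty$ (whose trace data is the limit of the trace data of $U_j$) gets mapped to itself. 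A clean way to organize this is to note that $\tU$ need not solve any PDE: it only has to (a) coincide with $U$ where it matters and (b) be close to $P$ in $C^{2\fm^2}$, and the polynomial-in-$\vr$ ansatz with tangential Taylor coefficients of $U$ achieves both. One then records, for use in the proof of Theorem \ref{t:eps-reg-PE}, that the critical set of $U$ inside $\cB^+_{2r/3}(x_+)$ equals that of $\tU$ there, so the Euclidean $\epsilon$-regularity of Theorem \ref{t:eps-regularity-smooth-approximation}/\ref{t:HHL-stability} applies verbatim to $\tU$.
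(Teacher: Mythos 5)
Your strategy is broadly the same as the paper's — keep $U$ on the true half $\{\vr>0\}$, replace it on $\{\vr<0\}$ by a polynomial-in-$\vr$ extension whose low-order Taylor data matches $U$, and exploit the evenness of $P$ to make the odd Taylor coefficients (and hence $\tU-P$) small — but there is a genuine gap at the crucial regularity step. You state that the extension ``glues continuously with all derivatives up to order $\fm-2$ and is $C^{2\fm^2}$ by construction on $\{\vr<0\}$, hence $\tU_j\in C^{2\fm^2}(\cB_{2r_j/3}(x_+))$ after rescaling.'' This does not follow: if the two pieces only match to order $\fm-2$ across $M^n$, the glued function is at most $C^{\fm-2}$ there, and rescaling by $r_j^{-1}$ (a dilation) cannot upgrade pointwise differentiability. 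As written, your $\tU_j$ fails requirement $\tU\in C^{2\fm^2}(\cB_{2r/3}(x_+))$. Likewise, your $C^{2\fm^2}_{\loc}$ convergence argument gives no control across $M^n$, which is exactly where the issue lives.

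What you are missing is the \emph{one-sided} regularity of $U$. Under the standing assumptions of Section 4 (in particular obstruction flatness of $(M^n,[h])$ when $n$ is even), the Fefferman--Graham metric $\bar g$ is $C^\infty$ up to $M^n$ on $\overline{X^{\fm}}$, and Lemma \ref{eq-C1a-U}(1)--(2) — used as \eqref{e:derivative-estimate-order-k} in the paper's proof — gives $\|U-P\|_{C^k(\overline{\cB^+_{2/3}(\bo_+)})}<C_k\delta$ for every $k$. The $C^{\fm-2,\alpha}$ bound you quote is the regularity of the evenly reflected $U$ in $\fX^{\fm}$, not of $U$ on the closed upper half ball. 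With one-sided $C^\infty$ regularity, the normal Taylor coefficients $c_\ell=\tfrac{1}{\ell!}\partial_\vr^\ell U|_{M^n}$ are defined to any order, and taking $\tU$ on $\{\vr<0\}$ to be the truncated polynomial $\sum_{\ell\le k_0}c_\ell\vr^\ell$ with $k_0\ge 2\fm^2$ gives a genuine $C^{k_0}$ gluing. Two further imprecisions: the coefficients you need are the \emph{normal} ($\vr$-) derivatives of $U$ from the upper side, not ``tangential derivatives along $M^n$''; and your ``equivalently, solve the formal expansion of the degenerate equation with the flat Euclidean collar metric'' is not equivalent — the Euclidean model equation produces normal derivatives that differ from those of the true $U$ (already at order $2$, via $\cJ_{\bg}$ and the curvature of $\bg$), so it will not match $U$ across $M^n$. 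Once the construction is corrected, your compactness wrapper becomes an unnecessary repackaging of the paper's direct quantitative Taylor estimate, which controls the odd $c_\ell$ by $|c_\ell|\lesssim\delta$ (since $P$ has no odd terms) and the remainder by $|y|^{k_0+1}\le(2/3)^{k_0+1}<\epsilon$.
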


\begin{remark}
Note that the doubling metric $\bg$ on $\fX^{\fm}$ is only $C^{\fm - 2,1}$ in general. Especially, when $n$ is odd and the {\it global term} in the normal expansion of $\bg$ is non-vanishing, the $C^{\fm - 2,1}$-smoothness of $\bg$ is optimal.  Since a key tool, theorem \ref{t:HHL-stability}, in proving Theorem \ref{t:eps-reg-PE} requires a very high order approximation of $U$ (up to $2\fm^2$), \eqref{e:L2-approximation-PE} cannot be upgraded to this by directly using elliptic estimates. Proposition \ref{p:higher-order-approximation-PE} constructs a sufficiently regular replacement of $U$ which overcomes insufficient regularity of the metric $\bg$.	
\end{remark}

Once this proposition is proved, the rest of the proof of Theorem \ref{t:eps-reg-PE} is identical to Theorem \ref{t:eps-reg-Euclidean}.

\begin{proof}[Proof of Proposition \ref{p:higher-order-approximation-PE}]
	
To construct the extension $\tU$, let us first analyze the regularity of $U$ in the upper half space. In this proposition, the estimate is required to pass to a small scale $r_0 > 0$ for carrying out elliptic estimates. Indeed, one needs to pass to a small scale on which the elliptic operator is sufficiently close to the model operator in the Euclidean case.     
In order to avoid unnecessary technical complications, we just set $r_0 = 1$ and assume that the distortion between our interested elliptic operator and the (degenerate) Euclidean Laplacian is sufficiently small.  

First, applying elliptic regularity, for any $k\in\dZ_+$, there exists $C_k = C_k (\fm , \fa, \bg) > 0$ such that
\begin{align}
\|U - P\|_{C^k \left( \overline{\cB_{2/3}^+(\bo_+)} \right)} < C_k\delta.	 \label{e:derivative-estimate-order-k}
\end{align}
Notice that, by the standard compactness argument, the homogeneous polynomial $P$ can be chosen as a solution in the model case of \eqref{e:PE-boundary-value-problem},  
$P$ is even in $y$.	 Moreover, there exists a uniform constant $N_0 = N_0(\Lambda, \fm, \fa, \bg) > 0$ such that $d_0 \leq N_0$ since the frequency of $P$ is uniformly bounded due to the compactness.
	
Let us pick a sufficiently large integer $k_0\in\dZ_+$ (to be fixed later) which is determined by the structure of the Taylor expansions of $U$ and $P$. For any $y < \frac{2}{3}$, applying Taylor's theorem in the $y$-direction,
we have that
\begin{align}
	U  =  U_{k_0}  + R_{k_0} \equiv \sum\limits_{\ell = 0}^{k_0} c_{\ell} y^{\ell} + R_{k_0},\quad  R_{k_0} = \frac{U^{(k_0+1)}(\xi y, \cdot)}{(k_0 + 1)!}\cdot y^{k_0 + 1}, \label{e:U-expansion-in-y}\end{align}
where each $c_{\ell} = c_{\ell}(\cdot)$ is independent of $y$, $0< \xi < 1$, and $U^{(k_0+1)}$ denotes the $(k_0+1)^{\text{th}}$-derivative of $U$ in the $y$ direction. 
For the homogeneous polynomial $P$, we write 
$P = \sum\limits_{\substack{\ell = 0, \\ \ell \ \text{is even}}}^{m_0} p_{\ell} y^{\ell}$,
where $m_0\leq d_0$ is some even integer and  every $p_{\ell} = p_{\ell}(\cdot)$ is a polynomial in the directions transversal to the $y$-direction.

Next,
we write   
\begin{align}U - P = (U_{k_0} - P) + R_{k_0} = \sum\limits_{0\leq \ell\leq k_0}\hat{c}_{\ell} y^{\ell} + \widehat{R}_{k_0},\end{align}
where 
$\hat{c}_{\ell} \equiv c_{\ell} - p_{\ell} = \frac{(U-P)^{(\ell)}(0, \cdot)}{\ell!}$ and $\widehat{R}_{k_0}$ is the remainder in Taylor's theorem. 
Now  we are ready to choose the truncation parameter $k_0$.  
For any fixed $\epsilon > 0$, let us choose 
\begin{align}k_0 \equiv \max\left\{2\fm^2, N_0 + 1, \frac{\log\epsilon}{\log(2/3)}\right\}.\end{align} Immediately, $y^{k_0 + 1} \leq (2/3)^{k_0 + 1} < \epsilon$. For such an integer $k_0$, one can further choose a sufficiently small $\delta = \delta(\epsilon, \fm, \fa, \bg) > 0$ such that if \eqref{e:L2-approximation-PE} holds, then 
\begin{align}\|\nabla^{\nu} (U - P)\|_{C^0(\cB_{2/3}^+(\bo_+))} < \frac{\epsilon}{2 (k_0 + 1)}, \quad \forall \ 0\leq \nu\leq 2\fm^2 + k_0 + 1. \label{e:fixed-derivative-error}
\end{align}
We also make some simple observations. 
\begin{enumerate}
	\item Since $\deg(P)$ is even, it follows that $\hat{c}_{\ell}  = c_{\ell}$ for any odd number $0\leq \ell \leq k_0$.
	\item By our choice, $k_0 > N_0 \geq  d_0$, which gives $\widehat{R}_{k_0} \equiv R_{k_0}$. 
\end{enumerate}
By observation (2) and \eqref{e:fixed-derivative-error},
\begin{align} \|R_{k_0}\|_{C^{2\fm^2}(\cB_{2/3}^+(\bo_+))} = \|\widehat{R}_{k_0}\|_{C^{2\fm^2}(\cB_{2/3}^+(\bo_+))}\leq \frac{\|\nabla^{2\fm^2 + k_0 + 1} (U - P)\|_{C^0(\cB_{2/3}^+(\bo_+))}}{(k_0 + 1)!}\cdot y^{k_0 + 1}. \label{e:remainder-bound}
\end{align}
So the remainder yields
\begin{align}
 \|R_{k_0}\|_{C^{2\fm^2}(\cB_{2/3}^+(\bo_+))}  < \frac{\epsilon^2}{10}.
\end{align}

Finally, we define
 \begin{align}
 \tU \equiv	\begin{cases}
 		U, & y \geq 0, 
 		\\
 		&
 		\\
 		\sum\limits_{\substack{0\leq \ell\leq k_0, \\ \ell \ \text{is odd}}}(-c_{\ell}) y^{\ell} +\sum\limits_{\substack{0\leq \ell\leq k_0, \\ \ell \ \text{is even}}} c_{\ell} y^{\ell} + R_{k_0}, & y < 0.
 	\end{cases}
 \end{align}
 	Clearly, $\tU \in C^{2\fm^2}(\cB_1(\bo_+))$. The rest of the proof is to verify the approximation \eqref{e:extension-estimate} in the lower half space.
 	In fact, 	
 	one can write \begin{align}
\tU - P = \sum\limits_{\substack{0\leq \ell\leq k_0, \\ \ell \ \text{is odd}}}(-\hat{c}_{\ell}) y^{\ell} +\sum\limits_{\substack{0\leq \ell\leq k_0, \\ \ell \ \text{is even}}} \hat{c}_{\ell} y^{\ell} + \widehat{R}_{k_0}  \quad y < 0.  	\end{align}
 Then 
 \begin{align}
 \begin{split}
 	\|\tU - P\|_{C^{2\fm^2}(\cB_{2/3}^-(\bo_+))}
 	\leq &\ ( k_0 + 1 ) \cdot \max\limits_{0\leq \nu k_0}\|\nabla^{\nu} (U - P)\|_{C^0(\cB_{2/3}^+(\bo_+))}  +  \|R_{k_0}\|_{C^{2\fm^2}(\cB_{2/3}^+(\bo_+))} 
 	\\
 	 \leq  &\  \frac{\epsilon}{2} + \frac{\epsilon^2}{10} < \epsilon. 
 	\end{split} 
 \end{align} 	
 	Here we used \eqref{e:fixed-derivative-error} and \eqref{e:remainder-bound}.
 	The proof is complete. 
 	\end{proof}

For the Minkowski type estimates and Hausdorff  measure estimates for $\cS(f)$ on the boundary, one needs to split $\cS(f)$ in a horizontal part and a mixed part as in Section \ref{ss:boundary-estimate}
The main results have been stated in Section \ref{ss:main-results}, while the technical refinements can be stated in a similar way as the relevant results in Section \ref{ss:boundary-estimate}.

\begin{remark}
The techniques used to prove the previous theorems allow actually to prove a rectifiability result for the singular set $\mathcal S(U)$ and its stratification $\mathcal S^k(U)$. More precisely, one can prove that the following decomposition holds 
\begin{align}\mathcal S^k(U)= \bigcup_{j=1}^\infty{E_j},
\end{align}
for   some sets $E_j$. Those sets can be shown to be closed, using Monneau's type monotonicity formulae for instance;  see \cite{STT}. The previous characterization of the each singular stratum comes from a growth lemma together with a non-degeneracy lemma. The idea is that singular  strata can be described also as the blow-ups having homogeneity $k$. This part follows from the Almgren's frequency function. The classification and behaviour of blow-ups is then done by the introduction of a suitable Monneau monotonicity formula originally introduced for the obstacle problem; see \cite{monneau}. 
Therefore as in \cite[Theorem 1.3.8]{GarofaloPetrosyan}, the rectifiability of each stratum follows from Whitney's extension theorem and the implicit function theorem; see also \cite[Theorem 7.7]{STT}. This approach describes just another way of stratifying the singular set. 
\end{remark}

\bibliographystyle{amsalpha}

\bibliography{PE}

 \end{document}